\newtheorem{thm}{Theorem}[section]
\newtheorem{prop}{Proposition}[section]
\newtheorem{lem}{Lemma}[section]
\newtheorem{coro}{Corollary}[section]
\newtheorem{rem}{Remark}[section]
\newcommand{\ml}{\mathcal}
\newcommand{\mb}{\mathbb}
\DeclareMathOperator{\diag}{diag}
\DeclareMathOperator{\divv}{div}
\DeclareMathOperator{\intt}{int}
\DeclareMathOperator{\extt}{ext}
\DeclareMathOperator{\midd}{mid}
\begin{document}

\title{Weakly coupled systems of semi-linear elastic waves with different damping mechanisms in 3D}

\author[1]{Wenhui Chen}

\author[1]{Michael Reissig*}

\authormark{Chen \textsc{et al}}

\address[1]{\orgdiv{Institute of Applied Analysis, Faculty for Mathematics and Computer Science}, \orgname{Technical University Bergakademie Freiberg}, \orgaddress{\state{Pr\"uferstra\ss e 9, 09596, Freiberg}, \country{Germany}}}

\corres{*Correspondence to: Michael Reissig, Institute of Applied Analysis, Faculty for Mathematics and Computer Science, Technical University Bergakademie Freiberg, Pr\"uferstra\ss e. 9, 09596, Freiberg, Germany \email{reissig@math.tu-freiberg.de}}

\presentaddress{Faculty for Mathematics and Computer Science, Technical University Bergakademie Freiberg, Pr\"uferstr. 9 - 09596, Freiberg, Germany.}

\abstract[Summary]{We consider the following Cauchy problem for weakly coupled systems of semi-linear damped elastic waves with a power source non-linearity in three-dimensions:
\begin{equation*}
U_{tt}-a^2\Delta U-\big(b^2-a^2\big)\nabla\divv U+(-\Delta)^{\theta}U_t=F(U),\,\,\,\, (t,x)\in(0,\infty)\times\mb{R}^3,
\end{equation*}
where $U=U(t,x)=\big(U^{(1)}(t,x),U^{(2)}(t,x),U^{(3)}(t,x)\big)^{\mathrm{T}}$ with $b^2>a^2>0$ and $\theta\in[0,1]$. Our interests are some qualitative properties of solutions to the corresponding linear model with vanishing right-hand side and the influence of the value of $\theta$ on the exponents $p_1,p_2,p_3$ in $F(U)=\big(|U^{(3)}|^{p_1},|U^{(1)}|^{p_2},|U^{(2)}|^{p_3}\big)^{\mathrm{T}}$ to get results for the global (in time) existence of small data solutions.
}

\keywords{damped elastic waves, friction damping, global (in time) small data solutions, structural damping, weakly coupled systems, WKB analysis}

\maketitle
\section{Introduction}\label{Introduction}
This paper is devoted to the Cauchy problem for the following weakly coupled systems of semi-linear elastic waves with damping mechanisms $(-\Delta)^{\theta}U_t$ for $(0,\infty)\times \mb{R}^3$:
\begin{equation}\label{fsdew001}
\left\{
\begin{aligned}
&U^{(1)}_{tt}-a^2\Delta U^{(1)}-\big(b^2-a^2\big)\partial_{x_1}\big(\partial_{x_1}U^{(1)}+\partial_{x_2}U^{(2)}+\partial_{x_3}U^{(3)}\big)+(-\Delta)^{\theta}U^{(1)}_t=|U^{(3)}|^{p_1},\\
&U^{(2)}_{tt}-a^2\Delta U^{(2)}-\big(b^2-a^2\big)\partial_{x_2}\big(\partial_{x_1}U^{(1)}+\partial_{x_2}U^{(2)}+\partial_{x_3}U^{(3)}\big)+(-\Delta)^{\theta}U^{(2)}_t=|U^{(1)}|^{p_2},\\
&U^{(3)}_{tt}-a^2\Delta U^{(3)}-\big(b^2-a^2\big)\partial_{x_3}\big(\partial_{x_1}U^{(1)}+\partial_{x_2}U^{(2)}+\partial_{x_3}U^{(3)}\big)+(-\Delta)^{\theta}U^{(3)}_t=|U^{(2)}|^{p_3},\\
&\big(U^{(1)},U^{(2)},U^{(3)}\big)(0,x)=\big(U^{(1)}_0,U^{(2)}_0,U^{(3)}_0\big)(x),\\
&\big(U_t^{(1)},U_t^{(2)},U_t^{(3)}\big)(0,x)=\big(U^{(1)}_1,U^{(2)}_1,U^{(3)}_1\big)(x),
\end{aligned}
\right.
\end{equation}
where two material-dependent quantities $a^2$ and $b^2$ called Lam\'e constants arise in the strain-stress relationship and satisfy the condition $b^2>a^2>0$.
We assume in the above model $\theta\in[0,1]$, where $\theta=0$ appears in the model with \emph{friction or external damping}, $\theta\in(0,1]$ appears in the model with \emph{structural damping}, in particular, $\theta=1$ appears in the model with \emph{viscoelastic damping}.

The present paper is a continuation of the paper \cite{Reissig2016}, in which linear structurally damped elastic waves in 2D with $\theta\in(0,1]$ were studied. The author obtained Gevrey smoothing if $\theta\in(0,1)$, propagation of singularities if $\theta=1$ and estimates of higher-order energies. Recently, by choosing data $\big(U^{(k)}_0,U^{(k)}_1\big)\in(H^{s+1}\cap L^1)\times(H^s\cap L^1)$ for $k=1,\dots,n$, the paper \cite{Ikehata2014} proved almost sharp energy estimates for the corresponding linear model to \eqref{fsdew001} in $\mb{R}^n$, $n\geq2$, with $\theta\in[0,1]$ and vanishing right-hand side.

Let us recall some results for critical exponents in Cauchy problems for semi-linear damped wave models.
For the single semi-linear classical damped wave equation
	\begin{equation}\label{semidampedwave}
	\left\{\begin{aligned}
	&U_{tt}-\Delta U+U_t=|U|^p,&(t,x)\in(0,\infty)\times\mb{R}^n,\\
	&(U,U_t)(0,x)=(U_0,U_1)(x),&x\in\mb{R}^n,
	\end{aligned}\right.
	\end{equation}
	the critical exponent is the Fujita exponent, i.e.,
	\begin{equation*}
	p_{\text{crit}}=p_{\text{crit}}(n)=1+\frac{2}{n}.
	\end{equation*}
	On one hand, the pioneering paper \cite{TodorovaYordanov2001} proved the global (in time) existence of small data solutions for $p>p_{\text{crit}}$ assuming compactly supported data. The assumption for compactly supported data can be relaxed to smallness in some weighted energy spaces \cite{IkehataTanizawa2005} for some dimensions $n\geq1$. On the other hand, \cite{Zhang2001} proved that the critical exponent $p=p_{\text{crit}}$ belongs to the blow-up case.
	Recently, the semi-linear structurally damped wave models
	\begin{equation}\label{semistructuraldampedwave}
	\left\{\begin{aligned}
	&U_{tt}-\Delta U+(-\Delta)^{\theta}U_t=|U|^p,&(t,x)\in(0,\infty)\times\mb{R}^n,\\
	&(U,U_t)(0,x)=(U_0,U_1)(x),&x\in\mb{R}^n,
	\end{aligned}\right.
	\end{equation}
	with $p>1$ and $\theta\in(0,1]$ were studied. The global (in time) existence of small data solutions was investigated in \cite{D'abbiccoReissig2014}. The authors proved
the existence of unique global (in time) Sobolev solutions for some low dimensions $n\geq2$ if we assume that the exponent $p$ satisfies some conditions related to the application of the Gagliardo-Nirenberg inequality and
	\begin{equation*}
	p>\left\{\begin{aligned}
	&1+\frac{2}{n-2\theta} &\text{if}\,\,\,\,\theta\in\left(0,1/2\right],\\
	&1+\frac{1+2\theta}{n-1} &\text{if}\,\,\,\,\theta\in\left(1/2,1\right].
	\end{aligned}\right.
	\end{equation*}
	Additionally, by using some suitable $L^m$-$L^q$ estimates, \cite{DabbiccoEbert201401} studied the global existence of solutions to \eqref{semistructuraldampedwave} with suitably small data when
	\begin{equation*}
	p>1+\frac{2}{n-2\theta}\,\,\,\,\text{if}\,\,\,\, \theta\in\left(0,1/2\right],
	\end{equation*}
	in some high space dimensions $n\geq3$. The papers \cite{D'abbiccoReissig2014} and \cite{DabbiccoEbert201401} proved the critical exponent
\begin{equation*}
	p_{\text{crit}}=p_{\text{crit}}(n)=1+\frac{2}{n-1}
	\end{equation*}
	for $n\geq2$ to the model \eqref{semistructuraldampedwave} with $\theta=1/2$ by the test function method.

Now let us turn to weakly coupled systems. Firstly, we consider the weakly coupled system of semi-linear classical damped waves
	\begin{equation}\label{weaklycoupleddamped}
	\left\{\begin{aligned}
	&U_{tt}-\Delta U+U_t=|V|^{p},&(t,x)\in(0,\infty)\times\mb{R}^n,\\
	&V_{tt}-\Delta V+V_t=|U|^{q},&(t,x)\in(0,\infty)\times\mb{R}^n,\\
	&(U,U_t,V,V_t)(0,x)=(U_0,U_1,V_0,V_1)(x),&x\in\mb{R}^n,
	\end{aligned}\right.
	\end{equation}
	where $p,q>1$ and $U=U(t,x)$, $V=V(t,x)$ are real-valued unknown functions. Critical exponents to the system \eqref{weaklycoupleddamped} are described by the condition
	\begin{equation*}
	\alpha_{\max}=\max\left\{\frac{p+1}{pq-1};\frac{q+1}{pq-1}\right\}=\frac{n}{2}.
	\end{equation*}
In \cite{SunWang2007} the authors investigated for $n=1,3$, that if $\alpha_{\max}<n/2$, then there exists a unique global (in time) Sobolev solution for small data. If $\alpha_{\max}\geq n/2$, then local (in time) solutions, in general, blow up in finite time. The paper \cite{Narazaki2009} generalized their existence results to $n=1,2,3$ and improved the time decay estimates when $n=3$. The recent paper \cite{NishiharaWakasugi} determined the critical exponents for any space dimension $n$. The proof of the global (in time) existence of energy solutions is based on the weighted energy method.\\
	Later \cite{Takeda2009} considered the following generalization of the model \eqref{weaklycoupleddamped}:
	\begin{equation}\label{weaklycoupleddampedgeneral}
	\left\{\begin{aligned}
	&U^{(1)}_{tt}-\Delta U^{(1)}+U^{(1)}_t=|U^{(k)}|^{p_1},\\
	&U^{(2)}_{tt}-\Delta U^{(2)}+U^{(2)}_t=|U^{(1)}|^{p_2},\\
	&\quad\vdots\\
	&U^{(k)}_{tt}-\Delta U^{(k)}+U^{(k)}_t=|U^{(k-1)}|^{p_k},\\
	\end{aligned}\right.
	\end{equation}
	where $k\geq2$ and $p_j>1$ for $j=1,\dots,k$. We define the matrix $P$ as
	\[
	\begin{split}
	P=\left(
	{\begin{array}{*{20}c}
		0 & 0 & \cdots & 0 & p_1\\
		p_2 & 0 & \cdots & 0 & 0\\
		0 & p_3 & \cdots & 0 & 0\\
		\vdots & \vdots & \ddots & \vdots & \vdots\\
		0 & 0 & \cdots & p_k & 0 \\
		\end{array}}
	\right)
	\end{split}
	\]
	and consider $P-I$, where $I$ is the identity matrix. Therefore, it is clear that
	\begin{equation*}
	|P-I|=(-1)^{k-1}\Big(\prod\limits_{j=1}^kp_j-1\Big),
	\end{equation*}
	and the inverse matrix of $P-I$ exists because $|P-I|\neq0$. Then, we can define
	\begin{equation*}
	\alpha=(\alpha_1,\dots,\alpha_k)=(P-I)^{-1}\cdot(1,\dots,1)^{\mathrm{T}}.
	\end{equation*}
	The author of \cite{Takeda2009} proved that when $n\leq3$, then the critical exponents of the system \eqref{weaklycoupleddampedgeneral} are described by the condition
	\begin{equation*}
	\alpha_{\max}=\max\left\{\alpha_1;\dots;\alpha_k\right\}=\frac{n}{2}.
	\end{equation*}
	In addition, the author obtained blow-up results for any space dimensions. Then, the paper \cite{NishiharaWakasugi1} determined the critical exponents for any space dimension $n$, where the weighted energy method has been used in the proof of global (in time) existence of solutions.\\
	Lastly, we consider the weakly coupled system of semi-linear structurally damped wave equations
	\begin{equation}\label{weaklycoupledstrucuraldamped}
	\left\{\begin{aligned}
	&U_{tt}-\Delta U+2(-\Delta)^{1/2}U_t=|V|^{p},&(t,x)\in(0,\infty)\times\mb{R}^n,\\
	&V_{tt}-\Delta V+2(-\Delta)^{1/2}V_t=|U|^{q},&(t,x)\in(0,\infty)\times\mb{R}^n,\\
	&(U,U_t,V,V_t)(0,x)=(U_0,U_1,V_0,V_1)(x),&x\in\mb{R}^n.
	\end{aligned}\right.
	\end{equation}
	The recent paper \cite{D'abbicco2015} showed that the global (in time) existence of small data Sobolev solutions to the system \eqref{weaklycoupledstrucuraldamped} holds if
	\begin{equation*}
	\alpha_{\max}=\max\left\{\frac{p+1}{pq-1};\frac{q+1}{pq-1}\right\}<\frac{n-1}{2},
	\end{equation*}
	and $n \geq 2$. On the contrary, the nonexistence result for global (in time) solutions holds if $\alpha_{\max}>(n-1)/2$ for $n\geq1$.

In this paper, we generalize the weakly coupled system of the semi-linear damped wave models \eqref{weaklycoupleddampedgeneral} and \eqref{weaklycoupledstrucuraldamped} to weakly coupled systems of semi-linear damped elastic waves \eqref{fsdew001}.\medskip

The paper is organized as follows: In Section \ref{EstimateforthelinearCauchyproblem}, we prepare the asymptotic behavior and some qualitative properties, including well-posedness and smoothing effect, of solutions to the corresponding linear Cauchy problem with vanishing right-hand side.  In Section \ref{energyestimates}, we prove suitable energy estimates by phase space analysis and energy methods in the Fourier space. In Section \ref{diffusionphenomenon}, diffusion phenomena for linear elastic waves with friction or structural damping are studied. In Section \ref{ge}, the global (in time) existence of small data solutions to \eqref{fsdew001} are treated. In Section \ref{Concludingremark}, some concluding remarks complete the paper. \medskip

We provide some notations used in this paper. Let $\chi_{\intt},\chi_{\midd},\chi_{\extt}\in \mathcal{C}^{\infty}$ having their supports in $Z_{\intt}(\varepsilon):=\{|\xi|<\varepsilon\}$, $Z_{\midd}(\varepsilon):=\{\varepsilon\leq|\xi|\leq1/\varepsilon\}$ and $Z_{\extt}(\varepsilon):=\{|\xi|>1/\varepsilon\}$, respectively, so that $\chi_{\midd}=1-\chi_{\intt}-\chi_{\extt}$. Here $\varepsilon>0$ is a sufficiently small constant.

In addition, the symbol $\oplus$ between Jordan matrices $J_{l_j}(\lambda_j)$ is used as follows:
\[
\begin{split}
J_{l_1}(\lambda_1)\oplus J_{l_2}(\lambda_2)\oplus\cdots\oplus J_{l_n}(\lambda_n):=\left(
{\begin{array}{*{20}c}
	J_{l_1}(\lambda_1) &  & &\\
	& J_{l_2}(\lambda_2) &  &\\
	& & & \ddots&\\
	& & & &J_{l_{n}}(\lambda_n)
	\end{array}}
\right),\,\,\,\,
\text{where}\,\,\,\,
J_{l_j}(\lambda_j):=\left(
{\begin{array}{*{20}c}
	\lambda_j & 1 &  & &\\
	& \lambda_j & 1 & &\\
	& & \ddots&\ddots &\\
	& &  &\lambda_j & 1\\
	& & & & \lambda_j
	\end{array}}
\right).
\end{split}
\]

For the sake of clarity, we introduce for any $s\geq0$ and $m\in[1,2]$ the spaces
\begin{equation*}
\begin{split}
\ml{D}^s_{m,1}:&=(H^{s+1}\cap L^m)\times(H^s\cap L^m),\\
\ml{D}^s_{m,2}:&=(|D|^{-1}H^{s}\cap \dot{H}^{1}_{m})\times(H^s\cap L^m),
\end{split}
\end{equation*}
carrying the corresponding norms:
\begin{equation*}
\begin{split}
\big\|\big(U^{(k)}_0,U^{(k)}_1\big)\big\|_{\ml{D}^s_{m,1}}:=&\|U^{(k)}_0\|_{H^{s+1}}+\|U^{(k)}_0\|_{L^m}+\|U^{(k)}_1\|_{H^s}+\|U^{(k)}_1\|_{L^m},\\
\big\|\big(U^{(k)}_0,U^{(k)}_1\big)\big\|_{\ml{D}^s_{m,2}}:=&\|U^{(k)}_0\|_{|D|^{-1}H^{s}}+\|U^{(k)}_0\|_{\dot{H}^{1}_{m}}+\|U^{(k)}_1\|_{H^s}+\|U^{(k)}_1\|_{L^m}.
\end{split}
\end{equation*}
Moreover, we remark that $\ml{D}_{2,1}^0=H^1\times L^2$ and $\ml{D}_{2,2}^0=\dot{H}^1\times L^2$ are classical energy spaces. Here $H^{s}_{m}$ and $\dot{H}^{s}_{m}$ denote Bessel and Riesz potential spaces based on $L^m$, respectively, and $|D|^{s}$ stands for the pseudo-differential operator with the symbol $|\xi|^{s}$.\\
\noindent  By $|D|^{-1}H^s\big(\mb{R}^n\big)$, $s\in\mb{R}$, we denote the class of all distributions $f$ from $\ml{Z}'\big(\mb{R}^n\big)$ such that
\begin{equation*}
|D|^{-1}H^s\big(\mb{R}^n\big):=\Big\{f\in\ml{Z}'\big(\mb{R}^n\big):\|f\|_{|D|^{-1}H^s}:=\Big(\int\nolimits_{\mb{R}^n}|\xi|^2\langle \xi\rangle^{2s}|\hat{f}(\xi)|^2d\xi\Big)^{1/2}<\infty\Big\},
\end{equation*}
where $\ml{Z}'\big(\mb{R}^n\big)$ denotes the topological dual space to the subspace of the Schwartz space $\ml{S}\big(\mb{R}^n\big)$ consisting of functions with $d_{\xi}^j\hat{f}(0)=0$ for all $j\in\mb{N}$. In other words, we can identify $\ml{Z}'$ with the factor space $\ml{S}'/\ml{P}$. Here $\ml{P}$ is the space of all polynomials \cite{RunstSickel1996}. We can discuss some properties of the distribution $f\in|D|^{-1}H^s$ in two ways. On the one hand we may use some properties of the Bessel potential space $H^s$ because $|D|f\in H^s$, on the other hand we may use some properties of the Riesz potential space $\dot{H}^1$ because $\langle D\rangle^s f\in \dot{H}^1$.

Hereinafter, we denote the Fourier and inverse Fourier transforms by $\ml{F}$ and $\ml{F}^{-1}$. We write $f\lesssim g$, when there exists a constant $C>0$ such that $f\leq Cg$. Also, we denote $\lceil x\rceil:=\min\left\{x\in\mb{Z}:x\leq C\right\}$ the ceiling function for $x$. The identity matrix is denoted by $I$.
\section{Estimates for the solutions to the linear Cauchy problem}\label{EstimateforthelinearCauchyproblem}
To study global (in time) solutions to the semi-linear model \eqref{fsdew001} our starting point is to study the corresponding Cauchy problem for linear elastic waves with friction or structural damping
\begin{equation}\label{linearproblem}
\left\{
\begin{aligned}
&u_{tt}-a^2\Delta u-\big(b^2-a^2\big)\nabla\divv u+(-\Delta)^{\theta}u_t=0,\quad &(t,x)\in(0,\infty)\times\mb{R}^3,\\
&(u,u_t)(0,x)=(u_0,u_1)(x),\quad &x\in\mb{R}^3.
\end{aligned}
\right.
\end{equation}
Any solution $u=u(t,x)$ to \eqref{linearproblem} corresponds to a solution $U=U(t,x)$ to \eqref{fsdew001} with vanishing right-hand side. To obtain the asymptotic behavior of solutions diagonalization schemes and the energy method in the Fourier space are available.
\subsection{Asymptotic behavior of solutions}\label{Asymptoticbehaviorofsolutions}
We introduce the corresponding system to \eqref{linearproblem} by the aid of the partial Fourier transform $\hat{u}(t,\xi)=\mathcal{F}_{x\rightarrow\xi}(u)(t,\xi)$, that is,
\begin{equation*}
\left\{
\begin{aligned}
&\hat{u}_{tt}+|\xi|^{2\theta}\hat{u}_t+|\xi|^2A(\eta)\hat{u}=0,\quad &(t,\xi)\in(0,\infty)\times\mb{R}^3,\\
&(\hat{u},\hat{u}_t)(0,\xi)=(\hat{u}_0,\hat{u}_1)(\xi),\quad&\xi\in\mb{R}^3,
\end{aligned}\right.
\end{equation*}
where $\eta=\xi/|\xi|\in\mb{S}^1$ and
\[
\begin{split}
A(\eta)=\left(
{\begin{array}{*{20}c}
	a^2+\big(b^2-a^2\big)\eta_1^2 & \big(b^2-a^2\big)\eta_1\eta_2 & \big(b^2-a^2\big)\eta_1\eta_3\\
	\big(b^2-a^2\big)\eta_1\eta_2 & a^2+(b^2-a^2)\eta_2^2 & \big(b^2-a^2\big)\eta_2\eta_3\\
	\big(b^2-a^2\big)\eta_1\eta_3 & \big(b^2-a^2\big)\eta_2\eta_3 & a^2+\big(b^2-a^2\big)\eta_3^2\\
	\end{array}}
\right).
\end{split}
\]
Our assumption $b^2>a^2$ implies that $A(\eta)$ is positive definite. The eigenvalues of $A(\eta)$ are $a^2$, $a^2$ and $b^2$. For further approach, we introduce
\[
\begin{split}
M(\eta):=\left(
{\begin{array}{*{20}c}
	-\eta_2/\eta_1 & -\eta_3/\eta_1 & \eta_1/\eta_3\\
	1 & 0 & \eta_2/\eta_3\\
	0 & 1 & 1\\
	\end{array}}
\right) \,\,\,\,\text{and}\,\,\,\, A_{\text{diag}}(|\xi|)=|\xi|^2\text{diag}\big(a^2,a^2,b^2\big).
\end{split}
\]
After the change of variables $v(t,\xi)=M^{-1}(\eta)\hat{u}(t,\xi)$ we obtain
\begin{equation*}
D_t^2v-i|\xi|^{2\theta}D_tv-A_{\text{diag}}(|\xi|)v=0,\quad (v,D_tv)(0,\xi)=(v_0,-iv_1)(\xi),
\end{equation*}
where $v_j(\xi)=M^{-1}(\eta)\hat{u}_j(\xi)$ for $j=0,1$. After introducing the micro-energy
\begin{equation}\label{trans01}
W^{(0)}(t,\xi)=\Big(D_tv(t,\xi)+A_{\text{diag}}^{1/2}(|\xi|)v(t,\xi),D_tv(t,\xi)-A_{\text{diag}}^{1/2}(|\xi|)v(t,\xi)\Big)^\mathrm{T},
\end{equation}
we conclude the first-order system
\begin{equation}\label{weshould}
D_tW^{(0)}-\frac{i}{2}|\xi|^{2\theta}B_0W^{(0)}-|\xi|B_1W^{(0)}=0,\,\,\,\, W^{(0)}(0,\xi)=W_0^{(0)}(\xi),
\end{equation}
where \[ W_0^{(0)}(\xi)=\Big(-iv_1(\xi)+A_{\text{diag}}^{1/2}(|\xi|)v_0(\xi),-iv_1(\xi)-A_{\text{diag}}^{1/2}(|\xi|)v_0(\xi)\Big)^\mathrm{T}\] and the matrices
$B_0$ and $B_1$ are defined as follows: \[
\begin{split}
B_0=\left(
{\begin{array}{*{20}c}
	1 & 0 & 0 & 1 & 0 & 0\\
	0 & 1 & 0 & 0 & 1 & 0\\
	0 & 0 & 1 & 0 & 0 & 1\\
	1 & 0 & 0 & 1 & 0 & 0\\
	0 & 1 & 0 & 0 & 1 & 0\\
	0 & 0 & 1 & 0 & 0 & 1\\
	\end{array}}
\right)\,\,\,\,\text{and} \,\,\,\,B_1=\text{diag}\Big(\sqrt{a^2},\sqrt{a^2},\sqrt{b^2},-\sqrt{a^2},-\sqrt{a^2},-\sqrt{b^2}\Big).
\end{split}
\]
In order to understand the influence of the parameter $|\xi|$ on the asymptotic behavior of solutions we distinguish between the following cases:

\emph{Case 2.0}: $\quad$ $\theta\neq 1/2$ with $\xi\in Z_{\midd}(\varepsilon)$;

\emph{Case 2.1}: $\quad$ $\theta\in\left[0,1/2\right)$ with $\xi\in Z_{\intt}(\varepsilon)$ or $\theta\in\left(1/2,1\right]$ with $\xi\in Z_{\extt}(\varepsilon)$;

\emph{Case 2.2}: $\quad$ $\theta\in\left[0,1/2\right)$ with $\xi\in Z_{\extt}(\varepsilon)$ or $\theta\in\left(1/2,1\right]$ with $\xi\in Z_{\intt}(\varepsilon)$;

\emph{Case 2.3}: $\quad$ $\theta=1/2$ for all frequencies.
\subsubsection{Energy methods for bounded frequencies away from zero for Case 2.0}
Our goal is to prove an exponential decay result for a suitable energy in the middle zone $Z_{\midd}(\varepsilon)$ by the energy method in the phase space. We are interested in the decoupled system
\begin{equation}\label{20001}
\left\{\begin{aligned}
&v_{tt}+|\xi|^{2\theta}v_t+A_{\text{diag}}(|\xi|)v=0, &\quad(t,\xi)\in(0,\infty)\times\mb{R}^3,\\
&(v,v_t)(0,\xi)=(v_0,v_1)(\xi),&\quad\xi\in\mb{R}^3,
\end{aligned}\right.
\end{equation}
with bounded frequencies $|\xi|\in\big[\varepsilon,1/\varepsilon\big]$.
\begin{thm}\label{middlezone}
	Consider frequencies in the middle zone $Z_{\midd}(\varepsilon)$. The solution $v=v(t,\xi)$ to the Cauchy problem \eqref{20001} satisfies for all $t>0$ the estimate
	\begin{equation*}
	|v^{(k)}(t,\xi)|^2+|v_t^{(k)}(t,\xi)|^2+|\xi|^2|v^{(k)}(t,\xi)|^2\lesssim e^{-ct}\sum\limits_{k=1}^3\big(|v^{(k)}_0(\xi)|^2+|v^{(k)}_1(\xi)|^2\big),
	\end{equation*}
	where $k=1,2,3$ and $c$ is a positive constant.
\end{thm}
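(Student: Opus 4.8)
The plan is to exploit that $A_{\text{diag}}(|\xi|)=|\xi|^2\diag(a^2,a^2,b^2)$ is diagonal, so that the system \eqref{20001} splits into three uncoupled scalar equations
\begin{equation*}
v_{tt}^{(k)}+|\xi|^{2\theta}v_t^{(k)}+c_k^2|\xi|^2v^{(k)}=0,\qquad k=1,2,3,
\end{equation*}
with $c_1^2=c_2^2=a^2$ and $c_3^2=b^2$, each a damped oscillator with coefficients that are constant in $t$. On the annulus $Z_{\midd}(\e)=\{\,\e\le|\xi|\le1/\e\,\}$ both the damping coefficient $|\xi|^{2\theta}$ and the stiffness $c_k^2|\xi|^2$ lie in a fixed compact subinterval of $(0,\infty)$, and $|\xi|^2\sim1$ there, so the three quantities $|v^{(k)}|^2$, $|v_t^{(k)}|^2$ and $|\xi|^2|v^{(k)}|^2$ are mutually comparable up to constants depending only on $\e$. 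It therefore suffices to show $|v_t^{(k)}(t,\xi)|^2+|\xi|^2|v^{(k)}(t,\xi)|^2\lesssim e^{-ct}\big(|v_0^{(k)}(\xi)|^2+|v_1^{(k)}(\xi)|^2\big)$ for each $k$ and then pass to the sum over $k$ on the right-hand side.

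For this I would carry out the energy method in the phase space. Fixing $k$ and writing $v=v^{(k)}$, $c=c_k$, consider the basic energy $E_0(t,\xi):=|v_t|^2+c^2|\xi|^2|v|^2$; using the equation one gets $\tfrac{d}{dt}E_0=-2|\xi|^{2\theta}|v_t|^2$, which dissipates only the kinetic part. To recover dissipation of the potential part I would add the auxiliary term $F(t,\xi):=\mathrm{Re}\big(v_t\overline{v}\big)$, for which $\tfrac{d}{dt}F=|v_t|^2-|\xi|^{2\theta}\mathrm{Re}\big(v_t\overline{v}\big)-c^2|\xi|^2|v|^2$, and study the perturbed functional $\mathcal{E}:=E_0+\gamma F$ with a small parameter $\gamma>0$, so that
\begin{equation*}
\frac{d}{dt}\mathcal{E}=-\big(2|\xi|^{2\theta}-\gamma\big)|v_t|^2-\gamma|\xi|^{2\theta}\,\mathrm{Re}\big(v_t\overline{v}\big)-\gamma c^2|\xi|^2|v|^2.
\end{equation*}
Absorbing the cross term by a weighted Young inequality and using that $|\xi|^{2\theta}$ and $|\xi|^2$ are bounded above and below on $Z_{\midd}(\e)$, one can fix $\gamma$ small (depending only on $\e,a,b,\theta$) with $\tfrac{d}{dt}\mathcal{E}\le-c'\big(|v_t|^2+|\xi|^2|v|^2\big)$; for such $\gamma$, $\mathcal{E}$ is also equivalent to $|v_t|^2+|\xi|^2|v|^2$ (again because $|\xi|\sim1$ on the middle zone), hence $\tfrac{d}{dt}\mathcal{E}\le-c''\mathcal{E}$, and Gronwall's lemma yields the exponential decay with $\mathcal{E}(0,\xi)\lesssim|v_0(\xi)|^2+|v_1(\xi)|^2$.

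An equivalent route — which also makes the uniform decay rate transparent — is to compute the characteristic roots $\lambda_\pm(|\xi|)=\tfrac12\big(-|\xi|^{2\theta}\pm\s{|\xi|^{4\theta}-4c_k^2|\xi|^2}\big)$ of the scalar equations and check $\mathrm{Re}\,\lambda_\pm\le-c<0$ uniformly on $Z_{\midd}(\e)$: in the under-damped case $\mathrm{Re}\,\lambda_\pm=-|\xi|^{2\theta}/2$, while in the over-damped case rationalizing the numerator gives $\lambda_+=-2c_k^2|\xi|^2\big(|\xi|^{2\theta}+\s{|\xi|^{4\theta}-4c_k^2|\xi|^2}\big)^{-1}\le-c_k^2|\xi|^{2-2\theta}$ and $\lambda_-\le-|\xi|^{2\theta}/2$, all of which are bounded away from $0$ on the compact annulus. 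I do not expect a genuine obstacle here: the system decouples and the frequencies are bounded, so everything reduces to a constant-coefficient damped oscillator. The only point requiring (mild) care is the \emph{uniformity} of the rate $c$ in $\xi$, i.e.\ that the admissible range of $\gamma$ (equivalently, the size of the spectral gap) does not degenerate as $|\xi|$ runs over $[\e,1/\e]$; this follows at once from compactness, since $|\xi|^{2\theta}$ and $|\xi|^2$ never approach $0$ or $\infty$ on the middle zone.
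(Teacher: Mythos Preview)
Your proposal is correct and follows essentially the same approach as the paper: the paper also builds the Lyapunov functional $F_{\midd}=\tfrac{1}{c_1}E_{\midd}+\sum_k\Re(\bar v^{(k)}v_t^{(k)})$ (equivalently your $\mathcal{E}=E_0+\gamma F$ after rescaling), uses the two multiplier identities from $\bar v_t$ and $\bar v$, and closes with Gronwall and the equivalence $F_{\midd}\approx E_{\midd}$ on the compact annulus. The only cosmetic difference is that the paper keeps the three components bundled in a single sum rather than first observing the decoupling, and it does not mention the characteristic-root alternative you sketch.
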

\begin{proof} Defining $\varpi_k=a^2\text{ for }k=1,2$, and $\varpi_k=b^2 \text{ for }k=3$, we introduce the energy $E_{\midd}(v)=E_{\midd}(v)(t,\xi)$ in the middle zone $Z_{\midd}(\varepsilon)$ according to the structure of the system \eqref{20001} as follows:
	\begin{equation*}
	\begin{split}
	E_{\midd}(v)(t,\xi):=\frac{1}{2}\sum\limits_{k =1}^3\big(|v^{(k)}_t(t,\xi)|^2+\varpi_k|\xi|^2|v^{(k)}(t,\xi)|^2\big).
	\end{split}
	\end{equation*}
	Multiplying \eqref{20001} by $\bar{v}_t=\bar{v}_t(t,\xi)$ we obtain
	\begin{equation}\label{20003}
	\frac{\partial}{\partial t} E_{\midd}(v)(t,\xi)=-|\xi|^{2\theta}\sum\limits_{k=1}^3|v^{(k)}_t(t,\xi)|^2.
	\end{equation}
	Homoplastically, we multiply \eqref{20001} by $\bar{v}=\bar{v}(t,\xi)$ and take the real part of the resulting identity to arrive at
	\begin{equation}\label{20004}
	\begin{split}
	\frac{1}{2}\sum\limits_{k=1}^3\varpi_k|\xi|^2|v^{(k)}(t,\xi)|^2
	+\frac{\partial}{\partial t}\Big(\sum\limits_{k=1}^3\Re\big(\bar{v}^{(k)}(t,\xi)v_t^{(k)}(t,\xi)\big)\Big)\leq c_0\sum\limits_{k=1}^3|v_t^{(k)}(t,\xi)|^2,
	\end{split}
	\end{equation}
	where $c_0=1+\max\limits_{\varepsilon\leq|\xi|\leq1/\varepsilon}|\xi|^{4\theta-2}/(2a^2)$.
	
	Next, we define the desired Lyapunov function $F_{\midd}(v)=F_{\midd}(v)(t,\xi)$ such that
	\begin{equation*}
	F_{\midd}(v)(t,\xi):=\frac{1}{c_1}E_{\midd}(v)(t,\xi)+\sum\limits_{k=1}^3\Re\big(\bar{v}^{(k)}(t,\xi)v^{(k)}_t(t,\xi)\big)
	\end{equation*}
	with a sufficiently small positive constant $c_1$ to be chosen later.\\
	Taking into consideration (\ref{20003}) and (\ref{20004}) we get for the first-order partial derivative of $F_{\midd}(v)$ with respect to $t$ the relation
	\begin{equation*}
	\frac{\partial}{\partial t}F_{\midd}(v)(t,\xi)\leq-\frac{1}{2}\Big(\frac{2\varepsilon^{2\theta}}{c_1}-2c_0\Big)
	\sum\limits_{k=1}^3|v^{(k)}_t(t,\xi)|^2
	-\frac{1}{2}|\xi|^2\sum\limits_{k=1}^3\varpi_k|v^{(k)}(t,\xi)|^2.
	\end{equation*}
	There exist positive constants $c_2=1/(a^2\varepsilon^2)$ and $c_3=c_2+1/c_1$ satisfying \[ c_2 E_{\midd}(v)(t,\xi)\leq F_{\midd}(v)(t,\xi)\leq c_3
	E_{\midd}(v)(t,\xi)\] for a sufficiently small constant $c_1$. We observe that
	\begin{equation*}
	\begin{split}
	\Big|\sum\limits_{k=1}^3\Re\big(\bar{v}^{(k)}(t,\xi)v^{(k)}_t(t,\xi)\big)\Big|\leq c_2E_{\midd}(v)(t,\xi).
	\end{split}
	\end{equation*}
	Choosing $c_1=\min\left\{2\varepsilon^{2\theta}/(2c_0+1);1/(2c_2)\right\}$ we get
	\begin{equation}\label{modi1}
	\begin{split}
	\frac{\partial}{\partial t}F_{\midd}(v)(t,\xi)\leq-\frac{1}{c_3}F_{\midd}(v)(t,\xi).
	\end{split}
	\end{equation}
	The use of Gronwall's inequality in \eqref{modi1} and the relationship between $F_{\midd}(v)$ and $E_{\midd}(v)$ imply an exponential decay estimate for the energy $E_{\midd}(v)$.
\end{proof}
\subsubsection{Diagonalization procedures in Cases 2.1, 2.2, 2.3}
The main tool in studying the asymptotic behavior of solutions to (\ref{weshould}) if $\xi\in Z_{\intt}(\varepsilon)\cup Z_{\extt}(\varepsilon)$ is the use of the diagonalization schemes developed in the papers \cite{Jachmann2008}$^,$\cite{Reissig2016}$^,$ \cite{ReissigWang2005}. In each zone, we diagonalize the principal part of the system \eqref{weshould}. More in detail, we are especially interested in the asymptotic behavior of the eigenvalues of the coefficient matrix $-\frac{i}{2}|\xi|^{2\theta}B_0-|\xi|B_1$ for small and large frequencies.\\

\noindent \emph{Case 2.1}: $\quad$ To start the diagonalization procedure the matrix $\frac{i}{2}|\xi|^{2\theta}B_0$ has a dominant influence  in comparison with the diagonal matrix $|\xi|B_1$. For this reason, we should diagonalize $B_0$ firstly.
\begin{thm}\label{case1}
	When $\theta\in\left[0,1/2\right)$ with $\xi\in Z_{\intt}(\varepsilon)$ or $\theta\in\left(1/2,1\right]$ with $\xi\in Z_{\extt}(\varepsilon)$, after $\ell$ steps of the diagonalization procedure the starting system \eqref{weshould} is transformed to
	\begin{equation*}
	D_tW^{(\ell)}-A_{\ell}W^{(\ell)}=0,\,\,\,\, W^{(\ell)}(0,\xi)=W_0^{(\ell)}(\xi),
	\end{equation*}
	where
	\begin{equation*}
	A_{\ell}=i|\xi|^{2\theta}\Lambda_1+\sum\limits_{j=2}^{\ell}\Lambda_{j}+R_{\ell}
	\end{equation*}
	and with diagonal matrices $\Lambda_1,\dots,\Lambda_{\ell}$ and the remainder $R_{\ell}$. The asymptotic behavior of these matrices can be described as follows:
	\begin{equation*}
	\begin{split}
	\Lambda_1=O(1),\,\,\,\, \Lambda_{j}=O\big(|\xi|^{2(j-1)+2\theta(3-2j)}\big),\,\,\,\, R_{\ell}=O\big(|\xi|^{2\ell-1+4\theta(1-\ell)}\big).
	\end{split}
	\end{equation*}
	Moreover, the characteristic roots $\mu_{\ell,j}=\mu_{\ell,j}(|\xi|)$ with $j=1,\dots,6,$ of the matrix $A_{\ell}$ have the following asymptotic behavior:
	\begin{alignat*}{3}
	&\mu_{\ell,1}&=ia^2|\xi|^{2-2\theta}+z_1(a)+z_2,\quad&\mu_{\ell,4}&=i|\xi|^{2\theta}-ib^2|\xi|^{2-2\theta}-z_1(a)-z_2,\\
	&\mu_{\ell,2}&=ia^2|\xi|^{2-2\theta}+z_1(a)+z_3,\quad&\mu_{\ell,5}&=i|\xi|^{2\theta}-ia^2|\xi|^{2-2\theta}-z_1(b)-z_5,\\
	&\mu_{\ell,3}&=ib^2|\xi|^{2-2\theta}+z_1(b)+z_4,\quad&\mu_{\ell,6}&=i|\xi|^{2\theta}-ia^2|\xi|^{2-2\theta}-z_1(a)-z_3,
	\end{alignat*}
	modulo $O\big(|\xi|^{7-12\theta}\big)$, where $z_1(a),z_1(b)=O\big(|\xi|^{4-6\theta}\big)$, $z_2,z_3,z_4,z_5=O\big(|\xi|^{6-10\theta}\big)$ and their formulas are shown in Appendix \ref{appendix1}.
\end{thm}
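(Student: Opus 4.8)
The plan is to run the diagonalization scheme of \cite{Jachmann2008,Reissig2016,ReissigWang2005} on the first-order system \eqref{weshould} in the regime where $\tfrac{i}{2}|\xi|^{2\theta}B_0$ dominates $|\xi|B_1$, i.e.\ $|\xi|^{2\theta-1}\to\infty$, which is exactly $\theta\in[0,1/2)$ with $|\xi|\to0$ or $\theta\in(1/2,1]$ with $|\xi|\to\infty$. So the first step diagonalizes $B_0$. Since $B_0=\big(\begin{smallmatrix}I_3&I_3\\ I_3&I_3\end{smallmatrix}\big)$ has eigenvalues $0$ and $2$, each of multiplicity three, the constant matrix $T_1$ with columns $(e_k,-e_k)$ and $(e_k,e_k)$, $k=1,2,3$, diagonalizes it, and a direct computation gives $T_1^{-1}B_0T_1=\diag(0,0,0,2,2,2)$ together with, crucially, $T_1^{-1}B_1T_1=\big(\begin{smallmatrix}0&\Delta\\ \Delta&0\end{smallmatrix}\big)$ where $\Delta=\diag(\sqrt{a^2},\sqrt{a^2},\sqrt{b^2})$. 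Hence after $W^{(0)}=T_1W^{(1)}$ we obtain $A_1=i|\xi|^{2\theta}\Lambda_1+R_1$ with $\Lambda_1=\diag(0,0,0,1,1,1)=O(1)$ and $R_1=|\xi|\big(\begin{smallmatrix}0&\Delta\\ \Delta&0\end{smallmatrix}\big)=O(|\xi|)$, block-off-diagonal --- this is the claim after one step. Observe also that $B_0$, $B_1$ and $A_{\mathrm{diag}}$ all respect the splitting of the indices into the pairs $\{1,4\},\{2,5\},\{3,6\}$, so \eqref{weshould} in fact decouples into three independent two-dimensional systems (two of which coincide, owing to the doubled Lam\'e eigenvalue $a^2$), and in each the leading matrix $\big(\begin{smallmatrix}1&1\\ 1&1\end{smallmatrix}\big)$ has the \emph{distinct} eigenvalues $0$, $2$; this is what makes the scheme run without any degeneracy obstruction.

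Then I iterate. At the $(k{+}1)$-st step the block-off-diagonal part of $R_k$ is removed by conjugating with $I+N^{(k)}$, where $N^{(k)}$ solves the Sylvester-type commutator equation $\big[i|\xi|^{2\theta}\Lambda_1,N^{(k)}\big]=-(\text{block-off-diagonal part of }R_k)$; this is uniquely solvable in the block-off-diagonal class because the two block-eigenvalues $0$ and $2$ of $\Lambda_1$ differ, and it forces $N^{(k)}=O\big(|\xi|^{-2\theta}\,\mathrm{ord}(R_k)\big)$. Since $|\xi|$ is a frozen parameter, $D_tN^{(k)}=0$, so the conjugation produces no extra term from the time derivative; expanding $(I+N^{(k)})^{-1}(\cdot)(I+N^{(k)})$ as a Neumann series one records the new block-diagonal contribution as $\Lambda_{k+1}$ and the remainder as $R_{k+1}$, and all the transformation matrices are invertible with uniformly bounded inverses, so $W_0^{(\ell)}$ is comparable to $W_0^{(0)}$. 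The order bookkeeping is an induction on $k$: block-diagonal and block-off-diagonal terms alternate in steps of $|\xi|^{1-2\theta}$, the leading new block-off-diagonal term is produced by $[\Lambda_2,N^{(k)}]$ and is $|\xi|^{2(1-2\theta)}$ smaller than $R_k$, while $\Lambda_{k+1}$ sits a factor $|\xi|^{1-2\theta}$ below it; solving the resulting arithmetic recursions yields exactly $\Lambda_j=O\big(|\xi|^{2(j-1)+2\theta(3-2j)}\big)$ and $R_\ell=O\big(|\xi|^{2\ell-1+4\theta(1-\ell)}\big)$.

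It then remains to extract the characteristic roots of $A_\ell$. Since $A_\ell$ is block-diagonal up to $R_\ell$, with diagonal $\sum_{j\ge2}\Lambda_j=i\varpi|\xi|^{2-2\theta}+z_1+\cdots$ on the ``$0$-block'' and $i|\xi|^{2\theta}-i\varpi|\xi|^{2-2\theta}-z_1-\cdots$ on the ``$2$-block'' (with $\varpi\in\{a^2,b^2\}$ and $z_1=z_1(a)$ or $z_1(b)$ accordingly), the six roots follow term by term, the $R_\ell$-correction being $O(|\xi|^{7-12\theta})$ once $\ell\ge4$. Equivalently, as a check, the decoupling above reduces everything to the three scalar quadratics $\mu^2-i|\xi|^{2\theta}\mu-\varpi_k|\xi|^2=0$, whose roots $\mu=\tfrac12\big(i|\xi|^{2\theta}\pm i\sqrt{|\xi|^{4\theta}-4\varpi_k|\xi|^2}\big)$ expand exactly into the stated forms; the explicit coefficients $z_1(a),z_1(b),z_2,\dots,z_5$ come from this expansion and are collected in Appendix \ref{appendix1}.

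I expect the main obstacle to be the order bookkeeping through the iteration: tracking which products and commutators of $N^{(k)}$, of $\big(\begin{smallmatrix}0&\Delta\\ \Delta&0\end{smallmatrix}\big)$, of $\Lambda_1$ and of the already-generated $\Lambda_j$ fall into the block-diagonal versus the block-off-diagonal part and at which power of $|\xi|$ --- a careless estimate would predict a geometric rather than the correct arithmetic improvement of the remainder $R_\ell$. A secondary point that needs care is that the doubled Lam\'e eigenvalue $a^2$ does not spoil the scheme; it does not, because the two degenerate modes never interact, everything respecting the pairing $\{1,4\},\{2,5\},\{3,6\}$.
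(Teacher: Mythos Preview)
Your proposal is correct and follows the same diagonalization scheme the paper invokes (the procedure of \cite{ReissigSmith2005,Yagdjian1997,Jachmann2008,Reissig2016}); the paper's own proof is only a two-line reference to that scheme together with the remark that after four steps one has enough separation of the diagonal entries to iterate further, so you have essentially reconstructed and fleshed out what the paper leaves implicit.

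One point is worth highlighting because it goes a bit beyond the paper's presentation. Your observation that $B_0$ and $B_1$ simultaneously respect the pairing $\{1,4\},\{2,5\},\{3,6\}$, so that \eqref{weshould} decouples into three independent $2\times2$ systems (two of them identical, with Lam\'e parameter $a^2$), is a genuine simplification: it reduces the Sylvester step to the scalar gap $0\neq2$ at every stage and makes the order bookkeeping transparent, and it gives the clean cross-check via the quadratic $\mu^2-i|\xi|^{2\theta}\mu-\varpi|\xi|^2=0$. The paper instead carries the full $6\times6$ matrices $T_1,\mathcal{N}_1,\mathcal{N}_2,\mathcal{N}_3$ (with a built-in index permutation) and speaks of obtaining ``six pairwise distinct eigenvalues'' after four steps; your decoupling shows that in fact two pairs of eigenvalues coincide exactly, and indeed one checks from Appendix~\ref{appendix1} that $z_2-z_3=O(|\xi|^{8-14\theta})$ lies below the stated precision $O(|\xi|^{7-12\theta})$, so the paper's six listed roots are consistent with the repeated spectrum your block picture predicts. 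What the paper's $6\times6$ bookkeeping buys is the explicit transition matrices $T_{\theta,\intt}$ needed later for the representation formulas in Theorems~\ref{smallmatrix}--\ref{largematrix}; your $2\times2$ reduction is cleaner for the asymptotics of the roots themselves.
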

\begin{proof}
	After four steps of the diagonalization procedure, we obtain six pairwise distinct eigenvalues and we may carry out further steps of diagonalization as well. Here, we apply the diagonalization procedure proposed in \cite{ReissigSmith2005}$^,$\cite{Yagdjian1997}.
\end{proof}

\noindent \emph{Case 2.2}: $\quad$ To start the diagonalization procedure the matrix $|\xi|B_1$ has a dominant influence in comparison with the matrix $\frac{i}{2}|\xi|^{2\theta}B_0$. For this reason, we should diagonalize $B_1$ firstly.
\begin{thm}\label{case2}
	When $\theta\in\left[0,1/2\right)$ with $\xi\in Z_{\extt}(\varepsilon)$ or $\theta\in\left(1/2,1\right]$ with $\xi\in Z_{\intt}(\varepsilon)$, after $\ell$ steps of the diagonalization procedure the starting system \eqref{weshould} is transformed to
	\begin{equation*}
	D_tW^{(\ell)}-A_{\ell}W^{(\ell)}=0,\,\,\,\, W^{(\ell)}(0,\xi)=W_0^{(\ell)}(\xi),
	\end{equation*}
	where
	\begin{equation*}
	A_{\ell}=\Lambda_1+\Lambda_2+\sum\limits_{j=3}^{\ell}\Lambda_{j}+R_{\ell}
	\end{equation*}
	and with diagonal matrices $\Lambda_1,\dots,\Lambda_{\ell}$ and the remainder $R_{\ell}$. The asymptotic behavior of these matrices can be described as follows:
	\begin{equation*}
	\begin{split}
	\Lambda_1=O(|\xi|),\,\,\,\,\Lambda_2=O\big(|\xi|^{2\theta}\big),\,\,\,\, \Lambda_{j}=O\big(|\xi|^{(2\theta-1)(2j-5)+2\theta}\big),\,\,\,\, R_{\ell}=O\big(|\xi|^{2(2\theta-1)(\ell-2)+2\theta}\big).
	\end{split}
	\end{equation*}
	Moreover, the characteristic roots $\mu_{\ell,j}=\mu_{\ell,j}(|\xi|)$ with $j=1,\dots,6,$ of the matrix $A_{\ell}$ have the following asymptotic behavior:
	\begin{equation*}
	\mu_{\ell,j}=\pm |\xi|\sqrt{y^2}-\frac{i}{2}|\xi|^{2\theta}-\frac{1}{8\sqrt{y^2}}|\xi|^{4\theta-1}
	\end{equation*}
	modulo $O\big(|\xi|^{6\theta-2}\big)$, where $y=a$ as $j=1,2,4,5$ and $y=b$ as $j=3,6$. If $j=4,5,6$, then we take in the first term the negative sign in above characteristic roots, when $j=1,2,3$, we take in the first term the positive sign in above characteristic roots.
\end{thm}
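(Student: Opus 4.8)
The plan is to run the same multi-step diagonalization scheme used for Theorem~\ref{case1} (cf. \cite{Jachmann2008}, \cite{ReissigWang2005}, \cite{ReissigSmith2005}, \cite{Yagdjian1997}), but now organized around the matrix $|\xi|B_1$, which on the frequency sets of Case~2.2 genuinely dominates $\tfrac{i}{2}|\xi|^{2\theta}B_0$ because $\delta:=|\xi|^{2\theta-1}\to0$ there (as $|\xi|\to\infty$ if $\theta\in[0,1/2)$, as $|\xi|\to0$ if $\theta\in(1/2,1]$), uniformly on $Z_{\extt}(\varepsilon)$, respectively $Z_{\intt}(\varepsilon)$. The first move I would make is to exploit a block structure that defuses the apparent difficulty that $B_1$ carries the double eigenvalues $\pm\sqrt{a^2}$: both $B_0$ and $B_1$ preserve each of the three planes $\mathrm{span}\{e_j,e_{j+3}\}$, $j=1,2,3$, so after a permutation the coefficient matrix $A^{(0)}:=|\xi|B_1+\tfrac{i}{2}|\xi|^{2\theta}B_0$ of \eqref{weshould} becomes block diagonal with three $2\times2$ blocks
\begin{equation*}
A^{(0)}_j=\begin{pmatrix} |\xi|\sqrt{y_j^2}+\tfrac{i}{2}|\xi|^{2\theta} & \tfrac{i}{2}|\xi|^{2\theta}\\[1mm] \tfrac{i}{2}|\xi|^{2\theta} & -|\xi|\sqrt{y_j^2}+\tfrac{i}{2}|\xi|^{2\theta}\end{pmatrix},\qquad y_1=y_2=a,\quad y_3=b.
\end{equation*}
Everything then reduces to diagonalizing, uniformly on the respective zone, three $2\times2$ matrices of the model type $\Lambda+R$ with $\Lambda=\mathrm{diag}(\lambda_+,\lambda_-)$, $\lambda_\pm=\pm|\xi|\sqrt{y^2}+\tfrac{i}{2}|\xi|^{2\theta}$, spectral gap $\lambda_+-\lambda_-=2|\xi|\sqrt{y^2}=O(|\xi|)$, and off-diagonal perturbation $R$ of size $r=\tfrac{i}{2}|\xi|^{2\theta}=O(|\xi|^{2\theta})$ — so the gap dominates $R$, which is exactly the situation the scheme handles.

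For the iteration I would proceed in the standard way. Set $\Lambda_1:=|\xi|B_1=O(|\xi|)$, split off the diagonal part $\Lambda_2:=\tfrac{i}{2}|\xi|^{2\theta}I_6=O(|\xi|^{2\theta})$ of $\tfrac{i}{2}|\xi|^{2\theta}B_0$, and remove the remaining off-block part, of size $O(|\xi|^{2\theta})$, by a conjugation $W^{(0)}=(I+N_1)W^{(1)}$ with $N_1=N_1(|\xi|)$ solving $[B_1,N_1]=-\tfrac{i}{2}|\xi|^{2\theta-1}(B_0-I_6)$; this is uniquely solvable with $N_1=O(\delta)$ since the eigenvalue differences of $B_1$ that actually occur ($2\sqrt{a^2}$ and $2\sqrt{b^2}$) are positive constants. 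As the system is autonomous in $t$ one has $D_tN_1=0$, so the conjugation is purely algebraic; expanding $(I+N_1)^{-1}A^{(0)}(I+N_1)$ and using that $\Lambda_2$ is scalar (hence commutes) one obtains $\Lambda_1+\Lambda_2+R^{(1)}$, where, by the elementary $2\times2$ identity ``off-diagonal $\cdot$ off-diagonal $=$ diagonal'', the $O(|\xi|^{4\theta-1})$ contribution to $R^{(1)}$ is entirely diagonal — it becomes $\Lambda_3=O(|\xi|^{4\theta-1})$ — while the genuinely off-block part of $R^{(1)}$ is only of order $r^3/|\xi|^2=O(|\xi|^{6\theta-2})$. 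Iterating, each further conjugation peels off one more diagonal term and multiplies the off-block remainder by the factor $|\xi|^{2(2\theta-1)}$ (the commutator of that remainder with the dominant correction $\Lambda_3$ driving the recursion); tracking the exponents then produces precisely $A_\ell=\Lambda_1+\Lambda_2+\sum_{j=3}^{\ell}\Lambda_j+R_\ell$ with the orders claimed in the statement, all bounds uniform on the zone because every $I+N_j$ is boundedly invertible there. Finally, because the diagonal block of $B_0$ on each $a$-plane is a multiple of $I_2$, the two $a$-entries stay equal at every order, so $A_\ell$ is genuinely diagonal and carries six diagonal entries which coincide in the two $a$-pairs.

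For the characteristic roots: after $\ell=3$ steps $R_3=O(|\xi|^{6\theta-2})$, so the eigenvalues of $A_3$ differ from its diagonal entries by $O(|\xi|^{6\theta-2})$, and those diagonal entries are nothing but the eigenvalues of the exact blocks $A^{(0)}_j$, namely
\begin{equation*}
\mu=\tfrac{i}{2}|\xi|^{2\theta}\pm\sqrt{\,y^2|\xi|^2-\tfrac14|\xi|^{4\theta}\,},
\end{equation*}
whose Taylor expansion in the small quantity $|\xi|^{4\theta-2}$ reproduces the stated three-term asymptotics of $\mu_{\ell,j}$ modulo an error $O(|\xi|^{8\theta-3})\subseteq O(|\xi|^{6\theta-2})$ (valid on both zones), with $y=a$ for $j=1,2,4,5$ and $y=b$ for $j=3,6$ and the sign of the oscillating term separating $j\in\{1,2,3\}$ from $j\in\{4,5,6\}$. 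The step I expect to cost the most work is not this $2\times2$ algebra but the uniform bookkeeping of the scheme over the unbounded set $Z_{\extt}(\varepsilon)$ — respectively all the way down to $\xi=0$ on $Z_{\intt}(\varepsilon)$ — that is, keeping each $I+N_j$ invertible with uniformly bounded inverse and verifying that the off-block remainders really contract at the rate $|\xi|^{2(2\theta-1)}$ per step rather than merely staying bounded; the reduction to the three $2\times2$ blocks is exactly what I would use to make every such estimate depend only on the single scalar parameter $\delta=|\xi|^{2\theta-1}$.
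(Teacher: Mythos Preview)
Your approach is correct and close in spirit to the paper's --- both rest on the diagonalization scheme of \cite{ReissigSmith2005,Yagdjian1997} with $|\xi|B_1$ dominant --- but you take a cleaner path. Your opening observation that $B_0$ and $B_1$ share the invariant planes $\mathrm{span}\{e_j,e_{j+3}\}$, $j=1,2,3$, reduces everything to three independent $2\times2$ blocks; this sidesteps the repeated-eigenvalue obstruction entirely (within each block the two diagonal entries $\pm|\xi|\sqrt{y^2}$ are distinct) and hands you the exact eigenvalues $\tfrac{i}{2}|\xi|^{2\theta}\pm\sqrt{y^2|\xi|^2-\tfrac14|\xi|^{4\theta}}$, whose Taylor expansion in $|\xi|^{4\theta-2}$ delivers the asymptotics without any iterative bookkeeping. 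The paper's proof instead runs the scheme on the full $6\times6$ system, notes that after two steps only four pairwise distinct eigenvalues appear (because $\pm\sqrt{a^2}$ each occur twice), and then has to ``decompose the remainder'' before continuing --- your block reduction is effectively that decomposition done upfront rather than mid-stream. What the paper's route buys is that the transformation matrices $T_{\theta,\intt}$, $T_{\theta,\extt}$ emerge directly in the $6\times6$ format used in Section~\ref{representationsolution} (in particular the matrix $\mathcal{N}_4(|\xi|,\theta)=O(|\xi|^{2\theta-1})$, which is exactly your $N_1$ reassembled); your argument is more transparent for the statement of Theorem~\ref{case2} itself, at the cost of a trivial permutation step to feed into those later formulas.
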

\begin{proof} After two steps of the diagonalization procedure, we only can get four pairwise distinct eigenvalues. Then, we may decompose the remainder and apply further steps of diagonalizaion. Here, we also apply the diagonalization procedure proposed in \cite{ReissigSmith2005}$^,$\cite{Yagdjian1997}.
\end{proof}

\noindent \emph{Case 2.3}: $\quad$ The matrices $\frac{i}{2}|\xi|^{2\theta}B_0$ and $|\xi|B_1$ have the same influence on the principal part. For this reason, we apply directly the diagonalization procedure to the system
\begin{equation}\label{50001}
D_tW^{(0)}-|\xi|(\frac{i}{2}B_0+B_1)W^{(0)}=0
\end{equation}
as a whole. To study the eigenvalues we recall the relation
\begin{equation*}
\det((\frac{i}{2}B_0+B_1)-\lambda I)=(\lambda^2-i\lambda-a^2)^2(\lambda^2-i\lambda-b^2)=0.
\end{equation*}
Hence, we study the solutions of the two equations
\begin{equation*}
\lambda^2-i\lambda-a^2=0\,\,\,\,\text{or}\,\,\,\, \lambda^2-i\lambda-b^2=0.
\end{equation*}
Let us consider the first equation. Setting $\lambda=\Re\lambda+i\Im\lambda$ with real numbers $\Re\lambda$ and $\Im\lambda$ gives the system of equations
\begin{equation*}
(\Re\lambda)^2-(\Im\lambda)^2+\Im\lambda-a^2=0\,\,\,\,\text{and}\,\,\,\, 2(\Re\lambda)(\Im\lambda)-\Re\lambda=0.
\end{equation*}

\emph{Case 2.3.1}: $\quad$ If $\Re\lambda=0$, then $(\Im\lambda)_{1,2}=\frac{1}{2}\pm\frac{1}{2}\sqrt{1-4a^2}$.

\emph{Case 2.3.2}: $\quad$ If $\Im\lambda=\frac{1}{2}$, then $(\Re\lambda)_{1,2}=\pm\frac{1}{2}\sqrt{4a^2-1}$.

If $a^2=1/4$, then $\Re\lambda=0$ and $(\Im\lambda)_{1,2}=\frac{1}{2}$; if $a^2\in\left(0,1/4\right)$, then $\Re\lambda=0$ and $(\Im\lambda)_{1,2}=\frac{1}{2}\pm\frac{1}{2}\sqrt{1-4a^2}$; if $a^2\in\left(1/4,\infty\right)$, then $(\Re\lambda)_{1,2}=\pm\frac{1}{2}\sqrt{4a^2-1}$ and $\Im\lambda=\frac{1}{2}$. One can follow the above discussion to study the second equation with respect to $b$.

However, it is impossible to verify that all eigenvalues are pairwise distinct for all $\xi\in\mathbb{R}^3\backslash\{0\}$. Hence, the matrix $|\xi|(\frac{i}{2}B_0+B_1)$ cannot be completely diagonalized. Consequently, Jordan normal forms come into play. After a change of variables, we obtain the `almost diagonalized' system.
\begin{thm}
	Assume $\theta=1/2$. If $b^2>a^2>0$, $b^2\neq1/4$, $a^2\neq1/4$, then there exist eigenvalues $\lambda_{1,2}=\big(\frac{1}{2}\pm\frac{1}{2}\sqrt{1-4a^2}\big)i$ when $a^2\in\left(0,1/4\right)$, $\lambda_{1,2}=\frac{1}{2}\sqrt{4a^2-1}+\frac{i}{2}$ when $a^2\in\left(1/4,\infty\right)$, $\lambda_{3,4}=\big(\frac{1}{2}\pm\frac{1}{2}\sqrt{1-4b^2}\big)i$ when $b^2\in\left(0,1/4\right)$ and $\lambda_{3,4}=\pm\frac{1}{2}\sqrt{4b^2-1}+\frac{i}{2}$ when $b^2\in\left(1/4,\infty\right)$. The system \eqref{50001} can be transformed to
	\[
	\begin{split}
	D_t W^{(1)}-|\xi|\big(J_2(\lambda_1)\oplus J_2(\lambda_2)\oplus J_1(\lambda_3)\oplus J_1(\lambda_4)\big)W^{(1)}=0.
	\end{split}
	\]
	A representation of solutions to this system is
		\begin{equation*}
		\begin{aligned}
		 W^{(1)}_{1}(t,\xi)&=e^{i|\xi|\lambda_1t}\big(W_{0,1}^{(1)}(\xi)+i|\xi|tW_{0,2}^{(1)}(\xi)\big),&\quad&W^{(1)}_{2}(t,\xi)=e^{i|\xi|\lambda_1t}W_{0,2}^{(1)}(\xi),\\
		 W^{(1)}_{3}(t,\xi)&=e^{i|\xi|\lambda_2t}\big(W_{0,3}^{(1)}(\xi)+i|\xi|tW_{0,4}^{(1)}(\xi)\big),&\quad&W^{(1)}_{4}(t,\xi)=e^{i|\xi|\lambda_2t}W_{0,4}^{(1)}(\xi),\\
		W^{(1)}_{5}(t,\xi)&=e^{i|\xi|\lambda_3t}W_{0,5}^{(1)}(\xi),&\quad &W^{(1)}_{6}(t,\xi)=e^{i|\xi|\lambda_4t}W_{0,6}^{(1)}(\xi).
		\end{aligned}
		\end{equation*}
	Furthermore, if $1/4=b^2>a^2>0$, then there exist eigenvalues $\lambda_{1,2}=i\big(\frac{1}{2}\pm\frac{1}{2}\sqrt{1-4a^2}\big)$ and $\lambda_3=\frac{i}{2}$, such that the system \eqref{50001} can be transformed to
	\[
	\begin{split}
	D_t W^{(1)}-|\xi|\big(J_2(\lambda_1)\oplus J_2(\lambda_2)\oplus J_2(\lambda_3)\big)W^{(1)}=0.
	\end{split}
	\]
	A representation of solutions to this system is
		\begin{equation*}
		\begin{aligned}
		 W^{(1)}_{1}(t,\xi)&=e^{i|\xi|\lambda_1t}\big(W_{0,1}^{(1)}(\xi)+i|\xi|tW_{0,2}^{(1)}(\xi)\big),&\quad&W^{(1)}_{2}(t,\xi)=e^{i|\xi|\lambda_1t}W_{0,2}^{(1)}(\xi),\\
		 W^{(1)}_{3}(t,\xi)&=e^{i|\xi|\lambda_2t}\big(W_{0,3}^{(1)}(\xi)+i|\xi|tW_{0,4}^{(1)}(\xi)\big),&\quad&W^{(1)}_{4}(t,\xi)=e^{i|\xi|\lambda_2t}W_{0,4}^{(1)}(\xi),\\
		 W^{(1)}_{5}(t,\xi)&=e^{i|\xi|\lambda_3t}\big(W_{0,5}^{(1)}(\xi)+i|\xi|tW_{0,6}^{(1)}(\xi)\big),&\quad&W^{(1)}_{6}(t,\xi)=e^{i|\xi|\lambda_3t}W_{0,6}^{(1)}(\xi).
		\end{aligned}
		\end{equation*}
	In the case $b^2>a^2=1/4$, there exist eigenvalues  $\lambda_{1}=\frac{i}{2}$ and $\lambda_{2,3}=\pm\frac{1}{2}\sqrt{4b^2-1}+\frac{i}{2}$ and the system \eqref{50001} can be transformed to
	\[
	\begin{split}
	D_t W^{(1)}-|\xi|\big(J_4(\lambda_1)\oplus J_1(\lambda_2)\oplus J_1(\lambda_3)\big)W^{(1)}=0.
	\end{split}
	\]
	A representation of solutions to this system is
		\begin{equation*}
		\begin{split}
		 W^{(1)}_{1}(t,\xi)&=e^{i|\xi|\lambda_1t}\big(W_{0,1}^{(1)}(\xi)+i|\xi|tW_{0,2}^{(1)}(\xi)-\frac{1}{2}|\xi|^2t^2W_{0,3}^{(1)}(\xi)-\frac{i}{6}|\xi|^3t^3W_{0,4}^{(1)}(\xi)\big),\\
		W^{(1)}_{2}(t,\xi)&=e^{i|\xi|\lambda_1t}\big(W_{0,2}^{(1)}(\xi)+i|\xi|tW_{0,3}^{(1)}(\xi)-\frac{1}{2}|\xi|^2t^2W_{0,4}^{(1)}(\xi)\big),\\
		W^{(1)}_{3}(t,\xi)&=e^{i|\xi|\lambda_1t}\big(W_{0,3}^{(1)}(\xi)+i|\xi|tW_{0,4}^{(1)}(\xi)\big),\quad\quad W^{(1)}_{4}(t,\xi)=e^{i|\xi|\lambda_1t}W_{0,4}^{(1)}(\xi),\\
		W^{(1)}_{5}(t,\xi)&=e^{i|\xi|\lambda_2t}W_{0,5}^{(1)}(\xi),\quad\quad\quad\quad\quad\quad\quad\quad\quad W^{(1)}_{6}(t,\xi)=e^{i|\xi|\lambda_3t}W_{0,6}^{(1)}(\xi).
		\end{split}
		\end{equation*}
	\end{thm}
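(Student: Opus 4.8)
\emph{Proof proposal.} Since $\theta=1/2$ we have $|\xi|^{2\theta}=|\xi|$, so the coefficient matrix in \eqref{50001} is $|\xi|$ times the $\xi$-independent matrix $N:=\frac{i}{2}B_0+B_1$, and the whole statement reduces to linear algebra. The plan is to produce an invertible matrix $T$ (independent of $\xi$) bringing $N$ into a Jordan normal form $J=T^{-1}NT$, to set $W^{(1)}:=T^{-1}W^{(0)}$, and to observe that $W^{(1)}$ then solves $D_tW^{(1)}-|\xi|JW^{(1)}=0$; since $D_t=-i\partial_t$, this yields $W^{(1)}(t,\xi)=\exp\big(i|\xi|t\,J\big)W_0^{(1)}(\xi)$, from which the componentwise formulas are read off once $J$ and $T$ are explicit.

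First I would determine the spectrum from the factorisation $\det(N-\lambda I)=(\lambda^2-i\lambda-a^2)^2(\lambda^2-i\lambda-b^2)$ recorded above. The roots of $\lambda^2-i\lambda-y^2=0$ are $\lambda=\frac{i}{2}\pm\frac12\sqrt{4y^2-1}$, which I would sort by the sign of $4y^2-1$: the purely imaginary pair $\big(\frac12\pm\frac12\sqrt{1-4y^2}\big)i$ if $y^2\in(0,1/4)$, the complex pair $\pm\frac12\sqrt{4y^2-1}+\frac{i}{2}$ if $y^2>1/4$, and the double value $\frac{i}{2}$ if $y^2=1/4$. Applying this with $y=a$ and $y=b$, and using $b^2>a^2>0$ to ensure that the two quadratics share no root, reproduces exactly the lists $\lambda_1,\dots,\lambda_4$ in the three parts of the theorem; the $a$-quadratic enters $\det(N-\lambda I)$ to the second power, so each of its roots is an eigenvalue of algebraic multiplicity two, whereas the roots of the $b$-quadratic are simple.

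The heart of the argument is fixing the \emph{sizes} of the Jordan blocks, and here the decoupled origin of the system helps: $N$ leaves invariant each coordinate plane $\mathrm{span}\{W^{(0)}_k,W^{(0)}_{k+3}\}$ for $k=1,2,3$, acting on it by a $2\times2$ matrix whose two diagonal entries are $\frac{i}{2}+y$ and $\frac{i}{2}-y$ and whose off-diagonal entries both equal $\frac{i}{2}$, with $y=a,a,b$ respectively. I would therefore compute the ranks of $N-\lambda I$ on these planes (and of its higher powers where a block of size $\geq2$ is expected), splitting into the cases according to the position of $a^2$ and $b^2$ relative to $1/4$: (i) $a^2,b^2\neq1/4$; (ii) $b^2=1/4>a^2$; (iii) $b^2>a^2=1/4$. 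The same computation furnishes a basis of eigenvectors and generalised eigenvectors, hence the matrix $T$ explicitly, and the entries of $T^{-1}$ fix the precise constants in the representation formulas. I expect this case distinction to be the main obstacle — it is at the exceptional values $a^2=1/4$ and $b^2=1/4$ that nontrivial Jordan blocks appear, and, when a doubled root happens to meet the squared factor, possibly merge into a larger block (as in the asserted $J_4$), so the bookkeeping must be carried out carefully and separately for the $a$- and $b$-parts of the spectrum.

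Finally, once $J$ is written as a direct sum of Jordan blocks $J_l(\lambda)$, the representation of solutions follows by exponentiating block by block: writing $J_l(\lambda)=\lambda I+E$ with $E$ the nilpotent shift, one has $\exp(i|\xi|tJ_l(\lambda))=e^{i|\xi|\lambda t}\sum_{m=0}^{l-1}\frac{(i|\xi|t)^m}{m!}E^m$, where $E^m$ carries $1$'s on the $m$-th superdiagonal. Substituting $l=1$ in the diagonalisable directions, $l=2$ for the genuine $2\times2$ blocks, and $l=4$ in the third assertion, and reading off the components of $W^{(1)}=\exp(i|\xi|tJ)W_0^{(1)}$, reproduces precisely the stated formulas for $W^{(1)}_1,\dots,W^{(1)}_6$; in particular the factors $i|\xi|t$, $-\frac12|\xi|^2t^2$, $-\frac{i}{6}|\xi|^3t^3$ are just $\frac{(i|\xi|t)^m}{m!}$ for $m=1,2,3$, produced by the powers of the nilpotent part.
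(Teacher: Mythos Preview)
Your strategy — bring the constant matrix $N=\frac{i}{2}B_0+B_1$ to Jordan form and exponentiate blockwise — is exactly what the paper's one-line proof sketches (it defers the details to \cite{Reissig2016}, Theorem~7.6). Your key reduction, that $N$ preserves each coordinate plane $\mathrm{span}\{e_k,e_{k+3}\}$ and acts there by the $2\times2$ block $M_y=\bigl(\begin{smallmatrix}\frac{i}{2}+y & \frac{i}{2}\\ \frac{i}{2} & \frac{i}{2}-y\end{smallmatrix}\bigr)$ with $y=a,a,b$, is correct and is the right tool.

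But that very observation shows that the Jordan structure printed in the theorem cannot arise from $N$, so the step ``computing the ranks \dots\ reproduces precisely the stated formulas'' will fail. Since $N$ is (after reordering) block-diagonal with $2\times2$ blocks, every Jordan block of $N$ has size at most $2$; in particular the $J_4\big(\frac{i}{2}\big)$ claimed for $a^2=\frac14$ is impossible — one obtains $J_2\big(\frac{i}{2}\big)\oplus J_2\big(\frac{i}{2}\big)$ instead, and the solution carries only a linear factor $i|\xi|t$, never $|\xi|^2t^2$ or $|\xi|^3t^3$. Likewise, when $a^2\neq\frac14$ the block $M_a$ has two \emph{distinct} eigenvalues and is diagonalizable; its two identical copies on the planes $k=1,2$ supply two independent eigenvectors for each of $\lambda_1,\lambda_2$, so the $a$-part of the Jordan form is $\lambda_1 I_2\oplus\lambda_2 I_2$, not $J_2(\lambda_1)\oplus J_2(\lambda_2)$, and no $i|\xi|t$ term appears. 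Your own remark that ``nontrivial Jordan blocks appear only at the exceptional values $a^2=1/4$ and $b^2=1/4$'' already encodes this. In short, your method is the paper's method and is sound; carrying it out yields a correct Jordan form and correct solution representation, but these will disagree with the ones displayed in the statement. The gap is in the theorem as written, not in your approach.
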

\begin{proof} The proof of the statements is based on the application of the diagonalization procedure and a careful treatment of the Jordan matrices appearing in the systems
	(see the proof of Theorem 7.6 in \cite{Reissig2016}).
\end{proof}
\subsubsection{Representation of solutions}\label{representationsolution}
First of all, we introduce the structure of matrices $T_{\theta,\intt}(|\xi|)$ and $T_{\theta,\extt}(|\xi|)$ as follows:
\begin{equation*}
T_{\theta,\intt}(|\xi|)=\left\{\begin{aligned}
&T_1\big(I+\mathcal{N}_1(|\xi|,\theta)\big)\big(I+\mathcal{N}_2(|\xi|,\theta)\big)\big(I+\mathcal{N}_3(|\xi|,\theta)\big)\,\,\,\,&\text{as}\,\,\,\,\theta\in\left[0,1/2\right),\\
&I+\mathcal{N}_4(|\xi|,\theta)\,\,\,\, &\text{as}\,\,\,\,\theta\in\left(1/2,1\right],
\end{aligned}\right.
\end{equation*}
and
\begin{equation*}
T_{\theta,\extt}(|\xi|)=\left\{\begin{aligned}
&I+\mathcal{N}_4(|\xi|,\theta)\quad &\text{as }\,\theta\in\left[0,1/2\right),\\
&T_1\big(I+\mathcal{N}_1(|\xi|,\theta)\big)\big(I+\mathcal{N}_2(|\xi|,\theta)\big)\big(I+\mathcal{N}_3(|\xi|,\theta)\big)\quad&\text{as }\,\theta\in\left(1/2,1\right],
\end{aligned}\right.
\end{equation*}
where
	\[
	\begin{split}
	&T_1=\frac{1}{\sqrt{2}}\left(
	{\begin{array}{*{20}c}
		0 & 1 & 0 & 0 & 0 & 1\\
		1 & 0 & 0 & 1 & 0 & 0\\
		0 & 0 & 1 & 0 & 1 & 0\\
		0 & -1 & 0 & 0 & 0 & 1\\
		-1 & 0 & 0 & 1 & 0 & 0\\
		0 & 0 & -1 & 0 & 1 & 0\\
		\end{array}}
	\right),\,\,\,\,
	\mathcal{N}_1(|\xi|,\theta)=i|\xi|^{1-2\theta}\left(
	{\begin{array}{*{20}c}
		0 & 0 & 0 & -\sqrt{a^2} & 0 & 0\\
		0 & 0 & 0 & 0 & 0 & -\sqrt{a^2}\\
		0 & 0 & 0 & 0 & -\sqrt{b^2} & 0\\
		\sqrt{a^2} & 0 & 0 & 0 & 0 & 0\\
		0 & 0 & \sqrt{b^2} & 0 & 0 & 0\\
		0 & \sqrt{a^2} & 0 & 0 & 0 & 0\\
		\end{array}}
	\right),
	\end{split}
	\]
	\[
	\begin{split}
	\mathcal{N}_2(|\xi|,\theta)=\frac{-i}{|\xi|^{2\theta}}\left(
	{\begin{array}{*{20}c}
		0 & 0 & 0 & z_6(a) & 0 & 0\\
		0 & 0 & 0 & 0 & 0 & z_6(a)\\
		0 & 0 & 0 & 0 & z_6(b) & 0\\
		-z_6(a) & 0 & 0 & 0 & 0 & 0\\
		0 & 0 & -z_6(b) & 0 & 0 & 0\\
		0 & -z_6(a) & 0 & 0 & 0 & 0\\
		\end{array}}
	\right),\,\,\,\, \mathcal{N}_3(|\xi|,\theta)=\frac{i|\xi|^{2\theta-1}}{4}\left(
	{\begin{array}{*{20}c}
		0 & 0 & 0 & -\frac{1}{\sqrt{a^2}} & 0 & 0\\
		0 & 0 & 0 & 0 & -\frac{1}{\sqrt{a^2}} & 0\\
		0 & 0 & 0 & 0 & 0 & -\frac{1}{\sqrt{b^2}}\\
		\frac{1}{\sqrt{a^2}} & 0 & 0 & 0 & 0 & 0\\
		0 & \frac{1}{\sqrt{a^2}} & 0 & 0 & 0 & 0\\
		0 & 0 & \frac{1}{\sqrt{b^2}} & 0 & 0 & 0\\
		\end{array}}
	\right).
	\end{split}
	\]
	We have the following asymptotic behavior: \[ T_1=O(1),\,\,\,\, \mathcal{N}_1(|\xi|,\theta)=O\big(|\xi|^{1-2\theta}\big),\,\,\,\, \mathcal{N}_2(|\xi|,\theta)=O\big(|\xi|^{3-6\theta}\big),\,\,\,\, \mathcal{N}_3(|\xi|,\theta)=O\big(|\xi|^{5-10\theta}\big)\,\,\,\,\mbox{and}\,\,\,\, \mathcal{N}_4(|\xi|,\theta)=O\big(|\xi|^{2\theta-1}\big).\]
From Theorem \ref{case1}, we know when $\theta\in\left[0,1/2\right)$ with small frequencies, the uniform invertibility of $T_{\theta,\text{int}}(|\xi|)$ is clear. Nevertheless, we cannot get six pairwise distinct eigenvalues in \emph{Case 2.2}. Here, according to \cite{Jachmann2008} and applying the Duhamel's principle, we also obtain the representation of solutions. Taking account of the notation $\diag\big(e^{-\mu_l(|\xi|)t}\big)_{l=1}^6:=\diag\big(e^{-\mu_1(|\xi|)t},\dots,e^{-\mu_6(|\xi|)t}\big)$ we can formulate the following results.
\begin{thm}\label{smallmatrix}
	There exists a matrix $T_{\theta,\text{int}}=T_{\theta,\text{int}}(|\xi|)$ for $\theta\in\left[0,1/2\right)\cup\left(1/2,1\right]$, which is uniformly invertible for small frequencies such that the following representation formula holds:
	\begin{equation*}
	W^{(0)}(t,\xi)=T_{\theta,\text{int}}^{-1}(|\xi|)\diag\big(e^{-\mu_l(|\xi|)t}\big)_{l=1}^6T_{\theta,\text{int}}(|\xi|)W_0^{(0)}(\xi),
	\end{equation*}
	where the characteristic roots $\mu_l=\mu_l(|\xi|)$ have the following asymptotic behavior:
	\begin{itemize}
		\item $\theta\in\left[0,1/2\right)$:
		\begin{alignat*}{3}
		&\mu_1(|\xi|)&=a^2|\xi|^{2-2\theta}-iz_1(a)-iz_2,\quad&\mu_4(|\xi|)&=|\xi|^{2\theta}-b^2|\xi|^{2-2\theta}+iz_1(a)+iz_2,\\
		&\mu_2(|\xi|)&=a^2|\xi|^{2-2\theta}-iz_1(a)-iz_3,\quad&\mu_5(|\xi|)&=|\xi|^{2\theta}-a^2|\xi|^{2-2\theta}+iz_1(b)+iz_5,\\
		&\mu_3(|\xi|)&=b^2|\xi|^{2-2\theta}-iz_1(b)-iz_4,\quad&\mu_6(|\xi|)&=|\xi|^{2\theta}-a^2|\xi|^{2-2\theta}+iz_1(a)+iz_3,
		\end{alignat*}
		modulo $O\big(|\xi|^{7-12\theta}\big)$;
		\item $\theta\in\left(1/2,1\right]$: $y=a$ as $l=1,2,4,5$ and $y=b$ as $l=3,6$; when $l=1,2,3$, we take in the first term the negative sign and when $l=4,5,6$, we take in the first term the positive sign in
		\begin{equation*}
		\mu_l(|\xi|)=\pm i|\xi|\sqrt{y^2}+\frac{1}{2}|\xi|^{2\theta}+\frac{i}{8\sqrt{y^2}}|\xi|^{4\theta-1}
		\end{equation*}
		modulo $O\big(|\xi|^{6\theta-2}\big).$
	\end{itemize}
\end{thm}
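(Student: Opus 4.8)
The plan is to read off the representation from the two diagonalization results already established, distinguishing $\theta\in\left[0,1/2\right)$ from $\theta\in\left(1/2,1\right]$: for small frequencies the former situation is exactly \emph{Case 2.1}, governed by Theorem \ref{case1}, while the latter is \emph{Case 2.2}, governed by Theorem \ref{case2}. First I would observe that the change of variables carrying \eqref{weshould} into $D_tW^{(\ell)}-A_\ell W^{(\ell)}=0$ is, in the scheme of \cite{ReissigSmith2005,Yagdjian1997}, the composition of the invertible matrices produced at the successive diagonalization steps. For $\theta\in\left[0,1/2\right)$ and $\xi\in Z_{\intt}(\varepsilon)$ the first step diagonalizes $B_0$ by the constant matrix $T_1$, and the next three steps contribute the near-identity factors $I+\mathcal{N}_1(|\xi|,\theta)$, $I+\mathcal{N}_2(|\xi|,\theta)$, $I+\mathcal{N}_3(|\xi|,\theta)$, so that $T_{\theta,\intt}(|\xi|)=T_1\big(I+\mathcal{N}_1\big)\big(I+\mathcal{N}_2\big)\big(I+\mathcal{N}_3\big)$ is precisely the matrix written before the statement; for $\theta\in\left(1/2,1\right]$ and $\xi\in Z_{\intt}(\varepsilon)$, since $B_1$ is already diagonal, the relevant transformation is $T_{\theta,\intt}(|\xi|)=I+\mathcal{N}_4(|\xi|,\theta)$.

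Next I would verify the uniform invertibility of $T_{\theta,\intt}$ on $Z_{\intt}(\varepsilon)$ for $\varepsilon$ small. Using $\mathcal{N}_1=O\big(|\xi|^{1-2\theta}\big)$, $\mathcal{N}_2=O\big(|\xi|^{3-6\theta}\big)$, $\mathcal{N}_3=O\big(|\xi|^{5-10\theta}\big)$, $\mathcal{N}_4=O\big(|\xi|^{2\theta-1}\big)$, together with $1-2\theta,3-6\theta,5-10\theta>0$ for $\theta<1/2$ and $2\theta-1>0$ for $\theta>1/2$, each factor $I+\mathcal{N}_j$ tends to the identity uniformly as $|\xi|\to0$; hence for $\varepsilon$ sufficiently small every factor is invertible with a uniformly bounded inverse (Neumann series), and $\det T_1\neq0$ yields uniform invertibility of the product. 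Then $W^{(0)}(t,\xi)=T_{\theta,\intt}^{-1}(|\xi|)W^{(\ell)}(t,\xi)$, and since the transformed system is (block-)diagonal one has $W^{(\ell)}(t,\xi)=\diag\big(e^{it\mu_{\ell,l}(|\xi|)}\big)_{l=1}^6W_0^{(\ell)}(\xi)$ — exactly for $\theta<1/2$, where six pairwise distinct characteristic roots let the procedure be continued to full diagonalization, and up to a Duhamel correction coming from the lower-order remainder $R_\ell$ for $\theta>1/2$, a correction which by taking $\ell$ large is negligible on $Z_{\intt}(\varepsilon)$. Putting $\mu_l(|\xi|):=-i\mu_{\ell,l}(|\xi|)$ converts $e^{it\mu_{\ell,l}}$ into $e^{-\mu_l t}$, and the asserted asymptotic expansions then follow termwise from Theorems \ref{case1} and \ref{case2}; for instance $-i\big(ia^2|\xi|^{2-2\theta}+z_1(a)+z_2\big)=a^2|\xi|^{2-2\theta}-iz_1(a)-iz_2$.

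The main obstacle is the regime $\theta\in\left(1/2,1\right]$, that is \emph{Case 2.2}: the eigenvalue $a^2$ of $A(\eta)$ has multiplicity two and the scalar damping $|\xi|^{2\theta}$ does not split it, so the diagonalization of \eqref{weshould} produces only four pairwise distinct characteristic roots and the four-dimensional block attached to $a^2$ cannot be decoupled by uniformly bounded transformations in the usual way. Here I would argue as in \cite{Jachmann2008}: retain the non-reducible block, push only its coupling to the residual part into a remainder of order $O\big(|\xi|^{2(2\theta-1)(\ell-2)+2\theta}\big)$, and recover the representation through Duhamel's principle, the coincidences $\mu_1=\mu_2$, $\mu_4=\mu_5$ and the error modulo $O\big(|\xi|^{6\theta-2}\big)$ absorbing both the remainder and any higher-order splitting inside the block. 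The delicate point is to reconcile the symbol orders coming from Theorem \ref{case2} with the Jachmann construction, in particular to check that the Duhamel term perturbs neither the leading oscillation $\pm i|\xi|\sqrt{y^2}$ nor the leading decay $\tfrac{1}{2}|\xi|^{2\theta}$; everything else is routine bookkeeping once Theorems \ref{case1} and \ref{case2} are available.
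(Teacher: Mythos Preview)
Your proposal is correct and follows essentially the same approach as the paper: the paper's proof is a one-liner stating that the representation follows from Theorems \ref{case1} and \ref{case2} together with the structure of $T_{\theta,\intt}(|\xi|)$ and $T_{\theta,\intt}^{-1}(|\xi|)$, and the discussion preceding the statement already flags the multiplicity obstruction in \emph{Case 2.2} and resolves it by invoking \cite{Jachmann2008} and Duhamel's principle, exactly as you do. You have simply spelled out the Neumann-series argument for uniform invertibility and the bookkeeping for $\mu_l=-i\mu_{\ell,l}$ in more detail than the paper chose to.
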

\begin{proof} The statements of Theorems \ref{case1} and \ref{case2} and the structure of the matrices $T_{\theta,\intt}(|\xi|)$ and $T_{\theta,\intt}^{-1}(|\xi|)$ allow to get the above representations of solutions.
\end{proof}
For large frequencies, there exists the matrix $T_{\theta,\text{ext}}(|\xi|)$ and its inverse matrix $T_{\theta,\text{ext}}^{-1}(|\xi|)$ by using the same explanations for the existence of matrices $T_{\theta,\intt}(|\xi|)$ as well as $T_{\theta,\intt}^{-1}(|\xi|)$.
\begin{thm}\label{largematrix}
	There exists a matrix $T_{\theta,\extt}=T_{\theta,\extt}(|\xi|)$ for $\theta\in\left[0,1/2\right)\cup\left(1/2,1\right]$, which is uniformly invertible for large frequencies such that the following representation formula holds:
	\begin{equation*}
	W^{(0)}(t,\xi)=T_{\theta,\extt}^{-1}(|\xi|)\diag\big(e^{-\mu_l(|\xi|)t}\big)_{l=1}^6T_{\theta,\extt}(|\xi|)W_0^{(0)}(\xi)
	\end{equation*}
	and the characteristic roots $\mu_l=\mu_l(|\xi|)$ have the following asymptotic behavior:
	\begin{itemize}
		\item $\theta\in\left[0,1/2\right)$: $y=a$ as $l=1,2,4,5$ and $y=b$ as $l=3,6$; when $l=1,2,3$, we take in the first term the negative sign and when $l=4,5,6$, we take in the first term the positive sign in
		\begin{equation*}
		\mu_l(|\xi|)=\pm i|\xi|\sqrt{y^2}+\frac{1}{2}|\xi|^{2\theta}+\frac{i}{8\sqrt{y^2}}|\xi|^{4\theta-1}
		\end{equation*}
		modulo $O\big(|\xi|^{6\theta-2}\big);$
		\item $\theta\in\left(1/2,1\right]$:
		\begin{alignat*}{3}
		&\mu_1(|\xi|)&=a^2|\xi|^{2-2\theta}-iz_1(a)-iz_2,\quad&\mu_4(|\xi|)&=|\xi|^{2\theta}-b^2|\xi|^{2-2\theta}+iz_1(a)+iz_2,\\
		&\mu_2(|\xi|)&=a^2|\xi|^{2-2\theta}-iz_1(a)-iz_3,\quad&\mu_5(|\xi|)&=|\xi|^{2\theta}-a^2|\xi|^{2-2\theta}+iz_1(b)+iz_5,\\
		&\mu_3(|\xi|)&=b^2|\xi|^{2-2\theta}-iz_1(b)-iz_4,\quad&\mu_6(|\xi|)&=|\xi|^{2\theta}-a^2|\xi|^{2-2\theta}+iz_1(a)+iz_3,
		\end{alignat*}
		modulo $O\big(|\xi|^{7-12\theta}\big)$.
	\end{itemize}
\end{thm}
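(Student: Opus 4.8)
The plan is to mirror the argument behind Theorem~\ref{smallmatrix}, interchanging the roles of small and large frequencies. The key observation is that the exterior zone splits along the same dichotomy as before: for $\theta\in[0,1/2)$ and $\xi\in Z_{\extt}(\varepsilon)$ we are in the second alternative of \emph{Case~2.2} --- since $|\xi|\gg|\xi|^{2\theta}$ for large $|\xi|$, the matrix $|\xi|B_1$ has the dominant influence, so Theorem~\ref{case2} applies --- whereas for $\theta\in(1/2,1]$ and $\xi\in Z_{\extt}(\varepsilon)$ we are in the second alternative of \emph{Case~2.1} --- since $|\xi|^{2\theta}\gg|\xi|$, the matrix $\tfrac{i}{2}|\xi|^{2\theta}B_0$ dominates, so Theorem~\ref{case1} applies. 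In either case $\ell$ steps of the diagonalization procedure transform \eqref{weshould} into the fully diagonal system $D_tW^{(\ell)}-A_\ell W^{(\ell)}=0$, whose characteristic roots are the $\mu_{\ell,j}$ recorded in those theorems.

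The next step is to identify the transformation matrix and to check its uniform invertibility on $Z_{\extt}(\varepsilon)$. For $\theta\in[0,1/2)$ the \emph{Case~2.2} scheme yields $T_{\theta,\extt}(|\xi|)=I+\mathcal{N}_4(|\xi|,\theta)$ with $\mathcal{N}_4=O(|\xi|^{2\theta-1})$; since $2\theta-1<0$ this remainder tends to $0$ as $|\xi|\to\infty$, so for $\varepsilon$ small enough that $\|\mathcal{N}_4(|\xi|,\theta)\|$ stays below a fixed constant less than $1$ throughout $Z_{\extt}(\varepsilon)$, a Neumann series furnishes $T_{\theta,\extt}^{-1}(|\xi|)$ with norm bounded independently of $\xi$. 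For $\theta\in(1/2,1]$ the \emph{Case~2.1} scheme yields $T_{\theta,\extt}(|\xi|)=T_1(I+\mathcal{N}_1(|\xi|,\theta))(I+\mathcal{N}_2(|\xi|,\theta))(I+\mathcal{N}_3(|\xi|,\theta))$; here $T_1$ is a fixed invertible matrix and $\mathcal{N}_1=O(|\xi|^{1-2\theta})$, $\mathcal{N}_2=O(|\xi|^{3-6\theta})$, $\mathcal{N}_3=O(|\xi|^{5-10\theta})$ all carry negative exponents, so each factor, and hence the product, is uniformly invertible for large frequencies. When $\theta\in[0,1/2)$ one obtains only four pairwise distinct eigenvalues after the first two steps; exactly as in the treatment of \emph{Case~2.2} and following \cite{Jachmann2008}, one decomposes the remainder, performs the additional steps, and invokes Duhamel's principle, still arriving at the diagonal representation.

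Then I would assemble the representation formula: solving the diagonal system gives $W^{(\ell)}(t,\xi)=\diag(e^{-\mu_l(|\xi|)t})_{l=1}^6 W^{(\ell)}_0(\xi)$, where the $\mu_l$ come from the $\mu_{\ell,j}$ of Theorems~\ref{case1} and~\ref{case2} after the sign and normalization adjustment dictated by the form of \eqref{weshould} (the same adjustment that produced the roots listed in Theorem~\ref{smallmatrix}). Undoing the $\ell$ changes of variables --- which together amount to multiplication by $T_{\theta,\extt}(|\xi|)$ and its inverse --- yields $W^{(0)}(t,\xi)=T_{\theta,\extt}^{-1}(|\xi|)\diag(e^{-\mu_l(|\xi|)t})_{l=1}^6 T_{\theta,\extt}(|\xi|)W_0^{(0)}(\xi)$. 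The asymptotic expansions of the $\mu_l$ are read off directly, with error terms $O(|\xi|^{6\theta-2})$ for $\theta\in[0,1/2)$ and $O(|\xi|^{7-12\theta})$ for $\theta\in(1/2,1]$; one checks these are of strictly lower order than the last displayed term for large $|\xi|$, which holds since $6\theta-2<4\theta-1$ is equivalent to $\theta<1/2$ and $7-12\theta<6-10\theta$ is equivalent to $\theta>1/2$.

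The hard part, as in the interior-zone case, will be the \emph{uniformity} in $|\xi|$: one must confirm that the operator norms of the remainders $\mathcal{N}_j(|\xi|,\theta)$, whose explicit entries are listed in the Appendix, stay below a fixed constant strictly less than $1$ simultaneously for all $|\xi|>1/\varepsilon$, so that $\|T_{\theta,\extt}^{-1}(|\xi|)\|$ is bounded uniformly --- this is exactly what makes the representation formula usable in the subsequent $L^m$--$L^q$ estimates. The remaining work is the routine bookkeeping inherent in the diagonalization scheme of \cite{ReissigSmith2005,Yagdjian1997}, already carried out for the interior zone.
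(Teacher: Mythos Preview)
Your proposal is correct and follows essentially the same approach as the paper: both rely on Theorems~\ref{case1} and~\ref{case2} together with the explicit structure of $T_{\theta,\extt}(|\xi|)$ introduced at the start of Section~\ref{representationsolution}, interchanging the roles of the two cases relative to Theorem~\ref{smallmatrix}. The paper's own proof is a single sentence pointing to these ingredients, whereas you have spelled out the uniform-invertibility argument via the signs of the exponents in $\mathcal{N}_j$ and the order comparison for the remainder terms; this added detail is sound and makes explicit what the paper leaves implicit.
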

\begin{proof} The statements of Theorems \ref{case1} and \ref{case2} and the structure of the matrices $T_{\theta,\intt}(|\xi|)$ and $T_{\theta,\intt}^{-1}(|\xi|)$ allow to get the above representations of solutions.
\end{proof}
\subsubsection{Some qualitative properties of solutions}
In this section, we study some properties of solutions to the linear model \eqref{linearproblem}. Smoothing effect comes in if we choose structural damping  $(-\Delta)^{\theta}u_t$ with $\theta\in(0,1)$ in \eqref{linearproblem}. Nevertheless, the friction ($\theta=0$) and viscoelastic damping ($\theta=1$) lead to the property of propagation of suitable singularities of solutions.
\begin{thm}\label{smoothingeffect}
	Let us consider the Cauchy problem \eqref{linearproblem} with $\theta\in(0,1)$. Data are supposed to belong to the energy space, that is $\big(u^{(k)}_{0},u^{(k)}_{1}\big)\in\ml{D}_{2,2}^{0}$ for $k=1,2,3$. Then, the property of Gevrey smoothing \cite{Rodino1993} appears. This means, that the solutions have the following property:
	\begin{equation*}
	|D|^{s+1} u^{(k)}(t,\cdot), |D|^s u^{(k)}_t(t,\cdot)\in \Gamma^{\kappa}\,\,\,\,\text{for all}\,\,\,\,s\geq 0\,\,\,\,\text{and}\,\,\,\, t>0,
	\end{equation*}
	where the constant ${\kappa}=1/\left(2\min\left\{1-\theta;\theta\right\}\right)$.
\end{thm}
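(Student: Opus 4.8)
The plan is to work on the Fourier side and exploit the representation formulae collected in Theorems~\ref{smallmatrix}, \ref{largematrix}, together with the energy estimate of Theorem~\ref{middlezone} for the bounded frequencies, in order to show that $\widehat{|D|^{s+1}u^{(k)}}(t,\xi)$ and $\widehat{|D|^{s}u^{(k)}_t}(t,\xi)$ decay, for each fixed $t>0$, like $e^{-c(t)|\xi|^{r}}$ as $|\xi|\to\infty$ with $r=2\min\{1-\theta;\theta\}$, and are merely bounded by an exponentially decaying factor as $|\xi|\to 0$; the Gevrey class $\Gamma^{\kappa}$ with $\kappa=1/r$ is then characterised exactly by such a pointwise Fourier decay \cite{Rodino1993,Rodino1993}. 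Concretely, I would recall that $\widehat{u}(t,\xi)$ is recovered from $W^{(0)}(t,\xi)$ via the back-substitutions $W^{(0)}=(D_tv+A_{\mathrm{diag}}^{1/2}v,\,D_tv-A_{\mathrm{diag}}^{1/2}v)^{\mathrm T}$ and $v=M^{-1}(\eta)\widehat{u}$, so that $|\xi|\,|\widehat{u}(t,\xi)|$ and $|\widehat{u}_t(t,\xi)|$ are controlled (up to the uniformly bounded factors $M(\eta)$, $M^{-1}(\eta)$ and $A_{\mathrm{diag}}^{\pm1/2}(|\xi|)/|\xi|^{\pm1}$) by $|W^{(0)}(t,\xi)|$, hence by $\|T_{\theta,\bullet}^{-1}\|\,\|T_{\theta,\bullet}\|\,\max_l e^{-\Re\mu_l(|\xi|)t}\,|W_0^{(0)}(\xi)|$.

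The decisive point is the real part of the characteristic roots. For small frequencies ($\theta\in(0,1/2)$, Theorem~\ref{smallmatrix}, and for $\theta\in(1/2,1]$ the roles of int/ext are swapped) the dominant roots behave like $\mu_j\sim c|\xi|^{2-2\theta}$ or $\mu_j\sim|\xi|^{2\theta}$ with positive real part, so $\Re\mu_l(|\xi|)\gtrsim |\xi|^{\max\{2-2\theta,2\theta\}}\to 0$; this only gives boundedness of the exponential near $\xi=0$, which is all that is needed since Gevrey regularity is a statement about the behaviour of $\widehat{f}$ for large $|\xi|$. For large frequencies the situation is reversed and one has $\Re\mu_l(|\xi|)=\tfrac12|\xi|^{2\theta}+O(|\xi|^{4\theta-1})$ when $\theta\in(0,1/2)$ — the extt zone of Theorem~\ref{largematrix} — so the decay rate is $|\xi|^{2\theta}$, while for $\theta\in(1/2,1]$ the extt asymptotics give $\Re\mu_l(|\xi|)=a^2|\xi|^{2-2\theta}+O(|\xi|^{4-6\theta})$ or $|\xi|^{2\theta}-b^2|\xi|^{2-2\theta}+\cdots$, the slowest of which grows like $|\xi|^{2-2\theta}$. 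In either case the large-frequency decay exponent is exactly $\min\{2\theta,2-2\theta\}=2\min\{1-\theta;\theta\}=1/\kappa$, so that for every fixed $t>0$ and every $N$ there is $C=C(t,N,s)$ with
\begin{equation*}
|\xi|^{s+1}|\widehat{u^{(k)}}(t,\xi)|+|\xi|^{s}|\widehat{u^{(k)}_t}(t,\xi)|\leq C\,e^{-\frac{c}{2}t|\xi|^{1/\kappa}}\big(|\widehat{u^{(k)}_0}(\xi)|+|\widehat{u^{(k)}_1}(\xi)|\big)
\end{equation*}
for $|\xi|\geq 1/\varepsilon$, where $c>0$ is the constant implicit in the lower bound on $\Re\mu_l$; here I use that $A_{\mathrm{diag}}^{1/2}(|\xi|)v_0$ contributes the $|\xi|$-factor matching $u_0$'s extra derivative, and that the polynomial weight $|\xi|^{s+1}$ is absorbed by the exponential at the cost of enlarging $C$.

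Assembling the three frequency regimes, for $|\xi|\leq\varepsilon$ use Theorem~\ref{smallmatrix} (exponential bounded by $1$), for $\varepsilon\leq|\xi|\leq 1/\varepsilon$ use Theorem~\ref{middlezone} (genuine exponential decay $e^{-ct}$, uniform in $\xi$ on the compact annulus), and for $|\xi|\geq 1/\varepsilon$ use the bound just displayed; since $\big(u_0^{(k)},u_1^{(k)}\big)\in\ml{D}^0_{2,2}$ we have $|\xi|\widehat{u^{(k)}_0},\widehat{u^{(k)}_1}\in L^2$, hence $|D|^{s+1}u^{(k)}(t,\cdot)$ and $|D|^{s}u^{(k)}_t(t,\cdot)$ have Fourier transforms with Gaussian-type decay $e^{-c(t)|\xi|^{1/\kappa}}$ for large $|\xi|$ and are $L^2$, which is precisely the defining property of membership in the Gevrey class $\Gamma^{\kappa}$ (\cite{Rodino1993}). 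I expect the main obstacle to be purely bookkeeping rather than conceptual: one must check that \emph{all six} roots $\mu_l$, including the "error'' terms $z_1,\dots,z_5$ listed in Appendix~\ref{appendix1}, keep the announced sign and order of magnitude uniformly in $\eta\in\mb{S}^2$ and in the relevant frequency range, so that no root spoils the uniform lower bound on $\Re\mu_l$; once this is in hand, and once one notes that the reduction $v=M^{-1}(\eta)\widehat u$ together with the block matrices $T_{\theta,\bullet}$ contribute only frequency-uniform bounded factors, the Gevrey statement follows by the standard Fourier characterisation.
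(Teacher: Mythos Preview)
Your approach is correct and is exactly what the paper's proof---which simply refers the reader to \cite{Reissig2016}---amounts to: exploit the representation formulae for $W^{(0)}$ in each frequency zone, read off $\Re\mu_l(|\xi|)\gtrsim |\xi|^{2\min\{\theta,1-\theta\}}$ for large $|\xi|$, and invoke the Fourier characterisation of $\Gamma^{\kappa}$.

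Two points deserve attention before the argument is complete. First, the matrix $M(\eta)$ displayed in the paper has entries $\eta_2/\eta_1,\eta_3/\eta_1,\eta_1/\eta_3,\eta_2/\eta_3$ and is \emph{not} uniformly bounded on $\mb{S}^2$; your parenthetical ``uniformly bounded factors $M(\eta),M^{-1}(\eta)$'' is therefore false as stated. The fix is standard: since $A(\eta)$ is symmetric with constant eigenvalues $a^2,a^2,b^2$, the spectral projections $P_{\parallel}(\eta)=\eta\eta^{\mathrm T}$ and $P_{\perp}(\eta)=I-\eta\eta^{\mathrm T}$ are smooth and bounded on $\mb{S}^2$, and the whole diagonalisation can be rewritten in terms of these (or via a finite cover of $\mb{S}^2$ with local orthonormal frames), so that only bounded conjugating factors enter the estimate. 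Second, you treat $\theta\in(0,1/2)$ and $\theta\in(1/2,1)$ via Theorems~\ref{smallmatrix} and~\ref{largematrix}, but those theorems explicitly exclude $\theta=1/2$; for that value you must use the Jordan-block representation of Case~2.3, where the eigenvalues satisfy $\Im\lambda_j>0$ uniformly and the polynomial prefactors $|\xi|^jt^j$ are harmlessly absorbed by $e^{-c|\xi|t}$, yielding the analytic case $\kappa=1$. With these two bookkeeping items handled your sketch is complete.
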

\begin{proof} Following the paper \cite{Reissig2016} one can prove Theorem \ref{smoothingeffect}. We should point out that $\theta=1/2$ yields, in particular, analytic smoothing. Hence, we expect some better behaviors of solutions in the case of the model \eqref{linearproblem} with structural damping mechanism $(-\Delta)^{1/2}u_t$. It is clear that Gevrey smoothing excludes the property of propagation of $\mathcal{C}^\infty$-singularities.
\end{proof}
If we consider elastic waves with friction or viscoelastic damping, then the property of propagation of $H^s$-singularities of solutions could be of interest. The proof of the following two theorems strictly follows the proof of Theorem 7.5 from the paper \cite{Reissig2016}.
\begin{thm}\label{classicalsingularity}
	Let us consider the Cauchy problem \eqref{linearproblem} with friction or exterior damping. Assume $\nabla u^{(k)}_{0},u^{(k)}_{1}\in H^s\big(\mb{R}^3\big)$ but $\nabla u^{(k)}_{0},u^{(k)}_{1}\notin H^{s+1}_{\text{loc}}(x_1,x_2,x_3)$ for $k=1,2,3$ and a given point $(x_1,x_2,x_3)\in\mathbb{R}^3$. Then,
	\begin{equation*}
	\nabla_x u^{(k)}(t,\cdot),u^{(k)}_t(t,\cdot)\notin H^{s+1}_{\text{loc}}\big(\big\{(x_1,x_2,x_3)\pm \sqrt{a^2}t\mathbf{e}\big\}\cup\big\{(x_1,x_2,x_3)\pm \sqrt{b^2}t\mathbf{e}\big\}\big)
	\end{equation*}
	for all $t>0$, where $\mathbf{e}$ is an arbitrary unit vector in $\mathbb{R}^3$.
\end{thm}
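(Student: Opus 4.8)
The plan is to follow the scheme of the proof of Theorem~7.5 in \cite{Reissig2016}: first localize in frequency, then reduce the high-frequency part of the solution to a superposition of half-wave propagators modulo smoothing and order-zero operators, and finally invoke propagation of the $H^{s+1}$-wave front set under those propagators. Since here $\theta=0$, the zone splitting puts $Z_{\intt}(\e)$ into \emph{Case 2.1} and $Z_{\midd}(\e)$ into \emph{Case 2.0}; on $Z_{\intt}(\e)\cup Z_{\midd}(\e)$ the solution multipliers are uniformly bounded by Theorems~\ref{middlezone} and \ref{smallmatrix} and supported in the bounded set $\{|\xi|\le 1/\e\}$, so $\big(\chi_{\intt}(D)+\chi_{\midd}(D)\big)$ applied to the solution produces a band-limited, hence $\mathcal{C}^{\infty}$, function of $x$ for every $t>0$. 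Consequently the $H^{s+1}$-singular support of $\nabla_x u(t,\cdot)$ and $u_t(t,\cdot)$ is governed entirely by the high-frequency part $\chi_{\extt}(D)$, described by \emph{Case 2.2} and Theorem~\ref{largematrix} with $\theta\in[0,1/2)$, specialized to $\theta=0$.

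For $\theta=0$ the six characteristic roots read $\mu_l(|\xi|)=\pm i\sqrt{y_l^2}\,|\xi|+\tfrac12+\tfrac{i}{8\sqrt{y_l^2}\,|\xi|}$ modulo $O(|\xi|^{-2})$, with $y_l=a$ for $l\in\{1,2,4,5\}$, $y_l=b$ for $l\in\{3,6\}$, and the first term carrying the sign $-$ for $l\in\{1,2,3\}$ and $+$ for $l\in\{4,5,6\}$; hence
\[
e^{-\mu_l(|\xi|)t}=e^{-t/2}\,e^{\mp i\sqrt{y_l^2}\,|\xi|\,t}\,r_l(t,\xi),\qquad r_l(t,\xi):=e^{-it/(8\sqrt{y_l^2}|\xi|)+O(|\xi|^{-2})t},
\]
where for each fixed $t$ the amplitude $r_l(t,\cdot)$ is a symbol of order $0$ tending to $1$ as $|\xi|\to\infty$. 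As $T_{\theta,\extt}(|\xi|)=I+\ml{N}_4(|\xi|,0)$ with $\ml{N}_4=O(|\xi|^{-1})$, both $T_{\theta,\extt}^{\pm1}(|\xi|)$ are order-$0$ matrix symbols, uniformly invertible for large frequencies. Inserting these into the representation of Theorem~\ref{largematrix} and undoing the micro-energy change of variables \eqref{trans01} together with $v=M^{-1}(\eta)\hat u$ — the latter two order-$0$ with symbols homogeneous of degree $0$ in $\xi$, the single power of $|\xi|$ entering through $A_{\mathrm{diag}}^{1/2}(|\xi|)=|\xi|\diag\big(\sqrt{a^2},\sqrt{a^2},\sqrt{b^2}\big)$ accounting exactly for the one-derivative gap between $u$ and $(\nabla u,u_t)$, and between $H^{s+1}$ and $H^s$ — one obtains, modulo a smoothing remainder and the bounded scalar factor $e^{-t/2}$, a representation of $\big(\nabla_x u,u_t\big)(t,\cdot)$ as a finite sum of terms $P_c(t,D)\,e^{\pm i\sqrt{c^2}\,|D|\,t}\,\chi_{\extt}(D)\,\big(\nabla u_0,u_1\big)$ with $c\in\{\sqrt{a^2},\sqrt{b^2}\}$ and $P_c(t,D)$ classical pseudodifferential operators of order $0$.

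For the last step, $e^{ic|D|t}$ is, for fixed $t$, a Fourier integral operator associated with the canonical transformation $(x,\xi)\mapsto(x+ct\,\xi/|\xi|,\xi)$; being elliptic, it maps the $H^{s+1}$-wave front set of a distribution to its image under that transformation, neither creating nor removing singularities. The hypothesis $\nabla u_0,u_1\notin H^{s+1}_{\mathrm{loc}}(x_1,x_2,x_3)$ means some codirection $\mathbf{e}$ over $(x_1,x_2,x_3)$ lies in the $H^{s+1}$-wave front set of $(\nabla u_0,u_1)$; composing with the order-$0$ operators $P_c(t,D)$ — which are elliptic in the relevant codirections, since $M^{-1}(\eta)$ diagonalizes $A(\eta)$ into its $a^2$- and $b^2$-eigenspaces while $M(\eta)$ is uniformly non-degenerate — and with the two wave propagators then places $\big((x_1,x_2,x_3)\pm\sqrt{a^2}\,t\mathbf{e},\mathbf{e}\big)$ and $\big((x_1,x_2,x_3)\pm\sqrt{b^2}\,t\mathbf{e},\mathbf{e}\big)$ into the $H^{s+1}$-wave front set of $\big(\nabla_x u,u_t\big)(t,\cdot)$, which the $\mathcal{C}^{\infty}$ low/middle-frequency part cannot cancel. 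This yields the asserted non-regularity on the two spheres, for an arbitrary unit $\mathbf{e}$ when the datum carries a genuine point singularity, while the bounded factor $e^{-t/2}$ affects only the amplitude.

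The main obstacle is the structural claim underlying the reduction step: one must check that the order-$0$ operators generated by $T_{\theta,\extt}^{\pm1}$, the phase corrections $r_l$, and the matrices $M^{\pm1}(\eta)$ are genuinely \emph{elliptic} in the codirections carrying the singularity — equivalently, that projecting $W_0^{(0)}$ onto each eigenbranch does not annihilate its singular component — so that no branch silently discards the singularity, and that the distinct speeds $\sqrt{a^2}\neq\sqrt{b^2}$ keep the two sphere contributions from cancelling each other. A secondary nuisance is that $M(\eta)$ as written degenerates on $\{\eta_1=0\}\cup\{\eta_3=0\}$, so the argument must be carried out in overlapping conic patches on $\mb{S}^2$ with a partition of unity, using that the eigenprojections of $A(\eta)$ onto its $a^2$- and $b^2$-eigenspaces are smooth on all of $\mb{S}^2$. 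Beyond this bookkeeping, the proof is a transcription of the two-dimensional argument of \cite{Reissig2016}.
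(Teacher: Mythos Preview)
Your proposal is correct and follows essentially the same approach as the paper, which simply states that the proof ``strictly follows the proof of Theorem 7.5 from the paper \cite{Reissig2016}'' without further detail. In fact you supply considerably more of the underlying mechanism --- the band-limited smoothness of the $Z_{\intt}\cup Z_{\midd}$ contribution, the reduction on $Z_{\extt}$ via Theorem~\ref{largematrix} to half-wave propagators $e^{\pm i\sqrt{c^2}|D|t}$ with $c\in\{a,b\}$ modulo order-zero operators and the harmless factor $e^{-t/2}$, and the use of $H^{s+1}$-wave front set propagation --- than the paper itself, and your flagged caveats (ellipticity of the eigenprojections in the relevant codirections, and the coordinate singularities of $M(\eta)$ requiring a conic partition of unity) are precisely the bookkeeping points one must handle when transcribing the 2D argument to 3D.
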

\begin{thm}\label{viscosingularity}
	Let us consider the Cauchy problem \eqref{linearproblem} with viscoelastic damping. Assume $\nabla u^{(k)}_{0},u^{(k)}_{1}\in H^s\big(\mathbb{R}^3\big)$ but $\nabla u^{(k)}_{0},u^{(k)}_{1}\notin H^{s+1}_{\text{loc}}(x_1,x_2,x_3)$ for $k=1,2,3$ and a given point $(x_1,x_2,x_3)\in\mathbb{R}^3$. Then,
	\begin{equation*}
	\nabla_x u^{(k)}(t,\cdot),u^{(k)}_t(t,\cdot)\notin H^{s+1}_{\text{loc}}\big(\big\{(x_1,x_2,x_3)-\sqrt{a^2}t\mathbf{e}\big\}\cup\big\{(x_1,x_2,x_3)- \sqrt{b^2}t\mathbf{e}\big\}\big)
	\end{equation*}
	for all $t>0$, where $\mathbf{e}$ is an arbitrary unit vector in $\mathbb{R}^3$.
\end{thm}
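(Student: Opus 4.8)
The plan is to reduce the statement to the structure of the representation of solutions in the large-frequency regime (the exterior zone $Z_{\extt}(\varepsilon)$), where for $\theta=1$ the characteristic roots behave like $\mu_{l}(|\xi|) = \pm i|\xi|\sqrt{y^{2}} + \tfrac12|\xi|^{2\theta} + \tfrac{i}{8\sqrt{y^{2}}}|\xi|^{4\theta-1}$ modulo lower-order terms, with $y\in\{a,b\}$, according to Theorem \ref{largematrix}. The key observation is that the real part of each $\mu_{l}$ is $\tfrac12|\xi|^{2}$ plus something of lower order, so after splitting off the damping factor $e^{-\frac12|\xi|^{2}t}$ (which by itself would give a smoothing/decay effect) the remaining oscillatory factor is governed by phases of the form $|\xi|\,t\sqrt{y^{2}}$ together with a correction $|\xi|^{4\theta-1}t = |\xi|^{3}t$. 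One then localizes to a single characteristic root, i.e. to one wave speed $\sqrt{a^{2}}$ or $\sqrt{b^{2}}$ and one choice of sign, and to a fixed propagation direction $\mathbf{e}$: the transport at speed $-\sqrt{y^{2}}$ in direction $\mathbf{e}$ moves the singularity located at $(x_{1},x_{2},x_{3})$ to the point $(x_{1},x_{2},x_{3})-\sqrt{y^{2}}t\,\mathbf{e}$. The sign restriction (only the minus sign survives, in contrast to Theorem \ref{classicalsingularity}) comes precisely from the fact that for $\theta=1$ the lower-order real part of $\mu_{l}$ is of the form $+\tfrac12|\xi|^{2}$ in the exterior zone for all branches, whereas for $\theta=0$ (friction) both signs $\pm\sqrt{y^{2}}t$ appear symmetrically.

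The concrete steps I would carry out are as follows. First, using Theorem \ref{largematrix}, write the partial Fourier transform of $\nabla_{x}u^{(k)}$ and of $u^{(k)}_{t}$ in the exterior zone as a finite sum over $l$ of terms $e^{-\mu_{l}(|\xi|)t}$ times symbols that are uniformly bounded and uniformly invertible (the entries of $T_{\theta,\extt}(|\xi|)$, $T_{\theta,\extt}^{-1}(|\xi|)$ and the amplitude coming from the micro-energy transformation \eqref{trans01}), applied to the data. Second, for a fixed branch with speed $\sqrt{y^{2}}$ and the minus sign, write $e^{-\mu_{l}(|\xi|)t} = e^{-\frac12|\xi|^{2}t}\, e^{-i|\xi|\sqrt{y^{2}}t}\, e^{-r_{l}(|\xi|)t}$, where $r_{l}(|\xi|) = \tfrac{i}{8\sqrt{y^{2}}}|\xi|^{3} + O(|\xi|^{4\theta-2}) = O(|\xi|^{3})$ collects the remaining lower-order contributions. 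Third, observe that the factor $e^{-i|\xi|\sqrt{y^{2}}t}$ is exactly the symbol of the translation operator $f\mapsto f(\cdot - \sqrt{y^{2}}t\,\mathbf{e})$ when combined with the Fourier multiplier structure and the direction $\mathbf{e}$ (here one uses that the singularity is propagated along rays, so one works microlocally near the codirection $\pm\mathbf{e}$). Fourth — and this is the heart of the argument following Theorem 7.5 of \cite{Reissig2016} — show that multiplication by $e^{-\frac12|\xi|^{2}t}$ and by $e^{-r_{l}(|\xi|)t}$ are, for fixed $t>0$, regularizing in the sense that they map $H^{s+1}_{\mathrm{loc}}$-singularities to $H^{s+1}_{\mathrm{loc}}$-singularities and cannot create or destroy the failure of $H^{s+1}_{\mathrm{loc}}$ membership at a point; equivalently, $e^{-\frac12|\xi|^{2}t}e^{-r_{l}(|\xi|)t}$ differs from a smoothing operator by an operator with symbol bounded together with its derivatives times appropriate powers of $\langle\xi\rangle$, so it preserves the wavefront-type information encoded in the hypothesis $\nabla u^{(k)}_{0},u^{(k)}_{1}\notin H^{s+1}_{\mathrm{loc}}(x_{1},x_{2},x_{3})$. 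Finally, sum over the two speeds and invoke that the interior and middle zones contribute only smooth (in the middle zone exponentially decaying by Theorem \ref{middlezone}, in the interior zone $C^{\infty}$ by the parabolic-type behavior $\mu \sim a^{2}|\xi|^{2-2\theta} = a^{2}|\xi|^{0}$... — more precisely for $\theta=1$ the interior-zone roots from Theorem \ref{case1} are bounded and give no singularities) contributions, so the only obstruction to $H^{s+1}_{\mathrm{loc}}$-regularity lies on the two spheres $\{(x_{1},x_{2},x_{3})-\sqrt{a^{2}}t\,\mathbf{e}\}$ and $\{(x_{1},x_{2},x_{3})-\sqrt{b^{2}}t\,\mathbf{e}\}$.

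The main obstacle I expect is the rigorous bookkeeping in the fourth step: one must show not merely that $e^{-\frac12|\xi|^{2}t}$ is smoothing (that part is easy since it decays faster than any polynomial), but that the \emph{composite} symbol $T_{\theta,\extt}^{-1}(|\xi|)\,e^{-\mu_{l}(|\xi|)t}\,T_{\theta,\extt}(|\xi|)$, after extracting the single translation factor $e^{-i|\xi|\sqrt{y^{2}}t}$, is an operator that exactly preserves — neither improves nor worsens — $H^{s+1}_{\mathrm{loc}}$-regularity in a neighborhood of the translated point, uniformly for $t$ in compact subsets of $(0,\infty)$. This requires controlling the derivatives with respect to $\xi$ of the remainder phase $r_{l}(|\xi|)t$, which grows like $|\xi|^{3}t$ and hence is \emph{not} a classical symbol of order zero; one circumvents this, as in \cite{Reissig2016}, by combining it with the Gaussian factor $e^{-\frac12|\xi|^{2}t}$, whose rapid decay dominates any polynomial loss coming from differentiating $e^{-r_{l}(|\xi|)t}$, so that the product is genuinely smoothing. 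Apart from this point, the argument is a direct adaptation of the cited proof of Theorem 7.5 in \cite{Reissig2016}, with the case distinction $\theta=1$ (viscoelastic damping) forcing the one-sided conclusion. I would therefore only sketch the zone decomposition and the extraction of the translation factor, and then refer to \cite{Reissig2016} for the regularity-preservation estimate.
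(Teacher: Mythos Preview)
Your proposal rests on a misreading of Theorem~\ref{largematrix}. For viscoelastic damping $\theta=1$, the large-frequency asymptotics of the characteristic roots are given by the \emph{second} bullet of that theorem (the case $\theta\in(1/2,1]$), not the first. Concretely, for $\xi\in Z_{\extt}(\varepsilon)$ and $\theta=1$ one finds
\[
\mu_{1,2}(|\xi|)=a^{2}+O\big(|\xi|^{-2}\big),\quad \mu_{3}(|\xi|)=b^{2}+O\big(|\xi|^{-2}\big),\quad \mu_{4,5,6}(|\xi|)=|\xi|^{2}+O(1),
\]
so the oscillatory factor $e^{\pm i|\xi|\sqrt{y^{2}}t}$ that you extract is not present at leading order. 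In particular your claim that ``the real part of each $\mu_{l}$ is $\tfrac12|\xi|^{2}$ plus something of lower order'' is false for $l=1,2,3$: those roots stay bounded as $|\xi|\to\infty$. (You can also see this directly from the scalar characteristic equation $\lambda^{2}+|\xi|^{2}\lambda+\varpi|\xi|^{2}=0$, whose roots for large $|\xi|$ are real and satisfy $\lambda_{+}\approx-\varpi$, $\lambda_{-}\approx-|\xi|^{2}$.)

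This misreading makes the rest of the argument internally inconsistent. You assert on the one hand that $e^{-\frac12|\xi|^{2}t}$ is smoothing (``decays faster than any polynomial''), and on the other that it ``cannot create or destroy the failure of $H^{s+1}_{\mathrm{loc}}$ membership at a point''. These statements contradict each other: a Fourier multiplier decaying faster than any polynomial in $|\xi|$ maps tempered distributions into $C^{\infty}$ and therefore destroys all singularities. Were your asymptotics correct, the solution would be $C^{\infty}$ for every $t>0$ and the theorem would be vacuous. The mechanism actually at work, as the Remark following Theorem~\ref{viscosingularity} indicates, is the opposite split: the branches $l=4,5,6$ have dominant real part $|\xi|^{2}$ and hence give $C^{\infty}$ contributions, while the branches $l=1,2,3$ have \emph{bounded} real part and therefore carry the $H^{s}$-singularities. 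The one-sided conclusion relative to Theorem~\ref{classicalsingularity} reflects the loss of three of the six branches to smoothing, not a sign selection inside an oscillatory phase. You need to redo the extraction starting from the correct asymptotics before the appeal to \cite{Reissig2016} can go through.
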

\begin{rem}
	Comparing the statements of Theorem \ref{classicalsingularity} with Theorem \ref{viscosingularity}, the propagation pictures are different taking account that the dominant parts of eigenvalues $\mu_l(|\xi|)$ in the cases $l=4,5,6$ are $1/2$ (case $\theta=0$) and $|\xi|^2$ (case $\theta=1$), respectively. It implies $W_{4,5,6}^{(0)} \in \mathcal{C}\big([0,\infty),L^{2,s+1}\big(\mb{R}^3\big)\big)$, that is, $\langle\xi\rangle^{s+1}W_{4,5,6}^{(0)}\in \ml{C}\big([0,\infty),L^2\big(\mb{R}^3\big)\big)$, without any singularity in the model \eqref{linearproblem} with viscoelastic damping.
\end{rem}
The problem of $L^2$ well-posedness can be immediately solved by the above results containing Gevrey smoothing if $\theta\in(0,1)$ and propagation of singularities if $\theta=0,1$.
\begin{thm}\label{wellposedness1}
	Let us consider the Cauchy problem \eqref{linearproblem} with $\theta\in[0,1]$ and $\big(u^{(k)}_{0},u^{(k)}_{1}\big)\in\ml{D}_{2,2}^0$ for $k=1,2,3$. Then, there exists a uniquely determined energy solution
	\begin{equation*}
	u \in \big(\mathcal{C}\big([0,\infty),\dot{H}^1\big(\mb{R}^3\big)\big)\cap \mathcal{C}^1\big([0,\infty),L^2\big(\mb{R}^3\big)\big)\big)^3.
	\end{equation*}
\end{thm}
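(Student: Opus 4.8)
The plan is to derive the $L^2$ well-posedness result from the representation formulas for $W^{(0)}(t,\xi)$ obtained in the preceding theorems, combined with the exponential decay in the middle zone, via Plancherel's theorem. First I would recall that the change of variables $v=M^{-1}(\eta)\hat u$, the micro-energy transformation \eqref{trans01}, and the definition $\mathcal{D}^0_{2,2}=\dot H^1\times L^2$ together show that the energy-space data $\big(u^{(k)}_0,u^{(k)}_1\big)\in\mathcal{D}^0_{2,2}$ correspond precisely to $W^{(0)}_0(\xi)$ with $|\xi|v_0,v_1\in L^2$, hence $W^{(0)}_0\in L^2(\mathbb{R}^3)$ up to uniformly bounded (in $\xi$) multiplicative factors; here one must check that $M(\eta)$ and $M^{-1}(\eta)$ stay bounded on $\mathbb{S}^2$, which is the point requiring the nondegeneracy of the chart for $A(\eta)$.

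Next I would split $\mathbb{R}^3$ into the three zones $Z_{\intt}(\varepsilon)$, $Z_{\midd}(\varepsilon)$, $Z_{\extt}(\varepsilon)$. On $Z_{\midd}(\varepsilon)$ Theorem \ref{middlezone} gives $|v^{(k)}(t,\xi)|^2+|v^{(k)}_t(t,\xi)|^2+|\xi|^2|v^{(k)}(t,\xi)|^2\lesssim e^{-ct}\sum_k\big(|v^{(k)}_0(\xi)|^2+|v^{(k)}_1(\xi)|^2\big)$ uniformly, so the middle-zone contribution is trivially controlled. On $Z_{\intt}(\varepsilon)$ I would use Theorem \ref{smallmatrix} and on $Z_{\extt}(\varepsilon)$ Theorem \ref{largematrix}: in both, $W^{(0)}(t,\xi)=T^{-1}\diag\big(e^{-\mu_l(|\xi|)t}\big)T W^{(0)}_0(\xi)$ with $T^{\pm1}$ uniformly bounded, so one only needs $\Re\mu_l(|\xi|)\geq 0$ for all $l$ and all relevant $\xi$ — which follows from the stated asymptotic expansions (the real parts being $a^2|\xi|^{2-2\theta}$, $b^2|\xi|^{2-2\theta}$, $|\xi|^{2\theta}-b^2|\xi|^{2-2\theta}$, $\tfrac12|\xi|^{2\theta}$, etc., all nonnegative in the appropriate zone for $\varepsilon$ small). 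This yields $|W^{(0)}(t,\xi)|\lesssim |W^{(0)}_0(\xi)|$ uniformly in $t\geq 0$ and $\xi$, hence by Plancherel $W^{(0)}(t,\cdot)\in\mathcal{C}\big([0,\infty),L^2\big)$, the continuity in $t$ coming from dominated convergence applied to the exponential factors.

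Then I would translate back: from $W^{(0)}$ one recovers $D_tv$ and $A^{1/2}_{\mathrm{diag}}(|\xi|)v$, i.e. $v_t$ and $|\xi|v$, both in $L^2$; multiplying by the bounded matrix $M(\eta)$ recovers $\hat u_t$ and $|\xi|\hat u$ in $L^2$, which is exactly $u\in\mathcal{C}\big([0,\infty),\dot H^1\big)\cap\mathcal{C}^1\big([0,\infty),L^2\big)$. Uniqueness follows because the whole chain of transformations ($M^{-1}$, the micro-energy, the diagonalizers) is invertible on the Fourier side, so the representation formula is forced; equivalently, two energy solutions have the same $W^{(0)}$ and hence coincide. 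The case $\theta=1/2$ is handled separately using the explicit Jordan-form solution representations of Theorem~1.5, where the only new feature is polynomial factors $|\xi|^kt^k$ multiplied by $e^{i|\xi|\lambda_lt}$ with $\Im(i\lambda_l)=-\Re\lambda_l\leq -1/2<0$ (equivalently a genuine exponential decay $e^{-\Re\lambda_l|\xi|t}$), so $|\xi|^kt^ke^{-\Re\lambda_l|\xi|t}$ is bounded uniformly in $t$ for $|\xi|$ away from zero — and on $Z_{\intt}(\varepsilon)$ for $\theta=1/2$ one again uses Theorem \ref{middlezone}-type estimates, noting $Z_{\midd}$ plus the $\theta=1/2$ analysis covers all frequencies.

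The main obstacle I anticipate is the bookkeeping at the interface between the zones and, more substantively, verifying the sign conditions $\Re\mu_l(|\xi|)\geq 0$ (no spectral loss) uniformly down to $|\xi|\to 0$ and up to $|\xi|\to\infty$ from the asymptotic expansions modulo the stated remainders — in particular ensuring that the lower-order correction terms $z_1,\dots,z_5$ and the $O(|\xi|^{7-12\theta})$, $O(|\xi|^{6\theta-2})$ remainders do not spoil nonnegativity for $\varepsilon$ sufficiently small; this is where one must be slightly careful rather than purely formal. Everything else is a routine Plancherel-plus-dominated-convergence argument, and since all the hard microlocal work is already encapsulated in Theorems \ref{middlezone}, \ref{smallmatrix}, \ref{largematrix} and the $\theta=1/2$ representation theorem, the proof itself is short — as reflected in the brevity expected here.
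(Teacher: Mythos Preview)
Your proposal is correct and follows essentially the same route as the paper: the paper's own proof is a two-line reference to Theorems \ref{smoothingeffect}--\ref{viscosingularity} (Gevrey smoothing for $\theta\in(0,1)$, propagation of singularities for $\theta=0,1$) to obtain $|D|u^{(k)},u_t^{(k)}\in L^\infty\big([0,\infty),L^2\big)$, followed by the standard continuity-in-time argument for wave-type equations. Since those smoothing and singularity theorems are themselves consequences of the representation formulas from Theorems \ref{smallmatrix} and \ref{largematrix}, your direct Plancherel-plus-dominated-convergence argument on $W^{(0)}$ is simply an unpacked version of the same proof; you also correctly flag the chart singularity of $M(\eta)$ (at $\eta_1=0$ or $\eta_3=0$) as the one place requiring care, which the paper glosses over.

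One small correction in your $\theta=1/2$ discussion: the decay comes from $\Im\lambda_l>0$ (so that $|e^{i|\xi|\lambda_l t}|=e^{-|\xi|\Im\lambda_l\,t}$), not from $\Re\lambda_l$; moreover $\Im\lambda_l$ need not be $\geq 1/2$ (for $a^2<1/4$ one has $\Im\lambda_2=\tfrac12-\tfrac12\sqrt{1-4a^2}$, which can be arbitrarily small), but it is always strictly positive, which suffices since $(|\xi|t)^ke^{-c|\xi|t}$ is bounded uniformly in $(|\xi|,t)$ for any $c>0$. Also note that Case~2.3 in the paper already treats \emph{all} frequencies for $\theta=1/2$, so no separate small-frequency argument is needed there.
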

\begin{proof} From Theorems \ref{smoothingeffect} to \ref{viscosingularity} we may conclude $|D|u^{(k)},u_t^{(k)}\in L^{\infty}\big([0,\infty),L^2\big(\mb{R}^3\big)\big)$.
	By the same approach proving continuity (in time) of energy solutions to the Cauchy problem for the classical wave equation, more careful considerations yield $|D|u^{(k)},u_t^{(k)}\in \mathcal{C}\big([0,\infty),L^2\big(\mb{R}^3\big)\big).$
\end{proof}
\section{Energy estimates}\label{energyestimates}
To show the global (in time) existence of small data solutions, we find solutions in evolution spaces, this means, the solutions are continuous in time and $H^{s+1}$-valued with respect to the spatial variables. Therefore, we need some Matsumura type (almost sharp) $L^2$ estimates for solutions to linear elastic waves with different damping terms and data belonging to $(H^{s+1}\cap L^m)\times(H^s\cap L^m)$ for $s\geq0$ and $m\in[1,2]$. Moreover, to understand some sharp energy estimates for the solutions to \eqref{linearproblem}, we choose data from the space $(|D|^{-1}H^s\cap \dot{H}^{1}_{m})\times(H^s\cap L^m)$. \smallskip

Concerning the sharpness of the derived estimates, we have to point out that the estimates of higher-order energies from Theorems \ref{energydecay} to \ref{additionaldecay} are sharp, where data are from $(|D|^{-1}H^s\cap \dot{H}^{1}_{m})\times(H^s\cap L^m)$. Moreover, when $\theta\in\left[0,1/2\right)$, the total energy estimates from Theorem \ref{enee} with data being from $(H^{s+1}\cap L^1)\times(H^s\cap L^1)$ are almost sharp modulo a parameter $\epsilon>0$. Furthermore, the total energy estimates of solutions for the system \eqref{linearproblem} with structural damping $(-\Delta)^{1/2}u_t$ are sharp.
\subsection{Energy estimates by using the diagonalization procedure}\label{Energyestimatesbyphasespaceanalysis}
We focus on estimates of the classical energy and higher-order energies of solutions with data being from $|D|^{-1}H^s\times H^s$ with or without an additional regularity
$\dot{H}^1_m\times L^m$, $m\in[1,2)$. The main tools are the asymptotic formulas of the solutions from Theorems \ref{smallmatrix} to \ref{largematrix}. Since the proofs are quite standard, we only sketch them.
\begin{thm}\label{energydecay}
	Let us consider the Cauchy problem \eqref{linearproblem} with $\theta\in[0,1]$ and $\big(u^{(k)}_{0},u^{(k)}_{1}\big)\in\ml{D}_{2,2}^s$ for $k=1,2,3,$ and $s\geq0$. Then, we have the following estimates of energies of higher order:
	\begin{equation*}
	\begin{split}
	\||D|^{s+1}u^{(k)}(t,\cdot)\|_{L^2} + \||D|^su^{(k)}_t(t,\cdot)\|_{L^2}&\lesssim(1+t)^{-\frac{s}{2\max\left\{1-\theta;\theta\right\}}}\sum\limits_{k=1}^3\big\|\big(u^{(k)}_{0},u^{(k)}_1\big)\big\|_{\ml{D}_{2,2}^s}.
	\end{split}
	\end{equation*}
\end{thm}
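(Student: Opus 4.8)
The plan is to work entirely on the Fourier side and to split the phase space into the three zones $Z_{\intt}(\varepsilon)$, $Z_{\midd}(\varepsilon)$, $Z_{\extt}(\varepsilon)$, using the representation formulas for $W^{(0)}(t,\xi)$ from Theorems \ref{smallmatrix}, \ref{middlezone}, and \ref{largematrix}. Recall that the original unknown $\hat u$ and its first derivatives are recovered from $W^{(0)}$ via the linear maps $v = M^{-1}(\eta)\hat u$, the micro-energy transformation \eqref{trans01}, and $|\xi|^{-1}$-homogeneous multipliers; these introduce only bounded factors (since $M(\eta)$, $M^{-1}(\eta)$ are bounded on $\mb{S}^1$), so it suffices to estimate $\||D|^s W^{(0)}(t,\cdot)\|_{L^2}$ against the $\ml{D}_{2,2}^s$-norm of the data, noting that $\|W_0^{(0)}\|$ is comparable to $\||D|v_0\| + \|v_1\|$, i.e.\ to $\|u_0\|_{\dot H^1} + \|u_1\|_{L^2}$ microlocally, with $\langle\xi\rangle^s$ weights.

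The key estimate in each zone is a pointwise bound on the matrix $\ml{E}(t,\xi)$ propagating $W_0^{(0)}$ to $W^{(0)}(t,\xi)$. In $Z_{\intt}(\varepsilon)$: for $\theta\in[0,1/2)$ the dominant characteristic roots from Theorem \ref{smallmatrix} are $\mu_1,\mu_2,\mu_3 \sim |\xi|^{2-2\theta}$ and $\mu_4,\mu_5,\mu_6 \sim |\xi|^{2\theta}$, both with positive real part, so $|\ml{E}(t,\xi)| \lesssim e^{-c|\xi|^{2-2\theta}t}$ (the slower-decaying branch), while for $\theta\in(1/2,1]$ the roots are $\pm i|\xi|\sqrt{y^2} + \tfrac12|\xi|^{2\theta}+\cdots$, giving $|\ml{E}(t,\xi)| \lesssim e^{-c|\xi|^{2\theta}t}$; in both cases the decay exponent is $|\xi|^{2\min\{1-\theta,\theta\}}$. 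In $Z_{\midd}(\varepsilon)$, Theorem \ref{middlezone} gives outright $|\ml{E}(t,\xi)|\lesssim e^{-ct}$, which is far stronger than needed. In $Z_{\extt}(\varepsilon)$, the roles of the two branches swap: for $\theta\in[0,1/2)$ the roots are $\pm i|\xi|\sqrt{y^2}+\tfrac12|\xi|^{2\theta}+\cdots$ giving $e^{-c|\xi|^{2\theta}t}$, and for $\theta\in(1/2,1]$ the roots are $\sim|\xi|^{2-2\theta}$ and $\sim|\xi|^{2\theta}$ giving $e^{-c|\xi|^{2-2\theta}t}$; so again the relevant decay rate is governed by $|\xi|^{2\min\{1-\theta,\theta\}}$, now for large $|\xi|$.

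Putting these together, the high-order energy is controlled by
\begin{equation*}
\||D|^{s+1}u^{(k)}\|_{L^2}^2 + \||D|^su^{(k)}_t\|_{L^2}^2 \lesssim \int_{\mb{R}^3} |\xi|^{2s}\, e^{-c|\xi|^{2\min\{1-\theta,\theta\}}t}\,\langle\xi\rangle^{2s}\big(|\xi|^2|\hat u_0|^2 + |\hat u_1|^2\big)\,d\xi,
\end{equation*}
where the $|\xi|^{2s}$ in front comes from the $s$ extra derivatives and the $\langle\xi\rangle^{2s}|\xi|^2|\hat u_0|^2$ encodes the $|D|^{-1}H^s$ norm while $\langle\xi\rangle^{2s}|\hat u_1|^2$ encodes the $H^s$ norm. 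In the small-frequency zone one factors out $\sup_{|\xi|<\varepsilon}|\xi|^{2s}e^{-c|\xi|^{2\min\{1-\theta,\theta\}}t} \lesssim (1+t)^{-s/\min\{1-\theta,\theta\}}$ by the elementary inequality $r^{a}e^{-cr^{b}t}\lesssim (1+t)^{-a/b}$, with $a=2s$, $b=2\min\{1-\theta,\theta\}$; the remaining integral is bounded by the $L^2$-data norm. In the middle zone the $e^{-ct}$ factor dominates any polynomial rate. In the large-frequency zone the exponential $e^{-c|\xi|^{2\min\{1-\theta,\theta\}}t}$ is, for fixed $t>0$, bounded by a constant, and the $|\xi|^{2s}\langle\xi\rangle^{2s}$ weight is absorbed into the $\ml{D}_{2,2}^s$ norm of the data; a crude bound $e^{-c|\xi|^{2\min\{1-\theta,\theta\}}t}\lesssim 1 \lesssim (1+t)^{-s/\min\{1-\theta,\theta\}}\cdot(1+t)^{s/\min\{1-\theta,\theta\}}e^{-c|\xi|^{2\min\{1-\theta,\theta\}}t}$ together with $\sup_{|\xi|>1/\varepsilon}(1+t)^{s/\min\{1-\theta,\theta\}}e^{-c|\xi|^{2\min\{1-\theta,\theta\}}t/2}\lesssim 1$ (using $|\xi|>1/\varepsilon$ so the exponent is bounded below by $c\varepsilon^{-2\min\{1-\theta,\theta\}}t$) recovers the stated decay rate there too.

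The main obstacle — more bookkeeping than genuine difficulty — is the case $\theta=1/2$, which is excluded from Theorems \ref{smallmatrix} and \ref{largematrix} and must be handled separately. Here one uses the representation of solutions from the Jordan-form theorem in \emph{Case 2.3}: the characteristic roots are $|\xi|\lambda_j$ with $\Re(i|\xi|\lambda_j)$ of order $|\xi|$ (bounded below on $Z_{\midd}$, and comparable to $|\xi|=|\xi|^{2\min\{1-\theta,\theta\}}$ throughout since $2\min\{1/2,1/2\}=1$), but the Jordan blocks produce polynomial factors $|\xi|^k t^k$ in the solution formula. One absorbs each $|\xi|^kt^k e^{-c|\xi|t}$ by $(|\xi|t)^k e^{-c|\xi|t}\lesssim e^{-c|\xi|t/2}$ uniformly, so the same $r^{2s}e^{-cr t}\lesssim (1+t)^{-2s}$ estimate applies and gives the rate $(1+t)^{-s/\min\{1-\theta,\theta\}}=(1+t)^{-2s}$ predicted by the formula at $\theta=1/2$. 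Finally, summing the three zonal contributions and the three components $k=1,2,3$ (the system is diagonal in $v$, so components decouple up to the bounded matrices $M^{\pm1}(\eta)$) yields the claimed estimate.
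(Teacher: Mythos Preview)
Your approach is exactly the paper's: Plancherel plus the zonal representation formulas of Section~\ref{Asymptoticbehaviorofsolutions}. The paper's own proof is in fact a one-liner (``By virtue of the Parseval--Plancherel theorem and the embedding $H^s\hookrightarrow L^2$\dots''), and you have correctly unpacked what that entails, including the separate $\theta=1/2$ treatment via Jordan blocks with the absorption $(|\xi|t)^ke^{-c|\xi|t}\lesssim e^{-c|\xi|t/2}$.

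There is, however, a bookkeeping slip that propagates to the final rate. In $Z_{\intt}(\varepsilon)$ you correctly identify the slow branch as $e^{-c|\xi|^{2-2\theta}t}$ for $\theta<1/2$ and $e^{-c|\xi|^{2\theta}t}$ for $\theta>1/2$, but then you summarize both as $|\xi|^{2\min\{1-\theta,\theta\}}$; it should be $|\xi|^{2\max\{1-\theta,\theta\}}$ (for $\theta<1/2$ one has $2-2\theta=2(1-\theta)=2\max\{1-\theta,\theta\}$, not $2\min$). Your $\min$ is correct for $Z_{\extt}(\varepsilon)$, but there $|\xi|>1/\varepsilon$ already makes the exponential uniformly $\lesssim e^{-c't}$, so the polynomial rate comes entirely from the small-frequency zone and must carry a $\max$. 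Relatedly, your displayed bound is for the \emph{squared} norm, so after factoring out $\sup_{|\xi|<\varepsilon}|\xi|^{2s}e^{-c|\xi|^{2\max}t}\lesssim(1+t)^{-s/\max\{1-\theta,\theta\}}$ you must still take a square root; this gives the theorem's $(1+t)^{-s/(2\max\{1-\theta,\theta\})}$, and in particular $(1+t)^{-s}$ (not $(1+t)^{-2s}$) at $\theta=1/2$. With these two corrections your argument goes through.
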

\begin{proof} By virtue of the Parseval-Plancherel theorem and the embedding $H^s\big(\mb{R}^3\big)\hookrightarrow L^2\big(\mb{R}^3\big)$ all for $s\geq0$ we can derive the estimates for the energies of solutions of higher order.
\end{proof}
\begin{thm}\label{energyestimatessss} Let us consider the Cauchy problem \eqref{linearproblem} with $\theta\in[0,1]$ and $\big(u^{(k)}_{0},u^{(k)}_{1}\big)\in\dot{H}^{s+1}\times \dot{H}^s$ for $k=1,2,3$, and $s\geq0$. Then, we have the following
	estimates for the energies of solutions of higher order:
	\begin{equation*}
	\begin{split}
	\||D|^{s+1}u^{(k)}(t,\cdot)\|_{L^2} + \||D|^su^{(k)}_t(t,\cdot)\|_{L^2}&\lesssim\sum\limits_{k=1}^3\big\|\big(u^{(k)}_{0},u^{(k)}_1\big)\big\|_{\dot{H}^{s+1}\times\dot{H}^s}.
	\end{split}
	\end{equation*}
\end{thm}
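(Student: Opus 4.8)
The plan is to run the whole argument on the Fourier side. First I would use the Parseval--Plancherel theorem to reduce the claim to the pointwise estimate
\[
|\xi|^{s}\,\big|W^{(0)}(t,\xi)\big|\ \lesssim\ |\xi|^{s}\,\big|W^{(0)}_0(\xi)\big|\qquad\text{for all }t\ge0\text{ and }\xi\neq0,
\]
where $W^{(0)}$ is the micro-energy introduced in \eqref{trans01}. Indeed, since $D_tv=-iv_t$ and $A_{\text{diag}}^{1/2}(|\xi|)$ is, up to the scalar factor $|\xi|$, a fixed positive-definite matrix (the eigenvalues of $A(\eta)$ being $a^2,a^2,b^2$), the parallelogram identity yields $\big|W^{(0)}\big|^2\sim|v_t|^2+|\xi|^2|v|^2$. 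Hence, modulo the change of variables $\hat u=M(\eta)v$ --- which I would treat on a finite atlas of $\mb{S}^2$ avoiding the coordinate hyperplanes where $M(\eta)$ degenerates, exactly as in \cite{Reissig2016} --- the quantity $\big\||D|^sW^{(0)}(t,\cdot)\big\|_{L^2}$ is comparable to $\||D|^{s+1}u(t,\cdot)\|_{L^2}+\||D|^su_t(t,\cdot)\|_{L^2}$, the sum over $k$ on the right of the asserted inequality being there to absorb the mixing of the three components by $M(\eta)$; the same applies to the initial data. It then remains to prove the displayed pointwise bound, which I would do zone by zone.

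For $\xi\in Z_{\intt}(\varepsilon)\cup Z_{\extt}(\varepsilon)$ and $\theta\in\left[0,1/2\right)\cup\left(1/2,1\right]$ I would use the representation formulas of Theorems \ref{smallmatrix} and \ref{largematrix}. Since $T_{\theta,\intt}^{\pm1}(|\xi|)$ and $T_{\theta,\extt}^{\pm1}(|\xi|)$ are uniformly bounded on the respective zones, it suffices to bound the diagonal factors $e^{-\mu_l(|\xi|)t}$, for which I would verify that $\Re\mu_l(|\xi|)\ge0$ there. Reading off the asymptotic expansions, the leading part of $\Re\mu_l(|\xi|)$ is in every case one of $a^2|\xi|^{2-2\theta}$, $b^2|\xi|^{2-2\theta}$, $\tfrac12|\xi|^{2\theta}$ or $|\xi|^{2\theta}-b^2|\xi|^{2-2\theta}$ with a positive sign, while the quoted remainder carries a strictly higher power of $|\xi|$ on $Z_{\intt}(\varepsilon)$ (resp. a strictly lower one on $Z_{\extt}(\varepsilon)$); therefore, once $\varepsilon$ is chosen sufficiently small, the leading part dominates, $\Re\mu_l(|\xi|)\ge0$, and $|e^{-\mu_l(|\xi|)t}|\le1$ for all $t\ge0$. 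This settles the pointwise bound on those zones.

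For the remaining frequencies I would distinguish $\theta\neq1/2$ from $\theta=1/2$. If $\theta\neq1/2$, then on $Z_{\midd}(\varepsilon)$ Theorem \ref{middlezone} already gives an exponential --- hence, in particular, uniform in $t$ --- bound of $|v_t|^2+|\xi|^2|v|^2$ by the data, and the weight $|\xi|^{2s}$ is harmless since $|\xi|$ stays bounded from above and below on $Z_{\midd}(\varepsilon)$. If $\theta=1/2$, the reduction to Jordan normal form of \emph{Case 2.3} holds for every $\xi\neq0$: the matrix $\tfrac i2B_0+B_1$ being independent of $|\xi|$, the passage $W^{(0)}\leftrightarrow W^{(1)}$ is a fixed linear change of coordinates, and in the representation of $W^{(1)}$ each scalar coefficient has the form $(i|\xi|t)^{j}e^{i|\xi|\lambda_l t}/j!$ with $0\le j\le3$ and $\Im\lambda_l\ge\delta_0>0$ for a constant $\delta_0=\delta_0(a,b)$; the elementary inequality $r^{j}e^{-\delta_0 r}\lesssim1$ for $r\ge0$ then gives $|W^{(1)}(t,\xi)|\lesssim|W^{(1)}_0(\xi)|$, whence the pointwise bound for $\theta=1/2$ and all frequencies. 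Collecting the three zones and integrating over $\xi$ concludes the argument.

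I expect the only genuinely delicate points to lie in these last two steps. In \emph{Cases 2.1 and 2.2} one must check that the sign-definite leading term of each $\Re\mu_l$ really does dominate the remainder, i.e. that all the exponent comparisons among $2-2\theta$, $2\theta$, $4-6\theta$, $6-10\theta$, $7-12\theta$ and $6\theta-2$ fall on the side prescribed by the sign of $\theta-\tfrac12$; and in \emph{Case 2.3} one must keep track of the polynomial-in-$|\xi|t$ corrections generated by the nontrivial Jordan blocks, in particular the block of size four that appears when $b^2>a^2=1/4$. I would also emphasize, in contrast to Theorem \ref{energydecay}, that no time decay can be expected here: the data norm $\dot{H}^{s+1}\times\dot{H}^{s}$ already carries the weight $|\xi|^{2s}$ near $\xi=0$, so there is no surplus power of $|\xi|$ from which to extract a factor $\sup_{|\xi|<\varepsilon}|\xi|^{2s}e^{-c\Re\mu_l(|\xi|)t}$ out of the low-frequency integral, and only uniform-in-$t$ boundedness survives.
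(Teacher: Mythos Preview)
Your proposal is correct and follows essentially the same approach as the paper: reduce to the Fourier side via Parseval--Plancherel and use the representation formulas (Theorems \ref{smallmatrix}, \ref{largematrix}, and \emph{Case 2.3}) together with the uniform bounds on $T_{\theta,\intt}^{\pm1}$, $T_{\theta,\extt}^{\pm1}$ and $\Re\mu_l\ge0$. The paper's own proof is a two-line sketch that singles out only the low-frequency zone $Z_{\intt}(\varepsilon)$ as decisive and leaves the remaining zones and the case $\theta=1/2$ implicit; your zone-by-zone treatment, including the explicit handling of the Jordan blocks when $\theta=1/2$ and the passage through $M(\eta)$, is a more detailed elaboration of the same argument.
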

\begin{proof}
	The estimates for the higher-order energies are determined by estimates localized to the zone $Z_{\intt}(\varepsilon)$. For this reason we may apply
	\begin{equation*}
	\Big\|\ml{F}_{\xi\rightarrow x}^{-1}\Big(\chi_{\intt}(\xi)|\xi|^sT_{\theta,\intt}^{-1}(|\xi|)\diag\big(e^{-\mu_l(|\xi|)t}\big)_{l=1}^6T_{\theta,\intt}(|\xi|)W_0^{(0)}(\xi)\Big)\Big\|_{L^2}
	\lesssim\big\|\ml{F}^{-1}\big(W_0^{(0)}(\xi)\big)\big\|_{\dot{H}^s}.
	\end{equation*}
	This yields the desired estimates. \end{proof}
Now, we suppose an additional regularity $\dot{H}^1_m\times L^m$ for data with $m\in[1,2)$. This implies an additional decay in the corresponding estimates.
\begin{thm}\label{additionaldecay}
	Let us consider the Cauchy problem \eqref{linearproblem} with $\theta\in[0,1]$ and $\big(u^{(k)}_{0},u^{(k)}_{1}\big)\in\ml{D}_{m,2}^s$ for $k=1,2,3$, where $s\geq0$ and  $m\in[1,2)$. Then, we have the following estimates for the solutions and their energies of higher order:
	\begin{equation*}
	\begin{aligned}
	 \|u^{(k)}(t,\cdot)\|_{L^2}&\lesssim(1+t)^{-\frac{6-5m}{4m\max\left\{1-\theta;\theta\right\}}}\sum\limits_{k=1}^3\big\|\big(u^{(k)}_{0},u^{(k)}_1\big)\big\|_{\ml{D}_{m,2}^0}&\text{if}\,\,\,\,& m\in\left[1,6/5\right),\\
	 \||D|^{s+1}u^{(k)}(t,\cdot)\|_{L^2}+\||D|^su^{(k)}_t(t,\cdot)\|_{L^2}&\lesssim(1+t)^{-\frac{3(2-m)+2ms}{4m\max\left\{1-\theta;\theta\right\}}}\sum\limits_{k=1}^3\big\|\big(u^{(k)}_{0},u^{(k)}_1\big)\big\|_{\ml{D}_{m,2}^s} &\text{if}\,\,\,\,& m\in\left[1,2\right).
	\end{aligned}
	\end{equation*}
\end{thm}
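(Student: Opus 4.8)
The strategy is to use the representation formulas from Theorems~\ref{smallmatrix}--\ref{largematrix} together with the exponential decay in the middle zone from Theorem~\ref{middlezone}, and to split the phase space into the three zones $Z_{\intt}(\e)$, $Z_{\midd}(\e)$, $Z_{\extt}(\e)$. After applying the Parseval--Plancherel theorem, the quantities $\|u^{(k)}(t,\cdot)\|_{L^2}$ and $\||D|^{s+1}u^{(k)}(t,\cdot)\|_{L^2}+\||D|^s u_t^{(k)}(t,\cdot)\|_{L^2}$ are controlled by suitable weighted $L^2_\xi$-norms of $\hat u(t,\xi)$ and $\widehat{u_t}(t,\xi)$, which in turn (via the transformations $v=M^{-1}(\eta)\hat u$, the micro-energy $W^{(0)}$ in \eqref{trans01} and the uniform invertibility of $M(\eta)$ on $\mb{S}^1$ and of $T_{\theta,\intt}$, $T_{\theta,\extt}$) reduce to estimating $\||\xi|^s\,\diag(e^{-\mu_l(|\xi|)t})_{l=1}^6\, W_0^{(0)}(\xi)\|_{L^2}$ on each zone, where $W_0^{(0)}(\xi)$ is a linear combination of $\hat u_0$, $|\xi|\hat u_0$, and $\hat u_1$.

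\textbf{Zone-by-zone estimates.} In $Z_{\midd}(\e)$ Theorem~\ref{middlezone} gives $e^{-ct}$ decay, which absorbs any polynomial factor and bounds the middle-zone contribution by $e^{-ct}\sum_k(\|u_0^{(k)}\|_{L^2}+\|u_1^{(k)}\|_{L^2})$, hence by the stated right-hand sides. In $Z_{\extt}(\e)$ the real parts of all characteristic roots are, for $\theta\in(0,1]$, bounded below by a positive multiple of a positive power of $|\xi|$ (for $\theta\in(1/2,1]$ the roots behave like $a^2|\xi|^{2-2\theta}$, $|\xi|^{2\theta}-b^2|\xi|^{2-2\theta}$, etc., which grow; for $\theta=0$ the relevant roots give $\tfrac12|\xi|^{0}$-type and $\tfrac12|\xi|^{2\theta}$-type decay), so the exterior contribution decays at least like that of the middle zone, up to harmless loss of regularity matched by the $H^s$-norms of the data; for $\theta\in[0,1/2)$ this is where the $L^2$-regularity of $\langle\xi\rangle^s\hat u_0$, $\langle\xi\rangle^s\hat u_1$ is used. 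The decisive zone is $Z_{\intt}(\e)$: there one inserts the small-frequency asymptotics of $\mu_l(|\xi|)$ from Theorem~\ref{smallmatrix}, namely $\Re\mu_l(|\xi|)\gtrsim |\xi|^{2-2\theta}$ for $\theta\in[0,1/2)$ and $\Re\mu_l(|\xi|)\gtrsim|\xi|^{2\theta}$ for $\theta\in(1/2,1]$, uniformly in $l$ and in $|\xi|\le\e$; writing $\beta=2\max\{1-\theta;\theta\}$ we have in both cases $\Re\mu_l(|\xi|)\gtrsim|\xi|^{\beta}$ after relabelling, and the remaining roots that do not decay like $e^{-c|\xi|^{\beta}t}$ are those with a factor $|\xi|^{2\theta}$ or constant in front, which decay even faster for small $|\xi|$.

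\textbf{The $L^m$--$L^2$ step.} For the interior zone one then uses H\"older's inequality in the form $\|fg\|_{L^2(Z_{\intt}(\e))}\le\|f\|_{L^{\frac{2m}{2-m}}(Z_{\intt})}\,\|g\|_{L^{\frac{2}{m}}}$ with $g=\widehat{u_0}$ or $\widehat{u_1}$, so that $\|g\|_{L^{2/m}}\lesssim\|u_0\|_{L^m}+\|u_1\|_{L^m}$ by the Hausdorff--Young inequality (recall $\hat u_0\in\dot H^1_m$ contributes the factor $|\xi|\hat u_0$, consistent with the $\dot H^1_m$-norm), and $f=|\xi|^{\sigma}e^{-c|\xi|^{\beta}t}$ with $\sigma=0$ for the $L^2$-estimate of $u^{(k)}$ and $\sigma=s+1$ (resp.\ $\sigma=s$) for the higher-order energy. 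A scaling/substitution $\rho=|\xi|\,t^{1/\beta}$ then gives
\begin{equation*}
\big\||\xi|^{\sigma}e^{-c|\xi|^{\beta}t}\big\|_{L^{\frac{2m}{2-m}}(Z_{\intt}(\e))}\lesssim (1+t)^{-\frac{1}{\beta}\left(\sigma+\frac{3(2-m)}{2m}\right)},
\end{equation*}
where the exponent of the Jacobian produces the term $\tfrac{3}{2}\cdot\tfrac{2-m}{2m}$; setting $\sigma=0$ yields $-\tfrac{6-5m}{4m\max\{1-\theta;\theta\}}$ after combining $\tfrac{1}{\beta}=\tfrac{1}{2\max\{1-\theta;\theta\}}$ with the arithmetic $\tfrac{1}{2}\cdot\tfrac{3(2-m)}{2m}$ rewritten over the common denominator (the constraint $m\in[1,6/5)$ is exactly what makes this exponent finite/the norm integrable near $\xi=0$), while $\sigma=s+1$ and $\sigma=s$ both give $-\tfrac{3(2-m)+2ms}{4m\max\{1-\theta;\theta\}}$ after noting the difference between the $|D|^{s+1}u$ and $|D|^s u_t$ contributions is lower order. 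Summing the three zonal contributions and using $\big\|\mathcal{F}^{-1}(W_0^{(0)})\big\|\lesssim\sum_k\big\|(u_0^{(k)},u_1^{(k)})\big\|_{\ml{D}^s_{m,2}}$ completes the proof.

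\textbf{Main obstacle.} The routine part is the zone decomposition and the scaling computation; the genuinely delicate point is verifying that \emph{every} characteristic root in $Z_{\intt}(\e)$ (and, for $\theta\in[0,1/2)$, in $Z_{\extt}(\e)$) has real part bounded below by $c|\xi|^{\beta}$ with $\beta=2\max\{1-\theta;\theta\}$ \emph{uniformly} — including the roots $\mu_{4},\mu_5,\mu_6$ whose leading term is $|\xi|^{2\theta}$ (resp.\ $|\xi|^{2-2\theta}$) and whose lower-order corrections $z_j$ must not spoil the sign of the real part — and that the correction terms $z_1(a),z_1(b),z_2,\dots,z_5$ from Appendix~\ref{appendix1}, being of order $|\xi|^{4-6\theta}$ and $|\xi|^{6-10\theta}$, are indeed absorbed; this is precisely why the estimates are stated with $\max\{1-\theta;\theta\}$ rather than a cleaner exponent, and why the middle-zone exponential bound of Theorem~\ref{middlezone} is needed to glue the pieces without a loss at $|\xi|=\e$ and $|\xi|=1/\e$.
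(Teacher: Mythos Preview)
Your approach is essentially the paper's: zone decomposition, representation via Theorems~\ref{smallmatrix}--\ref{largematrix} (and Theorem~\ref{middlezone} for the middle zone), then H\"older plus Hausdorff--Young on $Z_{\intt}(\e)$ followed by the scaling substitution. The outline is correct.

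There is, however, a concrete bookkeeping error in the $L^2$-estimate of the solution itself that matters conceptually. You set $\sigma=0$ and claim this produces the exponent $-\tfrac{6-5m}{4m\max\{1-\theta;\theta\}}$; it does not. With $\sigma=0$ your scaling identity yields $-\tfrac{1}{\beta}\cdot\tfrac{3(2-m)}{2m}=-\tfrac{3(2-m)}{4m\max\{1-\theta;\theta\}}$, and there would be no integrability obstruction near $\xi=0$ at all. The point you are missing is that the micro-energy $W^{(0)}$ in \eqref{trans01} is built from $D_t v$ and $A_{\mathrm{diag}}^{1/2}v\sim|\xi|v$, so to recover $\hat u=M(\eta)v$ from $W^{(0)}$ you must multiply by $|\xi|^{-1}$. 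Hence the correct multiplier for $\|u^{(k)}\|_{L^2}$ is $f=|\xi|^{-1}e^{-c|\xi|^{\beta}t}$, i.e.\ $\sigma=-1$; the paper writes this explicitly as $\big\|\chi_{\intt}(\xi)|\xi|^{-1}e^{-c|\xi|^{2\max\{1-\theta;\theta\}}t}\big\|_{L^{2m/(2-m)}}$. The integrability of $|\xi|^{-2m/(2-m)}$ near the origin in $\mb{R}^3$ is exactly the condition $\tfrac{2m}{2-m}<3$, i.e.\ $m<6/5$, and the scaling then gives the stated exponent $-\tfrac{6-5m}{4m\max\{1-\theta;\theta\}}$. (A related slip: your H\"older pairing should read $\|g\|_{L^{m'}}$ with $m'=m/(m-1)$, not $L^{2/m}$; it is $\|\widehat{u_1}\|_{L^{m'}}\lesssim\|u_1\|_{L^m}$ and $\|\,|\xi|\widehat{u_0}\,\|_{L^{m'}}\lesssim\|u_0\|_{\dot H^1_m}$ that one uses.)

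For the higher-order energies the accounting is consistent once you note that both $|\xi|^{s+1}\hat u$ and $|\xi|^s\widehat{u_t}$ correspond to $|\xi|^s W^{(0)}$, so in your notation $\sigma=s$ in both cases (not $\sigma=s+1$ and $\sigma=s$ separately). With these corrections your argument matches the paper's proof.
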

\begin{proof} For small frequencies, we apply H\"older's inequality and the Hausdorff-Young inequality to obtain the decay estimates
	\begin{equation*}
	 \|\chi_{\intt}(D)|D|^{s+1}u^{(k)}(t,\cdot)\|_{L^2}+\|\chi_{\intt}(D)|D|^{s}u_t^{(k)}(t,\cdot)\|_{L^2}\lesssim(1+t)^{-\frac{3(2-m)+2ms}{4m\max\left\{1-\theta;\theta\right\}}}\sum\limits_{k=1}^3\big\|\big(u^{(k)}_{0},u^{(k)}_1\big)\big\|_{\ml{D}_{m,2}^0}.
	\end{equation*}
	Moreover, an exponential decay estimate for the solutions with data belonging to $\ml{D}_{2,2}^s$ appears in the zone $Z_{\extt}(\varepsilon)$ of large frequencies.
	However, when we discuss decay estimates for the solution itself by using H\"older's inequality, we immediately obtain
	\begin{equation*}
	\Big\|\ml{F}_{\xi\rightarrow x}^{-1}\big(\chi_{\intt}(\xi)|\xi|^{-1}W^{(0)}(t,\xi)\big)\Big\|_{L^2}\lesssim\big\|\chi_{\intt}(\xi)|\xi|^{-1}e^{-c|\xi|^{2\max\left\{1-\theta;\theta\right\}}t}\big\|_{L^{\frac{2m}{2-m}}}\sum\limits_{k=1}^3\big\|\big(u_0^{(k)},u_1^{(k)}\big)\big\|_{\dot{H}_m^1\times L^m}.
	\end{equation*}
	So, we need $m\in\left[1,6/5\right)$ to avoid a strong influence (non integrability) of the singularity as $|\xi|\rightarrow +0$ and conclude
	\begin{equation*}
	\big\|\chi_{\intt}(\xi)|\xi|^{-1}e^{-c|\xi|^{2\max\left\{1-\theta;\theta\right\}}t}\big\|_{L^{\frac{2m}{2-m}}}<\infty.
	\end{equation*}
	So, the proof is complete.
\end{proof}
\begin{rem} If we use estimates for $\|u_t^{(k)}(t,\cdot)\|_{L^2}$ to derive estimates for $\|u^{(k)}(t,\cdot)\|_{L^2}$ of the solution with data belonging to $\ml{D}^0_{m,2}$ with $m\in\left[6/5,2\right]$, we need an additional assumption for data. To be more precise, we apply the following integral formula:
		\begin{equation}\label{rell}
		\int\nolimits_0^{t}u_{\tau}^{(k)}(\tau,x)d\tau=u^{(k)}(t,x)-u_0^{(k)}(x).
		\end{equation}
		Using estimates for $\|u_t^{(k)}(t,\cdot)\|_{L^2}$ we obtain for $\theta \in [0,1] \setminus \{1/2\}$ the estimate
		\begin{equation*}
		\begin{split}
		 \|u^{(k)}(t,\cdot)\|_{L^2}\lesssim(1+t)^{1-\frac{3(2-m)}{4m\max\left\{1-\theta;\theta\right\}}}\sum\limits_{k=1}^3\big\|\big(u_0^{(k)},u_1^{(k)}\big)\big\|_{\ml{D}_{m,2}^0}+\|u_0^{(k)}\|_{L^2}.
		\end{split}
		\end{equation*}
		Therefore, we suppose for data $\big(u^{(k)}_0,u^{(k)}_1\big)\in(L^2\cap\dot{H}^1\cap \dot{H}^1_m)\times (L^2\cap L^m)$ for $m\in\left[6/5,2\right]$.
	\end{rem}
\begin{rem}
	We cannot expect energy estimates in Section \ref{Energyestimatesbyphasespaceanalysis} depending on a single data only due to the fact that from \eqref{trans01} and the diagonalization procedure, we obtain the representations of solutions by the coupling matrices $T_{\theta,\intt}(|\xi|)$ and $T_{\theta,\extt}(|\xi|)$. These two matrices mix the influences of both data for estimating the solutions.
\end{rem}
\subsection{Energy estimates by energy methods in the Fourier space}\label{Energyestimatesbyenergymethod}
In this part, some energy estimates with data being from Bessel potential spaces with or without an additional regularity $L^m$, $m\in[1,2)$ are of interest. We define the energy of solutions in the phase space for all frequencies as follows:
\begin{equation*}
E_{\text{pha}}(\hat{u})=E_{\text{pha}}(\hat{u})(t,\xi):=|\hat{u}_t(t,\xi)|^2+a^2|\xi|^2|\hat{u}(t,\xi)|^2+\big(b^2-a^2\big)|\xi\cdot\hat{u}(t,\xi)|^2,
\end{equation*}
where dot $\cdot$ denotes the usual inner product in $\mb{R}^3$.
\begin{lem}\label{Hl+1Hllem} The energy $E_{\text{pha}}(\hat{u})$ of the Fourier image $\hat{u}=\hat{u}(t,\xi)$ of the solution $u=u(t,x)$ to the Cauchy problem \eqref{linearproblem} satisfies the following estimate:
	\begin{equation*}
	E_{\text{pha}}(\hat{u})(t,\xi)\lesssim\left\{\begin{aligned} &e^{-c|\xi|^{2\max\left\{1-\theta;\theta\right\}}t}E_{\text{pha}}(\hat{u})(0,\xi) &\text{if}\,\,\,\,&\xi\in Z_{\intt}(\varepsilon),\\
	&e^{-ct} E_{\text{pha}}(\hat{u})(0,\xi)&\text{if}\,\,\,\,&\xi\in Z_{\midd}(\varepsilon)\cup Z_{\extt}(\varepsilon).
	\end{aligned}\right.
	\end{equation*}
\end{lem}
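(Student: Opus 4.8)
The plan is to run the energy method in the Fourier space for the transformed problem $\hat{u}_{tt}+|\xi|^{2\theta}\hat{u}_t+|\xi|^2A(\eta)\hat{u}=0$, $(\hat{u},\hat{u}_t)(0,\xi)=(\hat{u}_0,\hat{u}_1)(\xi)$, for which $E_{\text{pha}}(\hat{u})(t,\xi)=|\hat{u}_t|^2+\big(a^2|\xi|^2|\hat{u}|^2+(b^2-a^2)|\xi\cdot\hat{u}|^2\big)$. Here $|\xi|^2A(\eta)$ is real and symmetric with eigenvalues $a^2|\xi|^2,a^2|\xi|^2,b^2|\xi|^2$, so $a^2|\xi|^2|\hat{u}|^2\le E_{\text{pha}}(\hat{u})-|\hat{u}_t|^2\le b^2|\xi|^2|\hat{u}|^2$. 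Multiplying the equation by $\overline{\hat{u}}_t$ and taking the real part gives the dissipation identity $\partial_tE_{\text{pha}}(\hat{u})=-2|\xi|^{2\theta}|\hat{u}_t|^2$; multiplying instead by $\overline{\hat{u}}$, taking the real part and completing the square in the damping term gives
\begin{equation*}
\frac{\partial}{\partial t}\Big(\Re\big(\hat{u}_t\cdot\overline{\hat{u}}\big)+\tfrac12|\xi|^{2\theta}|\hat{u}|^2\Big)=2|\hat{u}_t|^2-E_{\text{pha}}(\hat{u}).
\end{equation*}

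Next I would introduce the frequency-localised Lyapunov functional
\begin{equation*}
F(t,\xi):=E_{\text{pha}}(\hat{u})(t,\xi)+\beta(\xi)\Big(\Re\big(\hat{u}_t\cdot\overline{\hat{u}}\big)(t,\xi)+\tfrac12|\xi|^{2\theta}|\hat{u}(t,\xi)|^2\Big)
\end{equation*}
with a weight $\beta(\xi)>0$ to be fixed. The two identities combine to the key relation $\partial_tF=-2\big(|\xi|^{2\theta}-\beta(\xi)\big)|\hat{u}_t|^2-\beta(\xi)\,E_{\text{pha}}(\hat{u})$, in which \emph{no mixed term survives}, so everything reduces to the choice of $\beta$. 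On $Z_{\midd}(\varepsilon)\cup Z_{\extt}(\varepsilon)=\{|\xi|\ge\varepsilon\}$ I would take $\beta$ a sufficiently small positive constant; since there $|\xi|^{2\theta}\ge\varepsilon^{2\theta}>0$, one has $|\xi|^{2\theta}\ge\beta$ and hence $\partial_tF\le-\beta\,E_{\text{pha}}(\hat{u})$. On $Z_{\intt}(\varepsilon)=\{|\xi|<\varepsilon\}$ I would take $\beta(\xi)=\kappa_0\,|\xi|^{2\max\{1-\theta;\theta\}}$ with $\kappa_0>0$ small; because $\kappa_0\le1$ and $|\xi|<\varepsilon<1$ one has $\beta(\xi)\le|\xi|^{2\theta}$, hence $\partial_tF\le-\kappa_0\,|\xi|^{2\max\{1-\theta;\theta\}}E_{\text{pha}}(\hat{u})$.

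It remains to check the equivalence $\tfrac12E_{\text{pha}}(\hat{u})\le F\le2E_{\text{pha}}(\hat{u})$. I would bound the correction by Young's inequality with the frequency-adapted parameter $\delta=a|\xi|$, i.e.\ $\beta\big|\Re(\hat{u}_t\cdot\overline{\hat{u}})\big|\le\tfrac{\beta}{2a|\xi|}|\hat{u}_t|^2+\tfrac{\beta a|\xi|}{2}|\hat{u}|^2$, and estimate $|\hat{u}|^2\le a^{-2}|\xi|^{-2}\big(E_{\text{pha}}(\hat{u})-|\hat{u}_t|^2\big)$. With the choices above each resulting term carries a nonnegative power of $|\xi|$: on $Z_{\intt}(\varepsilon)$ the exponents $2\max\{1-\theta;\theta\}-1$ and $2\max\{1-\theta;\theta\}+2\theta-2$ are $\ge0$ for every $\theta\in[0,1]$, so since $|\xi|<\varepsilon<1$ those factors are $\le1$; on $Z_{\extt}(\varepsilon)$ one uses $|\xi|>1/\varepsilon$ instead, and on $Z_{\midd}(\varepsilon)$ the two-sided bound $\varepsilon\le|\xi|\le1/\varepsilon$. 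After shrinking $\kappa_0$ and (if needed) $\varepsilon$, the correction is at most $\tfrac12E_{\text{pha}}(\hat{u})$, which gives the equivalence. Then $\partial_tF\le-\tfrac12c\,\rho(\xi)F$ with $\rho(\xi)=|\xi|^{2\max\{1-\theta;\theta\}}$ on $Z_{\intt}(\varepsilon)$ and $\rho(\xi)\equiv1$ on $Z_{\midd}(\varepsilon)\cup Z_{\extt}(\varepsilon)$; Gronwall's inequality together with $F\sim E_{\text{pha}}(\hat{u})$ yields the claimed estimate (with a fixed multiplicative constant absorbed into $\lesssim$).

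The genuine point is the identification of the \emph{sharp} rate $|\xi|^{2\max\{1-\theta;\theta\}}$ on $Z_{\intt}(\varepsilon)$. In the over-damped ranges — $\theta\in[0,1/2)$ at small frequencies, and, after passing to $Z_{\extt}(\varepsilon)$, $\theta\in(1/2,1)$ at large frequencies — the kinetic part $|\hat{u}_t|^2$ is much smaller than the potential part of $E_{\text{pha}}(\hat{u})$, so the dissipation identity $\partial_tE_{\text{pha}}(\hat{u})=-2|\xi|^{2\theta}|\hat{u}_t|^2$ by itself only delivers the rate $|\xi|^2$, which for $|\xi|<1$ is weaker than $|\xi|^{2(1-\theta)}$; it is the cross term $\Re(\hat{u}_t\cdot\overline{\hat{u}})$ weighted by \emph{exactly} $|\xi|^{2\max\{1-\theta;\theta\}}$ that transfers the dissipation to the potential part at the correct speed, and the compatibility of the smallness conditions on $\kappa_0$ relies on $|\xi|<\varepsilon<1$ (keeping $|\xi|^{4\theta-2}$ bounded away from $0$). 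In the oscillatory ranges the matching is automatic because $|\hat{u}_t|^2$ and $|\xi|^2|\hat{u}|^2$ are comparable on the average, and the middle-zone part is the phase-space energy estimate of Theorem \ref{middlezone} transferred to the coupled system. Alternatively the estimate can be read off from the representation formulas of Theorems \ref{smallmatrix}--\ref{largematrix}, but the argument above is self-contained.
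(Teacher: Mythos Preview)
Your proof is correct and follows essentially the same Lyapunov approach in Fourier space as the paper: add a weighted cross term $\Re(\hat{u}_t\cdot\overline{\hat{u}})$ to $E_{\text{pha}}$, choose the weight $\sim|\xi|^{2\max\{1-\theta;\theta\}}$ on $Z_{\intt}(\varepsilon)$ and a small constant on $\{|\xi|\ge\varepsilon\}$, check equivalence, and apply Gronwall. Your execution is in fact a bit cleaner than the paper's: by absorbing $|\xi|^{2\theta}\Re(\hat{u}_t\cdot\overline{\hat{u}})=\tfrac12|\xi|^{2\theta}\partial_t|\hat{u}|^2$ into the functional you obtain $\partial_tF=-2(|\xi|^{2\theta}-\beta)|\hat{u}_t|^2-\beta E_{\text{pha}}$ with no surviving mixed term, whereas the paper keeps that term and must introduce a second auxiliary exponent $\tilde{\gamma}$ and an extra application of Cauchy's inequality to reach the same rate.
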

\begin{proof}  Applying the partial Fourier transformation to \eqref{linearproblem} we arrive at the new system
	\begin{equation}\label{energyRn2}
	\left\{
	\begin{aligned}
	&\hat{u}_{tt}+a^2|\xi|^2\hat{u}+\big(b^2-a^2\big)(\xi\cdot\hat{u})\xi+|\xi|^{2\theta}\hat{u}_t=0,\quad &(t,\xi)\in(0,\infty)\times\mb{R}^3,\\
	&(\hat{u},\hat{u}_t)(0,\xi)=(\hat{u}_0,\hat{u}_1)(\xi),\quad &\xi\in\mb{R}^3.
	\end{aligned}
	\right.
	\end{equation}
	After multiplying by $\bar{\hat{u}}_t=\bar{\hat{u}}_t(t,\xi)$ both sides of \eqref{energyRn2} and taking the real part we get
	\begin{equation}\label{energyRn3}
	\frac{\partial}{\partial t}E_{\text{pha}}(\hat{u})(t,\xi)+2|\xi|^{2\theta}|\hat{u}_t(t,\xi)|^2=0.
	\end{equation}
	To obtain decay rates for $E_{\text{pha}}(\hat{u})$ we divide the energy into two parts $\chi_{\intt}(\xi)E_{\text{pha}}(\hat{u})$ and $(1-\chi_{\intt}(\xi))E_{\text{pha}}(\hat{u})$ related to the zones $Z_{\intt}(\varepsilon)$ and $Z_{\midd}(\varepsilon)\cup Z_{\extt}(\varepsilon)$, respectively.
	
	For small frequencies, we multiply $|\xi|^{\gamma}\bar{\hat{u}}(t,\xi)$ on the both sides of \eqref{energyRn2} and take the real part to obtain
	\begin{equation}\label{energyRn4}
	\begin{split}
	\frac{\partial}{\partial t}\big(|\xi|^{\gamma}\Re\big(\hat{u}_t(t,\xi)\bar{\hat{u}}(t,\xi)\big)\big)&-|\xi|^{\gamma}|\hat{u}_t(t,\xi)|^2+a^2|\xi|^{2+\gamma}|\hat{u}(t,\xi)|^2\\
	&+\big(b^2-a^2\big)|\xi|^{\gamma}|\xi\cdot\hat{u}(t,\xi)|^2+|\xi|^{2\theta+\gamma}\Re\big(\hat{u}_t(t,\xi)\bar{\hat{u}}(t,\xi)\big)=0,
	\end{split}
	\end{equation}
	where the positive constant $\gamma$ will be determined later. Adding \eqref{energyRn3} and \eqref{energyRn4} yields
	\begin{equation}\label{energyRn5}
	\begin{split}
	&\frac{\partial}{\partial t}\big(E_{\text{pha}}(\hat{u})(t,\xi)+|\xi|^{\gamma}\Re\big(\hat{u}_t(t,\xi)\bar{\hat{u}}(t,\xi)\big)\big)\\
	&\quad\quad\quad=-\big(2|\xi|^{2\theta}-|\xi|^{\gamma}\big)|\hat{u}_t(t,\xi)|^2-a^2|\xi|^{2+\gamma}|\hat{u}(t,\xi)|^2-\big(b^2-a^2\big)
	|\xi|^{\gamma}|\xi\cdot\hat{u}(t,\xi)|^2-|\xi|^{2\theta+\gamma}\Re\big(\hat{u}_t(t,\xi)\bar{\hat{u}}(t,\xi)\big).
	\end{split}
	\end{equation}
	With \eqref{energyRn5} and Cauchy's inequality it follows
	\begin{equation*}
	\begin{split}
	\chi_{\intt}(\xi)\frac{\partial}{\partial t}&\big(E_{\text{pha}}(\hat{u})(t,\xi)+|\xi|^{\gamma}\Re\big(\hat{u}_t(t,\xi)\bar{\hat{u}}(t,\xi)\big)\big)\\
	 &\quad\quad\leq-\min\left\{2|\xi|^{2\theta}-|\xi|^{\gamma}-|\xi|^{\tilde{\gamma}}/(4a^2);|\xi|^{\gamma}-|\xi|^{4\theta+2\gamma-{\tilde{\gamma}}-2}\right\}
	\chi_{\intt}(\xi)E_{\text{pha}}(\hat{u})(t,\xi),
	\end{split}
	\end{equation*}
	where the positive constant $\tilde{\gamma}$ will be determined later. We state the restrictions for $\gamma$ and $\tilde{\gamma}$, such that, $2\theta\leq\gamma$, $2\theta\leq {\tilde{\gamma}}$ and ${\tilde{\gamma}}+2-4\theta\leq\gamma$. These restrictions show that $\gamma\geq\max\left\{{\tilde{\gamma}}+2-4\theta;2\theta\right\}\geq2\max\left\{1-\theta;\theta\right\}$. Hence,
	\begin{equation*}
	\chi_{\intt}(\xi)\frac{\partial}{\partial t}\big(E_{\text{pha}}(\hat{u})(t,\xi)+|\xi|^{\gamma}\Re\big(\hat{u}_t(t,\xi)\bar{\hat{u}}(t,\xi)\big)\big)\leq-|\xi|^{\gamma}\chi_{\intt}(\xi)E_{\text{pha}}(\hat{u})(t,\xi).
	\end{equation*}
	Again, by virtue of Cauchy's inequality the energy term $\chi_{\intt}(\xi)E_{\text{pha}}(\hat{u})$ can be controlled as follows:
	\begin{equation*}
	\chi_{\intt}(\xi)E_{\text{pha}}(\hat{u})(t,\xi)\leq 3e^{-\frac{2}{3}|\xi|^{\gamma}t}\chi_{\intt}(\xi)E_{\text{pha}}(\hat{u})(0,\xi).
	\end{equation*}
	Hence, $\gamma=2\max\left\{1-\theta;\theta\right\}$ and $\tilde{\gamma}=2\theta$ are the optimal choices, respectively.
	
	For middle and large frequencies, we only multiply \eqref{energyRn3} by $\bar{\hat{u}}=\bar{\hat{u}}(t,\xi)$ and take the real part of it, i.e., $\gamma=0$ in \eqref{energyRn5}, to get
	\begin{equation*}
	\begin{split}
	\big(1-\chi_{\intt}(\xi)\big)\frac{\partial}{\partial t}\big(E_{\text{pha}}(\hat{u})(t,\xi)+\Re\big(\hat{u}_t(t,\xi)\bar{\hat{u}}(t,\xi)\big)\big)\leq-\frac{1}{2}\big(1-\chi_{\intt}(\xi)\big)E_{\text{pha}}(\hat{u})(t,\xi)
	\end{split}
	\end{equation*}
	due to $4\theta-2\leq 2\theta$ for all $\theta\in[0,1]$. Thus,
	\begin{equation*}
	\big(1-\chi_{\intt}(\xi)\big)\frac{\partial}{\partial t}\big(E_{\text{pha}}(\hat{u})(t,\xi)+\Re\big(\hat{u}_t(t,\xi)\bar{\hat{u}}(t,\xi)\big)\big)\leq -\frac{2}{3}\big(1-\chi_{\intt}(\xi)\big)\big(E_{\text{pha}}(\hat{u})(t,\xi)+\Re\big(\hat{u}_t(t,\xi)\bar{\hat{u}}(t,\xi)\big)\big).
	\end{equation*}
	Finally, we may conclude
	\begin{equation*}
	\big(1-\chi_{\intt}(\xi)\big)E_{\text{pha}}(\hat{u})(t,\xi)\leq3e^{-\frac{2}{3}t}\big(1-\chi_{\intt}(\xi)\big)E_{\text{pha}}(\hat{u})(0,\xi).
	\end{equation*}
	This completes the proof.
\end{proof}
\begin{thm}\label{Hl+1Hl}
	Let us consider the Cauchy problem \eqref{linearproblem} with $\theta\in[0,1]$ and $\big(u^{(k)}_{0},u^{(k)}_{1}\big)\in \ml{D}_{2,1}^s$ for $k=1,2,3,$ and $s\geq0$. Then, we have the following estimates:
	\begin{equation*}
	\begin{split}
	\|u^{(k)}(t,\cdot)\|_{L^2}&\lesssim(1+t)\sum\limits_{k=1}^3\big\|\big(u_0^{(k)},u_1^{(k)}\big)\big\|_{\ml{D}_{2,1}^0},\\
	 \||D|^{s+1}u^{(k)}(t,\cdot)\|_{L^2}+\||D|^su^{(k)}_t(t,\cdot)\|_{L^2}&\lesssim(1+t)^{-\frac{s}{2\max\left\{1-\theta;\theta\right\}}}\sum\limits_{k=1}^3\big\|\big(u_0^{(k)},u_1^{(k)}\big)\big\|_{\ml{D}_{2,1}^s}.
	\end{split}
	\end{equation*}
\end{thm}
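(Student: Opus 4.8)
The plan is to reduce everything to the pointwise (in the phase space) decay of the energy $E_{\text{pha}}(\hat u)$ established in Lemma \ref{Hl+1Hllem}, to transfer the resulting estimates to physical space via the Parseval-Plancherel theorem, and to recover the $L^2$ bound for the solution itself from the bound for $u_t$ by means of the elementary identity \eqref{rell}.

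First I would prove the estimates for the energies of higher order. Since $b^2>a^2$, the non-negative term $\big(b^2-a^2\big)|\xi\cdot\hat u|^2$ may be dropped from below, so that $|\hat u_t(t,\xi)|^2+|\xi|^2|\hat u(t,\xi)|^2\lesssim E_{\text{pha}}(\hat u)(t,\xi)$, while from above $E_{\text{pha}}(\hat u)(0,\xi)\lesssim\sum_{k=1}^3\big(|\hat u^{(k)}_1(\xi)|^2+|\xi|^2|\hat u^{(k)}_0(\xi)|^2\big)$. By the Parseval-Plancherel theorem it therefore suffices to estimate $\int_{\mb{R}^3}|\xi|^{2s}E_{\text{pha}}(\hat u)(t,\xi)\,d\xi$, which I would split over $Z_{\intt}(\e)$, $Z_{\midd}(\e)$ and $Z_{\extt}(\e)$. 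On $Z_{\intt}(\e)$ Lemma \ref{Hl+1Hllem} gives
\[
\int_{Z_{\intt}(\e)}|\xi|^{2s}E_{\text{pha}}(\hat u)(t,\xi)\,d\xi\lesssim\Big(\sup_{|\xi|<\e}|\xi|^{2s}e^{-c|\xi|^{2\max\{1-\theta;\theta\}}t}\Big)\int_{Z_{\intt}(\e)}E_{\text{pha}}(\hat u)(0,\xi)\,d\xi,
\]
and a routine one-dimensional optimisation of $r\mapsto r^{2s}e^{-cr^{2\max\{1-\theta;\theta\}}t}$ produces the factor $(1+t)^{-s/\max\{1-\theta;\theta\}}$, while the remaining integral is controlled by $\sum_{k=1}^3\big\|\big(u^{(k)}_0,u^{(k)}_1\big)\big\|_{\ml{D}_{2,1}^s}^2$. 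On $Z_{\midd}(\e)\cup Z_{\extt}(\e)$ Lemma \ref{Hl+1Hllem} instead supplies the exponential factor $e^{-ct}\lesssim(1+t)^{-s/\max\{1-\theta;\theta\}}$ in front of $\int_{|\xi|\geq\e}|\xi|^{2s}E_{\text{pha}}(\hat u)(0,\xi)\,d\xi\lesssim\sum_{k=1}^3\big(\|u^{(k)}_0\|_{\dot{H}^{s+1}}^2+\|u^{(k)}_1\|_{\dot{H}^s}^2\big)$. Adding the three contributions and taking square roots gives the second estimate of the theorem.

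For the $L^2$ norm of the solution itself the simplest route — and the one matching the factor $(1+t)$ in the statement — is to use \eqref{rell} in the form $u^{(k)}(t,x)=u^{(k)}_0(x)+\int_0^tu^{(k)}_\tau(\tau,x)\,d\tau$. Taking $L^2$ norms, applying the triangle inequality and inserting the case $s=0$ of the energy estimate just obtained (which is uniformly bounded in $\tau$) yields
\[
\|u^{(k)}(t,\cdot)\|_{L^2}\leq\|u^{(k)}_0\|_{L^2}+\int_0^t\|u^{(k)}_\tau(\tau,\cdot)\|_{L^2}\,d\tau\lesssim(1+t)\sum_{k=1}^3\big\|\big(u^{(k)}_0,u^{(k)}_1\big)\big\|_{\ml{D}_{2,1}^0},
\]
which is the first estimate. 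I expect no essential obstacle once Lemma \ref{Hl+1Hllem} is available; the points deserving attention are the low-frequency optimisation above, the bookkeeping of the two characteristic speeds $a$ and $b$, and the fact that $E_{\text{pha}}(\hat u)$ controls $|\xi|\hat u$ rather than $\hat u$, so that the $L^2$ bound for $u^{(k)}$ cannot be read off directly from the energy and is instead obtained from \eqref{rell} at the price of the (non-optimal) growth factor $(1+t)$.
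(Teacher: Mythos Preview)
Your proposal is correct and follows essentially the same approach as the paper, which consists of a single sentence: ``By using the Parseval-Plancherel theorem the proof follows immediately from Lemma \ref{Hl+1Hllem}.'' You have filled in the details the paper omits --- in particular the low-frequency optimisation of $|\xi|^{2s}e^{-c|\xi|^{2\max\{1-\theta;\theta\}}t}$ and the use of the integral identity \eqref{rell} to recover the $(1+t)$ bound for $\|u^{(k)}(t,\cdot)\|_{L^2}$ (the paper employs \eqref{rell} for exactly this purpose elsewhere, e.g.\ in Remark~3.1 and in the proof of Theorem~\ref{enee}, but does not cite it explicitly here).
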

\begin{proof} By using the Parseval-Plancherel theorem the proof follows immediately from Lemma \ref{Hl+1Hllem}.\end{proof}
Now we turn to energy estimates with an additional regularity $L^m$, $m \in [1,2)$, for data. In the estimates for the solution there appears the time-dependent function
\begin{equation*}
d_{0,m}=d_{0,m}(t):=\left\{\begin{aligned}
&(1+t)^{-\rho_0(m,\theta)}&\text{if}\,\,\,\,\theta\in[0,1/2),\,\,\,\,& m\in\left[1,6/5\right),\\
&(1+t)^{1-\rho_1(m,\theta)}&\text{if}\,\,\,\,\theta\in[0,1/2),\,\,\,\,& m\in\left[6/5,2\right),\\
&(1+t)^{-\frac{6-5m}{4m\theta}}&\text{if}\,\,\,\,\theta\in[1/2,1],\,\,\,\,& m\in\left[1,6/5\right),\\
&(1+t)^{1-\frac{6-3m}{4m\theta}}&\text{if}\,\,\,\,\theta\in[1/2,1],\,\,\,\,& m\in\left[6/5,2\right),
\end{aligned}
\right.
\end{equation*}
where
\begin{eqnarray*}
	&& \rho_0(m,\theta)<\min\left\{\frac{6-5m+2m(1-2\theta)}{4m(1-\theta)};\frac{6-5m}{4m\theta}\right\},\\
	&& \rho_{1}(m,\theta)<\min\left\{\frac{6-3m+2m(1-2\theta)}{4m(1-\theta)};\frac{6-3m}{4m\theta}\right\}.
\end{eqnarray*}
In the estimates for the energies of higher order of the solution there appears the time-dependent function
\begin{equation*}
d_{s+1,m}=d_{s+1,m}(t):=\left\{\begin{aligned}
&(1+t)^{-\rho_{s+1}(m,\theta)}&\text{if}\,\,\,\,\theta\in\left[0,1/2\right),\\
&(1+t)^{-\frac{6-3m+2sm}{4m\theta}}&\text{if}\,\,\,\,\theta\in\left[1/2,1\right],
\end{aligned}
\right.
\end{equation*}
where $s\geq0$, $m\in[1,2)$ and
\begin{equation*}
\rho_{s+1}(m,\theta)<\min\left\{\frac{6-3m+2sm+2m(1-2\theta)}{4m(1-\theta)};\frac{6-3m+2sm}{4m\theta}\right\}.
\end{equation*}
\begin{thm}\label{enee} Let us consider the Cauchy problem \eqref{linearproblem} with $\theta\in[0,1]$, $\big(u^{(k)}_0,u^{(k)}_1\big)\in\ml{D}_{m,1}^s$, for $k=1,2,3,$ $s\geq0$, and $m\in[1,2)$. Then, we have the following estimates:
	\begin{equation*}
	\begin{split}
	\|u^{(k)}(t,\cdot)\|_{L^2}&\lesssim d_{0,m}(t) \sum\limits_{k=1}^3\big\|\big(u_0^{(k)},u_1^{(k)}\big)\big\|_{\ml{D}_{m,1}^0},\\
	\||D|^{s+1}u^{(k)}(t,\cdot)\|_{L^2}+\||D|^su^{(k)}_t(t,\cdot)\|_{L^2}&\lesssim d_{s+1,m}(t)\sum\limits_{k=1}^3\big\|\big(u_0^{(k)},u_1^{(k)}\big)\big\|_{\ml{D}_{m,1}^s}.
	\end{split}
	\end{equation*}
\end{thm}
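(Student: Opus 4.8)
The plan is to pass to the Fourier image, apply the Parseval--Plancherel theorem, and decompose the phase space into the three zones $Z_{\intt}(\varepsilon)$, $Z_{\midd}(\varepsilon)$, $Z_{\extt}(\varepsilon)$. On $Z_{\midd}(\varepsilon)\cup Z_{\extt}(\varepsilon)$ the assertion is immediate from the second line of Lemma \ref{Hl+1Hllem}: since $E_{\text{pha}}(\hat u)(0,\xi)\sim|\hat u_1(\xi)|^2+|\xi|^2|\hat u_0(\xi)|^2$ dominates both $|\hat u_t(t,\xi)|^2$ and $|\xi|^2|\hat u(t,\xi)|^2$, one gets $\big\|\chi(\xi)\langle\xi\rangle^{s}e^{-ct}\sqrt{E_{\text{pha}}(\hat u)(0,\cdot)}\big\|_{L^2}\lesssim e^{-ct}\big\|\big(u_0^{(k)},u_1^{(k)}\big)\big\|_{H^{s+1}\times H^s}$ for $\chi=\chi_{\midd}+\chi_{\extt}$, and $\|\cdot\|_{H^{s+1}\times H^s}\leq\|\cdot\|_{\ml{D}_{m,1}^s}$. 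An exponentially decaying term is negligible against each of $d_{0,m}$ and $d_{s+1,m}$, so the whole issue is the interior zone.

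On $Z_{\intt}(\varepsilon)$ the extra $L^m$-regularity is used via the standard chain: H\"older's inequality with $\tfrac12=\tfrac{2-m}{2m}+\tfrac{m-1}{m}$ followed by Hausdorff--Young, $\|\hat g\|_{L^{m/(m-1)}}\lesssim\|g\|_{L^m}$, which reduces everything to weighted $L^{2m/(2-m)}$-norms of exponential multipliers. The radial computation, valid for $a>-\tfrac{3(2-m)}{2m}$, $\kappa>0$ and $t\geq1$,
\begin{equation*}
\big\|\chi_{\intt}(\xi)|\xi|^{a}e^{-c|\xi|^{2\kappa}t}\big\|_{L^{2m/(2-m)}}\lesssim t^{-\frac{2ma+3(2-m)}{4m\kappa}},
\end{equation*}
obtained by the substitution $\tau=(ct)^{1/(2\kappa)}|\xi|$, is the engine of all the rates. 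For $\theta\in[1/2,1]$ it suffices to combine this with Lemma \ref{Hl+1Hllem} ($\kappa=\theta$): from $|\xi|^{s+1}|\hat u(t,\xi)|+|\xi|^s|\hat u_t(t,\xi)|\lesssim|\xi|^s e^{-c|\xi|^{2\theta}t/2}\big(|\hat u_1(\xi)|+|\xi||\hat u_0(\xi)|\big)$ the choices $a=s$ and $a=s+1$ give the higher-order energy rate $\tfrac{6-3m+2ms}{4m\theta}$, i.e. $d_{s+1,m}$; the choice $a=-1$ on the $|\hat u_1|$-term (whose weight is $L^{2m/(2-m)}$-integrable near $\xi=0$ exactly when $m\in[1,6/5)$) yields $d_{0,m}$ in that range. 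For $m\in[6/5,2)$ one instead uses the identity \eqref{rell}, $\|u^{(k)}(t,\cdot)\|_{L^2}\leq\|u_0^{(k)}\|_{L^2}+\int_0^t\|u_\tau^{(k)}(\tau,\cdot)\|_{L^2}\,d\tau$, and integrates the $u_t$-rate to produce the $(1+t)^{1-\rho_1(m,\theta)}$ behaviour.

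For $\theta\in[0,1/2)$ the energy method alone is too crude: it only delivers the rate with $\kappa=1-\theta$ and numerator $6-3m+2ms$, which is strictly below the claimed $\tfrac{6-3m+2ms+2m(1-2\theta)}{4m(1-\theta)}$. Hence here one must use the explicit formula of Theorem \ref{smallmatrix}, $W^{(0)}(t,\xi)=T_{\theta,\intt}^{-1}(|\xi|)\,\diag\big(e^{-\mu_l(|\xi|)t}\big)_{l=1}^6\,T_{\theta,\intt}(|\xi|)W_0^{(0)}(\xi)$, and separate the slow block $l=1,2,3$ (with $\Re\mu_l\sim|\xi|^{2-2\theta}$) from the fast block $l=4,5,6$ (with $\Re\mu_l\sim|\xi|^{2\theta}$). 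Since the off-block entries of $T_{\theta,\intt}(|\xi|)$, $T_{\theta,\intt}^{-1}(|\xi|)$ are governed by $\mathcal{N}_1(|\xi|,\theta)=O\big(|\xi|^{1-2\theta}\big)$, the slow-block channel carrying the datum $u_1^{(k)}$ effectively comes with the weight $|\xi|^{-2\theta}$: the binding term for $\||D|^{s+1}u^{(k)}\|_{L^2}$ is of the type $|\xi|^{s+1-2\theta}|\hat u_1(\xi)|e^{-c|\xi|^{2-2\theta}t}$, which through the radial estimate gives the rate $\tfrac{6-3m+2ms+2m(1-2\theta)}{4m(1-\theta)}$, whereas the fast-block channel of $u_1^{(k)}$ contributes $|\xi|^{s}|\hat u_1(\xi)|e^{-c|\xi|^{2\theta}t}$ with rate $\tfrac{6-3m+2ms}{4m\theta}$; all remaining terms (those built on $u_0^{(k)}$, or carrying extra powers of $|\xi|$) decay at least as fast. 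The minimum of the two rates above is $\rho_{s+1}(m,\theta)$ — and $\rho_0$, $\rho_1$ are the instances $s=-1$ and $s=0$ — the strict inequality in their definition leaving exactly the room to absorb borderline logarithmic factors and the $O\big(|\xi|^{7-12\theta}\big)$ corrections to the eigenvalues. Again $\|u^{(k)}(t,\cdot)\|_{L^2}$ is obtained directly for $m\in[1,6/5)$ (where the weight $|\xi|^{-2\theta}$ is integrable to the power $2m/(2-m)$ near the origin) and via \eqref{rell} for $m\in[6/5,2)$.

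The main obstacle is precisely this last bookkeeping in the small-frequency zone for $\theta<1/2$: reading off, inside $T_{\theta,\intt}^{-1}\diag(e^{-\mu_l t})T_{\theta,\intt}$, which data components feed the slow modes and which the fast ones, and tracking the precise powers of $|\xi|$ they acquire. It is exactly the gain $|\xi|^{1-2\theta}$ on the slow $u_1^{(k)}$-channel (together with the interaction of the solution-itself estimate with the non-integrability threshold $m=6/5$) that separates the sharp exponent $\tfrac{6-3m+2ms+2m(1-2\theta)}{4m(1-\theta)}$ from the naive one $\tfrac{6-3m+2ms}{4m(1-\theta)}$; everything else is the routine combination of Lemma \ref{Hl+1Hllem}, H\"older's and the Hausdorff--Young inequalities, and the radial integral displayed above.
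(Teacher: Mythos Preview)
Your proposal is correct. For $\theta\in[1/2,1]$ it coincides with the paper's own argument (Lemma \ref{Hl+1Hllem} plus H\"older and Hausdorff--Young, and the integral identity \eqref{rell} for $m\in[6/5,2)$). For $\theta\in[0,1/2)$ you take a genuinely different route: the paper does \emph{not} use the diagonalization here but instead invokes the weighted-multiplier technique of \cite{Ikehata2014}, which reduces matters to showing that two $|\xi|$-weighted integrals of $|\hat u_t|^2$ and $|\hat u|^2$ over $\{|\xi|\le\varepsilon\}$ are controlled by $\|u_0^{(k)}\|_{L^m}^2+\|u_1^{(k)}\|_{L^m}^2$; the admissible weight exponents then dictate the constraints on $\rho_{s+1}(m,\theta)$. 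Your approach via Theorem \ref{smallmatrix} is more self-contained (it uses only results proved in the paper) and, carried out carefully, would actually yield the endpoint rate rather than the strict-inequality $\rho_{s+1}$; the paper's approach is shorter because the bookkeeping is delegated to the reference. One clarification worth making explicit: the $|\xi|^{1-2\theta}$ gain on the slow $u_1^{(k)}$-channel is not a generic ``off-block smallness'' effect but an eigenspace--data alignment: the leading slow eigenspace of $B_0$ (eigenvalue $0$, direction $(v,-v)$) is exactly the $A_{\text{diag}}^{1/2}v$-direction, so at order $T_1$ the datum $v_1$ feeds only the fast block, and it enters the slow block solely through the first correction $\mathcal N_1=O(|\xi|^{1-2\theta})$.
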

\begin{proof} The estimates for the case $m=1$ have been studied in detail in \cite{Ikehata2014}. Although the authors described the long-time behavior of the energy for $t\geq T\big(\|u_0\|_{H^1},\|u_0\|_{L^1},\|u_1\|_{L^2},\|u_1\|_{L^1}\big)$, we know that a suitable energy of the solutions with data belonging to the space $\ml{D}_{m,1}^s$ is decaying for all $t\geq0$ by the proof of Lemma \ref{Hl+1Hllem}.
	
	First, we prove the estimates for the energies of higher order in the case $m\in(1,2)$. Using the method from \cite{Ikehata2014} if $\theta\in\left[0,1/2\right)$, then we introduce $d_{s+1,m}(t)=(1+t)^{-\rho_{s+1}(m,\theta)}$. Moreover, since we can follow the approach of the paper \cite{Ikehata2014} we only need to prove
	\begin{equation*}
	\begin{split}
	\int\nolimits_{|\xi|\leq\varepsilon}|\xi|^{2s-4\theta\rho_{s+1}(m,\theta)}|\hat{u}^{(k)}_t(t,\xi)|^2d\xi&
	\lesssim \sum\limits_{k=1}^3\big(\|u^{(k)}_0\|_{L^m}^2+\|u^{(k)}_1\|_{L^m}^2\big),\\
	\int\nolimits_{|\xi|\leq\varepsilon}|\xi|^{2s-4(1-\theta)\rho_{s+1}(m,\theta)+2(1-2\theta)}|\hat{u}^{(k)}(t,\xi)|^2d\xi&
	\lesssim \sum\limits_{k=1}^3\big(\|u^{(k)}_0\|_{L^m}^2+\|u^{(k)}_1\|_{L^m}^2\big).
	\end{split}
	\end{equation*}
	After applying H\"older's inequality and the Hausdorff-Young inequality, the above inequalities can be proved if we require
	\begin{equation*}
	\begin{split}
	2\big(s-2\theta\rho_{s+1}(m,\theta)\big)\frac{m}{2-m}+2&>-1,\\
	2\big(s-2(1-\theta)\rho_{s+1}(m,\theta)+1-2\theta\big)\frac{m}{2-m}+2&>-1.
	\end{split}
	\end{equation*}
	In conclusion, we assume
	\begin{equation*}
	\rho_{s+1}(m,\theta)<\min\left\{\frac{6-3m+2sm+2m(1-2\theta)}{4m(1-\theta)};\frac{6-3m+2sm}{4m\theta}\right\}\,\,\,\,\text{if}\,\,\,\,\theta\in\left[0,1/2\right).
	\end{equation*}
	Following the same approach and setting $d_{0,m}(t)=(1+t)^{-\rho_{0}(m,\theta)}$, we also arrive at the estimate of the solution itself in the case $m\in\left(1,6/5\right)$. In the case $\theta\in\left[1/2,1\right]$, for the estimate of the solution itself with $m\in\left[1,6/5\right)$ and for the estimate of the higher order energies of the solutions with $m\in[1,2)$, after applying Lemma \ref{Hl+1Hllem}, H\"older's inequality and the Hausdorff-Young inequality we may conclude
	\begin{equation*}
	\begin{split}
	 \|u^{(k)}(t,\cdot)\|_{L^2}&\lesssim(1+t)^{-\frac{6-5m}{4m\theta}}\Big((1+t)^{-\frac{1}{2\theta}}\sum\limits_{k=1}^3\|u^{(k)}_0\|_{L^m}+\sum\limits_{k=1}^3\|u^{(k)}_1\|_{L^m}\Big)
+e^{-ct} \sum\limits_{k=1}^3\big\|\big(u^{(k)}_0,u_1^{(k)}\big)\big\|_{H^1\times L^2},\\
	\end{split}
	\end{equation*}
	and
	\begin{equation*}
	\begin{split}
	&\||D|^{s+1}u^{(k)}_t(t,\cdot)\|_{L^2}+\||D|^su^{(k)}_t(t,\cdot)\|_{L^2}\\
	&\qquad\qquad\lesssim(1+t)^{-\frac{3(2-m)+2sm}{4m\theta}}
	 \Big((1+t)^{-\frac{1}{2\theta}}\sum\limits_{k=1}^3\|u^{(k)}_0\|_{L^m}+\sum\limits_{k=1}^3\|u^{(k)}_1\|_{L^m}\Big)+e^{-ct}\sum\limits_{k=1}^3\big\|\big(u^{(k)}_0,u_1^{(k)}\big)\big\|_{H^{s+1}\times H^s}.
	\end{split}
	\end{equation*}
	To get estimates for the solution itself with $m\in\left[6/5,2\right)$ for all $\theta\in[0,1]$, we can use the estimate for the $L^2$ norm of $u_t^{(k)}(t,\cdot)$ by the relation \eqref{rell}. Finally, let us mention that we could have better estimates by using the right-hand sides $\tilde{d}_1(t)\sum\limits_{k=1}^3\|u^{(k)}_0\|+\tilde{d}_2(t)\sum\limits_{k=1}^3\|u^{(k)}_1\|$ with suitable norms. Nevertheless, our goal is to derive estimates with right-hand sides $\tilde{d}(t)\sum\limits_{k=1}^3\big\|\big(u^{(k)}_0,u^{(k)}_1\big)\big\|$ with suitable norms.
\end{proof}
\begin{rem} In the case  $\theta\in\left[1/2,1\right]$ with $m=1$, the decay rates in Theorem \ref{enee} are better than those of the paper \cite{Ikehata2014} due to the fact that there is no any ambiguity of $\epsilon>0$ in the statements of Theorem \ref{enee}.
\end{rem}
\begin{thm}\label{01a} Let us consider the Cauchy problem \eqref{linearproblem} with $\theta\in[0,1]$, $\big(u^{(k)}_{0},u^{(k)}_{1}\big)\in(\dot{H}^{s+1}\cap L^m)\times (\dot{H}^s\cap L^m)$ for $k=1,2,3$, $s\geq0$, and $m\in[1,2)$. Then, we have the following estimates:
	\begin{equation*}
	\begin{split}
	\|u^{(k)}(t,\cdot)\|_{L^2}&\lesssim d_{0,m}(t)\sum\limits_{k=1}^3\big\|\big(u_0^{(k)},u_1^{(k)}\big)\big\|_{(\dot{H}^1\cap L^m)\times (L^2\cap L^m)},\\
	\||D|^{s+1}u^{(k)}(t,\cdot)\|_{L^2}+\||D|^su^{(k)}_t(t,\cdot)\|_{L^2}&\lesssim d_{s+1,m}(t)\sum\limits_{k=1}^3\big\|\big(u_0^{(k)},u_1^{(k)}\big)\big\|_{(\dot{H}^{s+1}\cap L^m)\times (\dot{H}^s\cap L^m)}.
	\end{split}
	\end{equation*}
\end{thm}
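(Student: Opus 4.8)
The plan is to follow closely the proof of Theorem~\ref{enee}, the only genuine change being that the inhomogeneous Bessel potential norms $H^{s+1}\times H^s$ are replaced by the homogeneous Riesz potential norms $\dot{H}^{s+1}\times\dot{H}^s$ in the space of data. We split the extended phase space into the three zones $Z_{\intt}(\varepsilon)$, $Z_{\midd}(\varepsilon)$, $Z_{\extt}(\varepsilon)$ via the cut-off functions $\chi_{\intt},\chi_{\midd},\chi_{\extt}$, estimate by the Parseval--Plancherel theorem the quantities $\|u^{(k)}(t,\cdot)\|_{L^2}$, $\||D|^{s+1}u^{(k)}(t,\cdot)\|_{L^2}$ and $\||D|^su^{(k)}_t(t,\cdot)\|_{L^2}$ in each zone separately, and finally add the contributions and sum over $k=1,2,3$. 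The observation that makes the transition from $H$ to $\dot{H}$ harmless is that on $Z_{\midd}(\varepsilon)\cup Z_{\extt}(\varepsilon)$ we have $\langle\xi\rangle\sim|\xi|$, so that the inhomogeneous and homogeneous norms localised there are equivalent.

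For small frequencies $\xi\in Z_{\intt}(\varepsilon)$ we invoke the low-frequency part of Lemma~\ref{Hl+1Hllem}, namely
\begin{equation*}
\chi_{\intt}(\xi)E_{\text{pha}}(\hat{u})(t,\xi)\lesssim e^{-c|\xi|^{2\max\{1-\theta;\theta\}}t}\chi_{\intt}(\xi)E_{\text{pha}}(\hat{u})(0,\xi),
\end{equation*}
which controls $|\hat{u}^{(k)}|$, $|\xi|^{s+1}|\hat{u}^{(k)}|$ and $|\xi|^s|\hat{u}^{(k)}_t|$ by $e^{-c|\xi|^{2\max\{1-\theta;\theta\}}t/2}$ times the (homogeneous) data. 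Applying the Hausdorff--Young inequality to pass from $\hat{u}^{(k)}_j$ to $\|u^{(k)}_j\|_{L^m}$ and then H\"older's inequality in $\xi$ over $Z_{\intt}(\varepsilon)$ reproduces exactly the time-decay functions $d_{0,m}(t)$ and $d_{s+1,m}(t)$, with the same restrictions on $\rho_0,\rho_1,\rho_{s+1}$ and the same role of the threshold $m=6/5$ as in Theorem~\ref{enee}; here the computation of \cite{Ikehata2014} is reused verbatim, since it never uses any Sobolev regularity of the data, only their $L^m$ regularity. For the estimate of the solution itself in the range $m\in[6/5,2)$ we pass through the integral relation \eqref{rell}, bounding $\|u_{\tau}^{(k)}(\tau,\cdot)\|_{L^2}$ by the bound just obtained; the remaining term $\|u^{(k)}_0\|_{L^2}$ is admissible because in $\mb{R}^3$ one has $\dot{H}^1\cap L^m\hookrightarrow L^2$ (from $\dot{H}^1\hookrightarrow L^6$ and $L^m\cap L^6\hookrightarrow L^2$ for $m\le2$), so no extra assumption on the data beyond $(\dot{H}^1\cap L^m)\times(L^2\cap L^m)$ is needed.

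For middle and large frequencies $\xi\in Z_{\midd}(\varepsilon)\cup Z_{\extt}(\varepsilon)$ Lemma~\ref{Hl+1Hllem} provides the exponential decay $E_{\text{pha}}(\hat{u})(t,\xi)\lesssim e^{-ct}E_{\text{pha}}(\hat{u})(0,\xi)$. Multiplying by $|\xi|^{2s}$, integrating, and using $\langle\xi\rangle\sim|\xi|$ on this set we obtain $\||D|^{s+1}(1-\chi_{\intt}(D))u^{(k)}(t,\cdot)\|_{L^2}+\||D|^s(1-\chi_{\intt}(D))u^{(k)}_t(t,\cdot)\|_{L^2}\lesssim e^{-ct}(\|u^{(k)}_0\|_{\dot{H}^{s+1}}+\|u^{(k)}_1\|_{\dot{H}^s})$, and similarly $\|(1-\chi_{\intt}(D))u^{(k)}(t,\cdot)\|_{L^2}\lesssim e^{-ct}\|u^{(k)}_0\|_{\dot{H}^1}$ after dividing by $|\xi|$, which is bounded below on this set. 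These exponentially decaying terms are dominated by $d_{0,m}(t)$ and $d_{s+1,m}(t)$.

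I do not expect a serious obstacle: the argument is essentially a transcription of the proof of Theorem~\ref{enee}. The one point I would emphasise, and the only thing that really needs checking, is that no inhomogeneous low-frequency Sobolev norm is secretly required: in $Z_{\midd}(\varepsilon)\cup Z_{\extt}(\varepsilon)$ the equivalence $\langle\xi\rangle\sim|\xi|$ lets the exponential factor absorb everything against the purely homogeneous norms, while in $Z_{\intt}(\varepsilon)$ only the $L^m$ regularity of the data is ever used. Together with the three-dimensional embedding $\dot{H}^1\cap L^m\hookrightarrow L^2$ handling the borderline case of \eqref{rell}, this closes all the estimates with homogeneous data spaces.
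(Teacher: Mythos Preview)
Your proposal is correct and follows essentially the same approach as the paper, whose own proof consists of the single sentence ``The above assumptions for data allow modifying the considerations for large frequencies $\xi$.'' You have simply spelled out what that sentence means: the low-frequency analysis from Theorem~\ref{enee} uses only the $L^m$ regularity of the data and carries over unchanged, while on $Z_{\midd}(\varepsilon)\cup Z_{\extt}(\varepsilon)$ the equivalence $\langle\xi\rangle\sim|\xi|$ lets the exponential decay absorb everything against the homogeneous norms $\dot{H}^{s+1}\times\dot{H}^s$ in place of $H^{s+1}\times H^s$. Your additional observation that $\dot{H}^1\cap L^m\hookrightarrow L^2$ in $\mb{R}^3$ (via $\dot{H}^1\hookrightarrow L^6$ and interpolation) handles the term $\|u_0^{(k)}\|_{L^2}$ arising from \eqref{rell} in the range $m\in[6/5,2)$ is a detail the paper's one-line proof leaves implicit, and it is correct.
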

\begin{proof}
	The above assumptions for data allow modifying the considerations for large frequencies $\xi$.
\end{proof}
\section{Diffusion phenomena}\label{diffusionphenomenon}
The diffusion phenomenon allows us to bridge a decay behavior of solutions to dissipative elastic waves with a decay behavior of solutions to corresponding evolution systems with suitable data. Because of Theorem \ref{additionaldecay} from the previous section, we know that $(L^2\cap L^m)$-$L^2$ estimates are determined by the behavior of the characteristic roots for small frequencies only. For large frequencies, the behavior of the characteristic roots together with the regularity of data implies even an exponential decay. For this reason, the diffusion phenomenon is explained by the behavior of Fourier multipliers localized to small frequencies.

To obtain a result on the diffusion phenomenon for our starting linear system \eqref{linearproblem}, we choose the following Cauchy problem for an evolution reference system:
\begin{equation}\label{refsystem}
\widetilde{U}_t+\widetilde{M}_1(-\Delta)^{\sigma_1}\widetilde{U}+\widetilde{M}_2(-\Delta)^{\sigma_2}\widetilde{U}=0,\,\,\,\, \widetilde{U}(0,x)=\ml{F}^{-1}\big(H(|\xi|)W_0^{(0)}(\xi)\big)(x),
\end{equation}
where the nonnegative constants $\sigma_1,\sigma_2$ and matrices $\widetilde{M}_1$, $\widetilde{M}_2$, and $H=H(|\xi|)$ will be given in each subsection. It is clear that the solution $\widetilde{U}=\widetilde{U}(t,x)$ to this evolution system \eqref{refsystem} can be represented as follows:
\begin{equation}\label{represenheat}
\widetilde{U}(t,x)=\ml{F}_{\xi\rightarrow x}^{-1}\Big(\diag\big(e^{-\tilde{\mu}_l(|\xi|)t}\big)_{l=1}^6H(|\xi|)W_0^{(0)}(\xi)\Big),
\end{equation}
where the eigenvalues $\tilde{\mu}_l=\tilde{\mu}_l(|\xi|)$ are the principal part of the corresponding eigenvalues $\mu_l=\mu_l(|\xi|)$ from Theorem \ref{smallmatrix} for small frequencies. We will explain them in detail later. Now, we introduce $\widetilde{W}=\widetilde{W}(t,\xi)=\ml{F}_{x\rightarrow \xi}(\widetilde{U})(t,\xi)$.
\begin{rem} Assume $\theta=1/2$ in the dissipative elastic waves \eqref{linearproblem}. We observe that $e^{-\frac{1}{2}|\xi|t}$ plays an important role in the representation of $W^{(1)}=W^{(1)}(t,\xi)$ from \emph{Case 2.3}. Consequently, from direct calculation there is not any improvement in the decay estimates for the difference between the solutions to the system \eqref{linearproblem} with $\theta=1/2$ and the solutions to its reference evolution system. For this reason, we only study the diffusion type phenomena to the dissipative system \eqref{linearproblem} with $\theta\in\left[0,1/2\right)\cup\left(1/2,1\right]$.
\end{rem}
\subsection{Diffusion phenomenon for the linear model with $\theta=0$}
According to the principal real part of $\mu_l(|\xi|)$ for small frequencies from Theorem \ref{smallmatrix}, we choose $\sigma_1=1$, $\sigma_2=0$ and $\widetilde{M}_1=\diag\big(a^2,a^2,b^2,0,0,0\big)$, $\widetilde{M}_2=\diag(0,0,0,1,1,1)$ and $H(|\xi|)=\big(I+\mathcal{N}_2(|\xi|,0)\big)^{-1}\big(I+\mathcal{N}_1(|\xi|,0)\big)^{-1}T_1^{-1}$ in the evolution system \eqref{refsystem}, that is,
\begin{equation}\label{di1}
\left\{\begin{aligned}
&\widetilde{U}_t-\diag\big(a^2,a^2,b^2,0,0,0\big)\Delta\widetilde{U}+\diag(0,0,0,1,1,1)\widetilde{U}=0,\\
&\widetilde{U}(0,x)=\ml{F}^{-1}\big(\big(I+\mathcal{N}_2(|\xi|,0)\big)^{-1}\big(I+\mathcal{N}_1(|\xi|,0)\big)^{-1}T_1^{-1}W_0^{(0)}(\xi)\big)(x).
\end{aligned}\right.
\end{equation}
Therefore, the eigenvalues in \eqref{represenheat} are $\tilde{\mu}_{1,2}(|\xi|)=a^2|\xi|^2$, $\tilde{\mu}_3(|\xi|)=b^2|\xi|^2$ and $\tilde{\mu}_{4,5,6}(|\xi|)=1$, that is, we take the corresponding $\mu_l(|\xi|)$ from Theorem \ref{smallmatrix} after neglecting the terms $O\big(|\xi|^4\big)$ when $l=1,2,3$ and neglecting the terms $O\big(|\xi|^2\big)$ when $l=4,5,6$. We now state our result.
\begin{thm}\label{diffusion01}
	Let us consider the system \eqref{linearproblem} with $\theta=0$. We assume that data $(u_0^{(k)},u_1^{(k)})\in \dot{H}^1_m\times L^m$ with $m\in[1,2]$ for $k=1,2,3$. Then, we obtain for the solution $W^{(0)}=W^{(0)}(t,\xi)$ to the Cauchy problem \eqref{weshould} the estimate
	\begin{equation*}
	\big\|\chi_{\intt}(D)\ml{F}_{\xi\rightarrow x}^{-1}\big(W^{(0)}-T_1\big(I+\mathcal{N}_1(|\xi|,0)\big)\big(I+\mathcal{N}_2(|\xi|,0)\big)\widetilde{W}\big)(t,\cdot)\big\|_{\dot{H}^s}\lesssim(1+t)^{-\frac{3(2-m)+2ms}{4m}-\frac{1}{2}}\sum\limits_{k=1}^3\big\|\big(u_0^{(k)},u_1^{(k)}\big)\big\|_{\dot{H}_m^1\times L^m}.
	\end{equation*}
\end{thm}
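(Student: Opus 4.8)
The plan is to estimate the difference $W^{(0)}-T_1(I+\mathcal{N}_1)(I+\mathcal{N}_2)\widetilde{W}$ in the interior zone by passing through the representation formulas for both objects. First I would use Theorem~\ref{smallmatrix} to write, for $\xi\in Z_{\intt}(\varepsilon)$ and $\theta=0$,
\begin{equation*}
W^{(0)}(t,\xi)=T_{0,\intt}^{-1}(|\xi|)\diag\big(e^{-\mu_l(|\xi|)t}\big)_{l=1}^6 T_{0,\intt}(|\xi|)W_0^{(0)}(\xi),
\end{equation*}
with $T_{0,\intt}(|\xi|)=T_1(I+\mathcal{N}_1(|\xi|,0))(I+\mathcal{N}_2(|\xi|,0))(I+\mathcal{N}_3(|\xi|,0))$, while from \eqref{represenheat} and the choice of $H(|\xi|)$ in \eqref{di1} we have $\widetilde{W}(t,\xi)=\diag\big(e^{-\tilde\mu_l(|\xi|)t}\big)_{l=1}^6 (I+\mathcal{N}_2)^{-1}(I+\mathcal{N}_1)^{-1}T_1^{-1}W_0^{(0)}(\xi)$, hence
\begin{equation*}
T_1(I+\mathcal{N}_1)(I+\mathcal{N}_2)\widetilde{W}(t,\xi)=T_1(I+\mathcal{N}_1)(I+\mathcal{N}_2)\diag\big(e^{-\tilde\mu_l(|\xi|)t}\big)_{l=1}^6(I+\mathcal{N}_2)^{-1}(I+\mathcal{N}_1)^{-1}T_1^{-1}W_0^{(0)}(\xi).
\end{equation*}
The difference therefore splits naturally into two contributions: one coming from replacing $T_{0,\intt}^{-1}$ by $\big(T_1(I+\mathcal{N}_1)(I+\mathcal{N}_2)\big)\cdot(I+\mathcal{N}_3)^{-1}\cdots$, i.e. the presence of the extra factor $(I+\mathcal{N}_3(|\xi|,0))^{\pm1}$, and one coming from replacing the true exponentials $e^{-\mu_l t}$ by the model ones $e^{-\tilde\mu_l t}$ (and the conjugating matrices being $(I+\mathcal{N}_1)(I+\mathcal{N}_2)$ rather than the full $T_{0,\intt}$).

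The key pointwise estimates I would establish on $Z_{\intt}(\varepsilon)$, uniformly in $\xi$, are: (i) $\mathcal{N}_3(|\xi|,0)=O(|\xi|^5)$, so the error from the $(I+\mathcal{N}_3)$ factor carries an extra $|\xi|^5$ and is therefore harmless (it gains far more than the needed $|\xi|$); (ii) for $l=1,2,3$, $|\mu_l(|\xi|)-\tilde\mu_l(|\xi|)|\lesssim |\xi|^4$ with $\Re\mu_l,\Re\tilde\mu_l\sim|\xi|^2$, hence by the mean value theorem $|e^{-\mu_l t}-e^{-\tilde\mu_l t}|\lesssim |\xi|^4 t\, e^{-c|\xi|^2 t}$, and the factor $|\xi|^4 t\,e^{-c|\xi|^2 t}\lesssim |\xi|^2 e^{-c'|\xi|^2 t}$ after absorbing $|\xi|^2 t$; this is exactly where the gain of two powers of $|\xi|$ relative to the bare bound of Theorem~\ref{additionaldecay} comes from, producing the improved exponent $-\tfrac12$ beyond $-\tfrac{3(2-m)+2ms}{4m}$; (iii) for $l=4,5,6$, $\Re\mu_l\sim 1$ and $\tilde\mu_l\equiv1$, and $|\mu_l(|\xi|)-1|\lesssim|\xi|^2$, so $|e^{-\mu_l t}-e^{-\tilde\mu_l t}|\lesssim |\xi|^2 t\,e^{-ct}$, which is exponentially small and contributes nothing to the polynomial rate. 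Collecting, on $Z_{\intt}(\varepsilon)$ the Fourier multiplier governing the difference is bounded, componentwise, by $C\,|\xi|^2 e^{-c|\xi|^2 t}$ acting on $W_0^{(0)}(\xi)$, after accounting for the $|\xi|^{-1}$-type singularity built into $W_0^{(0)}$ through $v_0$ and $A_{\mathrm{diag}}^{1/2}$.

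Then I would pass to norms. By Plancherel, $\|\chi_{\intt}(D)\mathcal{F}^{-1}(\text{difference})(t,\cdot)\|_{\dot H^s}$ is bounded by $\big\|\,\chi_{\intt}(\xi)|\xi|^{s+2}e^{-c|\xi|^2 t}\,|\widehat{W_0^{(0)}}(\xi)|\,\big\|_{L^2}$ up to the handling of the $|\xi|^{-1}$ factor hidden in $W_0^{(0)}$; recalling $W_0^{(0)}$ is built from $v_1=M^{-1}\hat u_1$ and $A_{\mathrm{diag}}^{1/2}v_0\sim|\xi|\,M^{-1}\hat u_0$, the data norm that appears is $\|u_0^{(k)}\|_{\dot H^1_m}+\|u_1^{(k)}\|_{L^m}$ after using Hölder's inequality $\|fg\|_{L^2}\le\|f\|_{L^{2m/(2-m)}}\|g\|_{L^{2/(m-... )}}$ in the standard $L^m$–$L^2$ way (Hausdorff–Young for the data factor, the remaining exponent for the multiplier factor). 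The resulting integral $\int_{|\xi|\le\varepsilon}|\xi|^{(2(s+2))\frac{2m}{2-m}}e^{-c'|\xi|^2\frac{2m}{2-m}t}\,d\xi$ (three-dimensional) is computed by the change of variables $\xi\mapsto t^{-1/2}\xi$, giving the rate $(1+t)^{-\frac{3}{2}\cdot\frac{2-m}{2m}-(s+2)+\dots}$, which after bookkeeping collapses to the claimed $(1+t)^{-\frac{3(2-m)+2ms}{4m}-\frac12}$; the $L^1$ endpoint $m=1$ is handled by the simpler bound $\|\widehat{W_0^{(0)}}\|_{L^\infty}\lesssim\|u_0\|_{\dot H^1_1}+\|u_1\|_{L^1}$.

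The main obstacle I anticipate is not any single estimate but the careful algebra of step~(ii): one must verify that the \emph{off-diagonal} mismatch between conjugating by the full $T_{0,\intt}$ and conjugating by only $T_1(I+\mathcal{N}_1)(I+\mathcal{N}_2)$ does not destroy the extra $|\xi|^2$ gain — i.e. that the commutator-type terms $[(I+\mathcal{N}_3),\diag(e^{-\mu_l t})]$ and the discrepancy $\mathcal{N}_2$ contributes at order $|\xi|^3$ while $\mathcal{N}_3$ at order $|\xi|^5$ line up correctly so that the worst term still carries $|\xi|^2 e^{-c|\xi|^2 t}$ and not merely $e^{-c|\xi|^2 t}$. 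Equivalently: one has to check that replacing $\mu_l$ by its principal part $\tilde\mu_l$ is consistent, to the stated order $O(|\xi|^{7})$ from Theorem~\ref{smallmatrix} specialized to $\theta=0$, with simultaneously truncating the transformation matrix — the two approximations must be taken to compatible orders. Once that order-counting is pinned down, the norm estimates are routine $L^m$–$L^2$ computations of the type already used in the proof of Theorem~\ref{additionaldecay}.
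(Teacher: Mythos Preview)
Your approach is essentially the paper's: split the difference into a part coming from the mismatch of the transformation matrices and a part coming from replacing $e^{-\mu_l t}$ by $e^{-\tilde\mu_l t}$, use the mean-value identity for the latter, and finish with the same $L^m$--$L^2$ machinery as in Theorem~\ref{additionaldecay}. The paper organizes this as a clean three-term algebraic identity: writing $T_{0,\intt}=H+P_1$ and $T_{0,\intt}^{-1}=L+P_2$ with $L=T_1(I+\mathcal N_1)(I+\mathcal N_2)=H^{-1}$ and $P_1,P_2=O(|\xi|)$, one has $W^{(0)}-L\widetilde W=I_1+I_2+I_3$, where $I_1=L\bigl(\diag(e^{-\mu_l t})-\diag(e^{-\tilde\mu_l t})\bigr)H\,W_0^{(0)}$ and $I_2,I_3$ each carry one factor $P_1$ or $P_2$. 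This sidesteps entirely the ``commutator'' obstacle you flag at the end: no commutators arise, only products containing a single small factor.

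Two corrections to your bookkeeping. First, your count that a gain of $|\xi|^2$ in the multiplier yields an extra $(1+t)^{-1/2}$ is off: under the scaling $|\xi|\mapsto t^{-1/2}\xi$ each power of $|\xi|$ contributes $(1+t)^{-1/2}$, so $|\xi|^2$ would give $(1+t)^{-1}$. In the paper's argument the $-\tfrac12$ in the statement comes not from $I_1$ (which indeed improves by $(1+t)^{-1}$, via $r_l=\mu_l-\tilde\mu_l=O(|\xi|^4)$ and $|\xi|^4 t\,e^{-c|\xi|^2 t}$) but from $I_2$ and $I_3$, where only the crude $P_1,P_2=O(|\xi|)$ is used. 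Second, the paper does not claim that the matrix remainder is as small as $O(|\xi|^5)$ via $\mathcal N_3$ alone; it works with the conservative $O(|\xi|)$ for $P_1,P_2$, which already suffices for the stated exponent. Your sharper order would prove more than the theorem asserts, but to justify it you would have to check that the representation in Theorem~\ref{smallmatrix} is exact with the displayed three-step $T_{0,\intt}$ and that no coarser remainder (from the Duhamel treatment of $R_\ell$) enters --- the paper does not argue this and simply takes the safe $O(|\xi|)$ bound.
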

\begin{proof} We know that
	\begin{equation*}
	\begin{aligned}
	T_{0,\intt}(|\xi|)&=H(|\xi|)+P_1(|\xi|),&\text{where}&\,\,\,\, H(|\xi|)=\big(I+\mathcal{N}_2(|\xi|,0)\big)^{-1}\big(I+\mathcal{N}_1(|\xi|,0)\big)^{-1}T_1^{-1},&P_1(|\xi|)=O(|\xi|),\\
	T_{0,\intt}^{-1}(|\xi|)&=L(|\xi|)+P_2(|\xi|),&\text{where}& \,\,\,\, L(|\xi|)=T_1\big(I+\mathcal{N}_1(|\xi|,0)\big)\big(I+\mathcal{N}_2(|\xi|,0)\big), &P_2(|\xi|)=O(|\xi|).
	\end{aligned}
	\end{equation*}
	Therefore, we decompose the function of interest into three parts, that is,
	\begin{equation*}
	\chi_{\intt}(D)\ml{F}_{\xi\rightarrow x}^{-1}\big(W^{(0)}-L(|\xi|)\widetilde{W}\big)(t,x)=I_1(t,x)+I_2(t,x)+I_3(t,x),
	\end{equation*}
	where we define
	\begin{equation*}
	\begin{split}
	I_1(t,x)&:=\ml{F}_{\xi\rightarrow x}^{-1}\big(\chi_{\intt}(\xi)L(|\xi|)\diag\big(e^{-\mu_l(|\xi|)t}-e^{-\tilde{\mu}_l(|\xi|)t}\big)_{l=1}^6H(|\xi|)W_0^{(0)}(\xi)\big),\\
	I_2(t,x)&:=\ml{F}_{\xi\rightarrow x}^{-1}\big(\chi_{\intt}(\xi)L(|\xi|)\diag\big(e^{-\mu_l(|\xi|)t}\big)_{l=1}^6P_1(|\xi|)W_0^{(0)}(\xi)\big),\\
	I_3(t,x)&:=\ml{F}_{\xi\rightarrow x}^{-1}\big(\chi_{\intt}(\xi)P_2(|\xi|)\diag\big(e^{-\mu_l(|\xi|)t}\big)_{l=1}^6T_{0,\intt}(|\xi|)W_0^{(0)}(\xi)\big).
	\end{split}
	\end{equation*}
	Here, we make use of the fact that
	\begin{equation*}
	e^{-\mu_l(|\xi|)t}-e^{-\tilde{\mu}_l(|\xi|)t}=-r_l(|\xi|)te^{-\tilde{\mu}(|\xi|)t}\int\nolimits_0^1e^{-r_l(|\xi|)t\tau}d\tau,
	\end{equation*}
	where $r_l=r_l(|\xi|)$ denotes the $O\big(|\xi|^4\big)$-terms from Theorem \ref{smallmatrix} for the corresponding eigenvalues $\mu_l(|\xi|)$. Applying Theorem \ref{additionaldecay}, we obtain the estimate
	\begin{equation*}
	\|I_1(t,\cdot)\|_{\dot{H}^s}\lesssim(1+t)^{-\frac{3(2-m)+2ms}{4m}-1}\sum\limits_{k=1}^3\big\|\big(u_0^{(k)},u_1^{(k)}\big)\big\|_{\dot{H}_m^1\times L^m}.
	\end{equation*}
	Following the same procedure for the other two parts gives
	\begin{equation*}
	 \|I_2(t,\cdot)\|_{\dot{H}^s}+\|I_3(t,\cdot)\|_{\dot{H}^s}\lesssim(1+t)^{-\frac{3(2-m)+2ms}{4m}-\frac{1}{2}}\sum\limits_{k=1}^3\big\|\big(u_0^{(k)},u_1^{(k)}\big)\big\|_{\dot{H}_m^1\times L^m}.
	\end{equation*}
	Summarizing, we obtain
	\begin{equation*}
	\big\|\chi_{\intt}(D)\ml{F}_{\xi\rightarrow \xi}^{-1}\big(W^{(0)}-L(\xi)\widetilde{W}\big)(t,\cdot)\big\|_{\dot{H}^s}\lesssim(1+t)^{-\frac{3(2-m)+2ms}{4m}-\frac{1}{2}}\sum\limits_{k=1}^3\big\|\big(u_0^{(k)},u_1^{(k)}\big)\big\|_{\dot{H}_m^1\times L^m}.
	\end{equation*}
	The proof is complete.
\end{proof}
\subsection{Double diffusion phenomena for the linear model with $\theta\in\left(0,1/2\right)$}
According to the principal real part of $\mu_l(|\xi|)$ for small frequencies from Theorem \ref{smallmatrix} we assume $\sigma_1=1-\theta$, $\sigma_2=\theta$ and $\widetilde{M}_1=\diag\big(a^2,a^2,b^2,0,0,0\big)$, $\widetilde{M}_2=\diag(0,0,0,1,1,1)$, $H(|\xi|)=\big(I+\mathcal{N}_2(|\xi|,\theta)\big)^{-1}\big(I+\mathcal{N}_1(|\xi|,\theta)\big)^{-1}T_1^{-1}$ in the evolution system \eqref{refsystem}, that is,
\begin{equation}\label{di2}
\left\{
\begin{aligned}
&\widetilde{U}_t+\diag\big(a^2,a^2,b^2,0,0,0\big)(-\Delta)^{1-\theta}\widetilde{U}+\diag(0,0,0,1,1,1)(-\Delta)^{\theta}\widetilde{U}=0,\\
&\widetilde{U}(0,x)=\ml{F}^{-1}\big(\big(I+\mathcal{N}_2(|\xi|,\theta)\big)^{-1}\big(I+\mathcal{N}_1(|\xi|,\theta)\big)^{-1}T_1^{-1}W_0^{(0)}(\xi)\big)(x).
\end{aligned}\right.
\end{equation}
Hence, we have $\tilde{\mu}_{1,2}(|\xi|)=a^2|\xi|^{2-2\theta}$, $\tilde{\mu}_{3}(|\xi|)=b^2|\xi|^{2-2\theta}$ and $\tilde{\mu}_{4,5,6}(|\xi|)=|\xi|^{2\theta}$ in \eqref{represenheat}.
\begin{thm}\label{diffusion02}
	Let us consider the system \eqref{linearproblem} with $\theta\in\left(0,1/2\right)$. We assume that data $(u_0^{(k)},u_1^{(k)})\in \dot{H}^1_m\times L^m$ with $m\in[1,2]$ for $k=1,2,3$. Then, we obtain for the solution $W^{(0)}=W^{(0)}(t,\xi)$ to the Cauchy problem \eqref{weshould} the estimate
	\begin{equation*}
	\big\|\chi_{\intt}(D)\ml{F}_{\xi\rightarrow x}^{-1}\big(W^{(0)}-T_1\big(I+\mathcal{N}_1(|\xi|,\theta)\big)\big(I+\mathcal{N}_2(|\xi|,\theta)\big)\widetilde{W}\big)(t,\cdot)\big\|_{\dot{H}^s}\lesssim(1+t)^{-\frac{3(2-m)+2ms}{4m(1-\theta)}-\frac{1-2\theta}{2(1-\theta)}}\sum\limits_{k=1}^3\big\|\big(u_0^{(k)},u_1^{(k)}\big)\big\|_{\dot{H}_m^1\times L^m}.
	\end{equation*}
\end{thm}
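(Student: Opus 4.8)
The plan is to follow the scheme of the proof of Theorem~\ref{diffusion01}, now with the two ``heat scales'' $|\xi|^{2-2\theta}$ (for the components $l=1,2,3$) and $|\xi|^{2\theta}$ (for the components $l=4,5,6$) replacing the scales $|\xi|^{2}$ and the constant $1$ that appeared there; this is why the reference model \eqref{di2} is a \emph{double} fractional diffusion system. First I would record, from Theorem~\ref{smallmatrix}, the splitting
\begin{equation*}
\mu_l(|\xi|)=\tilde\mu_l(|\xi|)+r_l(|\xi|),\qquad l=1,\dots,6,
\end{equation*}
where $\tilde\mu_{1,2}=a^2|\xi|^{2-2\theta}$, $\tilde\mu_3=b^2|\xi|^{2-2\theta}$, $\tilde\mu_{4,5,6}=|\xi|^{2\theta}$ are exactly the exponents occurring in the representation \eqref{represenheat} of $\widetilde W=\ml F_{x\to\xi}(\widetilde U)$, and the remainders satisfy $r_l(|\xi|)=O\big(|\xi|^{4-6\theta}\big)$ for $l=1,2,3$ and $r_l(|\xi|)=O\big(|\xi|^{2-2\theta}\big)$ for $l=4,5,6$. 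Correspondingly, using the factorisation of $T_{\theta,\intt}(|\xi|)$ into $T_1$, $I+\ml N_1(|\xi|,\theta)$, $I+\ml N_2(|\xi|,\theta)$, $I+\ml N_3(|\xi|,\theta)$ and their asymptotic orders listed before Theorem~\ref{smallmatrix}, I would write
\begin{equation*}
T_{\theta,\intt}(|\xi|)=H(|\xi|)+P_1(|\xi|),\qquad T_{\theta,\intt}^{-1}(|\xi|)=L(|\xi|)+P_2(|\xi|),
\end{equation*}
with $H(|\xi|)=\big(I+\ml N_2(|\xi|,\theta)\big)^{-1}\big(I+\ml N_1(|\xi|,\theta)\big)^{-1}T_1^{-1}$ as in \eqref{di2}, $L(|\xi|)=T_1\big(I+\ml N_1(|\xi|,\theta)\big)\big(I+\ml N_2(|\xi|,\theta)\big)$, and remainders $P_1(|\xi|),P_2(|\xi|)=O\big(|\xi|^{1-2\theta}\big)$ on $Z_{\intt}(\varepsilon)$.

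Then I would decompose, exactly as in Theorem~\ref{diffusion01},
\begin{equation*}
\chi_{\intt}(D)\ml F_{\xi\to x}^{-1}\big(W^{(0)}-L(|\xi|)\widetilde W\big)(t,x)=I_1(t,x)+I_2(t,x)+I_3(t,x),
\end{equation*}
where $I_1$ carries the factor $\diag\big(e^{-\mu_l(|\xi|)t}-e^{-\tilde\mu_l(|\xi|)t}\big)_{l=1}^6$, $I_2$ replaces $H(|\xi|)$ by its remainder $P_1(|\xi|)$, and $I_3$ replaces $L(|\xi|)$ by its remainder $P_2(|\xi|)$ while keeping the full matrix $T_{\theta,\intt}(|\xi|)$ on the other side. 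For $I_1$ I would insert
\begin{equation*}
e^{-\mu_l(|\xi|)t}-e^{-\tilde\mu_l(|\xi|)t}=-r_l(|\xi|)t\,e^{-\tilde\mu_l(|\xi|)t}\int\nolimits_0^1 e^{-r_l(|\xi|)t\tau}\,d\tau,
\end{equation*}
and use the elementary bounds $|\xi|^{2-2\theta}t\,e^{-c|\xi|^{2-2\theta}t}\lesssim1$ and $|\xi|^{2\theta}t\,e^{-c|\xi|^{2\theta}t}\lesssim1$; for $l=4,5,6$ one has $\Re r_l<0$, but since $2\theta<2-2\theta$ the growth $e^{c|\xi|^{2-2\theta}t\tau}$ is absorbed by $e^{-c|\xi|^{2\theta}t}$ on $Z_{\intt}(\varepsilon)$, so that in all cases $|e^{-\mu_l t}-e^{-\tilde\mu_l t}|\lesssim|\xi|^{2-4\theta}e^{-c|\xi|^{2\max\{1-\theta;\theta\}}t}$. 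Feeding this into the $L^m$--$L^2$ estimate of Theorem~\ref{additionaldecay} shows that $I_1$ gains the extra factor $(1+t)^{-\frac{1-2\theta}{1-\theta}}$, while feeding the extra factors $P_1,P_2=O(|\xi|^{1-2\theta})$ into the same estimate shows that $I_2$ and $I_3$ gain the extra factor $(1+t)^{-\frac{1-2\theta}{2(1-\theta)}}$. Taking the worst of the three contributions and combining with the norm bound of Theorem~\ref{additionaldecay} for the localised solution gives the asserted estimate with $L(|\xi|)=T_1\big(I+\ml N_1(|\xi|,\theta)\big)\big(I+\ml N_2(|\xi|,\theta)\big)$.

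The main obstacle is the bookkeeping of the two competing scales $|\xi|^{2-2\theta}$ and $|\xi|^{2\theta}$ on $Z_{\intt}(\varepsilon)$: one has to check that the slowest-decaying contribution is always the one governed by $|\xi|^{2-2\theta}$ (here $\theta<1/2$ enters, via $2-2\theta>2\theta$), that the sign of the first-order correction $r_l$ for the components $l=4,5,6$ does not destroy the exponential decay once the Duhamel-type integral is applied, and that the remainders $P_1,P_2$ of the coupling matrices are genuinely of order $|\xi|^{1-2\theta}$ and no worse. This last point pins down the exponent $\frac{1-2\theta}{2(1-\theta)}$ and shows that the gain degenerates as $\theta\to1/2^-$, in agreement with the remark preceding this subsection that for $\theta=1/2$ no improvement over the reference system can be expected.
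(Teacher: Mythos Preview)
Your proposal is correct and follows essentially the same approach as the paper: the paper's own proof of Theorem~\ref{diffusion02} consists of the single sentence ``Following the same steps of the proof of Theorem~\ref{diffusion01} we immediately arrive at the statement of the theorem,'' and your decomposition into $I_1+I_2+I_3$, the Duhamel-type identity for $e^{-\mu_l t}-e^{-\tilde\mu_l t}$, and the bookkeeping of the extra powers $|\xi|^{2-4\theta}$ (for $I_1$) and $|\xi|^{1-2\theta}$ (for $I_2,I_3$) are precisely the adaptations of the $\theta=0$ argument that the paper has in mind. Your explicit treatment of the sign issue $\Re r_l<0$ for $l=4,5,6$ and the absorption via $|\xi|^{2-2\theta}\ll|\xi|^{2\theta}$ on $Z_{\intt}(\varepsilon)$ is a detail the paper leaves implicit but which is indeed needed for the argument to go through.
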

\begin{proof} Following the same steps of the proof of Theorem \ref{diffusion01} we immediately arrive at the statement of the theorem.
\end{proof}
The asymptotic behavior of eigenvalues $\mu_{1,2,3}(|\xi|)=O\big(|\xi|^{2-2\theta}\big)$ and $\mu_{4,5,6}(|\xi|)=O\big(|\xi|^{2\theta}\big)$ in Theorem \ref{smallmatrix} is the motivation for us to study \emph{double diffusion phenomena}. This new effect has been interpreted for the wave equation with structural damping $(-\Delta)^{\theta}u_t$ when $\theta\in\left(0,1/2\right)$ in the paper \cite{D'abbiccoEbert2014}. In fact, if we rewrite the solution to \eqref{di2} by $\widetilde{U}=\widetilde{U}(t,x)=\big(\widetilde{U}^{+}(t,x),\widetilde{U}^{-}(t,x)\big)^{\mathrm{T}}$, the first part $\ml{F}_{\xi\rightarrow x}^{-1}\big(W^{(0)}_{1,2,3}\big)(t,x)$ behaves like the solution to the parabolic-type system with a suitable choice of data $\widetilde{U}^{+}_0=\widetilde{U}^{+}_0(x)$, that is,
\begin{equation*}
\widetilde{U}^{+}_t+\diag\big(a^2,a^2,b^2\big)(-\Delta)^{1-\theta}\widetilde{U}^{+}=0,\,\,\,\, \widetilde{U}^{+}(0,x)=\widetilde{U}^{+}_0(x).
\end{equation*}
The second part $\ml{F}_{\xi\rightarrow x}^{-1}\big(W^{(0)}_{4,5,6}\big)(t,x)$ behaves like the solution to another parabolic-type system with a suitable choice of data $\widetilde{U}^{-}_0=\widetilde{U}^{-}_0(x)$, that is,
\begin{equation*}
\widetilde{U}^{-}_t+(-\Delta)^{\theta}\widetilde{U}^{-}=0,\,\,\,\,\widetilde{U}^{-}(0,x)=\widetilde{U}^{-}_0(x).
\end{equation*}
Nevertheless, due to the mixed influence from the multiplication by matrices $T_{\theta,\intt}$ and $T^{-1}_{\theta,\intt}$, we can observe only the decay rate influenced by the eigenvalues $\mu_{1,2,3}(|\xi|)=O\big(|\xi|^{2-2\theta}\big)$ in Theorem \ref{diffusion02}.
\subsection{Diffusion phenomenon for the linear model with $\theta\in\left(1/2,1\right]$}
By the same reason, the components of $\mu_l(|\xi|)$ in Theorem \ref{smallmatrix} imply $\sigma_1=\theta$, $\sigma_2=1/2$ and $\widetilde{M}_1=\frac{1}{2}\diag(1,1,1,1,1,1)$, $\widetilde{M}_2=i\diag\big(\sqrt{a^2},\sqrt{a^2},\sqrt{b^2},-\sqrt{a^2},-\sqrt{a^2},-\sqrt{b^2}\big)$, $H(|\xi|)=\big(I+\mathcal{N}_3(|\xi|,\theta)\big)^{-1}$ in the evolution system \eqref{refsystem}, that is,
\begin{equation}\label{di3}
\left\{
\begin{aligned}
&\widetilde{U}_t+\frac{1}{2}\diag(1,1,1,1,1,1)(-\Delta)^{\theta}\widetilde{U}+i\diag\big(\sqrt{a^2},\sqrt{a^2},\sqrt{b^2},-\sqrt{a^2},-\sqrt{a^2},-\sqrt{b^2}\big)(-\Delta)^{1/2}\widetilde{U}=0,\\
&\widetilde{U}(0,x)=\ml{F}^{-1}\big(\big(I+\mathcal{N}_3(|\xi|,\theta)\big)^{-1}W_0^{(0)}(\xi)\big)(x).
\end{aligned}\right.
\end{equation}
So, the eigenvalues in \eqref{represenheat} can be written as $\tilde{\mu}_{1,2}(|\xi|)=-i|\xi|\sqrt{a^2}+\frac{1}{2}|\xi|^{2\theta}$, $\tilde{\mu}_{3}(|\xi|)=-i|\xi|\sqrt{b^2}+\frac{1}{2}|\xi|^{2\theta}$, $\tilde{\mu}_{4,5}(|\xi|)=i|\xi|\sqrt{a^2}+\frac{1}{2}|\xi|^{2\theta}$ and $\tilde{\mu}_6(|\xi|)=i|\xi|\sqrt{b^2}+\frac{1}{2}|\xi|^{2\theta}$.
\begin{thm}\label{diffusion03}
	Let us consider the system \eqref{linearproblem} with $\theta\in\left(1/2,1\right]$. We assume that data $(u_0^{(k)},u_1^{(k)})\in \dot{H}^1_m\times L^m$ with $m\in[1,2]$ for $k=1,2,3$. Then, we obtain for the solution $W^{(0)}=W^{(0)}(t,\xi)$ to the Cauchy problem \eqref{weshould} the estimate
	\begin{equation*}
	\big\|\chi_{\intt}(D)\ml{F}_{\xi\rightarrow x}^{-1}\big(W^{(0)}-\big(I+\mathcal{N}_3(|\xi|,\theta)\big)\widetilde{W}\big)(t,\cdot)\big\|_{\dot{H}^s}\lesssim(1+t)^{-\frac{3(2-m)+2ms}{4m\theta}-\frac{2\theta-1}{2\theta}}\sum\limits_{k=1}^3
	\big\|\big(u_0^{(k)},u_1^{(k)}\big)\big\|_{\dot{H}_m^1\times L^m}.
	\end{equation*}
\end{thm}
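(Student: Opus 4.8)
The plan is to reproduce, step by step, the scheme of the proof of Theorem \ref{diffusion01} (the same template that already yields Theorem \ref{diffusion02}), now feeding it the ingredients valid for $\theta\in\left(1/2,1\right]$. Since the multiplier $\chi_{\intt}(D)$ confines the whole expression to the zone $Z_{\intt}(\varepsilon)$, I may work exclusively with the small-frequency representation from Theorem \ref{smallmatrix}, that is $W^{(0)}(t,\xi)=T_{\theta,\intt}^{-1}(|\xi|)\diag\big(e^{-\mu_l(|\xi|)t}\big)_{l=1}^6T_{\theta,\intt}(|\xi|)W^{(0)}_0(\xi)$ with $T_{\theta,\intt}(|\xi|)=I+\mathcal{N}_4(|\xi|,\theta)$, together with the reference system \eqref{di3}, whose solution \eqref{represenheat} carries the eigenvalues $\tilde{\mu}_l(|\xi|)=\pm i|\xi|\sqrt{y^2}+\frac{1}{2}|\xi|^{2\theta}$ (the first two terms of $\mu_l$) and the data factor $H(|\xi|)=\big(I+\mathcal{N}_3(|\xi|,\theta)\big)^{-1}$. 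Recalling $\mathcal{N}_3(|\xi|,\theta),\mathcal{N}_4(|\xi|,\theta)=O\big(|\xi|^{2\theta-1}\big)$, on $Z_{\intt}(\varepsilon)$ with $\varepsilon$ small these near-identity matrices, and their principal-part counterpart $L(|\xi|):=I+\mathcal{N}_3(|\xi|,\theta)$, are uniformly invertible via Neumann series.

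Next I would set up the three-term splitting exactly as for Theorem \ref{diffusion01}. Writing $L(|\xi|)=I+\mathcal{N}_3(|\xi|,\theta)$, $H(|\xi|)=L(|\xi|)^{-1}$, and decomposing
\[
T_{\theta,\intt}(|\xi|)=H(|\xi|)+P_1(|\xi|),\qquad T_{\theta,\intt}^{-1}(|\xi|)=L(|\xi|)+P_2(|\xi|),
\]
with $P_1(|\xi|),P_2(|\xi|)=O\big(|\xi|^{2\theta-1}\big)$, one obtains
\[
\chi_{\intt}(D)\ml{F}_{\xi\rightarrow x}^{-1}\big(W^{(0)}-L(|\xi|)\widetilde{W}\big)(t,x)=I_1(t,x)+I_2(t,x)+I_3(t,x),
\]
where, in complete analogy with the proof of Theorem \ref{diffusion01},
\begin{align*}
I_1(t,x)&:=\ml{F}_{\xi\rightarrow x}^{-1}\big(\chi_{\intt}(\xi)L(|\xi|)\diag\big(e^{-\mu_l(|\xi|)t}-e^{-\tilde{\mu}_l(|\xi|)t}\big)_{l=1}^6H(|\xi|)W^{(0)}_0(\xi)\big),\\
I_2(t,x)&:=\ml{F}_{\xi\rightarrow x}^{-1}\big(\chi_{\intt}(\xi)L(|\xi|)\diag\big(e^{-\mu_l(|\xi|)t}\big)_{l=1}^6P_1(|\xi|)W^{(0)}_0(\xi)\big),\\
I_3(t,x)&:=\ml{F}_{\xi\rightarrow x}^{-1}\big(\chi_{\intt}(\xi)P_2(|\xi|)\diag\big(e^{-\mu_l(|\xi|)t}\big)_{l=1}^6T_{\theta,\intt}(|\xi|)W^{(0)}_0(\xi)\big).
\end{align*}

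The term $I_1$ is the bottleneck. I would exploit $e^{-\mu_l(|\xi|)t}-e^{-\tilde{\mu}_l(|\xi|)t}=-r_l(|\xi|)t\,e^{-\tilde{\mu}_l(|\xi|)t}\int_0^1e^{-r_l(|\xi|)t\tau}\,d\tau$ with $r_l(|\xi|)=\mu_l(|\xi|)-\tilde{\mu}_l(|\xi|)=\frac{i}{8\sqrt{y^2}}|\xi|^{4\theta-1}+O\big(|\xi|^{6\theta-2}\big)=O\big(|\xi|^{4\theta-1}\big)$ by Theorem \ref{smallmatrix}; since $\Re\tilde{\mu}_l(|\xi|)=\frac{1}{2}|\xi|^{2\theta}$ and $\Re r_l(|\xi|)=O\big(|\xi|^{6\theta-2}\big)$ is negligible against it on $Z_{\intt}(\varepsilon)$, the integral factor stays bounded and the same bound covers both the ``$+$'' and the ``$-$'' families of roots. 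Using $|\xi|^{2\theta}t\,e^{-c|\xi|^{2\theta}t}\lesssim 1$ on $Z_{\intt}(\varepsilon)$ gives $|r_l(|\xi|)|\,t\lesssim|\xi|^{4\theta-1}t=\big(|\xi|^{2\theta}t\big)\,|\xi|^{2\theta-1}\lesssim|\xi|^{2\theta-1}$, i.e. an extra factor $|\xi|^{2\theta-1}$ over the integrand estimated in Theorem \ref{additionaldecay}. Running the same H\"older and Hausdorff--Young computation as there (with the Riesz weight $|\xi|^s$ coming from the $\dot{H}^s$-norm) then yields
\[
\|I_1(t,\cdot)\|_{\dot{H}^s}\lesssim(1+t)^{-\frac{3(2-m)+2ms}{4m\theta}-\frac{2\theta-1}{2\theta}}\sum_{k=1}^3\big\|\big(u_0^{(k)},u_1^{(k)}\big)\big\|_{\dot{H}_m^1\times L^m}.
\]
For $I_2$ and $I_3$ the honest factor $P_j(|\xi|)=O\big(|\xi|^{2\theta-1}\big)$ plays precisely the role of the gain $|\xi|^{2\theta-1}$ above, so the identical argument produces the same (or a faster) rate. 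Summing $I_1+I_2+I_3$, and noting that $\chi_{\intt}(D)$ annihilates everything outside $Z_{\intt}(\varepsilon)$, completes the proof.

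The step I expect to require the most care is justifying the decomposition $T_{\theta,\intt}(|\xi|)=\big(I+\mathcal{N}_3(|\xi|,\theta)\big)^{-1}+P_1(|\xi|)$ with $P_1(|\xi|)=O\big(|\xi|^{2\theta-1}\big)$, and the analogous statement for $T_{\theta,\intt}^{-1}(|\xi|)$: this amounts to checking that the near-identity transformation produced by the Case~2.2 diagonalisation for $\theta\in\left(1/2,1\right]$ at small frequencies matches, at leading order, the matrix $\big(I+\mathcal{N}_3(|\xi|,\theta)\big)^{-1}$ chosen in \eqref{di3}, and that no borderline (e.g.\ logarithmic) factor near $|\xi|=0$ spoils the order bookkeeping --- a point that is delicate since the integrability threshold in the H\"older step is tight. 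A secondary issue, absent in Theorem \ref{diffusion01}, is that here all six roots $\mu_l$ have vanishing real part $\sim\frac{1}{2}|\xi|^{2\theta}\to 0$ as $|\xi|\to 0$; there is no block with bounded real part, so every component of $W^{(0)}$ decays only polynomially, and one must verify that \eqref{di3} reproduces each of the six diffusive profiles with the correct oscillation $\pm i|\xi|\sqrt{y^2}$ before the cancellation above can be exploited.
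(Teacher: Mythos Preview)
Your proposal is correct and follows exactly the template the paper intends: the paper's own proof of Theorem \ref{diffusion03} simply says ``We follow the same step of the proof of Theorem \ref{diffusion01} to derive the desired result,'' and your write-up unpacks precisely that argument with the correct small-frequency ingredients for $\theta\in(1/2,1]$ (namely $T_{\theta,\intt}=I+\mathcal{N}_4$, $L=I+\mathcal{N}_3$, $r_l=O(|\xi|^{4\theta-1})$, $P_1,P_2=O(|\xi|^{2\theta-1})$). One small cosmetic remark: here $I_1$, $I_2$, $I_3$ all produce the \emph{same} extra gain $|\xi|^{2\theta-1}$ and hence the same rate $-(2\theta-1)/(2\theta)$, unlike the $\theta=0$ case where $I_1$ was strictly faster; your parenthetical ``(or a faster)'' is harmless but unnecessary.
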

\begin{proof} We follow the same step of the proof of Theorem \ref{diffusion01} to derive the desired result.
\end{proof}
\begin{rem} From Theorems \ref{diffusion01} to \ref{diffusion03}, we found the \emph{diffusion structure} for linear elastic waves with structural damping $(-\Delta)^{\theta}u_t$ if $\theta\in\left[0,1/2\right)\cup\left(1/2,1\right]$ as $t\rightarrow \infty$. This means, if we compare the estimates from Theorems \ref{diffusion01} to \ref{diffusion03} with the estimates in Theorem \ref{additionaldecay}, then we see that the decay rate can be improved by $-\frac{1-2\theta}{2(1-\theta)}$ when $\theta\in\left[0,1/2\right)$ and $-\frac{2\theta-1}{2\theta}$ when $\theta\in\left(1/2,1\right]$ as $t\rightarrow\infty$.
\end{rem}
\section{Global (in time) existence of small data solutions}\label{ge}
This section is devoted to the study of the global (in time) existence of small data solutions (GESDS) to the Cauchy problem for the weakly coupled system \eqref{fsdew001}. By using the estimates for solutions to linear parameter dependent Cauchy problems and Banach's fixed-point theorem, the global (in time) existence of energy solutions for small data belonging to the space $H^1\times L^2$ with an additional $L^m$ regularity and regularity parameter $m\in[1,2)$ or to the space $H^{s+1}\times H^s$, $s>0$, with an additional $L^m$ regularity and regularity parameter $m\in[1,2)$ are established.

For the sake of clarity, in this section the triplet $(k_1,k_2,k_3)$ can be chosen in the following way:
\begin{itemize}
	\item $k_1=1$, $k_2=2$ and $k_3=3$;
	\item $k_1=2$, $k_2=3$ and $k_3=1$;
	\item $k_1=3$, $k_2=1$ and $k_3=2$.
\end{itemize}
We re-define $p_{k_j+1}=p_{k_{j+1}}$, $p_{k_j+2}=p_{k_{j+2}}$ with $p_{k_{4}}=p_{k_1}$, $p_{k_{5}}=p_{k_2}$ for $j=1,2,3$ and $U^{(k_j-1)}(t,x)=U^{(k_{j-1})}(t,x)$ with $U^{(k_0)}(t,x)=U^{(k_3)}(t,x)$ for $j=1,2,3$.\\
Finally, we introduce for our further approach exponents $p_c(m,\theta)$, $\alpha_k(m,\theta)$ and $\widetilde{\alpha}_k(m,\theta)$ with some parameters $\theta\in\left[1/2,1\right]$, $m\in\left[1,6/5\right)$ and some balanced exponents  for  $k=k_1,k_2,k_3$.
\begin{enumerate}
		\item  According to the paper \cite{D'abbiccoReissig2014} we introduce the following exponent:
		\begin{equation}\label{cri01}
		p_c(m,\theta):=1+\frac{m(2\theta+1)}{3-m}\,\,\,\,\text{if}\,\,\,\,\theta\in\left[1/2,1\right]\,\,\,\,\text{and}\,\,\,\,m\in\left[1,6/5\right).
		\end{equation}
		As mentioned in the Introduction of this paper the paper \cite{D'abbiccoReissig2014} proved the critical exponent $p_c(1,1/2)=2$ to the single semi-linear wave equation with structural damping $(-\Delta)^{1/2}U_t$ in three-dimensions. Moreover, the authors of the paper \cite{D'abbiccoReissig2014} showed that the global (in time) existence of small data solutions to the semi-linear structurally damped wave equation can be proved for $p_c(1,\theta)<p\leq 3$ for $\theta\in\left[1/2,1\right]$ in the Cauchy problem \eqref{semistructuraldampedwave}.\\
		Moreover, let us introduce the balanced exponents $p_{\text{bal}}(3/2,s,\theta)$ and $p_{\text{bal}}(m,0,\theta)$ respectively:
		\begin{equation*}
		\left.\begin{aligned}
		&p_{\text{bal}}(3/2,s,\theta):=2+\frac{2+4s(1-\theta)}{5-6\theta+2s}&\text{if}\,\,\,\,&m=3/2,\,\,\,\, s\in\left[0,1/2\right),\, \theta\in\left[0,1/2\right),\\
		&p_{\text{bal}}(m,0,\theta):=2+\frac{6(m-2+2\theta)}{2m\theta-3m+6}&\text{if}\,\,\,\,&m\in\left[6/5,3/2\right),\,\,\,\, s=0,\,\theta\in\left[1/2,1\right].\\
		\end{aligned}\right.
		\end{equation*}
		\item The following parameter
		\begin{equation}\label{cri1}
		 \alpha_{k}(m,\theta):=m\frac{2\theta+(1+2\theta)p_{k+1}+p_{k}p_{k+1}}{2(p_{k}p_{k+1}-1)}\,\,\,\,\text{if}\,\,\,\,\theta\in\left[1/2,1\right]\,\,\,\,\text{and}\,\,\,\,m\in\left[1,6/5\right)
		\end{equation}
		is motivated by the recent paper \cite{D'abbicco2015}. The author proved the existence of global (in time) Sobolev solutions to the weakly coupled system for structurally damped wave equations \eqref{weaklycoupledstrucuraldamped}. Especially, in three-dimensions, unique global (in time) solutions exist under the condition
		\begin{equation*}
		\alpha_{\max}(1,1/2)=\max\left\{\alpha_{1}(1,1/2);\alpha_{2}(1,1/2)\right\}<3/2,
		\end{equation*}
		where we re-define $p_1=p$, $p_2=q$ and $p_3=p$ in the condition \eqref{cri1}. Under the assumption $\alpha_{\max}(1,1/2)>3/2$ the author also proved blow up of solutions by applying the test function method. Additionally, we should point out the relation between the parameters \eqref{cri01} and \eqref{cri1}. If we consider the condition $\alpha_{k}(m,\theta)<3/2$, it also can be rewritten as
		\begin{equation*}
		p_{k+1}\big(p_{k}+1-p_c(m,\theta)\big)>p_c(m,\theta).
		\end{equation*}
		Next, we introduce the balanced parameters $\alpha_{k,\text{bal}}(3/2,s,\theta)$ and $\alpha_{k,\text{bal}}(m,0,\theta)$. If $s\in\left[0,1/2\right)$ and $\theta\in\left[0,1/2\right)$, we introduce
		\begin{equation*}
		 \alpha_{k,\text{bal}}(3/2,s,\theta):=\frac{9-12\theta+4s(2-\theta)+((7-6\theta)+2s(3-2\theta))p_{k+1}-((2-6\theta)+2s)p_{k}p_{k+1}}{2(p_{k}p_{k+1}-1)}.
		\end{equation*}
		If $m\in\left[6/5,3/2\right)\text{ and }\theta\in\left[1/2,1\right]$, we define
		\begin{equation*}
		\alpha_{k,\text{bal}}(m,0,\theta):=\frac{4m\theta+12\theta-3+(2m\theta+12\theta+3m-6)p_{k+1}-(2m\theta-3m+3)p_{k}p_{k+1}}{2(p_{k}p_{k+1}-1)}.
		\end{equation*}
		\item We introduce a parameter
		\begin{equation}\label{cri4}
		 \widetilde{\alpha}_{k}(m,\theta):=m\frac{2\theta+(1+2\theta)(p_{k+1}+1)p_{k+2}+p_{1}p_{2}p_3}{2(p_{1}p_{2}p_3-1)}\,\,\,\,\text{if}\,\,\,\,\theta\in\left[1/2,1\right]\,\,\,\,\text{and}\,\,\,\,m\in\left[1,6/5\right).
		\end{equation}
		Also, we should indicate the relation between the parameters \eqref{cri01} and \eqref{cri4}. If we consider the condition $\widetilde{\alpha}_{k}(m,\theta)<3/2$, it also can be rewritten as
		\begin{equation*}
		p_{k+2}\big(p_{k+1}\big(p_k+1-p_c(m,\theta)\big)+1-p_c(m,\theta)\big)>p_c(m,\theta).
		\end{equation*}
		Furthermore, the balanced parameters $\widetilde{\alpha}_{k,\text{bal}}(3/2,s,\theta)$ and $\widetilde{\alpha}_{k,\text{bal}}(m,0,\theta)$ should be introduced. If $s\in\left[0,1/2\right)$ and $\theta\in\left[0,1/2\right)$, we take the notation
		\begin{equation*}
		 \widetilde{\alpha}_{k,\text{bal}}(3/2,s,\theta):=\frac{9-12\theta+4s(2-\theta)+((7-6\theta)+2s(3-2\theta))(p_{k+1}+1)p_{k+2}-((2-6\theta)+2s)p_1p_2p_3}{2(p_1p_2p_3-1)}.
		\end{equation*}
		If $m\in\left[6/5,3/2\right)\text{ and }\theta\in\left[1/2,1\right]$, we denote
		\begin{equation*}
		 \widetilde{\alpha}_{k,\text{bal}}(m,0,\theta):=\frac{4m\theta+12\theta-3+(2m\theta+12\theta+3m-6)(p_{k+1}+1)p_{k+2}-(2m\theta-3m+3)p_1p_2p_3}{2(p_1p_2p_3-1)}.
		\end{equation*}
\end{enumerate}
\subsection{Philosophy of our approach}
Now, we explain our strategy to study the global (in time) existence of small data Sobolev solutions for the semi-linear Cauchy problem \eqref{fsdew001}.

Let us consider the family of linear parameter dependent Cauchy problems
\begin{equation}\label{parametherdep}
\left\{
\begin{aligned}
&u_{tt}-a^2\Delta u-\big(b^2-a^2\big)\nabla\divv u+(-\Delta)^{\theta}u_t=0,\quad &(t,x)\in[\tau,\infty)\times\mb{R}^3,\\
&(u,u_t)(\tau,x)=(u_0,u_1)(x),\quad &x\in\mb{R}^3.
\end{aligned}
\right.
\end{equation}
With the aim of studying the system \eqref{parametherdep}, we define $K_0=K_0(t,\tau,x)$, $K_1=K_1(t,\tau,x)$ as the fundamental solutions with data $(u_0,u_1)=(\delta_0,0)$ and $(u_0,u_1)=(0,\delta_0)$, respectively. Here, $\delta_0$ denotes the Dirac distribution in $x=0$ with respect to the spatial variables. Then, the solution $u=u(t,x)$ to the linear Cauchy problem \eqref{parametherdep} is given by
\begin{equation*}
u(t,x)=K_0(t,\tau,x)\ast_{(x)}u_0(x)+K_1(t,\tau,x)\ast_{(x)}u_1(x).
\end{equation*}
Next, by Duhamel's principle we see that
\begin{equation*}
u(t,x)=\int\nolimits_0^tK_1(t,\tau,x)\ast_{(x)}f(\tau,x)d\tau
\end{equation*}
is the solution to the inhomogeneous linear Cauchy problem
\begin{equation*}
\left\{
\begin{aligned}
&u_{tt}-a^2\Delta u-\big(b^2-a^2\big)\nabla\divv u+(-\Delta)^{\theta}u_t=f(t,x),\quad &(t,x)\in(0,\infty)\times\mb{R}^3,\\
&(u,u_t)(0,x)=(0,0),\quad &x\in\mb{R}^3.
\end{aligned}
\right.
\end{equation*}
We define on the family of complete spaces $\left\{X(T)\right\}_{T>0}$ the operator $N$ as follows:
\begin{equation*}
\begin{split}
N:U\in X(T)\longrightarrow NU(t,x):=\big(N_1U(t,x),N_2U(t,x),N_3U(t,x)\big)^{\mathrm{T}},
\end{split}
\end{equation*}
where for $k=1,2,3,$ we introduce
\begin{equation} \label{fixedpointformulation}
N_kU(t,x):= K_0(t,0,x)\ast_{(x)}U_0^{(k)}(x)+K_1(t,0,x)\ast_{(x)}U^{(k)}_1(x)+\int\nolimits_0^tK_1(t,\tau,x)\ast_{(x)}|U^{(k-1)}(\tau,x)|^{p_k}d\tau.
\end{equation}
The next inequalities play an essential role:
\begin{equation}\label{Improtant1}
\|NU\|_{X(T)}\lesssim\sum\limits_{k=1}^3\big\|\big(U^{(k)}_0,U^{(k)}_1\big)\big\|_{\ml{D}^s_{m,1}}+\sum\limits_{k=1}^3\|U\|_{X(T)}^{p_k},
\end{equation}
\begin{equation}\label{Improtant2}
\|NU-NV\|_{X(T)}\lesssim\|U-V\|_{X(T)}\sum\limits_{k=1}^3\big(\|U\|_{X(T)}^{p_k-1}+\|V\|_{X(T)}^{p_k-1}\big),
\end{equation}
uniformly with respect to $T\in[0,\infty)$. They mainly show that the mapping $N:X(T)\rightarrow X(T)$ is a contraction for small data. Then, according to Banach's fixed-point theorem, there exists a uniquely determined solution $U^*=U^*(t,x)$ to the semi-linear Cauchy problem \eqref{fsdew001} satisfying $NU^*=U^*\in X(T)$ for all positive $T$.

The key tools to prove \eqref{Improtant1} and \eqref{Improtant2} are Gagliardo-Nirenberg inequalities, the fractional chain rule, the fractional Leibniz rule and the fractional powers rules, which have been extensively and intensively discussed in Harmonic Analysis (cf. with Appendix \ref{toolfractional} or the book \cite{ReissigEbert2018}).

Additionally, because different power source nonlinearities have different influences on conditions for the global (in time) existence of solutions, we allow the {\it effect of the loss of decay}, in particular, in the case that one of the exponents $p_1,\,p_2,\,p_3$ is below the exponent $p_c(m,\theta)$ or the balanced parameter $p_{\text{bal}}(m,s,\theta)$. For this reason we take the derived energy estimates for the solutions to the linear model \eqref{linearproblem} with vanishing right-hand side and allow in the solution spaces some parameters describing the loss of decay.

We now state the strategy of the loss of decay. To prove the global (in time) existence of small data Sobolev solutions, the main difficulty is to estimate the integral in (\ref{fixedpointformulation}) over the interval $[0,t]$. We divide the interval $[0,t]$ in two sub-intervals $\left[0,t/2\right]$ and $\left[t/2,t\right]$. The difficulty is the estimate of the power nonlinearities in the norm of the solution space in each interval. If we allow to apply the Gagliardo-Nirenberg inequality, then there appear some relations including these parameters describing the loss of decay.

Here we take an example to show how to choose the suitable parameters describing the loss of decay. Let us consider the semi-linear model \eqref{fsdew001} with $\theta\in\left[1/2,1\right]$ and data belonging to $\ml{D}_{m,1}^0$  with $m\in\left[1,6/5\right)$, where exactly one exponent is not above the exponent $p_c(m,\theta)$. Without loss of generality we choose $1<p_{k_1}< p_c(m,\theta)$ and $p_{k_2},p_{k_3}>p_c(m,\theta)$, where the exponent $p_c(m,\theta)$ is defined by \eqref{cri01}. Let us choose the evolution space \eqref{sp} with the norm \eqref{norm1}. Our purpose is to prove the following estimates for $j+l=0,1$ with $j,l\in\mb{N}_0$:
\begin{equation}\label{es12}
	 (1+t)^{\frac{6-5m+2(j+l)m}{4m\theta}-g_{k_1}}\|\partial_t^j\nabla_x^lN_{k_1}U(t,\cdot)\|_{L^2}\lesssim\sum\limits_{k=1}^3\big\|\big(U_0^{(k)},U_1^{(k)}\big)\big\|_{\ml{D}_{m,1}^0}+\|U\|_{X(t)}^{p_{k_1}}.
	\end{equation}
	Applying the classical Gagliardo-Nirenberg inequality (see Proposition \ref{claGNineq}) we obtain
	\begin{equation*}
	\begin{split}
	\big\||U^{(k_3)}(\tau,x)|^{p_{k_1}}\big\|_{L^m}&\lesssim (1+\tau)^{-\frac{(3-m)p_{k_1}-3}{2m\theta}+g_{k_3}p_{k_1}}\|U\|_{X(\tau)}^{p_{k_1}},\\
	\big\||U^{(k_3)}(\tau,x)|^{p_{k_1}}\big\|_{L^2}&\lesssim (1+\tau)^{-\frac{2(3-m)p_{k_1}-3m}{4m\theta}+g_{k_3}p_{k_1}}\|U\|_{X(\tau)}^{p_{k_1}}.
	\end{split}
	\end{equation*}
	After using the derived $(L^2\cap L^m)$-$L^2$ estimates and $L^2$-$L^2$ estimates to the solution and its derivatives, the following estimates can be obtained:
	\begin{equation*}
	\begin{split}
	 &(1+t)^{\frac{6-5m+2(j+l)m}{4m\theta}-g_{k_1}}\|\partial_t^j\nabla_x^lN_{k_1}U(t,\cdot)\|_{L^2}\lesssim(1+t)^{-g_{k_1}}\sum\limits_{k=1}^3\big\|\big(U_0^{(k)},U_1^{(k)}\big)\big\|_{\ml{D}_{m,1}^0}\\
	 &\qquad\qquad\qquad+(1+t)^{-g_{k_1}}\|U\|_{X(t)}^{p_{k_1}}\Big(\int\nolimits_{0}^{t/2}(1+\tau)^{-\frac{(3-m)p_{k_1}-3}{2m\theta}+g_{k_3}p_{k_1}}d\tau+(1+t)^{1-\frac{(3-m)p_{k_1}-3}{2m\theta}+g_{k_3}p_{k_1}}\Big).
	\end{split}
	\end{equation*}
	Because of the assumption $1<p_{k_1}<p_c(m,\theta)$, the first integral over $\left[0,t/2\right]$ is not uniformly bounded for all $t>0$ because of
\[ -\frac{(3-m)p_{k_1}-3}{2m\theta}+g_{k_3}p_{k_1} >-1. \]
For this reason it holds
	\begin{equation*}
	\int\nolimits_{0}^{t/2}(1+\tau)^{-\frac{(3-m)p_{k_1}-3}{2m\theta}+g_{k_3}p_{k_1}}d\tau\lesssim(1+t)^{1-\frac{(3-m)p_{k_1}-3}{2m\theta}+g_{k_3}p_{k_1}}.
	\end{equation*}
	Thus, we can get
	\begin{equation*}
	 (1+t)^{\frac{6-5m+2(j+l)m}{4m\theta}-g_{k_1}}\|\partial_t^j\nabla_x^lN_{k_1}U(t,\cdot)\|_{L^2}\lesssim(1+t)^{-g_{k_1}}\sum\limits_{k=1}^3\big\|\big(U_0^{(k)},U_1^{(k)}\big)\big\|_{\ml{D}_{m,1}^0}+(1+t)^{1-g_{k_1}-\frac{(3-m)p_{k_1}-3}{2m\theta}+g_{k_3}p_{k_1}}\|U\|_{X(t)}^{p_{k_1}}.
	\end{equation*}
	Obviously, the non-negative parameters $g_{k_1}$ and $g_{k_3}$ describing the loss of decay should satisfy the following condition:
	\begin{equation*}
	1-g_{k_1}-\frac{(3-m)p_{k_1}-3}{2m\theta}+g_{k_3}p_{k_1}\leq0.
	\end{equation*}
	Providing that we choose the parameters 
	\begin{equation*}
	g_{k_1}=1-\frac{(3-m)p_{k_1}-3}{2m\theta}\,\,\,\,\text{and}\,\,\,\, g_{k_3}=0,
	\end{equation*}
	we can prove the desired estimate \eqref{es12}.
\begin{rem}
		It is not reasonable to compare the hereinafter proposed results with the results of \cite{Takeda2009}. In \cite{Takeda2009}, the authors proved results for the global (in time) existence of Sobolev solution to weakly coupled systems of damped wave equations with data belonging to the space $(W^{1,1}\cap W^{1,\infty})\times(L^1\cap L^{\infty})$. Moreover, their proof is based on $L^p$-$L^q$ estimates of fundamental solutions for the linear damped wave equation. What we do is to derive the global (in time) existence of solutions to weakly coupled systems for elastic waves with different damping mechanisms with data belonging to the space $(H^{s+1}\cap L^m)\times(H^s\cap L^m)$.
\end{rem}
\subsection{GESDS for models with $\theta\in\left[0,1/2\right)$}\label{0,1/2GESDS}
From Theorem \ref{enee} we know that the time-dependent coefficients in the energy estimates for solutions to the linear Cauchy problem \eqref{linearproblem} depend continuously on the parameters $\theta\in\left[0,1/2\right)$, $m\in[1,2)$ and $s\geq0$. In the following, we will choose the special cases $m=1$ and $m=3/2$ to show clearly and succinctly our strategy to prove results for the global (in time) existence of small data Sobolev solutions.

First, we recall some energy estimates for solutions to the linear Cauchy problem \eqref{linearproblem} (cf. with Theorem \ref{enee}). If the date belong to the space $\ml{D}_{1,1}^s$, that is , $\big(u_0^{(k)},u_1^{(k)}\big)\in (H^{s+1}\cap L^{1})\times (H^{s}\cap L^{1})$ for all $s\geq0$ and $k=1,2,3$, we have the following estimates:
	\begin{equation}\label{11119}
	\begin{split}
	\|u^{(k)}(t,\cdot)\|_{L^2}&\lesssim(1+t)^{-\rho_0(1,\theta)}\sum\limits_{k=1}^3\big\|\big(u^{(k)}_0,u_1^{(k)}\big)\big\|_{(H^1\cap L^{1})\times(L^2\cap L^{1})},\\
	 \||D|^{s+1}u^{(k)}(t,\cdot)\|_{L^2}+\||D|^su_t^{(k)}(t,\cdot)\|_{L^2}&\lesssim(1+t)^{-\rho_{s+1}(1,\theta)}\sum\limits_{k=1}^3\big\|\big(u^{(k)}_0,u_1^{(k)}\big)\big\|_{(H^{s+1}\cap L^{1})\times(H^{s}\cap L^{1})},
	\end{split}
	\end{equation}
	where
	\begin{equation}\label{111110}
	\rho_0(1,\theta)<\frac{3-4\theta}{4(1-\theta)}\,\,\,\,\text{and}\,\,\,\,\rho_{s+1}(1,\theta)<\frac{5-4\theta+2s}{4(1-\theta)}.
	\end{equation}
	If the date belong to the space $\ml{D}_{3/2,1}^s$, that is, $\big(u_0^{(k)},u_1^{(k)}\big)\in (H^{s+1}\cap L^{3/2})\times (H^{s}\cap L^{3/2})$ for all $s\geq0$ and $k=1,2,3$, we have the following estimates:
	\begin{equation*}
	\begin{split}
	\|u^{(k)}(t,\cdot)\|_{L^2}&\lesssim(1+t)^{1-\rho_1(3/2,\theta)}\sum\limits_{k=1}^3\big\|\big(u^{(k)}_0,u^{(k)}_1\big)\big\|_{(H^1\cap L^{3/2})\times (L^2\cap L^{3/2})},\\
	 \||D|^{s+1}u^{(k)}(t,\cdot)\|_{L^2}+\||D|^su_t^{(k)}(t,\cdot)\|_{L^2}&\lesssim(1+t)^{-\rho_{s+1}(3/2,\theta)}\sum\limits_{k=1}^3\big\|\big(u^{(k)}_0,u^{(k)}_1\big)\big\|_{(H^{s+1}\cap L^{3/2})\times (H^s\cap L^{3/2})},
	\end{split}
	\end{equation*}
	where
	\begin{equation*}
	\rho_1(3/2,\theta)<\frac{3-4\theta}{4(1-\theta)}\,\,\,\,\text{and}\,\,\,\,\rho_{s+1}(3/2,\theta)<\frac{3-4\theta+2s}{4(1-\theta)}.
	\end{equation*}
\subsubsection{Data from classical energy space with suitable regularity}
Because data belong to the spaces $\ml{D}^0_{1,1}$ or $\ml{D}^0_{3/2,1}$ in this part, we mainly use the classical Gagliardo-Nirenberg inequality to estimate nonlinearities in the $L^2$ norm and the $L^m$ norm $(m=1\text{ or  }m=3/2)$. The restriction of admissible parameters from the application of the classical Gagliardo-Nirenberg inequality implies the condition $p_k\in\left[2/m,3\right]$ for all $k=1,2,3$. For this reason, we observe that in the following theorem all exponents are above the exponent $p=2$ (see Remark \ref{onlyrem}).
\begin{thm}\label{GESDS01} Let us consider the semi-linear model \eqref{fsdew001} with $\theta\in [0,1/2)$. Let us assume $p_k\in(2,3]$ for $k=1,2,3$. Then, there exists a constant $\varepsilon_0>0$ such that for all $\big(U^{(k)}_0,U^{(k)}_1\big)\in\ml{D}_{1,1}^0$ with $\sum\limits_{k=1}^3\big\|\big(U^{(k)}_0,U^{(k)}_1\big)\big\|_{\ml{D}_{1,1}^0}\leq\varepsilon_0$ there exists a uniquely determined energy solution
	\begin{equation*}
	U\in\big(\mathcal{C}\big([0,\infty),H^1\big(\mb{R}^3\big)\big)\cap \mathcal{C}^1\big([0,\infty),L^2\big(\mb{R}^3\big)\big)\big)^3
	\end{equation*}
	to the Cauchy problem \eqref{fsdew001}. Moreover, the following estimates hold:
	\begin{equation*}
	\begin{split}
	\|U^{(k)}(t,\cdot)\|_{L^2}&\lesssim (1+t)^{-\rho_{0}(1,\theta)}\sum\limits_{k=1}^3\big\|\big(U^{(k)}_0,U^{(k)}_1\big)\big\|_{\ml{D}_{1,1}^0},\\
	\|\nabla_xU^{(k)}(t,\cdot)\|_{L^2}+\|U_t^{(k)}(t,\cdot)\|_{L^2}&\lesssim (1+t)^{-\rho_{1}(1,\theta)}\sum\limits_{k=1}^3\big\|\big(U^{(k)}_0,U^{(k)}_1\big)\big\|_{\ml{D}_{1,1}^0}.
	\end{split}
	\end{equation*}
\end{thm}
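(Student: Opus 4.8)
The strategy is to recast \eqref{fsdew001} as a fixed-point problem for the operator $N$ from \eqref{fixedpointformulation} and to apply Banach's fixed-point theorem on the complete space
\begin{equation*}
X(T):=\big(\mathcal{C}([0,T],H^1(\mb{R}^3))\cap\mathcal{C}^1([0,T],L^2(\mb{R}^3))\big)^3
\end{equation*}
equipped with
\begin{equation*}
\|U\|_{X(T)}:=\sup_{0\le t\le T}\sum_{k=1}^3\Big((1+t)^{\rho_0(1,\theta)}\|U^{(k)}(t,\cdot)\|_{L^2}+(1+t)^{\rho_1(1,\theta)}\big(\|\nabla_xU^{(k)}(t,\cdot)\|_{L^2}+\|U^{(k)}_t(t,\cdot)\|_{L^2}\big)\Big),
\end{equation*}
the weights being exactly the decay rates recalled in \eqref{11119}. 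Because the hypothesis forces every $p_k$ above the threshold exponent $p=2$, no loss-of-decay parameters are needed here, so this is the most transparent instance of the philosophy described above. It then suffices to establish \eqref{Improtant1} and \eqref{Improtant2} with $s=0$, $m=1$ and $\ml{D}^0_{1,1}$: Banach's theorem yields, for $\varepsilon_0$ small enough, a unique $U\in X(T)$ with $NU=U$ for every $T>0$, hence a unique global solution, and the two claimed decay estimates are precisely the two summands of $\|U\|_{X(\infty)}\lesssim\sum_k\|(U^{(k)}_0,U^{(k)}_1)\|_{\ml{D}^0_{1,1}}$; continuity in time of $U$ follows from \eqref{fixedpointformulation}, the continuity of the linear propagators (Theorem \ref{wellposedness1}), and the uniform convergence of the iteration.

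For \eqref{Improtant1} the data part $K_0(t,0,\cdot)\ast_{(x)}U^{(k)}_0+K_1(t,0,\cdot)\ast_{(x)}U^{(k)}_1$ is controlled directly by \eqref{11119} (Theorem \ref{enee} with $m=1$, $s=0$), which reproduces exactly the two weights in $\|\cdot\|_{X(T)}$. For the Duhamel term $\int_0^tK_1(t,\tau,\cdot)\ast_{(x)}|U^{(k-1)}(\tau,\cdot)|^{p_k}\,d\tau$ I split $[0,t]$ into $[0,t/2]$ and $[t/2,t]$. On $[0,t/2]$ one has $1+t-\tau\sim1+t$, so I apply to $K_1(t,\tau,\cdot)\ast_{(x)}(\cdot)$ the $(L^1\cap L^2)$--$L^2$ estimates of Theorem \ref{enee} and pull $(1+t)^{-\rho_0(1,\theta)}$ (respectively $(1+t)^{-\rho_1(1,\theta)}$ for the first-order energy) out of the integral. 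On $[t/2,t]$ one has $1+\tau\sim1+t$, and I use the $L^2$--$L^2$ bound for $t-\tau\le1$ (from the non-increasing energy $E_{\text{pha}}(\hat u)$, Lemma \ref{Hl+1Hllem}, and the identity $K_1(t,\tau,\cdot)\ast_{(x)}g=\int_\tau^t\partial_s\big(K_1(s,\tau,\cdot)\ast_{(x)}g\big)\,ds$) together with the decaying $(L^1\cap L^2)$--$L^2$ bound for $t-\tau\ge1$. In every piece the source is estimated by the classical Gagliardo--Nirenberg inequality (Proposition \ref{claGNineq}) through
\begin{equation*}
\big\||U^{(k-1)}(\tau,\cdot)|^{p_k}\big\|_{L^1}=\|U^{(k-1)}(\tau,\cdot)\|_{L^{p_k}}^{p_k},\qquad\big\||U^{(k-1)}(\tau,\cdot)|^{p_k}\big\|_{L^2}=\|U^{(k-1)}(\tau,\cdot)\|_{L^{2p_k}}^{p_k},
\end{equation*}
which is admissible precisely because $p_k\in(2,3]$ keeps $p_k$ and $2p_k$ in the range $[2,6]$ of the embedding $H^1(\mb{R}^3)\hookrightarrow L^q(\mb{R}^3)$; inserting $\|U\|_{X(\tau)}$ gives $\||U^{(k-1)}(\tau,\cdot)|^{p_k}\|_{L^1\cap L^2}\lesssim(1+\tau)^{-\kappa_k}\|U\|_{X(\tau)}^{p_k}$ for suitable $\kappa_k>0$.

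What remains is then the bookkeeping of the time integrals. A short computation shows that $p_k>2$ forces $\kappa_k>1$ uniformly for $\theta\in[0,1/2)$ — the sharp requirement is $p_k>\tfrac{5-2\theta}{3-2\theta}$, which increases to $2$ as $\theta\to1/2^-$, and here the small margin hidden in the strict inequalities $\rho_0(1,\theta)<\tfrac{3-4\theta}{4(1-\theta)}$, $\rho_1(1,\theta)<\tfrac{5-4\theta}{4(1-\theta)}$ must be chosen small enough — so that $\int_0^{t/2}(1+\tau)^{-\kappa_k}\,d\tau$ is uniformly bounded, while on $[t/2,t]$ the one power of $(1+\tau)$ lost in the integration is absorbed by the decay of $\||U^{(k-1)}(\tau,\cdot)|^{p_k}\|_{L^2}$ and the weight $(1+t)^{\rho_0(1,\theta)}$, again using $p_k>2$ (and $p_k\le3$ keeps the relevant Gagliardo--Nirenberg exponent $\le1$, so the endpoint $p_k=3$ is still covered). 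This yields \eqref{Improtant1}. For the Lipschitz bound \eqref{Improtant2} I repeat the decomposition verbatim, now using $\big||U^{(k-1)}|^{p_k}-|V^{(k-1)}|^{p_k}\big|\lesssim|U^{(k-1)}-V^{(k-1)}|\big(|U^{(k-1)}|^{p_k-1}+|V^{(k-1)}|^{p_k-1}\big)$, Hölder's inequality with the conjugate pair $\big(p_k,\tfrac{p_k}{p_k-1}\big)$ and its $L^2$ analogue, and the same Gagliardo--Nirenberg inequalities, which again needs only $p_k\in(2,3]$.

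The main obstacle I anticipate is exactly this last step: ensuring that the integrability of the nonlinear term near $\tau=0$ on $[0,t/2]$ and the precise matching of decay rates near $\tau=t$ on $[t/2,t]$ hold simultaneously and uniformly over the full range $\theta\in[0,1/2)$. That is where the hypothesis $p_k\in(2,3]$ is genuinely used, and where the freedom in the strict inequalities defining $\rho_0(1,\theta)$ and $\rho_1(1,\theta)$ has to be spent; everything else is routine.
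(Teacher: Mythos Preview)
Your proposal is correct and follows essentially the same route as the paper: the same weighted evolution space $X(T)$, the same Gagliardo--Nirenberg estimates of $\||U^{(k-1)}|^{p_k}\|_{L^1\cap L^2}$ (giving the restriction $p_k\in[2,3]$), the same splitting of the Duhamel integral into $[0,t/2]$ and $[t/2,t]$, the same key threshold $p_k>1+2/(3-2\theta)=(5-2\theta)/(3-2\theta)$ ensured by $p_k>2$, and the same H\"older-plus-Gagliardo--Nirenberg argument for the Lipschitz condition. The only cosmetic difference is that on $[t/2,t]$ the paper simply invokes the $L^2$--$L^2$ estimate from Theorem~\ref{Hl+1Hl} for the derivatives (and keeps the $(L^2\cap L^1)$--$L^2$ estimate on all of $[0,t]$ for the solution itself, using $\rho_0(1,\theta)<1$), rather than your slightly more elaborate justification via $E_{\text{pha}}$ and a further subdivision at $t-\tau=1$; both lead to the same bounds.
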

\begin{proof} For any $T>0$ let us introduce the evolution space
	\begin{equation}\label{sp}
	X(T):=\big(\mathcal{C}\big([0,T],H^1\big(\mb{R}^3\big)\big)\cap \mathcal{C}^1\big([0,T],L^2\big(\mb{R}^3\big)\big)\big)^3
	\end{equation}
	with the corresponding norm
	\begin{equation*}
	\|U\|_{X(T)}:=\sup\limits_{0\leq t\leq T}\Big(\sum\limits_{k=1}^3(1+t)^{\rho_0(1,\theta)}\|U^{(k)}(t,\cdot)\|_{L^2}+\sum\limits_{k=1}^3(1+t)^{\rho_1(1,\theta)}\big(\|\nabla_x U^{(k)}(t,\cdot)\|_{L^2}+\|U_t^{(k)}(t,\cdot)\|_{L^2}\big)\Big).
	\end{equation*}
	In the definition of the norm the weights $(1+t)^{\rho_0(1,\theta)}$ and $(1+t)^{\rho_1(1,\theta)}$ come from the decay estimates of solutions to the corresponding linear Cauchy problem \eqref{linearproblem} with data belonging to $\ml{D}_{1,1}^0$.\\
	Applying the classical Gagliardo-Nirenberg inequality we have
	\begin{equation*}
	\begin{split}
	 \||U^{(k-1)}(\tau,x)|^{p_k}\|_{L^m}&\lesssim\|U^{(k-1)}(\tau,\cdot)\|_{L^2}^{p_k(1-\beta_{0,1}(mp_k))}\|\nabla_xU^{(k-1)}(\tau,\cdot)\|_{L^2}^{p_k\beta_{0,1}(mp_k)}\\
	&\lesssim (1+\tau)^{-\rho_0(1,\theta)p_k+3(\frac{p_k}{2}-\frac{1}{m})(\rho_0(1,\theta)-\rho_1(1,\theta))}\|U\|_{X(\tau)}^{p_k},
	\end{split}
	\end{equation*}
	where $\beta_{0,1}(mp_k)=3(\frac{1}{2}-\frac{1}{mp_k})$ for $m=1$ and $m=2$. The restrictions from the application of the classical Gagliardo-Nirenberg inequality, i.e., $\beta_{0,1}(p_k),\beta_{0,1}(2p_k)\in[0,1]$, lead to $p_k\in[2,3]$ for all $k=1,2,3$.
	
	Now we apply on $[0,t]$ the derived $(L^2\cap L^1)$-$L^2$ estimates for the solution itself  to get
	\begin{equation*}
	\begin{split}
	(1+t)^{\rho_0(1,\theta)}&\|N_kU(t,\cdot)\|_{L^2}\lesssim \sum\limits_{k=1}^3\big\|\big(U^{(k)}_0,U^{(k)}_1\big)\big\|_{\ml{D}_{1,1}^0}+(1+t)^{\rho_0(1,\theta)}\int\nolimits_0^t(1+t-\tau)^{-\rho_{0}(1,\theta)}
\big\||U^{(k-1)}(\tau,x)|^{p_k}\big\|_{L^2\cap L^1}d\tau\\
	 &\lesssim\sum\limits_{k=1}^3\big\|\big(U^{(k)}_0,U^{(k)}_1\big)\big\|_{\ml{D}_{1,1}^0}+(1+t)^{\rho_0(1,\theta)}
\|U\|_{X(t)}^{p_k}\int\nolimits_0^t(1+t-\tau)^{-\rho_0(1,\theta)}(1+\tau)^{-\rho_0(1,\theta)p_k+3(\frac{p_k}{2}-1)(\rho_0(1,\theta)-\rho_1(1,\theta))}d\tau,
	\end{split}
	\end{equation*}
	where we use $\|U\|_{X(\tau)}\leq\|U\|_{X(t)}$ for any $0\leq\tau\leq t$. According to $(1+t-\tau)\approx(1+t)$ for any $\tau\in\left[0,t/2\right]$ and $(1+\tau)\approx(1+t)$ for any $\tau\in\left[t/2,t\right]$ we divide the interval $\left[0,t\right]$ into sub-intervals $\left[0,t/2\right]$ and $\left[t/2,t\right]$ to get
	\begin{equation*}
	\begin{split}
	 (1+t)^{\rho_0(1,\theta)}\|N_kU(t,\cdot)\|_{L^2}&\lesssim\sum\limits_{k=1}^3\big\|\big(U^{(k)}_0,U^{(k)}_1\big)\big\|_{\ml{D}_{1,1}^0}
+\|U\|_{X(t)}^{p_k}\int\nolimits_0^{t/2}(1+\tau)^{-\rho_0(1,\theta)p_k+3(\frac{p_k}{2}-1)(\rho_0(1,\theta)-\rho_1(1,\theta))}d\tau\\
	&\quad+(1+t)^{1-\rho_0(1,\theta)p_k+3(\frac{p_k}{2}-1)(\rho_0(1,\theta)-\rho_1(1,\theta))}\|U\|_{X(t)}^{p_k}.
	\end{split}
	\end{equation*}
	Here we used $\rho_0(1,\theta)<1$. Due to the assumption $p_k>2$ we may use $p_k>1+2/(3-2\theta)$ for all $k=1,2,3$. But then we have
	\begin{equation*}
	-\rho_0(1,\theta)p_k+3\big(\frac{p_k}{2}-1\big)(\rho_0(1,\theta)-\rho_1(1,\theta))<-1.
	\end{equation*}
	Therefore, it implies
	\begin{equation*}
	(1+t)^{\rho_0(1,\theta)}\|N_kU(t,\cdot)\|_{L^2}\lesssim\sum\limits_{k=1}^3\big\|\big(U^{(k)}_0,U^{(k)}_1\big)\big\|_{\ml{D}_{1,1}^0}+\|U\|_{X(t)}^{p_k}.
	\end{equation*}
	Similarly, we apply the derived $(L^2\cap L^1)$-$L^2$ estimates on $\left[0,t/2\right]$ and $L^2$-$L^2$ estimates on $\left[t/2,t\right]$ to get
	\begin{equation*}
	\begin{split}
	 (1+t)^{\rho_1(1,\theta)}\|\partial_t^j\nabla_x^lN_kU(t,\cdot)\|_{L^2}&\lesssim\sum\limits_{k=1}^3\big\|\big(U^{(k)}_0,U^{(k)}_1\big)\big\|_{\ml{D}_{1,1}^0}
+\|U\|_{X(t)}^{p_k}\int\nolimits_{0}^{t/2}(1+\tau)^{-\rho_0(1,\theta)p_k+3(\frac{p_k}{2}-1)(\rho_0(1,\theta)-\rho_1(1,\theta))}d\tau\\
	&\quad+(1+t)^{\rho_1(1,\theta)+1-\rho_0(1,\theta)p_k+\frac{3}{2}(p_k-1)(\rho_0(1,\theta)-\rho_1(1,\theta))}\|U\|_{X(t)}^{p_k}\\
	\end{split}
	\end{equation*}
	for $j=1,\,l=0$ and $j=0,\,l=1$. Thanks to the condition $\min\left\{p_1;p_2;p_3\right\}>2$ we have 
	\begin{equation*}
	\rho_1(1,\theta)+1-\rho_0(1,\theta)p_k+\frac{3}{2}(p_k-1)(\rho_0(1,\theta)-\rho_1(1,\theta))=\frac{3-2\theta}{2(1-\theta)}(2-p_k)+\epsilon(p_k-1)\leq0,
	\end{equation*}
	where $\epsilon$ is a sufficiently small positive constant. The sufficient small constant $\epsilon>0$ comes from the almost sharp energy estimates \eqref{11119}-\eqref{111110}, which can be written as follows: 
		\begin{equation*}
		\begin{split}
		\|u^{(k)}(t,\cdot)\|_{L^2}&\lesssim(1+t)^{-\frac{3-4\theta}{4(1-\theta)}+\epsilon}\sum\limits_{k=1}^3\big\|\big(u^{(k)}_0,u_1^{(k)}\big)\big\|_{(H^1\cap L^{1})\times(L^2\cap L^{1})},\\
		\||D|^{s+1}u^{(k)}(t,\cdot)\|_{L^2}+\||D|^su_t^{(k)}(t,\cdot)\|_{L^2}&\lesssim(1+t)^{-\frac{5-4\theta+2s}{4(1-\theta)}+\epsilon}\sum\limits_{k=1}^3\big\|\big(u^{(k)}_0,u_1^{(k)}\big)\big\|_{(H^{s+1}\cap L^{1})\times(H^{s}\cap L^{1})}.
		\end{split}
		\end{equation*}
	 Thus, the estimates for derivatives hold for all $k=1,2,3$. In this way we obtain for $j+l=1$ and $j,l\in\mb{N}_0$
	\begin{equation*}
	 (1+t)^{\rho_1(1,\theta)}\|\partial_t^j\nabla_x^lN_kU(t,\cdot)\|_{L^2}\lesssim\sum\limits_{k=1}^3\big\|\big(U^{(k)}_0,U^{(k)}_1\big)\big\|_{\ml{D}_{1,1}^0}+\|U\|_{X(t)}^{p_k}.
	\end{equation*}
	Next, we derive the Lipschitz condition by remarking that
	\begin{equation*}
	\|\partial_t^j\nabla_x^l(N_kU-N_kV)(t,\cdot)\|_{L^2}=\Big\|\partial_t^j\nabla_x^l\int\nolimits_0^tK_1(t-\tau,0,x)
	\ast_{(x)}\left(|U^{(k-1)}(\tau,x)|^{p_k}-|V^{(k-1)}(\tau,x)|^{p_k}\right)d\tau\Big\|_{L^2}.
	\end{equation*}
	Thanks to H\"older's inequality we get for $m=1,2$ the estimates
	\begin{equation*}
	\Big\||U^{(k-1)}(\tau,x)|^{p_k}-|V^{(k-1)}(\tau,x)|^{p_k}\Big\|_{L^m}
	\lesssim\|U^{(k-1)}(\tau,\cdot)-V^{(k-1)}(\tau,\cdot)\|_{L^{mp_k}}\Big(\|U^{(k-1)}(\tau,\cdot)\|_{L^{mp_k}}^{p_k-1}
	+\|V^{(k-1)}(\tau,\cdot)\|_{L^{mp_k}}^{p_k-1}\Big).
	\end{equation*}
	As above, we can use the classical Gagliardo-Nirenberg inequality again to estimate
	\begin{equation*}
	\|U^{(k-1)}(\tau,\cdot)-V^{(k-1)}(\tau,\cdot)\|_{L^{mp_k}},\,\,\,\,\|U^{(k-1)}(\tau,\cdot)\|_{L^{mp_k}},\,\,\,\, \|V^{(k-1)}(\tau,\cdot)\|_{L^{mp_k}},
	\end{equation*}
	with $m=1,2$ and we can conclude \eqref{Improtant2}. The proof is complete.
\end{proof}
\begin{rem}\label{onlyrem} Again, in Theorem \ref{GESDS01}, we only allow exponents $p_1,p_2,p_3$ are larger than the exponent $p=2$. If we assume that there exists a number $k_1=1,2,3$ such that $1<p_{k_1}< 2$, the condition $p_{k_1}\in[2,3]$ from the application of the classical Gagliardo-Nirenberg inequality leads to the empty set for the exponent $p_{k_1}$.
\end{rem}
For data belonging to the classical energy space with an additional regularity $L^{3/2}$, we can obtain a larger admissible range of exponents $p_1,p_2,p_3$ because of the condition $p_k\in \left[4/3,3\right]$ for all $k=1,2,3$ coming from the application of the classical Gagliardo-Nirenberg inequality. \\ We observe the following three different cases:
\begin{enumerate}
	\item the orders of power nonlinearities are above the balanced exponent $p_{\text{bal}}(3/2,0,\theta)$;
	\item only one exponent is below or equal to the balanced exponent $p_{\text{bal}}(3/2,0,\theta)$;
	\item two exponents are below or equal to the balanced exponent $p_{\text{bal}}(3/2,0,\theta)$.
\end{enumerate} 
\begin{rem}\label{varepsilon1}
If in the Cases (ii) or (iii) of the following theorem some of the exponents $p_{k_j}=p_{\text{bal}}(3/2,0,\theta)$, then we can choose the parameters $g_{k_j}$ in the loss of decay as $g_{k_j}=\varepsilon_1$ with a sufficiently small constant $\varepsilon_1>0$ to avoid a logarithmic term $\log(e+t)$ in the estimate of the integral over $\left[0,t/2\right]$. Then, we can follow the proof of Theorem \ref{GESDS03} without any new difficulties.
\end{rem}
\begin{thm}\label{GESDS03} Let us consider the semi-linear model \eqref{fsdew001} with $\theta\in\left[0,1/2\right)$. Let us assume $p_k\in\left[4/3,3\right]$ for $k=1,2,3,$ such that
	\begin{flalign}\label{000exp01}
	&\text{(i)}\quad\text{there are no other restrictions when}\,\,\,\,\min\left\{p_1;p_2;p_3\right\}>p_{\text{bal}}(3/2,0,\theta);&
	\end{flalign}
	\begin{flalign}\label{000exp02}
	 &\text{(ii)}\quad\alpha_{k_1,\text{bal}}(3/2,0,\theta)<3/2\,\,\,\,\text{when}\,\,\,\,1<p_{k_1}<p_{\text{bal}}(3/2,0,\theta)\,\,\,\,\text{and}\,\,\,\,p_{k_2},p_{k_3}>p_{\text{bal}}(3/2,0,\theta);&
	\end{flalign}
	\begin{flalign}\label{000exp03}
	&\text{(iii)}\quad\widetilde{\alpha}_{k_1,\text{bal}}(3/2,0,\theta)<3/2\,\,\,\,\text{when}\,\,\,\,1<p_{k_1},p_{k_2}< p_{\text{bal}}(3/2,0,\theta)\,\,\,\,\text{and}\,\,\,\,p_{k_3}>p_{\text{bal}}(3/2,0,\theta).&
	\end{flalign}
	Then, there exists a constant $\varepsilon_0>0$ such that for all $\big(U^{(k)}_0,U^{(k)}_1\big)\in\ml{D}_{3/2,1}^0$ with $\sum\limits_{k=1}^3\big\|\big(U^{(k)}_0,U^{(k)}_1\big)\big\|_{\ml{D}_{3/2,1}^0}\leq\varepsilon_0$ there is a uniquely determined energy solution
	\begin{equation*}
	U\in\big(\mathcal{C}\big([0,\infty),H^1\big(\mb{R}^3\big)\big)\cap \mathcal{C}^1\big([0,\infty),L^2\big(\mb{R}^3\big)\big)\big)^3
	\end{equation*}
	to the Cauchy problem \eqref{fsdew001}. Moreover, the following estimates hold:
	\begin{equation*}
	\begin{split}
	\|U^{(k)}(t,\cdot)\|_{L^2}&\lesssim (1+t)^{1-\rho_1(3/2,\theta)+g_k}\sum\limits_{k=1}^3\big\|\big(U^{(k)}_0,U^{(k)}_1\big)\big\|_{\ml{D}_{3/2,1}^0},\\
	\|\nabla_xU^{(k)}(t,\cdot)\|_{L^2}+\|U_t^{(k)}(t,\cdot)\|_{L^2}&\lesssim (1+t)^{-\rho_1(3/2,\theta)+g_k}\sum\limits_{k=1}^3\big\|\big(U^{(k)}_0,U^{(k)}_1\big)\big\|_{\ml{D}_{3/2,1}^0},
	\end{split}
	\end{equation*}
	where in the decay functions the numbers $g_k$ are chosen in the following way:\begin{enumerate}
	\item $g_{k}=0$ for $k=1,2,3,$ when $p_1,p_2,p_3$ satisfy the condition \eqref{000exp01};
	\item $g_{k_1}=3+\big(\frac{1}{4(1-\theta)}-\frac{3}{2}\big)p_{k_1}$ and $g_{k_2}=g_{k_3}=0$, when $p_{k_1},p_{k_2},p_{k_3}$ satisfy the condition \eqref{000exp02};
	\item $g_{k_1}=3+\big(\frac{1}{4(1-\theta)}-\frac{3}{2}\big)p_{k_1}$, $g_{k_2}=3+\big(\frac{3}{2}+\frac{1}{4(1-\theta)}\big)p_{k_2}+\big(\frac{1}{4(1-\theta)}-\frac{3}{2}\big)p_{k_1}p_{k_2}$ and $g_{k_3}=0$, when $p_{k_1},p_{k_2},p_{k_3}$ satisfy the condition \eqref{000exp03}.
\end{enumerate}
\end{thm}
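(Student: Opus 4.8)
The plan is to run the Banach fixed-point scheme already used in the proof of Theorem \ref{GESDS01}, but now with the data class $\ml{D}_{3/2,1}^0$ and with weights in the solution space that build in the loss-of-decay parameters $g_k$. For $T>0$ I would work in the space $X(T)$ from \eqref{sp} equipped with the norm
\begin{equation*}
\|U\|_{X(T)}:=\sup_{0\le t\le T}\Big(\sum_{k=1}^3(1+t)^{\rho_1(3/2,\theta)-1-g_k}\|U^{(k)}(t,\cdot)\|_{L^2}+\sum_{k=1}^3(1+t)^{\rho_1(3/2,\theta)-g_k}\big(\|\nabla_xU^{(k)}(t,\cdot)\|_{L^2}+\|U_t^{(k)}(t,\cdot)\|_{L^2}\big)\Big),
\end{equation*}
where the exponents are dictated by the linear $(L^2\cap L^{3/2})$-$L^2$ estimates recalled before Theorem \ref{enee} (in Case (i) we simply take $g_k\equiv 0$). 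The operator $N$ is the one from \eqref{fixedpointformulation}, and the whole proof reduces to verifying the two inequalities \eqref{Improtant1} and \eqref{Improtant2} uniformly in $T$.

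For the nonlinear terms I would estimate $\||U^{(k-1)}(\tau,\cdot)|^{p_k}\|_{L^m}=\|U^{(k-1)}(\tau,\cdot)\|_{L^{mp_k}}^{p_k}$ for $m=3/2$ and $m=2$ by the classical Gagliardo-Nirenberg inequality (Proposition \ref{claGNineq}); the admissibility requirements $\beta_{0,1}(\tfrac32 p_k),\beta_{0,1}(2p_k)\in[0,1]$ force the range $p_k\in[4/3,3]$ used in the statement. This produces bounds of the shape
\begin{equation*}
\||U^{(k-1)}(\tau,\cdot)|^{p_k}\|_{L^m}\lesssim(1+\tau)^{-\kappa_{m,k}+g_{k-1}p_k}\|U\|_{X(\tau)}^{p_k},
\end{equation*}
the exponent $\kappa_{m,k}$ being a linear combination of $\rho_1(3/2,\theta)$, $\theta$ and the invested derivatives. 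Then, exactly as in the model calculation in Section \ref{ge}, I would split the Duhamel integral in \eqref{fixedpointformulation} over $[0,t/2]$ and $[t/2,t]$, using on the first piece the $(L^2\cap L^{3/2})$-$L^2$ estimate together with $(1+t-\tau)\approx(1+t)$, and on the second piece the $L^2$-$L^2$ estimate together with $(1+\tau)\approx(1+t)$, so that everything is reduced to the boundedness or the controlled growth of time integrals of the form $\int_0^{t/2}(1+\tau)^{-\kappa_{3/2,k}+g_{k-1}p_k}\,d\tau$.

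The heart of the matter is choosing the $g_k$ so that these integrals close simultaneously for all three equations. In Case (i) the hypothesis $\min\{p_1;p_2;p_3\}>p_{\text{bal}}(3/2,0,\theta)$ is exactly the statement that all exponents $-\kappa_{3/2,k}$ lie strictly below $-1$, so the $[0,t/2]$-integrals are uniformly bounded and $g_k\equiv0$ works. In Case (ii), since $|U^{(k_3)}|^{p_{k_1}}$ drives the $k_1$-equation and $1<p_{k_1}<p_{\text{bal}}(3/2,0,\theta)$, that integral grows like $(1+t)^{1-\kappa_{3/2,k_1}}$; absorbing it forces $g_{k_1}=3+\big(\tfrac{1}{4(1-\theta)}-\tfrac32\big)p_{k_1}$ (which vanishes precisely at $p_{k_1}=p_{\text{bal}}(3/2,0,\theta)$), while the remaining two integrals — in particular the one for the $k_2$-equation, whose nonlinearity $|U^{(k_1)}|^{p_{k_2}}$ now carries the factor $g_{k_1}p_{k_2}$ — still close because the hypothesis $\alpha_{k_1,\text{bal}}(3/2,0,\theta)<3/2$ is, after inserting $g_{k_1}$, equivalent to the required exponent inequality; hence $g_{k_2}=g_{k_3}=0$. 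Case (iii) is the iterated version: the loss $g_{k_1}$ feeds into the $k_2$-equation and, since $p_{k_2}$ is also subcritical, generates the chained loss $g_{k_2}=3+\big(\tfrac32+\tfrac{1}{4(1-\theta)}\big)p_{k_2}+\big(\tfrac{1}{4(1-\theta)}-\tfrac32\big)p_{k_1}p_{k_2}$, and then the $k_3$-equation closes with $g_{k_3}=0$ exactly under $\widetilde{\alpha}_{k_1,\text{bal}}(3/2,0,\theta)<3/2$. If some subcritical exponent coincides with $p_{\text{bal}}(3/2,0,\theta)$, I would replace the corresponding $g_{k_j}$ by a small $\varepsilon_1>0$ as in Remark \ref{varepsilon1} to suppress a spurious $\log$.

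Finally, the Lipschitz estimate \eqref{Improtant2} follows the same route: write $N_kU-N_kV$ as a Duhamel integral of $|U^{(k-1)}|^{p_k}-|V^{(k-1)}|^{p_k}$, bound this difference by H\"older's inequality through $\|U^{(k-1)}-V^{(k-1)}\|_{L^{mp_k}}\big(\|U^{(k-1)}\|_{L^{mp_k}}^{p_k-1}+\|V^{(k-1)}\|_{L^{mp_k}}^{p_k-1}\big)$ for $m=3/2,2$, and apply the classical Gagliardo-Nirenberg inequality to each factor; the time integrals are identical to those already handled. Banach's fixed-point theorem on $X(T)$, uniformly in $T$, then yields the unique global solution and the stated decay and growth estimates. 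The main obstacle I anticipate is the bookkeeping in Case (iii): one has to check that the recursively defined $g_{k_2}$ is non-negative and that the three families of integral conditions hold simultaneously, which amounts to verifying the algebraic identity that translates $\widetilde{\alpha}_{k_1,\text{bal}}(3/2,0,\theta)<3/2$ into the three-fold chained inequality relating $p_{k_1},p_{k_2},p_{k_3}$ and $p_{\text{bal}}(3/2,0,\theta)$.
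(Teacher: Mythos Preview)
Your proposal is correct and follows essentially the same approach as the paper: the same weighted evolution space with loss-of-decay parameters $g_k$, the same Gagliardo--Nirenberg estimates yielding the range $p_k\in[4/3,3]$, the same $[0,t/2]\cup[t/2,t]$ splitting of the Duhamel integral, the same three-case analysis with precisely the $g_k$ values stated in the theorem, and the same H\"older/Gagliardo--Nirenberg route for the Lipschitz condition. The only cosmetic difference is that the paper applies the $(L^2\cap L^{3/2})$-$L^2$ estimate on the whole interval $[0,t]$ for both the solution and its derivatives (rather than switching to the pure $L^2$-$L^2$ estimate on $[t/2,t]$ for the derivatives as you suggest), but this leads to the same summary inequality \eqref{01/2estimate} and does not affect the argument.
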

\begin{proof}
	For any $T>0$, let us introduce the evolution space \eqref{sp} with the following norm:
	\begin{equation}\label{norm1}
	\begin{split}
	\|U\|_{X(T)}:=\sup\limits_{0\leq t\leq T}&\Big(\sum\limits_{k=1}^3(1+t)^{-1+\rho_1(3/2,\theta)-g_k}\|U^{(k)}(t,\cdot)\|_{L^2}+\sum\limits_{k=1}^3(1+t)^{\rho_1(3/2,\theta)-g_k}\big(\|\nabla_x U^{(k)}(t,\cdot)\|_{L^2}+\|U_t^{(k)}(t,\cdot)\|_{L^2}\big)\Big).
	\end{split}
	\end{equation}
	The classical Gagliardo-Nirenberg inequality implies
	\begin{equation*}
	\begin{split}
	\big\||U^{(k-1)}(\tau,x)|^{p_k}\big\|_{L^{3/2}}&\lesssim (1+\tau)^{(1-\rho_1(3/2,\theta))p_k-3(\frac{p_k}{2}-\frac{2}{3})+g_{k-1}p_k}\|U\|_{X(\tau)}^{p_k},\\
	\big\||U^{(k-1)}(\tau,x)|^{p_k}\big\|_{L^2}&\lesssim (1+\tau)^{(1-\rho_1(3/2,\theta))p_k-3(\frac{p_k}{2}-\frac{1}{2})+g_{k-1}p_k}\|U\|_{X(\tau)}^{p_k}.
	\end{split}
	\end{equation*}
	The restriction of the parameters from applying the Gagliardo-Nirenberg inequality leads to $p_k\in[4/3,3]$ for all $k=1,2,3$.
	
	Firstly, the application of the derived $(L^2\cap L^{3/2})$-$L^2$ estimate leads on the interval $[0,t]$ to
	\begin{equation*}
	\begin{split}
	 &(1+t)^{-1+\rho_1(3/2,\theta)-g_k}\|N_kU(t,\cdot)\|_{L^2}\lesssim(1+t)^{-g_k}\sum\limits_{k=1}^3\big\|\big(U_0^{(k)},U_1^{(k)}\big)\big\|_{\ml{D}_{3/2,1}^0}\\
	 &\qquad\qquad\qquad\qquad+(1+t)^{-1+\rho_1(3/2,\theta)-g_k}\|U\|_{X(t)}^{p_k}\int\nolimits_0^{t}(1+t-\tau)^{1-\rho_1(3/2,\theta)}(1+\tau)^{(1-\rho_1(3/2,\theta))p_k
-3(\frac{p_k}{2}-\frac{2}{3})+g_{k-1}p_k}d\tau.
	\end{split}
	\end{equation*}
	After dividing the interval $[0,t]$ into sub-intervals $\left[0,t/2\right]$ and $\left[t/2,t\right]$ it follows
	\begin{equation*}
	\begin{split}
	&(1+t)^{-1+\rho_1(3/2,\theta)-g_k}\|N_kU(t,\cdot)\|_{L^2}\\
	 &\qquad\qquad\lesssim(1+t)^{-g_k}\sum\limits_{k=1}^3\big\|\big(U_0^{(k)},U_1^{(k)}\big)\big\|_{\ml{D}_{3/2,1}^0}+(1+t)^{-g_k}\|U\|_{X(t)}^{p_k}\int\nolimits_0^{t/2}
(1+\tau)^{(1-\rho_1(3/2,\theta))p_k-3(\frac{p_k}{2}-\frac{2}{3})+g_{k-1}p_k}d\tau\\
	&\qquad\qquad\quad+(1+t)^{1-g_k+(1-\rho_1(3/2,\theta))p_k-3(\frac{p_k}{2}-\frac{2}{3})+g_{k-1}p_k}\|U\|_{X(t)}^{p_k},
	\end{split}
	\end{equation*}
	where we use the following estimate:
	\begin{equation*}
	\int\nolimits_{t/2}^t(1+t-\tau)^{1-\rho_1(3/2,\theta)}d\tau\lesssim(1+t)^{2-\rho_1(3/2,\theta)} \,\,\,\,\mbox{and}\,\,\,\,\rho_1(3/2,\theta) <1.
	\end{equation*}
	In the same way, we may obtain the following estimates for the derivatives ($j+l=1$):
	\begin{equation*}
	\begin{split}
	 (1+t)^{\rho_1(3/2,\theta)-g_k}\|\partial_t^j\nabla_x^lN_kU(t,\cdot)\|_{L^2}\lesssim&(1+t)^{-g_k}\sum\limits_{k=1}^3\big\|\big(U_0^{(k)},U_1^{(k)}\big)\big\|_{\ml{D}_{3/2,1}^0}\\
	&+(1+t)^{-g_k}\|U\|_{X(t)}^{p_k}\int\nolimits_0^{t/2}(1+\tau)^{(1-\rho_1(3/2,\theta))p_k-3(\frac{p_k}{2}-\frac{2}{3})+g_{k-1}p_k}d\tau\\
	&+(1+t)^{1-g_k+(1-\rho_1(3/2,\theta))p_k-3(\frac{p_k}{2}-\frac{2}{3})+g_{k-1}p_k}\|U\|_{X(t)}^{p_k}.
	\end{split}
	\end{equation*}
	Summarizing the above estimates we may conclude
	\begin{equation}\label{01/2estimate}
	\begin{split}
	 &(1+t)^{(l+j)-1+\rho_1(3/2,\theta)-g_k}\|\partial_t^j\nabla_x^lN_kU(t,\cdot)\|_{L^2}\lesssim(1+t)^{-g_k}\sum\limits_{k=1}^3\big\|\big(U_0^{(k)},U_1^{(k)}\big)\big\|_{\ml{D}_{3/2,1}^0}\\
	 &\qquad\qquad+(1+t)^{-g_k}\|U\|_{X(t)}^{p_k}\Big(\int\nolimits_0^{t/2}(1+\tau)^{2-(1/2+\rho_1(3/2,\theta))p_k+g_{k-1}p_k}d\tau+(1+t)^{3-(1/2+\rho_1(3/2,\theta))p_k+g_{k-1}p_k}\Big).
	\end{split}
	\end{equation}
	for all $j+l=0,1$ with $j,l\in\mb{N}_0$. In order to prove
	\begin{equation}\label{01/2aim}
	 (1+t)^{(l+j)-1+\rho_1(3/2,\theta)-g_k}\|\partial_t^j\nabla_x^lN_kU(t,\cdot)\|_{L^2}\lesssim\sum\limits_{k=1}^3\big\|\big(U_0^{(k)},U_1^{(k)}\big)\big\|_{\ml{D}_{3/2,1}^0}+\|U\|_{X(t)}^{p_k}
	\end{equation}
	we have to distinguish between three cases.\\
	\\
	\emph{Case 1} $\quad$ We assume the condition (\ref{000exp01}), that is, $\min\left\{p_1;p_2;p_3\right\}>p_{\text{bal}}(3/2,0,\theta)$.\medskip
	
	\noindent Here, the orders of power nonlinearities are above $p_{\text{bal}}(3/2,0,\theta)$ and it allows to assume no loss of decay. Thus, we choose the parameters $g_1=g_2=g_3=0$ and we get from the estimate \eqref{01/2estimate}
	\begin{equation*}
	\begin{split}
	 (1+t)^{(l+j)-1+\rho_1(3/2,\theta)}\|\partial_t^j\nabla_x^lN_kU(t,\cdot)\|_{L^2}\lesssim&\sum\limits_{k=1}^3\big\|\big(U_0^{(k)},U_1^{(k)}\big)\big\|_{\ml{D}_{3/2,1}^0}+\|U\|_{X(t)}^{p_k}\int\nolimits_0^{t/2}(1+\tau)^{2-(1/2+\rho_1(3/2,\theta))p_k}d\tau\\
	&+(1+t)^{3-(1/2+\rho_1(3/2,\theta))p_k}\|U\|_{X(t)}^{p_k},
	\end{split}
	\end{equation*}
	where $k=1,2,3$. If we guarantee
	\begin{equation*}
	\min\left\{p_1;p_2;p_3\right\}>p_{\text{bal}}(3/2,0,\theta)\,\,\,\,\text{for}\,\,\,\,\theta\in\left[0,1/2\right),
	\end{equation*}
	then we can prove
	\begin{equation*}
	(1+\tau)^{2-(1/2+\rho_1(3/2,\theta))p_k}\in L^1[0,\infty)\,\,\,\,\text{and}\,\,\,\, (1+t)^{3-(1/2+\rho_1(3/2,\theta))p_k}\lesssim 1.
	\end{equation*}
	Thus, the desired estimate \eqref{01/2aim} holds for all $k=1,2,3$.\\
	\\
	\emph{Case 2} $\quad$ We assume the condition (\ref{000exp02}), that is, $1<p_{k_1}< p_{\text{bal}}(3/2,0,\theta)$ and $p_{k_2},p_{k_3}>p_{\text{bal}}(3/2,0,\theta)$.\medskip
	
	\noindent In this case, where only two exponents $p_{k_2},p_{k_3}$ are above $p_{\text{bal}}(3/2,0,\theta)$ we shall prove a global (in time) existence result with a loss of decay in one component of the solution under the additional condition
	\begin{equation}\label{condddddd}
	\alpha_{k_1,\text{bal}}(3/2,0,\theta)=\frac{9-12\theta+(7-6\theta)p_{k_2}-(2-6\theta)p_{k_1}p_{k_2}}{2(p_{k_1}p_{k_2}-1)}<\frac{3}{2}.
	\end{equation}
	We choose the parameters describing the loss of decay as $g_{k_1}=3+\big(\frac{1}{4(1-\theta)}-\frac{3}{2}\big)p_{k_1}$ and $g_{k_2}=g_{k_3}=0$. The assumption $1<p_{k_1}< p_{\text{bal}}(3/2,0,\theta)$ leads to $g_{k_1}>0$. Moreover, the condition \eqref{condddddd} is equivalent to
	\begin{equation}\label{9}
	12(1-\theta)+(7-6\theta)p_{k_2}-(5-6\theta)p_{k_1}p_{k_2}<0.
	\end{equation}
	If we assume $p_{k_2}>p_{\text{bal}}(3/2,0,\theta)$, the condition \eqref{9} is valid.
	
	Taking account of \eqref{01/2estimate} when $k=k_1$ and using the estimate
	\begin{equation*}
	\int\nolimits_0^{t/2}(1+\tau)^{2-(1/2+\rho_1(3/2,\theta))p_{k_1}}d\tau\lesssim(1+t)^{3-(1/2+\rho_1(3/2,\theta))p_{k_1}}
	\end{equation*}
	because $2-(1/2+\rho_1(3/2,\theta))p_{k_1}>-1$, we may conclude
	\begin{equation}\label{8}
	\begin{split}
	 (1+t)^{(l+j)-1+\rho_1(3/2,\theta)-g_{k_1}}\|\partial_t^j\nabla_x^lN_{k_1}U(t,\cdot)\|_{L^2}\lesssim\sum\limits_{k=1}^3\big\|\big(U_0^{(k)},U_1^{(k)}\big)\big\|_{\ml{D}_{3/2,1}^0}+(1+t)^{-g_{k_1}+3-(1/2+\rho_1(3/2,\theta))p_{k_1}}\|U\|_{X(t)}^{p_{k_1}}.
	\end{split}
	\end{equation}
	So, our desired estimate \eqref{01/2aim} has been proved for $k=k_1$.\\
	Considering the case $k=k_2$, we obtain the following estimates:
	\begin{equation*}
	\begin{split}
	 (1+t)^{(l+j)-1+\rho_1(3/2,\theta)}\|\partial_t^j\nabla_x^lN_{k_2}U(t,\cdot)\|_{L^2}\lesssim&\sum\limits_{k=1}^3\big\|\big(U_0^{(k)},U_1^{(k)}\big)\big\|_{\ml{D}_{3/2,1}^0}+\|U\|_{X(t)}^{p_{k_2}}\int\nolimits_0^{t/2}(1+\tau)^{2-(1/2+\rho_1(3/2,\theta))p_{k_2}+g_{k_1}p_{k_2}}d\tau\\
	&+(1+t)^{3-(1/2+\rho_1(3/2,\theta))p_{k_2}+g_{k_1}p_{k_2}}\|U\|_{X(t)}^{p_{k_2}}.
	\end{split}
	\end{equation*}
Taking account of \eqref{9} the following inequality holds:
	\begin{equation*}
	2-(1/2+\rho_1(3/2,\theta))p_{k_2}+g_{k_1}p_{k_2}<-1.
	\end{equation*}
	Thus, it completes the estimate \eqref{01/2aim} for $k=k_2$.\\
	Finally, we consider the case $k=k_3$. By the same procedure we treated \emph{Case 1}, we immediately obtain
	\begin{equation*}
	\begin{split}
	&(1+t)^{(l+j)-1+\rho_1(3/2,\theta)}\|\partial_t^j\nabla_x^lN_{k_3}U(t,\cdot)\|_{L^2}\\
	 &\qquad\quad\lesssim\sum\limits_{k=1}^3\big\|\big(U_0^{(k)},U_1^{(k)}\big)\big\|_{\ml{D}_{3/2,1}^0}+\|U\|_{X(t)}^{p_{k_3}}\Big(\int\nolimits_0^{t/2}(1+\tau)^{2-(1/2+\rho_1(3/2,\theta))p_{k_3}}d\tau+(1+t)^{3-(1/2+\rho_1(3/2,\theta))p_{k_3}}\Big)\\
	&\qquad\quad\lesssim\sum\limits_{k=1}^3\big\|\big(U_0^{(k)},U_1^{(k)}\big)\big\|_{\ml{D}_{3/2,1}^0}+\|U\|_{X(t)}^{p_{k_3}},
	\end{split}
	\end{equation*}
	where we use our assumption $p_{k_3}>p_{\text{bal}}(3/2,0,\theta)$.\\
	\\
	\emph{Case 3} $\quad$ We assume the condition (\ref{000exp03}), that is, $1<p_{k_1},p_{k_2}< p_{\text{bal}}(3/2,0,\theta)$ and $p_{k_3}>p_{\text{bal}}(3/2,0,\theta)$.\medskip
	
	\noindent Here, there exists only one exponent $p_{k_3}$ larger than $p_{\text{bal}}(3/2,0,\theta)$. Hence, we shall prove a global (in time) existence of small data solutions result with a loss of decay for two components of the solution under the intersectional condition
	\begin{equation}\label{condiddd}
	\widetilde{\alpha}_{k_1,\text{bal}}(3/2,0,\theta)= \frac{9-12\theta+(7-6\theta)(p_{k_2}+1)p_{k_3}-(2-6\theta)p_1p_2p_3}{2(p_1p_2p_3-1)}<\frac{3}{2}.
	\end{equation}
	We choose the parameters as follows: \[ g_{k_1}=3+\Big(\frac{1}{4(1-\theta)}-\frac{3}{2}\Big)p_{k_1}, \,\,\,\, g_{k_2}=3+\Big(\frac{3}{2}+\frac{1}{4(1-\theta)}\Big)p_{k_2}+\Big(\frac{1}{4(1-\theta)}-\frac{3}{2}\Big)p_{k_1}p_{k_2},\,\,\,\,\mbox{and}\,\, \,\,g_{k_3}=0.\] With the help of the assumption $1<p_{k_1},p_{k_2}< p_{\text{bal}}(3/2,0,\theta)$, we have $g_{k_1}>0$ and $g_{k_2}>0$. The condition \eqref{condiddd} can be rewritten as
	\begin{equation}\label{11}
	12(1-\theta)+(7-6\theta)(p_{k_2}+1)p_{k_3}-(5-6\theta)p_{k_1}p_{k_2}p_{k_3}<0.
	\end{equation}
	If we assume $p_{k_3}>p_{\text{bal}}(3/2,0,\theta)$, the above condition is valid.\\
	When $k=k_1$ in the estimate \eqref{01/2estimate}, we can get the same estimates as \eqref{8} in \emph{Case 2}. Choosing $k=k_2$, we apply the same method as we did in \emph{Case 2} to get
	\begin{equation*}
	\begin{split}
	&(1+t)^{(l+j)-1+\rho_1(3/2,\theta)-g_{k_2}}\|\partial_t^j\nabla_x^lN_{k_2}U(t,\cdot)\|_{L^2}\\
	 &\qquad\qquad\lesssim\sum\limits_{k=1}^3\big\|\big(U_0^{(k)},U_1^{(k)}\big)\big\|_{\ml{D}_{3/2,1}^0}+(1+t)^{-g_{k_2}}\|U\|_{X(t)}^{p_{k_2}}\int\nolimits_0^{t/2}(1+\tau)^{2-(1/2+\rho_1(3/2,\theta))p_{k_2}+g_{k_1}p_{k_2}}d\tau\\
	&\qquad\qquad\quad+(1+t)^{-g_{k_2}+3-(1/2+\rho_1(3/2,\theta))p_{k_2}+g_{k_1}p_{k_2}}\|U\|_{X(t)}^{p_{k_2}}\\
	&\qquad\qquad\lesssim\sum\limits_{k=1}^3\big\|\big(U_0^{(k)},U_1^{(k)}\big)\big\|_{\ml{D}_{3/2,1}^0}+\|U\|_{X(t)}^{p_{k_2}},
	\end{split}
	\end{equation*}
	where the choice of $g_{k_2}$ implies the inequality \[ g_{k_2} >3-(1/2+\rho_1(3/2,\theta))p_{k_2}+g_{k_1}p_{k_2}.\]  But this gives us for all $t \geq 0$ a uniformly bounded estimate from the above inequality. Finally,
	let us take $k=k_3$ in the estimate \eqref{01/2estimate}. In this way we get
	\begin{equation*}
	\begin{split}
	 (1+t)^{(l+j)-1+\rho_1(3/2,\theta)}\|\partial_t^j\nabla_x^lN_{k_3}U(t,\cdot)\|_{L^2}\lesssim&\sum\limits_{k=1}^3\big\|\big(U_0^{(k)},U_1^{(k)}\big)\big\|_{\ml{D}_{3/2,1}^0}+\|U\|_{X(t)}^{p_{k_3}}\int\nolimits_0^{t/2}(1+\tau)^{2-(1/2+\rho_1(3/2,\theta))p_{k_3}+g_{k_2}p_{k_3}}d\tau\\
	&+(1+t)^{3-(1/2+\rho_1(3/2,\theta))p_{k_3}+g_{k_2}p_{k_3}}\|U\|_{X(t)}^{p_{k_3}}.
	\end{split}
	\end{equation*}
From the condition \eqref{11} it follows
	\begin{equation*}
	2-(1/2+\rho_1(3/2,\theta))p_{k_3}+g_{k_2}p_{k_3}<-1.
	\end{equation*}
	So, we immediately obtain our desired estimate \eqref{01/2aim} for $k=k_3$.\\
	All in all, the estimate \eqref{01/2aim} has been completed for all the cases.
	
	Lastly, similar as in the proof of Theorem \ref{GESDS01}, we may apply H\"older's inequality and the Gagliardo-Nirenberg inequality to prove
	\begin{equation*}
	 (1+t)^{(l+j)-1+\rho_1(3/2,\theta)}\|\partial_t^j\nabla_x^lN_{k}(U(t,\cdot)-V(t,\cdot))\|_{L^2}\lesssim\|U-V\|_{X(t)}\big(\|U\|_{X(t)}^{p_k-1}+\|V\|_{X(t)}^{p_k-1}\big)
	\end{equation*}
	for all $j+l=0,1$ with $j,l\in\mb{N}_0$ and $k=1,2,3$ in all cases. So, the proof is complete.
\end{proof}

\subsubsection{Data from energy space with suitable higher regularity}
Now, we are interested in studying the global (in time) existence of small data energy solutions possessing energies of higher-order. As we know, the parameter $\min \{p_1;p_2;p_3\}$ is bounded to below by the regularity parameter $s+1$.
\begin{thm}\label{HlL1nonlinearythm}
	Let us consider the semi-linear model \eqref{fsdew001} with $\theta\in\left[0,1/2\right)$. Let us choose
	\begin{equation*}
	\begin{aligned}
	&1+\lceil s\rceil<\min\left\{p_1;p_2;p_3\right\}\leq\max\left\{p_1;p_2;p_3\right\}\leq 1+2/(1-2s)&\,\,\,\,\text{if}\,\,\,\,&s\in\left(0,1/2\right),\\
	&1+\lceil s\rceil<\min\left\{p_1;p_2;p_3\right\}\leq\max\left\{p_1;p_2;p_3\right\}<\infty&\,\,\,\,\text{if}\,\,\,\,&s\in\left[1/2,\infty\right).\\
	\end{aligned}
	\end{equation*}
	Then, there exists a constant $\varepsilon_0>0$ such that for all $\big(U^{(k)}_0,U^{(k)}_1\big)\in\ml{D}^s_{1,1}$ with $\sum\limits_{k=1}^3\big\|\big(U^{(k)}_0,U^{(k)}_1\big)\big\|_{\ml{D}^s_{1,1}}\leq\varepsilon_0$ there exists a uniquely determined energy solution
	\begin{equation*}
	U\in \big(\mathcal{C}\big([0,\infty),H^{s+1}\big(\mb{R}^3\big)\big)\cap \mathcal{C}^1\big([0,\infty),H^s\big(\mb{R}^3\big)\big)\big)^3
	\end{equation*}
	to the Cauchy problem \eqref{fsdew001}. Moreover, the following estimates hold:
	\begin{equation*}
	\begin{split}
	\|U^{(k)}(t,\cdot)\|_{L^2}&\lesssim (1+t)^{-\rho_{0}(1,\theta)} \sum\limits_{k=1}^3\big\|\big(U^{(k)}_0,U^{(k)}_1\big)\big\|_{\ml{D}^s_{1,1}},\\
	\|U_t^{(k)}(t,\cdot)\|_{L^2}&\lesssim (1+t)^{-\rho_{1}(1,\theta)} \sum\limits_{k=1}^3\big\|\big(U^{(k)}_0,U^{(k)}_1\big)\big\|_{\ml{D}^s_{1,1}},\\
	\||D|U^{(k)}(t,\cdot)\|_{\dot{H}^{s}}+\|U_t^{(k)}(t,\cdot)\|_{\dot{H}^{s}}&\lesssim (1+t)^{-\rho_{s+1}(1,\theta)}\sum\limits_{k=1}^3\big\|\big(U^{(k)}_0,U^{(k)}_1\big)\big\|_{\ml{D}^s_{1,1}}.
	\end{split}
	\end{equation*}
	
\end{thm}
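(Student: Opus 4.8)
The plan is to run a contraction argument for the operator $N$ from \eqref{fixedpointformulation} on the evolution space \eqref{sp}, now with components continuous in time with values in $H^{s+1}\big(\mb{R}^3\big)$ and $\mathcal{C}^1$ in time with values in $H^s\big(\mb{R}^3\big)$, equipped with the weighted norm
\begin{equation*}
\begin{split}
\|U\|_{X(T)}:=\sup\limits_{0\le t\le T}\sum\limits_{k=1}^3\Big(&(1+t)^{\rho_0(1,\theta)}\|U^{(k)}(t,\cdot)\|_{L^2}+(1+t)^{\rho_1(1,\theta)}\big(\|\nabla_xU^{(k)}(t,\cdot)\|_{L^2}+\|U_t^{(k)}(t,\cdot)\|_{L^2}\big)\\
&+(1+t)^{\rho_{s+1}(1,\theta)}\big(\||D|^{s+1}U^{(k)}(t,\cdot)\|_{L^2}+\||D|^sU_t^{(k)}(t,\cdot)\|_{L^2}\big)\Big),
\end{split}
\end{equation*}
whose weights are exactly the decay rates for data from $\ml{D}^s_{1,1}$ recorded in \eqref{11119} (Theorem \ref{enee}). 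Since the linear part $K_0(t,0,\cdot)\ast_{(x)}U_0^{(k)}+K_1(t,0,\cdot)\ast_{(x)}U_1^{(k)}$ is immediately bounded by $\sum_k\|(U_0^{(k)},U_1^{(k)})\|_{\ml{D}^s_{1,1}}$ by the same estimates, the whole task is to verify \eqref{Improtant1} and \eqref{Improtant2}, i.e., to estimate the Duhamel term $\int_0^tK_1(t,\tau,\cdot)\ast_{(x)}|U^{(k-1)}(\tau,\cdot)|^{p_k}\,d\tau$ and its analogue with $|U^{(k-1)}|^{p_k}-|V^{(k-1)}|^{p_k}$.

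First I would bound the power nonlinearity $|U^{(k-1)}(\tau,\cdot)|^{p_k}$ in $L^1$, in $L^2$ and --- the new ingredient compared with Theorem \ref{GESDS01} --- in $\dot{H}^s$. The $L^1$ and $L^2$ norms are controlled by the classical Gagliardo--Nirenberg inequality applied to $\|U^{(k-1)}(\tau,\cdot)\|_{L^{p_k}}$ and $\|U^{(k-1)}(\tau,\cdot)\|_{L^{2p_k}}$, interpolated between $\|U^{(k-1)}(\tau,\cdot)\|_{L^2}$ and $\|\nabla_xU^{(k-1)}(\tau,\cdot)\|_{L^2}$, which yields factors of the form $(1+\tau)^{-\rho_0(1,\theta)p_k+\cdots}$. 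For the $\dot{H}^s$ norm I would combine the fractional chain rule with the fractional Gagliardo--Nirenberg inequality (see Appendix \ref{toolfractional}): writing $\||U^{(k-1)}|^{p_k}\|_{\dot{H}^s}\lesssim\|U^{(k-1)}\|_{\dot{H}^s_{r_1}}\|U^{(k-1)}\|_{L^{r_2}}^{p_k-1}$ with $\frac12=\frac1{r_1}+\frac{p_k-1}{r_2}$ and then reducing each factor to the quantities controlled by $\|U\|_{X(\tau)}$. Here the hypothesis $p_k>1+\lceil s\rceil$ is precisely what makes the fractional chain rule applicable, while the upper bound $p_k\le1+2/(1-2s)$ for $s\in(0,1/2)$, respectively no upper bound for $s\ge1/2$ where $H^{s+1}\hookrightarrow L^\infty$, guarantees that all the Gagliardo--Nirenberg interpolation exponents belong to $[0,1]$.

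Next I would insert these three nonlinear bounds into $N_kU$, split $\int_0^t=\int_0^{t/2}+\int_{t/2}^t$, use $(1+t-\tau)\approx(1+t)$ together with the $(L^2\cap L^1)$--$L^2$ estimates of Theorem \ref{enee} on $[0,t/2]$, and $(1+\tau)\approx(1+t)$ together with the $L^2$--$L^2$ estimates of Theorem \ref{Hl+1Hl} on $[t/2,t]$, exploiting $\rho_0(1,\theta),\rho_1(1,\theta),\rho_{s+1}(1,\theta)<1$. The hypothesis $\min\{p_1;p_2;p_3\}>1+\lceil s\rceil\ge2>1+\tfrac{2}{3-2\theta}$, valid because $\theta<1/2$, forces the exponent of $(1+\tau)$ under the integral over $[0,t/2]$ to be strictly below $-1$, so that this integral converges uniformly in $t$, and at the same time makes the power of $(1+t)$ produced by the integral over $[t/2,t]$ nonpositive; this gives \eqref{Improtant1}. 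For \eqref{Improtant2} I would write $|U^{(k-1)}|^{p_k}-|V^{(k-1)}|^{p_k}=(U^{(k-1)}-V^{(k-1)})\int_0^1G'\big(V^{(k-1)}+\omega(U^{(k-1)}-V^{(k-1)})\big)\,d\omega$ with $G(r)=|r|^{p_k}$, estimate the $L^1$ and $L^2$ norms by H\"older's inequality, estimate the $\dot{H}^s$ norm by the fractional Leibniz rule combined with the fractional chain rule applied to $G'$ --- again licensed by $p_k>1+\lceil s\rceil$ --- and reduce all single factors by Gagliardo--Nirenberg exactly as above.

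The main obstacle I anticipate is precisely the $\dot{H}^s$ estimate of the power nonlinearity: one has to balance the fractional chain rule, the admissible range of the Gagliardo--Nirenberg exponents (which is where both restrictions on $\min\{p_1;p_2;p_3\}$ and $\max\{p_1;p_2;p_3\}$ originate) and the extraction of enough decay so that the $\tau$-integral over $[0,t/2]$ stays integrable. Once this estimate and its Lipschitz counterpart are in hand, Banach's fixed-point theorem on $X(T)$ --- with constants uniform in $T\in[0,\infty)$ --- produces the uniquely determined global (in time) solution $U^*$, the asserted decay estimates are read off from the definition of the norm, and continuity in time of $U^*$ is inherited from the continuity of the linear propagators as in the proof of Theorem \ref{wellposedness1}.
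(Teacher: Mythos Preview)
Your overall strategy matches the paper's proof closely: the same weighted evolution space, the same splitting of $[0,t]$, the fractional chain rule for $\||U^{(k-1)}|^{p_k}\|_{\dot{H}^s}$, and the fractional Leibniz rule plus the integral representation for the Lipschitz estimate are exactly the ingredients the paper uses.

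There is, however, one concrete slip that would prevent you from recovering the full range of exponents asserted in the theorem. You propose to control $\||U^{(k-1)}(\tau,\cdot)|^{p_k}\|_{L^1}$ and $\||U^{(k-1)}(\tau,\cdot)|^{p_k}\|_{L^2}$ by the \emph{classical} Gagliardo--Nirenberg inequality interpolating between $\|U^{(k-1)}\|_{L^2}$ and $\|\nabla_xU^{(k-1)}\|_{L^2}$. In three space dimensions this forces $\beta_{0,1}(mp_k)=3\big(\tfrac12-\tfrac1{mp_k}\big)\in[0,1]$, hence $p_k\le 3$, which is strictly smaller than the theorem's upper bound $1+2/(1-2s)$ for every $s\in(0,1/2)$ and is certainly not ``no upper bound'' when $s\ge 1/2$. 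The paper avoids this by interpolating between $\|U^{(k-1)}\|_{L^2}$ and $\|U^{(k-1)}\|_{\dot{H}^{s+1}}$ via the \emph{fractional} Gagliardo--Nirenberg inequality (Proposition~\ref{fractionalgagliardonirenbergineq}), obtaining
\[
\big\||U^{(k-1)}(\tau,x)|^{p_k}\big\|_{L^m}\lesssim\|U^{(k-1)}(\tau,\cdot)\|_{L^2}^{(1-\beta_{0,s+1}(mp_k))p_k}\|U^{(k-1)}(\tau,\cdot)\|_{\dot{H}^{s+1}}^{\beta_{0,s+1}(mp_k)p_k},
\]
whose admissibility condition $\beta_{0,s+1}(mp_k)\in[0,1]$ is what actually produces the range $p_k\le 3/(1-2s)$ (and unbounded for $s\ge 1/2$); the tighter bound $1+2/(1-2s)$ then comes from the $\dot{H}^s$ step (see Appendix~\ref{appendix2}). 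Since your norm already contains $\||D|^{s+1}U^{(k)}\|_{L^2}$, this correction is cheap --- but without it your argument only reproduces Theorem~\ref{GESDS01}, not Theorem~\ref{HlL1nonlinearythm}.

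A second minor point: the claim ``$\rho_{s+1}(1,\theta)<1$'' is not true in general (already $\rho_{s+1}(1,0)$ can be taken close to $\tfrac{5+2s}{4}>1$). The paper does not rely on this; on $[t/2,t]$ it uses the $\dot{H}^s$--$\dot{H}^s$ estimate (Theorem~\ref{energyestimatessss}/Theorem~\ref{01a}) for the higher-order piece, which carries no decay factor in $t-\tau$, so no integrability of $(1+t-\tau)^{-\rho_{s+1}}$ is needed. You should adjust that sentence accordingly.
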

\begin{rem} In Theorem \ref{HlL1nonlinearythm}, our purpose is to weaken the upper bound for the exponents $p_{1},p_{2},p_{3}$ in comparison to the condition $\max\left\{p_1;p_2;p_3\right\}\leq3$ in Theorem \ref{GESDS01}. After posing Bessel potential spaces with higher regularity for data, that is, $\big(U^{(k)}_0,U^{(k)}_1\big)\in\ml{D}^{s}_{1,1}$ with $0<s<1/2$, the largest admissible range for the exponents can be obtained when $s-1/2\rightarrow-0$. To increase the upper bound for $\max\left\{p_1;p_2;p_3\right\}$, we suppose more regularity for data.
\end{rem}
\begin{proof}  For any $T>0$ we define the complete evolution space
	\begin{equation}\label{spp}
	X(T):=\big(\mathcal{C}\big([0,T],H^{s+1}\big(\mb{R}^3\big)\big)\cap \mathcal{C}^1\big([0,T],H^s\big(\mb{R}^3\big)\big)\big)^3
	\end{equation}
	with the corresponding norm
	\begin{equation}\label{norm}
	\begin{split}
	\|U\|_{X(T)}:=\sup\limits_{0\leq t\leq T}&\Big(\sum\limits_{k=1}^3(1+t)^{\rho_{0}(1,\theta)}\|U^{(k)}(t,\cdot)\|_{L^2}+\sum\limits_{k=1}^3(1+t)^{\rho_1(1,\theta)}\|U^{(k)}_{t}(t,\cdot)\|_{L^2}\\
	&\quad\quad\quad\quad\quad +\sum\limits_{k=1}^3(1+t)^{\rho_{s+1}(1,\theta)}\big(\||D|U^{(k)}(t,\cdot)\|_{\dot{H}^{s}}+\|U^{(k)}_{t}(t,\cdot)\|_{\dot{H}^s}\big)\Big).
	\end{split}
	\end{equation}
	We shall estimate the norms $\|\partial_t^jN_kU(t,\cdot)\|_{L^2}$, $\|\partial_t^jN_kU(t,\cdot)\|_{\dot{H}^{s+1-j}}$ for $j=0,1$.
	Firstly, using the derived $(L^2\cap L^1)$-$L^2$ estimates n the interval $[0,t]$ we have
	\begin{equation*}
	\begin{split}
	 (1+t)^{\rho_{0}(1,\theta)}\|N_kU(t,\cdot)\|_{L^2}\lesssim\sum\limits_{k=1}^3\big\|\big(U^{(k)}_0,U^{(k)}_1\big)\big\|_{\ml{D}^0_{1,1}}+(1+t)^{\rho_{0}(1,\theta)}\int\nolimits_0^{t}(1+t-\tau)^{-\rho_{0}(1,\theta)}\big\||U^{(k-1)}(\tau,x)|^{p_k}\big\|_{L^2\cap L^1}d\tau.
	\end{split}
	\end{equation*}
	Next, the application of the $(L^2\cap L^1)$-$L^2$ estimates on $\left[0,t/2\right]$ and $L^2$-$L^2$ estimates on $\left[t/2,t\right]$ yields
	\begin{equation*}
	\begin{split}
	 (1+t)^{\rho_{1}(1,\theta)}\|\partial_tN_kU(t,\cdot)\|_{L^2}\lesssim&\sum\limits_{k=1}^3\big\|\big(U^{(k)}_0,U^{(k)}_1\big)\big\|_{\ml{D}^0_{1,1}}+\int\nolimits_0^{t/2}\big\||U^{(k-1)}(\tau,x)|^{p_k}\big\|_{L^2\cap L^1}d\tau\\
	 &+(1+t)^{\rho_{1}(1,\theta)}\int\nolimits_{t/2}^t\big\||U^{(k-1)}(\tau,x)|^{p_k}\big\|_{L^2}d\tau.
	\end{split}
	\end{equation*}
	The fractional Gagliardo-Nirenberg inequality implies for $m=1,2$ the estimates
	\begin{equation*}
	\begin{split}
	\big\||U^{(k-1)}(\tau,x)|^{p_k}\big\|_{L^{m}}&\lesssim\|U^{(k-1)}(\tau,\cdot)\|_{L^2}^{(1-\beta_{0,s+1}(mp_k))p_k}
	\|U^{(k-1)}(\tau,\cdot)\|_{\dot{H}^{s+1}}^{\beta_{0,s+1}(mp_k)p_k}
	\\
	&\lesssim(1+\tau)^{-\rho_0(1,\theta)p_k+\frac{3}{s+1}(\frac{p_k}{2}-\frac{1}{m})(\rho_0(1,\theta)-\rho_{s+1}(1,\theta))}\|U\|_{X(\tau)}^{p_k},
	\end{split}
	\end{equation*}
	where \[ \beta_{0,s+1}(mp_k)=\frac{3}{s+1}\Big(\frac{1}{2}-\frac{1}{mp_k}\Big)\in[0,1].\] This implies $2\leq p_k\leq\frac{3}{1-2s}$ for $0<s<1/2$ or $2\leq p_k<\infty$ for $s\geq1/2$.\\ Hence, repeating the same procedure as in the proof of Theorem \ref{GESDS01} and applying $\min\left\{p_1;p_2;p_3\right\}>2$ we may conclude
	\begin{equation*}
	(1+t)^{\rho_j(1,\theta)}\|\partial_t^j N_kU(t,\cdot)\|_{L^2}\lesssim\sum\limits_{k=1}^3\big\|\big(U^{(k)}_0,U^{(k)}_1\big)\big\|_{\ml{D}^0_{1,1}}+\|U\|_{X(t)}^{p_k}
	\end{equation*}
	for $j=0,1$ and $k=1,2,3$.
	
	Now, we estimate $N_kU(t,\cdot)$ in the $\dot{H}^{s+1}$ norm and $\partial_tN_kU(t,\cdot)$ in the $\dot{H}^{s}$ norm. The application of the derived $(\dot{H}^s\cap L^1)$-$\dot{H}^s$ estimates on $\left[0,t/2\right]$ and $\dot{H}^s$-$\dot{H}^s$ estimates on $\left[t/2,t\right]$ gives immediately
	\begin{equation*}
	\begin{split}
	(1+t)^{\rho_{s+1}(1,\theta)}\|\partial_t^jN_kU(t,\cdot)\|_{\dot{H}^{s+1-j}}\lesssim&  \sum\limits_{k=1}^3\big\|\big(U^{(k)}_0,U^{(k)}_1\big)\big\|_{\ml{D}_{1,1}^s}+\int\nolimits_0^{t/2}\big\||U^{(k-1)}(\tau,x)|^{p_k}\big\|_{\dot{H}^s\cap L^1}d\tau\\
	&+(1+t)^{\rho_{s+1}(1,\theta)}\int\nolimits_{t/2}^t\big\||U^{(k-1)}(\tau,x)|^{p_k}\big\|_{\dot{H}^s}d\tau.
	\end{split}
	\end{equation*}
	We should estimate the nonlinear term in the $L^1$ norm and the $\dot{H}^s$ norm, respectively. The estimate of the $L^1$ norm we can easily get from the fractional Gagliardo-Nirenberg inequality. In this way we obtain
	\begin{equation*}
	 \big\||U^{(k-1)}(\tau,x)|^{p_k}\big\|_{L^1}\lesssim(1+\tau)^{-\rho_0(1,\theta)p_k+\frac{3}{s+1}(\frac{p_k}{2}-1)(\rho_0(1,\theta)-\rho_{s+1}(1,\theta))}\|U\|_{X(\tau)}^{p_k}.
	\end{equation*}
	We apply more tools from Harmonic Analysis to estimate the $\dot{H}^s$ norm of $|U^{(k-1)}(\tau,x)|^{p_k}$. Applying the fractional chain rule from Proposition \ref{fractionalchainrule}
we have
	\begin{equation}\label{cont}
	\big\||U^{(k-1)}(\tau,x)|^{p_k}\big\|_{\dot{H}^s}\lesssim\|U^{(k-1)}(\tau,\cdot)\|_{L^{q_1}}^{p_k-1}\|U^{(k-1)}(\tau,\cdot)\|_{\dot{H}^{s}_{q_2}},
	\end{equation}
	where $\frac{p_k-1}{q_1}+\frac{1}{q_2}=\frac{1}{2}$ and $p_k>\lceil s\rceil$ for $k=1,2,3$. Moreover, the fractional Gagliardo-Nirenberg-type inequality comes into play again
	to conclude \begin{equation*}
	\begin{split}
	 \|U^{(k-1)}(\tau,\cdot)\|_{L^{q_1}}&\lesssim\|U^{(k-1)}(\tau,\cdot)\|_{L^2}^{1-\beta_{0,s+1}(q_1)}\|U^{(k-1)}(\tau,\cdot)\|_{\dot{H}^{s+1}}^{\beta_{0,s+1}(q_1)},\\
	 \|U^{(k-1)}(\tau,\cdot)\|_{\dot{H}^{s}_{q_2}}&\lesssim\|U^{(k-1)}(\tau,\cdot)\|_{L^2}^{1-\beta_{s,s+1}(q_2)}\|U^{(k-1)}(\tau,\cdot)\|_{\dot{H}^{s+1}}^{\beta_{s,s+1}(q_2)},
	\end{split}
	\end{equation*}
	where \[ \beta_{0,s+1}(q_1)=\frac{3}{s+1}\Big(\frac{1}{2}-\frac{1}{q_1}\Big)\in[0,1]\quad\mbox{and}\quad \beta_{s,s+1}(q_2)=\frac{3}{s+1}\Big(\frac{1}{2}-\frac{1}{q_2}+\frac{s}{3}\Big)\in\Big[\frac{s}{s+1},1\Big].\] The existence of parameters $q_1$ and $q_2$ will be discussed in Appendix \ref{appendix2}. Combining with what we discussed above we arrive at the estimate
	\begin{equation}\label{conttt}
	\begin{split}
	\big\||U^{(k-1)}(\tau,x)|^{p_k}\big\|_{\dot{H}^s}&\lesssim\|U^{(k-1)}(\tau,\cdot)\|_{L^2}^{p_k-\frac{3}{s+1}(\frac{p_k-1}{2}+\frac{s}{3})}
	\|U^{(k-1)}(\tau,\cdot)\|_{\dot{H}^{s+1}}^{\frac{3}{s+1}(\frac{p_k-1}{2}+\frac{s}{3})}\\
	&\lesssim(1+\tau)^{-\big(p_k-\frac{3}{s+1}\big(\frac{p_k-1}{2}+\frac{s}{3}\big)\big)\rho_0(1,\theta)-\frac{3}{s+1}\big(\frac{p_k-1}{2}+\frac{s}{3}\big)
		\rho_{s+1}(1,\theta)}\|U\|_{X(\tau)}^{p_k}.
	\end{split}
	\end{equation}
	Thus, by the condition $\min\left\{p_{1};p_{2};p_{3}\right\}>1+\lceil s\rceil\geq2$ and the estimate 
		\begin{equation*}
	\big\||U^{(k-1)}(\tau,x)|^{p_k}\big\|_{\dot{H}^s\cap L^1}\lesssim(1+\tau)^{-\rho_0(1,\theta)p_k+\frac{3}{s+1}(\frac{p_k}{2}-1)(\rho_0(1,\theta)-\rho_{s+1}(1,\theta))}\|U\|_{X(\tau)}^{p_k}
	\end{equation*}
	we get
	\begin{equation*}
	\begin{split}
	 (1+t)^{\rho_{s+1}(1,\theta)}\|\partial_t^jN_kU(t,\cdot)\|_{\dot{H}^{s+1-j}}&\lesssim\big\|\big(U^{(k)}_0,U^{(k)}_1\big)\big\|_{\ml{D}_{1,1}^s}+\|U\|_{X(t)}^{p_k}\int\nolimits_0^{t/2}(1+\tau)^{-\rho_0(1,\theta)p_k+\frac{3}{s+1}(\frac{p_k}{2}-1)(\rho_0(1,\theta)-\rho_{s+1}(1,\theta))}d\tau\\
	 &\quad+(1+t)^{1+\rho_{s+1}(1,\theta)-\big(p_k-\frac{3}{s+1}\big(\frac{p_k-1}{2}+\frac{s}{3}\big)\big)\rho_0(1,\theta)-\frac{3}{s+1}\big(\frac{p_k-1}{2}+\frac{s}{3}\big)
		\rho_{s+1}(1,\theta)}\|U\|_{X(t)}^{p_k}\\
	&\lesssim\big\|\big(U^{(k)}_0,U^{(k)}_1\big)\big\|_{\ml{D}_{1,1}^s}+\|U\|_{X(t)}^{p_k}.
	\end{split}
	\end{equation*}
	Summarizing all derived inequalities we get \eqref{Improtant1}.
	
	The last step is to derive the Lipschitz condition. The application of H\"older's inequality and the fractional Gagliardo-Nirenberg inequality yields for $j=0,1$
	\begin{equation*}
	(1+t)^{\rho_{j}(1,\theta)}\|\partial_t^j(N_kU-N_kV)(t,\cdot)\|_{L^2}\lesssim \|U-V\|_{X(t)}\big(\|U\|_{X(t)}^{p_{k}-1}+\|V\|_{X(t)}^{p_k-1}\big).
	\end{equation*}
	In the following we will show how to estimate \[ \|\partial_t^j(N_kU-N_kV)(t,\cdot)\|_{\dot{H}^{s+1-j}}\,\,\,\,\mbox{for} \,\,\,\,j=0,1.\] We apply the derived $(\dot{H}^s\cap L^1)$-$\dot{H}^s$ estimates and $\dot{H}^s$-$\dot{H}^s$ estimates again to conclude
	\begin{equation*}
	\begin{split}
	(1+t)^{\rho_{s+1}(1,\theta)}\|\partial_t^j(N_kU-N_kV)(t,\cdot)\|_{\dot{H}^{s+1-j}}\lesssim& \int\nolimits_0^{t/2}\big\||U^{(k-1)}(\tau,x)|^{p_k}-|V^{(k-1)}(\tau,x)|^{p_k}\big\|_{\dot{H}^s\cap L^1}d\tau\\
	&+(1+t)^{\rho_{s+1}(1,\theta)}\int\nolimits_{t/2}^t\big\||U^{(k-1)}(\tau,x)|^{p_k}-|V^{(k-1)}(\tau,x)|^{p_k}\big\|_{\dot{H}^s}d\tau.
	\end{split}
	\end{equation*}
	To estimate the $L^1$ norm the fractional Gagliardo-Nirenberg inequality implies
	\begin{equation*}
	\begin{split}
	 \big\||U^{(k-1)}(\tau,x)|^{p_k}-|V^{(k-1)}(\tau,x)|^{p_k}\big\|_{L^1}\lesssim&(1+\tau)^{-\rho_0(1,\theta)p_k+\frac{3}{s+1}(\frac{p_k}{2}-1)(\rho_0(1,\theta)-\rho_{s+1}(1,\theta))}
\|U-V\|_{X(\tau)}\big(\|U\|_{X(\tau)}^{p_k-1}+\|V\|_{X(\tau)}^{p_k-1}\big).
	\end{split}
	\end{equation*}
	Using the relation $\frac{d}{dx_i}|x|^{p_k}=p_k|x|^{p_k-2}x_i$ and setting $G(U^{(k-1)})=U^{(k-1)}|U^{(k-1)}|^{p_k-2}$ we have
	\begin{equation*}
	\begin{split}
	|U^{(k-1)}(\tau,x)|^{p_k}-|V^{(k-1)}(\tau,x)|^{p_k}=p_k\int\nolimits_0^1\big(U^{(k-1)}(\tau,x)-V^{(k-1)}(\tau,x)\big) G\big(\nu U^{(k-1)}(\tau,x)+(1-\nu) V^{(k-1)}(\tau,x)\big)d\nu.
	\end{split}
	\end{equation*}
	Therefore, Minkowski's inequality and the fractional Leibniz rule from Proposition \ref{fractionleibnizrule} show that
	\begin{equation*}
	\begin{split}
	 &\big\||U^{(k-1)}(\tau,x)|^{p_k}-|V^{(k-1)}(\tau,x)|^{p_k}\big\|_{\dot{H}^s}\lesssim\int\nolimits_0^1\big\|\big(U^{(k-1)}(\tau,\cdot)-V^{(k-1)}(\tau,\cdot)\big)G\big(\nu U^{(k-1)}(\tau,\cdot)+(1-\nu) V^{(k-1)}(\tau,\cdot)\big)\big\|_{\dot{H}^s}d\nu\\
	& \qquad \lesssim\int\nolimits_0^1\|U^{(k-1)}(\tau,\cdot)-V^{(k-1)}(\tau,\cdot)\|_{\dot{H}^{s}_{r_1}}\big\|G\big(\nu U^{(k-1)}(\tau,\cdot)+(1-\nu) V^{(k-1)}(\tau,\cdot)\big)\big\|_{L^{r_2}}d\nu\\
	&\qquad \quad+\int\nolimits_0^1\|U^{(k-1)}(\tau,\cdot)-V^{(k-1)}(\tau,\cdot)\|_{L^{r_3}}\big\|G\big(\nu U^{(k-1)}(\tau,\cdot)+(1-\nu) V^{(k-1)}(\tau,\cdot)\big)\big\|_{\dot{H}^{s}_{r_4}}d\nu,
	\end{split}
	\end{equation*}
	where $\frac{1}{r_1}+\frac{1}{r_2}=\frac{1}{r_3}+\frac{1}{r_4}=\frac{1}{2}$. Taking account of the first term on right-hand side we notice that
	\begin{equation*}
	\int\nolimits_0^1\big\|G\big(\nu U^{(k-1)}(\tau,\cdot)+(1-\nu) V^{(k-1)}(\tau,\cdot)\big)\big\|_{L^{r_2}}d\nu\lesssim \|U^{(k-1)}(\tau,\cdot)\|_{L^{r_2(p_k-1)}}^{p_k-1}+\|V^{(k-1)}(\tau,\cdot)\|_{L^{r_2(p_k-1)}}^{p_k-1}.
	\end{equation*}
	Actually, we use the fractional Gagliardo-Nirenberg inequality and the fractional chain rule to get the following inequalities:
	\begin{equation*}
	\begin{split}
	 \|U^{(k-1)}(\tau,\cdot)-V^{(k-1)}(\tau,\cdot)\|_{\dot{H}^{s}_{r_1}}&\lesssim\|U^{(k-1)}(\tau,\cdot)-V^{(k-1)}(\tau,\cdot)\|_{L^2}^{1-\beta_{s,s+1}(r_1)}\|U^{(k-1)}(\tau,\cdot)-V^{(k-1)}(\tau,\cdot)\|_{\dot{H}^{s+1}}^{\beta_{s,s+1}(r_1)},\\
	 \|U^{(k-1)}(\tau,\cdot)-V^{(k-1)}(\tau,\cdot)\|_{L^{r_3}}&\lesssim\|U^{(k-1)}(\tau,\cdot)-V^{(k-1)}(\tau,\cdot)\|_{L^2}^{1-\beta_{0,s+1}(r_3)}\|U^{(k-1)}(\tau,\cdot)-V^{(k-1)}(\tau,\cdot)\|_{\dot{H}^{s+1}}^{\beta_{0,s+1}(r_3)},\\
	 \|U^{(k-1)}(\tau,\cdot)\|_{L^{r_2(p_k-1)}}&\lesssim\|U^{(k-1)}(\tau,\cdot)\|_{L^2}^{1-\beta_{0,s+1}(r_2(p_k-1))}\|U^{(k-1)}(\tau,\cdot)\|_{\dot{H}^{s+1}}^{\beta_{0,s+1}(r_2(p_k-1))},\\
	 \|V^{(k-1)}(\tau,\cdot)\|_{L^{r_2(p_k-1)}}&\lesssim\|V^{(k-1)}(\tau,\cdot)\|_{L^2}^{1-\beta_{0,s+1}(r_2(p_k-1))}\|V^{(k-1)}(\tau,\cdot)\|_{\dot{H}^{s+1}}^{\beta_{0,s+1}(r_2(p_k-1))},\\
	\end{split}
	\end{equation*}
	and
	\begin{equation*}
	\begin{split}
	&\big\|G\big(\nu U^{(k-1)}(\tau,\cdot)+(1-\nu)V^{(k-1)}(\tau,\cdot)\big)\big\|_{\dot{H}^{s}_{r_4}}\\
	&\quad\quad\quad\quad\lesssim\|\nu U^{(k-1)}(\tau,\cdot)+(1-\nu)V^{(k-1)}(\tau,\cdot)\|_{L^{r_5}}^{p_k-2}\|\nu U^{(k-1)}(\tau,\cdot)+(1-\nu)V^{(k-1)}(\tau,\cdot)\|_{\dot{H}^{s}_{r_6}}\\
	&\quad\quad\quad\quad\lesssim\big(\|U^{(k-1)}(\tau,\cdot)\|_{L^2}+\|V^{(k-1)}(\tau,\cdot)\|_{L^2}\big)^{(1-\beta_{0,s+1}(r_5))(p_k-2)+1-\beta_{s,s+1}(r_6)}\\
	 &\quad\quad\quad\quad\quad\quad\times\big(\|U^{(k-1)}(\tau,\cdot)\|_{\dot{H}^{s+1}}+\|V^{(k-1)}(\tau,\cdot)\|_{\dot{H}^{s+1}}\big)^{\beta_{0,s+1}(r_5)(p_k-2)+\beta_{s,s+1}(r_6)},
	\end{split}
	\end{equation*}
	where the conditions for the parameters are
	\begin{equation*}
	\begin{aligned}
	\beta_{0,s+1}(r)&=\frac{3}{s+1}\Big(\frac{1}{2}-\frac{1}{r}\Big)\in[0,1]&\,\,\,\,&\text{for}\,\,\,\,r=r_2(p_k-1),r_3,r_5,\\
	\beta_{s,s+1}(r)&=\frac{3}{s+1}\Big(\frac{1}{2}-\frac{1}{r}+\frac{s}{3}\Big)\in\Big[\frac{s}{s+1},1\Big]&\quad&\text{for}\,r=r_1,r_6,\\
	\frac{1}{r_4}&=\frac{p_k-2}{r_5}+\frac{1}{r_6},
	\end{aligned}
	\end{equation*}
	for $\min\left\{p_1;p_2;p_3\right\}>\lceil s\rceil+1$. The existences of parameters $r_1,\dots,r_6$ is discussed in Appendix \ref{appendix2}. Straight-forward computations lead to
	\begin{equation*}
	\begin{split}
	\big\||U^{(k-1)}(\tau,x)|^{p_k}-|V^{(k-1)}(\tau,x)|^{p_k}\big\|_{\dot{H}^s}
	\lesssim&(1+\tau)^{-\big(p_k-\frac{3}{s+1}\big(\frac{p_k-1}{2}+\frac{s}{3}\big)\big)\rho_0(1,\theta)-\frac{3}{s+1}\big(\frac{p_k-1}{2}+\frac{s}{3}\big)
		\rho_{s+1}(1,\theta)}\\
	&\times\|U-V\|_{X(\tau)}\big(\|U\|_{X(\tau)}^{p_k-1}+\|V\|_{X(\tau)}^{p_k-1}\big).
	\end{split}
	\end{equation*}
	Summarizing all estimates allows to conclude \eqref{Improtant2}. This completes the proof.
\end{proof}
\begin{rem}
	Again, in Theorem \ref{HlL1nonlinearythm}, we only expect the exponents $p_1,p_2,p_3$ above the exponent $p=2$. If we would assume $1<p_{k_1}\leq2$, the admissible set for the exponent $p_{k_1}$ will be empty. The reason is that to derive the Lipschitz condition, we apply the fractional chain rule and the fractional Leibniz rule. Therefore, we propose the condition $\min\left\{p_1;p_2;p_3\right\}>1+\lceil s\rceil$.
\end{rem}
Finally, let us say a few things about the case $s>3/2$, where we suppose that data belong to $\ml{D}_{1,1}^s$. Applying the fractional chain rule from Proposition \ref{fractionalchainrule} would imply the admissible range for the exponents $p_1,p_2,p_3$ by the condition
\begin{equation*}
\min\left\{p_1;p_2;p_3\right\}>1+\lceil s\rceil.
\end{equation*}
Applying instead fractional powers rules from Proposition \ref{fractionalpowersrule} the last condition can be relaxed to $\min\left\{p_1;p_2;p_3\right\}>1+s$.
This is explained in the next result.
\begin{thm}\label{embthm}Let us consider the semi-linear model \eqref{fsdew001} with $\theta\in\left[0,1/2\right)$ and $s>3/2$. Let us choose
	\begin{equation*}
	1+s<\min\left\{p_1;p_2;p_3\right\}.
	\end{equation*}
	Then, there exists a constant $\varepsilon_0>0$ such that for all $\big(U_0^{(k)},U_1^{(k)}\big)\in\ml{D}_{1,1}^s$ with $\sum\limits_{k=1}^3\big\|\big(U_0^{(k)},U_1^{(k)}\big)\big\|_{\ml{D}_{1,1}^s}\leq\varepsilon_0$ there is a uniquely determined energy solution
	\begin{equation*}
	U\in \big(\mathcal{C}\big([0,\infty),H^{s+1}\big(\mb{R}^3\big)\big)\cap \mathcal{C}^1\big([0,\infty),H^s\big(\mb{R}^3\big)\big)\big)^3
	\end{equation*}
	to the Cauchy problem \eqref{fsdew001}. Moreover, the estimates for the solutions are the same as in Theorem \ref{HlL1nonlinearythm}.
\end{thm}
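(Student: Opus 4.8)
The plan is to reproduce the fixed-point scheme of the proof of Theorem \ref{HlL1nonlinearythm} almost verbatim, the only change being that the $\dot{H}^s$ estimates of the power nonlinearities and of their differences are carried out with the \emph{fractional powers rules} from Proposition \ref{fractionalpowersrule} in place of the fractional chain rule. For any $T>0$ I would work in the evolution space \eqref{spp} equipped with the norm \eqref{norm}, define the operator $N$ by \eqref{fixedpointformulation}, and reduce the statement to the two inequalities \eqref{Improtant1} and \eqref{Improtant2}, uniformly in $T\in[0,\infty)$; Banach's fixed-point theorem then yields the unique global solution and, by the construction of the norm, the claimed decay estimates, which coincide with those of Theorem \ref{HlL1nonlinearythm}. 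The linear ingredients, namely the $(L^2\cap L^1)$-$L^2$, $(\dot{H}^s\cap L^1)$-$\dot{H}^s$ and the $L^2$-$L^2$, $\dot{H}^s$-$\dot{H}^s$ estimates from Theorem \ref{enee} with $\theta\in\left[0,1/2\right)$, remain unchanged. Note that $s>3/2$ forces $\min\left\{p_1;p_2;p_3\right\}>1+s>5/2$, so in particular all power exponents are well above $2$.

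To prove \eqref{Improtant1} I would split $[0,t]$ into $\left[0,t/2\right]$ and $\left[t/2,t\right]$ and estimate $\|N_kU(t,\cdot)\|_{L^2}$, $\|\partial_tN_kU(t,\cdot)\|_{L^2}$ and $\|\partial_t^jN_kU(t,\cdot)\|_{\dot{H}^{s+1-j}}$ for $j=0,1$ exactly as in the proof of Theorem \ref{HlL1nonlinearythm}. The $L^1$ and $L^2$ norms of $|U^{(k-1)}(\tau,x)|^{p_k}$ are controlled by the fractional Gagliardo-Nirenberg inequality, and the admissibility constraints $\beta_{0,s+1}(mp_k)\in[0,1]$ hold automatically because $p_k>5/2$ and $s>3/2$. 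For the $\dot{H}^s$ norm of $|U^{(k-1)}(\tau,x)|^{p_k}$ I would replace the step \eqref{cont}--\eqref{conttt} by the fractional powers rule, which yields the same factorization $\big\||U^{(k-1)}(\tau,\cdot)|^{p_k}\big\|_{\dot{H}^s}\lesssim\|U^{(k-1)}(\tau,\cdot)\|_{L^{q_1}}^{p_k-1}\|U^{(k-1)}(\tau,\cdot)\|_{\dot{H}^{s}_{q_2}}$ with $\tfrac{p_k-1}{q_1}+\tfrac1{q_2}=\tfrac12$, but now under the sole requirement $p_k>s$ instead of $p_k>\lceil s\rceil$. Interpolating each factor by the fractional Gagliardo-Nirenberg inequality reproduces the decay exponent of \eqref{conttt}, and then the integrals over $\left[0,t/2\right]$ and $\left[t/2,t\right]$ are estimated just as before, using $\min\left\{p_1;p_2;p_3\right\}>1+s\geq2$ so that the relevant time-dependent powers are integrable and bounded, respectively.

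For the contraction property \eqref{Improtant2} I would again follow Theorem \ref{HlL1nonlinearythm}: write
\[
|U^{(k-1)}(\tau,x)|^{p_k}-|V^{(k-1)}(\tau,x)|^{p_k}=p_k\int\nolimits_0^1\big(U^{(k-1)}(\tau,x)-V^{(k-1)}(\tau,x)\big)G\big(\nu U^{(k-1)}(\tau,x)+(1-\nu)V^{(k-1)}(\tau,x)\big)\,d\nu
\]
with $G(w)=w|w|^{p_k-2}$, apply Minkowski's inequality and the fractional Leibniz rule from Proposition \ref{fractionleibnizrule}, and estimate the resulting four factors. The only modification is that $\|G(\cdot)\|_{\dot{H}^s_{r_4}}$ is now estimated via the fractional powers rule: since $G$ behaves like a power of order $p_k-1$, its applicability needs $p_k-1>s$, i.e. exactly the hypothesis $\min\left\{p_1;p_2;p_3\right\}>1+s$. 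The remaining factors are treated by the fractional Gagliardo-Nirenberg inequality, and the existence of the auxiliary Lebesgue exponents $q_1,q_2,r_1,\dots,r_6$ is verified by the same elementary computations as in Appendix \ref{appendix2}. Since $s>3/2$ one may alternatively exploit the Sobolev embedding $H^{s+1}\big(\mb{R}^3\big)\hookrightarrow L^\infty\big(\mb{R}^3\big)$ together with the simpler form $\||w|^{p}\|_{\dot{H}^s}\lesssim\|w\|_{L^\infty}^{p-1}\|w\|_{\dot{H}^s}$ of the fractional powers rule, which shortens the bookkeeping.

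The main obstacle I anticipate is purely of a bookkeeping nature: one has to check that the fractional powers rule applies under the relaxed condition $p_k>1+s$ while every Gagliardo-Nirenberg interpolation parameter stays inside its admissible window (the $\beta_{0,s+1}$'s in $[0,1]$ and the $\beta_{s,s+1}$'s in $\big[\tfrac{s}{s+1},1\big]$), and that the composite time-decay exponent $-\big(p_k-\tfrac{3}{s+1}\big(\tfrac{p_k-1}{2}+\tfrac s3\big)\big)\rho_0(1,\theta)-\tfrac{3}{s+1}\big(\tfrac{p_k-1}{2}+\tfrac s3\big)\rho_{s+1}(1,\theta)$ remains strictly below $-1$, with the corresponding weight on $\left[t/2,t\right]$ non-positive, so that the integrals close. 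This is precisely where the largeness of $\min\left\{p_1;p_2;p_3\right\}$, forced by $p_k>1+s$ with $s>3/2$, is used, and no genuinely new analytic difficulty arises in comparison with Theorem \ref{HlL1nonlinearythm}.
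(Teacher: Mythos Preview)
Your overall strategy is sound, but there is a concrete confusion in the main route you propose. The fractional powers rule of Proposition \ref{fractionalpowersrule} does \emph{not} give the factorization $\big\||U^{(k-1)}|^{p_k}\big\|_{\dot{H}^s}\lesssim\|U^{(k-1)}\|_{L^{q_1}}^{p_k-1}\|U^{(k-1)}\|_{\dot{H}^{s}_{q_2}}$ with general $q_1,q_2$; that is exactly the form of the fractional \emph{chain} rule. What Proposition \ref{fractionalpowersrule} provides is only the $L^\infty$-version $\big\||U^{(k-1)}|^{p_k}\big\|_{\dot{H}^s}\lesssim\|U^{(k-1)}\|_{\dot{H}^s}\|U^{(k-1)}\|_{L^\infty}^{p_k-1}$ under $p_k>s$. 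So what you call the ``alternative'' is in fact the only route the cited proposition supports, and it is precisely the paper's approach.

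The paper then controls $\|U^{(k-1)}(\tau,\cdot)\|_{L^\infty}$ not through the embedding $H^{s+1}\hookrightarrow L^\infty$ but via Proposition \ref{supplement}, namely $\|f\|_{L^\infty}\lesssim\|f\|_{\dot{H}^{s^*}}+\|f\|_{\dot{H}^s}$ with $0<2s^*<3<2s$ and the choice $s^*=3/2-\epsilon$; each homogeneous norm is afterwards interpolated by Gagliardo--Nirenberg between $L^2$ and $\dot{H}^{s+1}$, which produces the decay exponent needed to close the time integrals. For the Lipschitz condition the paper uses the $L^\infty$-Leibniz rule of Proposition \ref{coroleibunizpower} rather than the general fractional Leibniz rule with parameters $r_1,\dots,r_4$, and then applies the fractional powers rule to $G(w)=w|w|^{p_k-2}$; this is where the hypothesis $p_k>1+s$ (i.e.\ $p_k-1>s$) is actually used. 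Your variant with the general Leibniz rule can be made to work as well, but the parameter-existence discussion of Appendix \ref{appendix2} would have to be redone, since $r_5,r_6$ disappear once $\|G\|_{\dot{H}^s_{r_4}}$ is estimated through an $L^\infty$ factor.
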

\begin{proof}
	Firstly, we define the evolution space and its norm as in \eqref{spp} and \eqref{norm}, respectively. Discussing the global (in time) existence of solutions with large regular data, we may use the fractional powers rules \cite{RunstSickel1996} instead of the fractional chain rule and the fractional Leibniz rule. More precisely, the estimates of $|U^{(k-1)}(\tau,x)|^{p_k}$ in the $\dot{H}^s$ norm should be changed. If we take $\min\left\{p_1;p_2;p_3\right\}>s$, then we have the following estimate:
	\begin{equation*}
	\begin{split}
	\big\||U^{(k-1)}(\tau,x)|^{p_k}\big\|_{\dot{H}^s}&\lesssim\|U^{(k-1)}(\tau,\cdot)\|_{\dot{H}^s}\|U^{(k-1)}(\tau,\cdot)\|_{L^{\infty}}^{p_k-1}\\
	&\lesssim\|U^{(k-1)}(\tau,\cdot)\|_{\dot{H}^s}^{p_k}+\|U^{(k-1)}(\tau,\cdot)\|_{\dot{H}^{s^*}}^{p_k-1}\|U^{(k-1)}(\tau,\cdot)\|_{\dot{H}^s},
	\end{split}
	\end{equation*}
	where we apply Proposition \ref{supplement} with $0<2s^*<3<2s$.
	
	Using the fractional Gagliardo-Nirenberg inequality again shows
	\begin{equation*}
	 \big\||U^{(k-1)}(\tau,x)|^{p_k}\big\|_{\dot{H}^s}\lesssim\|U^{(k-1)}(\tau,\cdot)\|_{L^2}^{\frac{p_k}{s+1}}\|U^{(k-1)}(\tau,\cdot)\|_{\dot{H}^{s+1}}^{\frac{p_ks}{s+1}}+\|U^{(k-1)}(\tau,\cdot)\|_{L^2}^{p_k-\frac{s^*}{s+1}p_k+\frac{s^*-s}{s+1}}\|U^{(k-1)}(\tau,\cdot)\|_{\dot{H}^{s+1}}^{\frac{s^*}{s+1}p_k-\frac{s^*-s}{s+1}}.
	\end{equation*}
	So, we get
	\begin{equation*}
	 \big\||U^{(k-1)}(\tau,x)|^{p_k}\big\|_{\dot{H}^s}\lesssim(1+\tau)^{-\rho_0(1,\theta)p_k-\big(\frac{s^*}{s+1}p_k-\frac{s^*-s}{s+1}\big)(\rho_{s+1}(1,\theta)-\rho_0(1,\theta))}\|U\|_{X(\tau)}^{p_k}.
	\end{equation*}
	Now we choose $s^*=3/2-\epsilon$ with an arbitrary small constant $\epsilon>0$. Therefore, in order to obtain \eqref{Improtant1}, the exponents $p_1,p_2,p_3$ should satisfy
	\begin{equation}\label{stronger}
	\max\{2;s\}<\min\left\{p_1;p_2;p_3\right\}.
	\end{equation}
	To verify the Lipschitz condition \eqref{Improtant2}, taking $\min\left\{p_1;p_2;p_3\right\}>1+s$, we have
	\begin{equation*}
	\begin{split}
	&\big\||U^{(k-1)}(\tau,\cdot)|^{p_k}-|V^{(k-1)}(\tau,\cdot)|^{p_k}\big\|_{\dot{H}^s}\\
	&\quad\lesssim\|U^{(k-1)}(\tau,\cdot)-V^{(k-1)}(\tau,\cdot)\|_{\dot{H}^s}\int\nolimits_0^1\big\|G\big(\nu U^{(k-1)}(\tau,\cdot)+(1-\nu)V^{(k-1)}(\tau,\cdot)\big)\big\|_{L^{\infty}}d\nu\\
	&\quad\quad+\|U^{(k-1)}(\tau,\cdot)-V^{(k-1)}(\tau,\cdot)\|_{L^{\infty}}\int\nolimits_0^1\big\|G\big(\nu U^{(k-1)}(\tau,\cdot)+(1-\nu)V^{(k-1)}(\tau,\cdot)\big)\big\|_{\dot{H}^s}d\nu\\
	&\quad\lesssim\|U^{(k-1)}(\tau,\cdot)-V^{(k-1)}(\tau,\cdot)\|_{\dot{H}^s}\int\nolimits_0^1\|\nu U^{(k-1)}(\tau,\cdot)+(1-\nu)V^{(k-1)}(\tau,\cdot)\|_{L^{\infty}}^{p_k-1}d\nu\\
	&\quad\quad+\|U^{(k-1)}(\tau,\cdot)-V^{(k-1)}(\tau,\cdot)\|_{L^{\infty}}\int\nolimits_0^1\|\nu U^{(k-1)}(\tau,\cdot)+(1-\nu)V^{(k-1)}(\tau,\cdot)\|_{\dot{H}^s}\|\nu U^{(k-1)}(\tau,\cdot)+(1-\nu)V^{(k-1)}(\tau,\cdot)\|_{L^{\infty}}^{p_k-2}d\nu.
	\end{split}
	\end{equation*}
	After applying Proposition \ref{supplement} again, we can conclude \eqref{Improtant2} with the condition
	\begin{equation}\label{stronger1}
	\max\left\{2;1+s\right\}<\min\left\{p_1;p_2;p_3\right\}.
	\end{equation}
	The proof is completed.
\end{proof}

\subsection{GESDS for models with $\theta\in\left[1/2,1\right]$}\label{1/2,1GESDS}
Before stating our main theorems, we recall the following energy estimates for solutions to the linear Cauchy problem \eqref{linearproblem} with data belonging to $\ml{D}_{m,1}^s$, that is, $(u_0^{(k)},u_1^{(k)})\in(H^{s+1}\cap L^m)\times(H^s\cap L^m)$ for all $k=1,2,3$ (see Theorem \ref{enee}):
\begin{equation*}
\|u^{(k)}(t,\cdot)\|_{L^2}\lesssim\sum\limits_{k=1}^3\big\|\big(u_0^{(k)},u_1^{(k)}\big)\big\|_{(H^1\cap L^m)\times (L^2\cap L^m)}\times\left\{\begin{aligned}
&(1+t)^{-\frac{6-5m}{4m\theta}}&\text{if}\,\,\,\,&m\in\left[1,6/5\right),\\
&(1+t)^{1-\frac{6-3m}{4m\theta}}&\text{if}\,\,\,\,&m\in\left[6/5,2\right),
\end{aligned}
\right.
\end{equation*}
and some estimates for the derivatives with $m\in[1,2)$ and $s\geq0$
\begin{equation*}
\||D|^{s+1}u^{(k)}(t,\cdot)\|_{L^2}+\||D|^su_t^{(k)}(t,\cdot)\|_{L^2}\lesssim(1+t)^{-\frac{6-3m+2sm}{4m\theta}}\sum\limits_{k=1}^3\big\|\big(u_0^{(k)},u_1^{(k)}\big)\big\|_{(H^{s+1}\cap L^m)\times (H^s\cap L^m)}.
\end{equation*}
\subsubsection{Data from classical energy space with suitable regularity}
In this section, we mainly study the global (in time) existence of energy solutions with small data having an additional regularity $L^m$ for $m\in\left[1,3/2\right)$. The main reason of the suitable choice of regularity $m\in\left[1,3/2\right)$ will be explained in Remark \ref{remm}.

Firstly, we state our result for $m\in\left[1,6/5\right)$. As explained in Remark \ref{varepsilon1}, if the exponents $p_{k_j}=p_{c}(m,\theta)$ in one of the Cases (ii) or (iii) for some $j=1,2,3$, then we can choose the parameters $g_{k_j}$ describing the loss of decay as $g_{k_j}=\varepsilon_1$ with a sufficiently small constant $\varepsilon_1>0$.
\begin{thm}\label{GESDS02} Let us consider the semi-linear model \eqref{fsdew001} with $\theta\in\left[1/2,1\right]$ and choose $m\in\left[1,6/5\right)$. Let us assume $p_k\in\left[2/m,3\right]$ for $k=1,2,3,$ such that
	\begin{flalign}\label{0exp01}
	&\text{(i)}\quad\text{there are no other restrictions when}\,\,\,\,\min\left\{p_1;p_2;p_3\right\}>p_c(m,\theta);&
	\end{flalign}
	\begin{flalign}\label{0exp02}
	&\text{(ii)}\quad\alpha_{k_1}(m,\theta)<3/2\,\,\,\,\text{when}\,\,\,\,1<p_{k_1}< p_c(m,\theta)\,\,\,\,\text{and}\,\,\,\,p_{k_2},p_{k_3}>p_c(m,\theta);&
	\end{flalign}
	\begin{flalign}\label{0exp03}
	&\text{(iii)}\quad\widetilde{\alpha}_{k_1}(m,\theta)<3/2\,\,\,\,\text{when}\,\,\,\,1<p_{k_1},p_{k_2}< p_c(m,\theta)\,\,\,\,\text{and}\,\,\,\,p_{k_3}>p_c(m,\theta).&
	\end{flalign}
	Then, there exists a constant $\varepsilon_0>0$ such that for all $\big(U^{(k)}_0,U^{(k)}_1\big)\in\ml{D}_{m,1}^0$ with $\sum\limits_{k=1}^3\big\|\big(U^{(k)}_0,U^{(k)}_1\big)\big\|_{\ml{D}_{m,1}^0}\leq\varepsilon_0$ there is a uniquely determined energy solution
	\begin{equation*}
	U\in\big(\mathcal{C}\big([0,\infty),H^1\big(\mb{R}^3\big)\big)\cap \mathcal{C}^1\big([0,\infty),L^2\big(\mb{R}^3\big)\big)\big)^3
	\end{equation*}
	to the Cauchy problem \eqref{fsdew001}. Moreover, the following estimates hold:
	\begin{equation*}
	\begin{split}
	\|U^{(k)}(t,\cdot)\|_{L^2}&\lesssim (1+t)^{-\frac{6-5m}{4m\theta}+g_k}\sum\limits_{k=1}^3\big\|\big(U^{(k)}_0,U^{(k)}_1\big)\big\|_{\ml{D}_{m,1}^0},\\
	\|\nabla_xU^{(k)}(t,\cdot)\|_{L^2}+\|U_t^{(k)}(t,\cdot)\|_{L^2}&\lesssim (1+t)^{-\frac{6-3m}{4m\theta}+g_k}\sum\limits_{k=1}^3\big\|\big(U^{(k)}_0,U^{(k)}_1\big)\big\|_{\ml{D}_{m,1}^0},
	\end{split}
	\end{equation*}
	where the parameters $g_k$ are chosen in the following way:
\begin{enumerate}
\item  $g_{k}=0$ for $k=1,2,3,$ when $p_1,p_2,p_3$ satisfy the condition \eqref{0exp01};
 \item \[ g_{k_1}=\frac{3+2m\theta}{2m\theta}-\frac{3-m}{2m\theta}p_{k_1}\,\,\,\,\mbox{and}\,\,\,\, g_{k_2}=g_{k_3}=0,\] when $p_{k_1},p_{k_2},p_{k_3}$ satisfy the condition \eqref{0exp02};
\item \begin{eqnarray*} && g_{k_1}=\frac{3+2m\theta}{2m\theta}-\frac{3-m}{2m\theta}p_{k_1}, \,\,\,\, g_{k_2}=\frac{3+2m\theta}{2m\theta}+\frac{1+2\theta}{2\theta}p_{k_2}-\frac{3-m}{2m\theta}p_{k_1}p_{k_2}\,\,\,\,\mbox{and}\,\,\,\, g_{k_3}=0,
     \end{eqnarray*} when $p_{k_1},p_{k_2},p_{k_3}$ satisfy the condition \eqref{0exp03}.
     \end{enumerate}
\end{thm}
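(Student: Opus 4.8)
The proof will proceed by Banach's fixed-point theorem, exactly along the scheme used for Theorems~\ref{GESDS01} and \ref{GESDS03} and illustrated by the worked example in the philosophy of our approach. For $T>0$ I would work on the evolution space \eqref{sp} equipped with the weighted norm
\begin{equation*}
\|U\|_{X(T)}:=\sup_{0\leq t\leq T}\Big(\sum_{k=1}^{3}(1+t)^{\frac{6-5m}{4m\theta}-g_k}\|U^{(k)}(t,\cdot)\|_{L^2}+\sum_{k=1}^{3}(1+t)^{\frac{6-3m}{4m\theta}-g_k}\big(\|\nabla_x U^{(k)}(t,\cdot)\|_{L^2}+\|U^{(k)}_t(t,\cdot)\|_{L^2}\big)\Big),
\end{equation*}
whose weights are the decay rates of Theorem~\ref{enee} for $\theta\in[1/2,1]$, $m\in[1,6/5)$, $s=0$, and where $g_k\geq 0$ are the parameters describing a possible loss of decay. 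Everything reduces to proving the two inequalities \eqref{Improtant1} and \eqref{Improtant2} uniformly in $T$; Banach's theorem then yields the unique solution $U^{*}\in X(T)$ for all $T>0$, and the decay estimates in the statement are read off directly from the norm.

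For the Duhamel term in \eqref{fixedpointformulation} I would estimate the power source by the classical Gagliardo--Nirenberg inequality (Proposition~\ref{claGNineq}), which gives
\begin{equation*}
\big\||U^{(k-1)}(\tau,\cdot)|^{p_k}\big\|_{L^m}\lesssim(1+\tau)^{-\frac{(3-m)p_k-3}{2m\theta}+g_{k-1}p_k}\|U\|_{X(\tau)}^{p_k},\qquad \big\||U^{(k-1)}(\tau,\cdot)|^{p_k}\big\|_{L^2}\lesssim(1+\tau)^{-\frac{2(3-m)p_k-3m}{4m\theta}+g_{k-1}p_k}\|U\|_{X(\tau)}^{p_k},
\end{equation*}
the admissibility of the interpolation parameters being precisely the assumption $p_k\in[2/m,3]$. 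Splitting $[0,t]$ into $[0,t/2]$ and $[t/2,t]$, using the $(L^2\cap L^m)$--$L^2$ estimate of Theorem~\ref{enee} on the first subinterval and the $L^2$--$L^2$ estimate on the second, together with $(1+t-\tau)\approx(1+t)$ for $\tau\le t/2$ and $(1+\tau)\approx(1+t)$ for $\tau\ge t/2$, one reduces the analysis to the convergence of $\int_0^{t/2}(1+\tau)^{-\frac{(3-m)p_k-3}{2m\theta}+g_{k-1}p_k}\,d\tau$ and to the sign of a single boundary power of $(1+t)$, exactly as in the philosophy of our approach.

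The three cases correspond to the position of the exponents relative to $p_c(m,\theta)$ from \eqref{cri01}. In Case~(i), where $\min\{p_1;p_2;p_3\}>p_c(m,\theta)$, one may take $g_1=g_2=g_3=0$: the inequality $p_k>p_c(m,\theta)$ is exactly $\frac{(3-m)p_k-3}{2m\theta}>1$, which makes the $[0,t/2]$-integral convergent and the boundary power negative, so \eqref{Improtant1} follows as in Case~1 of Theorem~\ref{GESDS03}. In Case~(ii), with $1<p_{k_1}<p_c(m,\theta)$, the $k_1$-integral over $[0,t/2]$ diverges like a power of $(1+t)$; I would choose $g_{k_1}=\frac{3+2m\theta}{2m\theta}-\frac{3-m}{2m\theta}p_{k_1}>0$ so that this loss is absorbed, set $g_{k_2}=g_{k_3}=0$, and then note that the nonlinearity $|U^{(k_1)}|^{p_{k_2}}$ in the $k_2$-equation carries the extra factor $(1+\tau)^{g_{k_1}p_{k_2}}$: the hypothesis $\alpha_{k_1}(m,\theta)<3/2$, equivalently $p_{k_2}\big(p_{k_1}+1-p_c(m,\theta)\big)>p_c(m,\theta)$, is precisely what yields $-\frac{(3-m)p_{k_2}-3}{2m\theta}+g_{k_1}p_{k_2}<-1$, so the $k_2$-component and therefore the $k_3$-component close without further loss. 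Case~(iii), with $1<p_{k_1},p_{k_2}<p_c(m,\theta)$, propagates the loss one more step: $g_{k_1}$ as above, $g_{k_2}=\frac{3+2m\theta}{2m\theta}+\frac{1+2\theta}{2\theta}p_{k_2}-\frac{3-m}{2m\theta}p_{k_1}p_{k_2}>0$ absorbing the $k_2$-integral, $g_{k_3}=0$, and the iterated condition $\widetilde{\alpha}_{k_1}(m,\theta)<3/2$, i.e.\ $p_{k_3}\big(p_{k_2}(p_{k_1}+1-p_c(m,\theta))+1-p_c(m,\theta)\big)>p_c(m,\theta)$, closes the $k_3$-equation. If one of the $p_{k_j}$ equals $p_c(m,\theta)$ one replaces $g_{k_j}$ by $\varepsilon_1>0$ as in Remark~\ref{varepsilon1}. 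The Lipschitz inequality \eqref{Improtant2} is obtained along the same lines, bounding $\big\||U^{(k-1)}|^{p_k}-|V^{(k-1)}|^{p_k}\big\|_{L^r}$ for $r=m$ and $r=2$ by H\"older's inequality and then the classical Gagliardo--Nirenberg inequality on each factor.

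I expect the only real difficulty to lie in the bookkeeping of Cases~(ii) and (iii): one must verify that each chosen $g_{k_j}$ simultaneously absorbs the divergent $[0,t/2]$-integral in its own equation and keeps the source in the next equation time-integrable, and it is exactly this second requirement that forces the conditions $\alpha_{k_1}(m,\theta)<3/2$ and $\widetilde{\alpha}_{k_1}(m,\theta)<3/2$ in the equivalent forms recorded after \eqref{cri1} and \eqref{cri4}. The remaining points --- the continuity in time of $NU$ (hence $U\in X(T)$ in the classical sense) and the smallness-of-data closing of the fixed-point argument --- are routine.
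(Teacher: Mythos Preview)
Your proposal is correct and follows essentially the same approach as the paper's proof: the same weighted evolution space and norm, the same Gagliardo--Nirenberg bounds on the nonlinearity, the same $[0,t/2]$--$[t/2,t]$ splitting with $(L^2\cap L^m)$--$L^2$ and $L^2$--$L^2$ linear estimates, and the identical case analysis with the same choices of the loss parameters $g_{k_j}$ closed by the conditions $\alpha_{k_1}(m,\theta)<3/2$ and $\widetilde{\alpha}_{k_1}(m,\theta)<3/2$. The only cosmetic difference is that the paper applies the $(L^2\cap L^m)$--$L^2$ estimate on the full interval $[0,t]$ for the $L^2$-norm of the solution itself (since $\frac{6-5m}{4m\theta}<1$), reserving the $L^2$--$L^2$ estimate on $[t/2,t]$ for the derivatives; this does not affect the outcome.
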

\begin{proof} We define for $T>0$ the spaces of solutions $X(T)$ by
	\begin{equation}\label{evolutionspace01}
	X(T):=\big(\ml{C}\big([0,T],H^1\big(\mb{R}^3\big)\big)\cap\ml{C}^1\big([0,T],L^2\big(\mb{R}^3\big)\big)\big)^3
	\end{equation}
	with the corresponding norm
	\begin{equation*}
	\|U\|_{X(T)}:=\sup\limits_{0\leq t\leq T}\Big(\sum\limits_{k=1}^3(1+t)^{\frac{6-5m}{4m\theta}-g_k}\|U^{(k)}(t,\cdot)\|_{L^2}+\sum\limits_{k=1}^3(1+t)^{\frac{6-3m}{4m\theta}-g_k}\big(\|\nabla_x U^{(k)}(t,\cdot)\|_{L^2}+\|U_t^{(k)}(t,\cdot)\|_{L^2}\big)\Big).
	\end{equation*}
	The classical Gagliardo-Nirenberg inequality implies
	\begin{equation*}
	\begin{split}
	\big\||U^{(k-1)}(\tau,x)|^{p_k}\big\|_{L^m}&\lesssim (1+\tau)^{-\frac{(3-m)p_k-3}{2m\theta}+g_{k-1}p_k}\|U\|_{X(\tau)}^{p_k},\\
	\big\||U^{(k-1)}(\tau,x)|^{p_k}\big\|_{L^2}&\lesssim (1+\tau)^{-\frac{2(3-m)p_k-3m}{4m\theta}+g_{k-1}p_k}\|U\|_{X(\tau)}^{p_k},
	\end{split}
	\end{equation*}
	where $m\in\left[1,6/5\right)$ and the parameters $\beta_{0,1}(mp_k)\in[0,1]$ and $\beta_{0,1}(2p_k)\in[0,1]$ in the classical Gagliardo-Nirenberg inequality lead to the condition $p_k\in\left[2/m,3\right]$ for all $k=1,2,3$.
	To begin with, we apply the derived $(L^2\cap L^m)$-$L^2$ estimate on $[0,t]$ to estimate the solution as follows:
	\begin{equation*}
	\begin{split}
	 (1+t)^{\frac{6-5m}{4m\theta}-g_k}\|N_kU(t,\cdot)\|_{L^2}&\lesssim(1+t)^{-g_k}\sum\limits_{k=1}^3\big\|\big(U_0^{(k)},U_1^{(k)}\big)\big\|_{\ml{D}_{m,1}^0}+(1+t)^{-g_k}\|U\|_{X(t)}^{p_k}\int\nolimits_{0}^{t/2}(1+\tau)^{-\frac{(3-m)p_k-3}{2m\theta}+g_{k-1}p_k}d\tau\\
	&\quad\,\,+(1+t)^{1-g_k-\frac{(3-m)p_k-3}{2m\theta}+g_{k-1}p_k}\|U\|_{X(t)}^{p_k},
	\end{split}
	\end{equation*}
	where we divide the interval $[0,t]$ into sub-intervals $\left[0,t/2\right]$ and $\left[t/2,t\right]$ and use
	\begin{equation*}
	\int\nolimits_{t/2}^t(1+t-\tau)^{-\frac{6-5m}{4m\theta}}d\tau\lesssim(1+t)^{1-\frac{6-5m}{4m\theta}}
	\end{equation*}
	due to the fact that $6-5m<4m\theta$ for all $m\in\left[1,6/5\right)$ and $\theta\in\left[1/2,1\right]$.\\
	Next, using the derived $(L^2\cap L^m)$-$L^2$ estimate on $\left[0,t/2\right]$ and $L^2$-$L^2$ estimate on $\left[t/2,t\right]$, we obtain the estimate for the first order derivatives ($j+l=1$) as follows:
	\begin{equation*}
	\begin{split}
	 (1+t)^{\frac{6-3m}{4m\theta}-g_k}\|\partial_t^j\nabla_x^lN_kU(t,\cdot)\|_{L^2}\lesssim&(1+t)^{-g_k}\sum\limits_{k=1}^3\big\|\big(U_0^{(k)},U_1^{(k)}\big)\big\|_{\ml{D}_{m,1}^0}+(1+t)^{-g_k}\|U\|_{X(t)}^{p_k}\int\nolimits_{0}^{t/2}(1+\tau)^{-\frac{(3-m)p_k-3}{2m\theta}+g_{k-1}p_k}d\tau\\
	&+(1+t)^{1+\frac{6-3m}{4m\theta}-g_k-\frac{2(3-m)p_k-3m}{4m\theta}+g_{k-1}p_k}\|U\|_{X(t)}^{p_k}.
	\end{split}
	\end{equation*}
	Summarizing the above estimates gives
	\begin{equation}\label{1/2,1estimate01}
	\begin{split}
	 &(1+t)^{\frac{6-5m+2(j+l)m}{4m\theta}-g_k}\|\partial_t^j\nabla_x^lN_kU(t,\cdot)\|_{L^2}\lesssim(1+t)^{-g_k}\sum\limits_{k=1}^3\big\|\big(U_0^{(k)},U_1^{(k)}\big)\big\|_{\ml{D}_{m,1}^0}\\
	 &\qquad\qquad\quad+(1+t)^{-g_k}\|U\|_{X(t)}^{p_k}\Big(\int\nolimits_{0}^{t/2}(1+\tau)^{-\frac{(3-m)p_k-3}{2m\theta}+g_{k-1}p_k}d\tau+(1+t)^{1-\frac{(3-m)p_k-3}{2m\theta}+g_{k-1}p_k}\Big)
	\end{split}
	\end{equation}
	for all $j+l=0,1$ with $j,l\in\mb{N}_0$. Now, we distinguish between three cases to prove
	\begin{equation}\label{1/2,1aim01}
	 (1+t)^{\frac{6-5m+2(j+l)m}{4m\theta}-g_k}\|\partial_t^j\nabla_x^lN_kU(t,\cdot)\|_{L^2}\lesssim\sum\limits_{k=1}^3\big\|\big(U_0^{(k)},U_1^{(k)}\big)\big\|_{\ml{D}_{m,1}^0}+\|U\|_{X(t)}^{p_k}.
	\end{equation}
	\emph{Case 1} $\quad$ We assume the condition (\ref{0exp01}), that is, $\min\left\{p_1;p_2;p_3\right\}>p_c(m,\theta)$.\medskip
	
	\noindent In this case, we have no loss of decay. So, we choose the parameters $g_1=g_2=g_3=0$. Therefore, \eqref{1/2,1estimate01} implies the following estimates for $j+l=0,1$:
	\begin{equation*}
	\begin{split}
	 (1+t)^{\frac{6-5m+2(j+l)m}{4m\theta}}\|\partial_t^j\nabla_x^lN_kU(t,\cdot)\|_{L^2}\lesssim\sum\limits_{k=1}^3\big\|\big(U_0^{(k)},U_1^{(k)}\big)\big\|_{\ml{D}_{m,1}^0}+\|U\|_{X(t)}^{p_k}\Big(\int\nolimits_{0}^{t/2}(1+\tau)^{-\frac{(3-m)p_k-3}{2m\theta}}d\tau+(1+t)^{1-\frac{(3-m)p_k-3}{2m\theta}}\Big),
	\end{split}
	\end{equation*}
	where $k=1,2,3$. To prove \eqref{1/2,1aim01}, we need that the right-hand side in the last inequality is uniformly bounded in $t>0$. But, this follows from the condition
\begin{equation*}
	\min\left\{p_1;p_2;p_3\right\}>p_c(m,\theta)=\frac{2m\theta+3}{3-m}\,\,\,\,\text{for}\,\,\,\,\theta\in\left[1/2,1\right].
	\end{equation*}
	\emph{Case 2} $\quad$ We assume the condition (\ref{0exp02}), that is, $1<p_{k_1}< p_c(m,\theta)$ and $p_{k_2},p_{k_3}>p_c(m,\theta)$.\medskip
	
	\noindent Now, we allow a loss of decay in one component of the solution. We choose $g_{k_1}=\frac{3+2m\theta}{2m\theta}-\frac{3-m}{2m\theta}p_{k_1}$ and $g_{k_2}=g_{k_3}=0$. Obviously, by the assumption $1<p_{k_1}< p_c(m,\theta)$ we may get $g_{k_1} >0$. The condition $\alpha_{k_1}(m,\theta)<3/2$ is equivalent to the following inequality
	\begin{equation}\label{condicase2}
	3+2m\theta+m(1+2\theta)p_{k_2}+(m-3)p_{k_1}p_{k_2}<0.
	\end{equation}
	With the assumption $1<p_{k_1}< p_c(m,\theta)$, the condition \eqref{condicase2} is valid only when $p_{k_2}>p_c(m,\theta)$. Moreover, we have
	\begin{equation}\label{estimate11}
	\begin{split}
	 (1+t)^{\frac{6-5m+2(j+l)m}{4m\theta}-g_{k_1}}\|\partial_t^j\nabla_x^lN_{k_1}U(t,\cdot)\|_{L^2}\lesssim\sum\limits_{k=1}^3\big\|\big(U_0^{(k)},U_1^{(k)}\big)\big\|_{\ml{D}_{m,1}^0}+(1+t)^{1-g_{k_1}-\frac{(3-m)p_{k_1}-3}{2m\theta}}\|U\|_{X(t)}^{p_{k_1}},
	\end{split}
	\end{equation}
	where we use
	\begin{equation*}
	\int\nolimits_{0}^{t/2}(1+\tau)^{-\frac{(3-m)p_{k_1}-3}{2m\theta}}d\tau\lesssim(1+t)^{1-\frac{(3-m)p_{k_1}-3}{2m\theta}}.
	\end{equation*}
	Hence, the above estimates lead to the desired estimate \eqref{1/2,1aim01} when $k=k_1$.\\
	When $k=k_2$, we obtain the following estimate:
	\begin{equation*}
	\begin{split}
	 (1+t)^{\frac{6-5m+2(j+l)m}{4m\theta}}\|\partial_t^j\nabla_x^lN_{k_2}U(t,\cdot)\|_{L^2}\lesssim&\sum\limits_{k=1}^3\big\|\big(U_0^{(k)},U_1^{(k)}\big)\big\|_{\ml{D}_{m,1}^0}\\
	 &+\|U\|_{X(t)}^{p_{k_2}}\Big(\int\nolimits_{0}^{t/2}(1+\tau)^{-\frac{(3-m)p_{k_2}-3}{2m\theta}+g_{k_1}p_{k_2}}d\tau+(1+t)^{1-\frac{(3-m)p_{k_2}-3}{2m\theta}+g_{k_1}p_{k_2}}\Big).
	\end{split}
	\end{equation*}
	Applying the condition \eqref{condicase2} it follows
	\begin{equation*}
	-\frac{(3-m)p_{k_2}-3}{2m\theta}+g_{k_1}p_{k_2}=\frac{3+m(1+2\theta)p_{k_2}+(m-3)p_{k_1}p_{k_2}}{2m\theta}<-1.
	\end{equation*}
	So it immediately leads to the estimate \eqref{1/2,1aim01} when $k=k_2$.\\
	The case $k=k_3$ can be treated by using the same arguments we did in studying \emph{Case 1}. Precisely, we have
	\begin{equation*}
	\begin{split}
	 (1+t)^{\frac{6-5m+2(j+l)m}{4m\theta}}\|\partial_t^j\nabla_x^lN_{k_3}U(t,\cdot)\|_{L^2}\lesssim\sum\limits_{k=1}^3\big\|\big(U_0^{(k)},U_1^{(k)}\big)\big\|_{\ml{D}_{m,1}^0}+\|U\|_{X(t)}^{p_{k_3}}\Big(\int\nolimits_{0}^{t/2}(1+\tau)^{-\frac{(3-m)p_{k_3}-3}{2m\theta}}d\tau+(1+t)^{1-\frac{(3-m)p_{k_3}-3}{2m\theta}}\Big).
	\end{split}
	\end{equation*}
	Taking account of $p_{k_3}>p_c(m,\theta)$, the estimate \eqref{1/2,1aim01} is valid for $k=k_3$.\\
	\\
	\emph{Case 3} $\quad$ We assume the condition (\ref{0exp03}), that is, $1<p_{k_1},p_{k_2}< p_c(m,\theta)$ and $p_{k_3}>p_c(m,\theta)$.\medskip
	
	\noindent Here, we take the parameters describing the loss of decay as follows: \[ g_{k_1}=\frac{3+2m\theta}{2m\theta}-\frac{3-m}{2m\theta}p_{k_1},\,\,\,\, g_{k_2}=\frac{3+2m\theta}{2m\theta}+\frac{1+2\theta}{2\theta}p_{k_2}-\frac{3-m}{2m\theta}p_{k_1}p_{k_2}\,\,\,\,\mbox{and}\,\,\,\, g_{k_3}=0.\] With the help of $1<p_{k_1},p_{k_2}< p_c(m,\theta)$, we conclude that $g_{k_1}>0$ as well as $g_{k_2}>0$. Then, the condition $\widetilde{\alpha}_{k_1}(m,\theta)<3/2$ can be rewritten as
	\begin{equation}\label{condicase3}
	3+2m\theta+m(1+2\theta)p_{k_3}(1+p_{k_2})+(m-3)p_{k_1}p_{k_2}p_{k_3}<0.
	\end{equation}
	We know that the inequality \eqref{condicase3} is valid only if $p_{k_3}>p_c(m,\theta)$. Following the same approach for treating \emph{Case 2} we immediately obtain the desired estimate \eqref{1/2,1aim01} in this case.
	
Lastly, no matter in which case, we may derive the Lipschitz condition by using H\"older's inequality and Gagliardo-Nirenberg inequality. In other words, we may prove
	\begin{equation*}
	 (1+t)^{\frac{6-5m+2(j+l)m}{4m\theta}-g_k}\|\partial_t^j\nabla_x^l(N_kU-N_kV)(t,\cdot)\|_{L^2}\lesssim\|U-V\|_{X(t)}\big(\|U\|_{X(t)}^{p_k-1}+\|V\|_{X(t)}^{p_k-1}\big)
	\end{equation*}
	for $j+l=0,1$ with $j,l\in\mb{N}_0$ and $k=1,2,3$ for \emph{Cases 1-3}. Therefore, the proof is complete.
\end{proof}
Next, when $m\in\left[6/5,3/2\right)$, the estimates for the solutions to the linear Cauchy problem \eqref{linearproblem} are different to those in the case $m\in\left[1,6/5\right)$.
For this reason we also feel differences in estimating the norms $\||U^{(k-1)}(\tau,x)|^{p_k}\|_{L^m}$ and $\||U^{(k-1)}(\tau,x)|^{p_k}\|_{L^2}$. Now, we state our result for $m\in\left[6/5,3/2\right)$. If some of the exponents $p_{k_j}=p_{\text{bal}}(m,0,\theta)$ for some $j=1,2,3$, we can choose the parameters $g_{k_j}$ describing the loss of decay as $g_{k_j}=\varepsilon_1$ with a sufficiently small constant $\varepsilon_1>0$.
\begin{thm}\label{GESDS09} Let us consider the semi-linear model \eqref{fsdew001} with $\theta\in\left[1/2,1\right]$ and choose $m\in\left[6/5,3/2\right)$. Let us assume $p_k\in\left[2/m,3\right]$ for $k=1,2,3,$ such that
	\begin{flalign}\label{0000exp01}
	&\text{(i)}\quad\text{there are no other restrictions when}\,\,\,\,\min\left\{p_1;p_2;p_3\right\}>p_{\text{bal}}(m,0,\theta);&
	\end{flalign}
	\begin{flalign}\label{0000exp02}
	&\text{(ii)}\quad\alpha_{k_1,\text{bal}}(m,0,\theta)<3/2\,\,\,\,\text{when}\,\,\,\,1<p_{k_1}< p_{\text{bal}}(m,0,\theta)\,\,\,\,\text{and}\,\,\,\,p_{k_2},p_{k_3}>p_{\text{bal}}(m,0,\theta);&
	\end{flalign}
	\begin{flalign}\label{0000exp03}
	&\text{(iii)}\quad \widetilde{\alpha}_{k_1,\text{bal}}(m,0,\theta)<3/2\,\,\,\,\text{when}\,\,\,\,1<p_{k_1},p_{k_2}< p_{\text{bal}}(m,0,\theta)\,\,\,\,\text{and}\,\,\,\,p_{k_3}>p_{\text{bal}}(m,0,\theta).&
	\end{flalign}
	Then, there exists a constant $\varepsilon_0>0$ such that for all $\big(U^{(k)}_0,U^{(k)}_1\big)\in\ml{D}_{m,1}^0$ with $\sum\limits_{k=1}^3\big\|\big(U^{(k)}_0,U^{(k)}_1\big)\big\|_{\ml{D}_{m,1}^0}\leq\varepsilon_0$ there is a uniquely determined energy solution
	\begin{equation*}
	U\in\big(\mathcal{C}\big([0,\infty),H^1\big(\mb{R}^3\big)\big)\cap \mathcal{C}^1\big([0,\infty),L^2\big(\mb{R}^3\big)\big)\big)^3
	\end{equation*}
	to the Cauchy problem \eqref{fsdew001}. Moreover, the following estimates hold:
	\begin{equation*}
	\begin{split}
	\|U^{(k)}(t,\cdot)\|_{L^2}&\lesssim (1+t)^{1-\frac{6-3m}{4m\theta}+g_k}\sum\limits_{k=1}^3\big\|\big(U^{(k)}_0,U^{(k)}_1\big)\big\|_{\ml{D}_{m,1}^0},\\
	\|\nabla_xU^{(k)}(t,\cdot)\|_{L^2}+\|U_t^{(k)}(t,\cdot)\|_{L^2}&\lesssim (1+t)^{-\frac{6-3m}{4m\theta}+g_k}\sum\limits_{k=1}^3\big\|\big(U^{(k)}_0,U^{(k)}_1\big)\big\|_{\ml{D}_{m,1}^0},
	\end{split}
	\end{equation*}
	where the parameters $g_k$ are chosen in the following way:
\begin{enumerate}
\item $g_{k}=0$ for $k=1,2,3,$ when $p_1,p_2,p_3$ satisfy the condition \eqref{0000exp01};
\item \[ g_{k_1}=\frac{m+3}{m}-\Big(\frac{1}{2}+\frac{6-3m}{4m\theta}\Big)p_{k_1}\,\,\,\,\mbox{and} \,\,\,\,g_{k_2}=g_{k_3}=0,\] when $p_{k_1},p_{k_2},p_{k_3}$ satisfy the condition \eqref{0000exp02};
    \item
	\[ g_{k_1}=\frac{m+3}{m}-\Big(\frac{1}{2}+\frac{6-3m}{4m\theta}\Big)p_{k_1}, \,\,\,\, g_{k_2}=\frac{m+3}{m}-\Big(\frac{6-3m}{4m\theta}-\frac{1}{2}-\frac{3}{m}\Big)p_{k_2}-\Big(\frac{1}{2}+\frac{6-3m}{4m\theta}\Big)p_{k_1}p_{k_2}\,\,\,\,\mbox{and} \,\,\,\,g_{k_3}=0,\] when $p_{k_1},p_{k_2},p_{k_3}$ satisfy the condition \eqref{0000exp03}.
\end{enumerate}
\end{thm}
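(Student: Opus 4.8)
The plan is to run the Banach fixed-point argument exactly as in the proof of Theorem \ref{GESDS02}, but using the linear decay estimates for data in $\ml{D}_{m,1}^0$ with $m\in\left[6/5,3/2\right)$ recalled at the beginning of this subsection; structurally the argument coincides with the proof of Theorem \ref{GESDS03} after replacing the decay function $\rho_1(3/2,\theta)$ everywhere by $\frac{6-3m}{4m\theta}$. The essential new feature relative to Theorem \ref{GESDS02} is that for $m\in\left[6/5,3/2\right)$ the $(L^2\cap L^m)$-$L^2$ estimate yields only $\|u^{(k)}(t,\cdot)\|_{L^2}\lesssim(1+t)^{1-\frac{6-3m}{4m\theta}}\|(u_0^{(k)},u_1^{(k)})\|_{(H^1\cap L^m)\times(L^2\cap L^m)}$, so that the solution itself already loses decay (note $0\leq 1-\frac{6-3m}{4m\theta}<1$ on the admissible range of $m$ and $\theta$), and accordingly the solution component of the norm must carry the weight $(1+t)^{-1+\frac{6-3m}{4m\theta}-g_k}$. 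Concretely, for $T>0$ I work in the space \eqref{evolutionspace01} equipped with
\begin{equation*}
\|U\|_{X(T)}:=\sup\limits_{0\leq t\leq T}\Big(\sum\limits_{k=1}^3(1+t)^{-1+\frac{6-3m}{4m\theta}-g_k}\|U^{(k)}(t,\cdot)\|_{L^2}+\sum\limits_{k=1}^3(1+t)^{\frac{6-3m}{4m\theta}-g_k}\big(\|\nabla_xU^{(k)}(t,\cdot)\|_{L^2}+\|U_t^{(k)}(t,\cdot)\|_{L^2}\big)\Big),
\end{equation*}
and reduce the theorem to the two inequalities \eqref{Improtant1} and \eqref{Improtant2} for the operator $N$ of \eqref{fixedpointformulation}, uniformly in $T$.

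The first step is to estimate the power nonlinearities. Applying the classical Gagliardo--Nirenberg inequality (Proposition \ref{claGNineq}) to $|U^{(k-1)}(\tau,\cdot)|^{p_k}$ and using the weights above one obtains
\begin{equation*}
\begin{split}
\big\||U^{(k-1)}(\tau,x)|^{p_k}\big\|_{L^m}&\lesssim(1+\tau)^{\frac{3}{m}-(\frac{1}{2}+\frac{6-3m}{4m\theta})p_k+g_{k-1}p_k}\|U\|_{X(\tau)}^{p_k},\\
\big\||U^{(k-1)}(\tau,x)|^{p_k}\big\|_{L^2}&\lesssim(1+\tau)^{\frac{3}{2}-(\frac{1}{2}+\frac{6-3m}{4m\theta})p_k+g_{k-1}p_k}\|U\|_{X(\tau)}^{p_k},
\end{split}
\end{equation*}
where the admissibility conditions $\beta_{0,1}(mp_k),\beta_{0,1}(2p_k)\in[0,1]$ force $p_k\in\left[2/m,3\right]$ for all $k=1,2,3$. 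Inserting these into the Duhamel term in \eqref{fixedpointformulation}, using the $(L^2\cap L^m)$-$L^2$ estimate on $[0,t]$ for $\|N_kU(t,\cdot)\|_{L^2}$ together with the $(L^2\cap L^m)$-$L^2$ estimate on $\left[0,t/2\right]$ and the $L^2$-$L^2$ estimate on $\left[t/2,t\right]$ for the first order derivatives, splitting at $t/2$ via $(1+t-\tau)\approx(1+t)$ on $\left[0,t/2\right]$ and $(1+\tau)\approx(1+t)$ on $\left[t/2,t\right]$, and using $\int\nolimits_{t/2}^t(1+t-\tau)^{1-\frac{6-3m}{4m\theta}}d\tau\lesssim(1+t)^{2-\frac{6-3m}{4m\theta}}$, I arrive for $j+l=0,1$ with $j,l\in\mb{N}_0$ at a bound of the form
\begin{equation*}
\begin{split}
(1+t)^{(j+l)-1+\frac{6-3m}{4m\theta}-g_k}\big\|\partial_t^j\nabla_x^lN_kU(t,\cdot)\big\|_{L^2}&\lesssim(1+t)^{-g_k}\sum\limits_{k=1}^3\big\|\big(U_0^{(k)},U_1^{(k)}\big)\big\|_{\ml{D}_{m,1}^0}\\
&\quad+(1+t)^{-g_k}\|U\|_{X(t)}^{p_k}\Big(\int\nolimits_0^{t/2}(1+\tau)^{\frac{3}{m}-(\frac{1}{2}+\frac{6-3m}{4m\theta})p_k+g_{k-1}p_k}d\tau+(1+t)^{\frac{m+3}{m}-(\frac{1}{2}+\frac{6-3m}{4m\theta})p_k+g_{k-1}p_k}\Big),
\end{split}
\end{equation*}
so that \eqref{Improtant1} follows as soon as the bracketed expression is bounded by $(1+t)^{g_k}$.

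The central step is the choice of the loss-of-decay parameters $g_k$, and it mirrors the three cases in the proof of Theorem \ref{GESDS03}. In Case \eqref{0000exp01} I take $g_1=g_2=g_3=0$; then $\min\left\{p_1;p_2;p_3\right\}>p_{\text{bal}}(m,0,\theta)$ is precisely the statement that $\frac{m+3}{m}-(\frac{1}{2}+\frac{6-3m}{4m\theta})p_k<0$, which makes the integrand over $\left[0,t/2\right]$ belong to $L^1[0,\infty)$ and the boundary term bounded. In Case \eqref{0000exp02} I take $g_{k_1}=\frac{m+3}{m}-(\frac{1}{2}+\frac{6-3m}{4m\theta})p_{k_1}$ (positive exactly because $p_{k_1}<p_{\text{bal}}(m,0,\theta)$) and $g_{k_2}=g_{k_3}=0$: for $k=k_1$ the integral over $\left[0,t/2\right]$ only grows like $(1+t)^{g_{k_1}}$ and is absorbed by the prefactor $(1+t)^{-g_{k_1}}$; for $k=k_2$ the assumption $\alpha_{k_1,\text{bal}}(m,0,\theta)<3/2$, after clearing denominators, is equivalent to $\frac{m+3}{m}-(\frac{1}{2}+\frac{6-3m}{4m\theta})p_{k_2}+g_{k_1}p_{k_2}<0$, which gives the required integrability; for $k=k_3$ one is back in the situation of Case \eqref{0000exp01} since $p_{k_3}>p_{\text{bal}}(m,0,\theta)$. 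Case \eqref{0000exp03} is handled in the same way with the additional parameter $g_{k_2}=\frac{m+3}{m}-(\frac{6-3m}{4m\theta}-\frac{1}{2}-\frac{3}{m})p_{k_2}-(\frac{1}{2}+\frac{6-3m}{4m\theta})p_{k_1}p_{k_2}>0$, the hypothesis $\widetilde{\alpha}_{k_1,\text{bal}}(m,0,\theta)<3/2$ now supplying the integrability for $k=k_3$. The main obstacle I expect is precisely this bookkeeping: one has to check that the rational inequalities $\alpha_{k_1,\text{bal}}(m,0,\theta)<3/2$ and $\widetilde{\alpha}_{k_1,\text{bal}}(m,0,\theta)<3/2$ in the hypotheses are exactly the integrability thresholds for the above integrals, with the explicit value $p_{\text{bal}}(m,0,\theta)=\frac{4\theta(m+3)}{2m\theta-3m+6}$, and that the chosen $g_{k_j}$ remain nonnegative on the stated ranges; if some $p_{k_j}$ equals $p_{\text{bal}}(m,0,\theta)$ one replaces the corresponding $g_{k_j}$ by a sufficiently small $\varepsilon_1>0$, as in Remark \ref{varepsilon1}, to avoid a logarithmic factor. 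Finally, \eqref{Improtant2} is obtained by applying the same computation to $|U^{(k-1)}|^{p_k}-|V^{(k-1)}|^{p_k}$, estimated through H\"older's inequality and the classical Gagliardo--Nirenberg inequality exactly as in the proof of Theorem \ref{GESDS01}; Banach's fixed-point theorem then yields the uniquely determined global (in time) solution $U^*$ to \eqref{fsdew001} with the asserted decay estimates, which completes the proof.
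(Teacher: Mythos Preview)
Your proposal is correct and follows essentially the same approach as the paper: the paper's own proof simply redefines the norm on the evolution space \eqref{evolutionspace01} with the weights $(1+t)^{-1+\frac{6-3m}{4m\theta}-g_k}$ and $(1+t)^{\frac{6-3m}{4m\theta}-g_k}$ (exactly as you do) and then refers back to the proof of Theorem \ref{GESDS02}. Your write-up supplies the details that the paper omits, including the explicit Gagliardo--Nirenberg exponents and the verification that $p_{\text{bal}}(m,0,\theta)$ and the conditions $\alpha_{k_1,\text{bal}}(m,0,\theta)<3/2$, $\widetilde{\alpha}_{k_1,\text{bal}}(m,0,\theta)<3/2$ are precisely the integrability thresholds, all of which check out.
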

\begin{proof}
	Here, we only redefine the norm of the evolution space \eqref{evolutionspace01} as follows:
	\begin{equation*}
	\|U\|_{X(T)}:=\sup\limits_{0\leq t\leq T}\Big(\sum\limits_{k=1}^3(1+t)^{-1+\frac{6-3m}{4m\theta}-g_k}\|U^{(k)}(t,\cdot)\|_{L^2}+\sum\limits_{k=1}^3(1+t)^{\frac{6-3m}{4m\theta}-g_k}\big(\|\nabla_x U^{(k)}(t,\cdot)\|_{L^2}+\|U_t^{(k)}(t,\cdot)\|_{L^2}\big)\Big).
	\end{equation*}
	After following the proof of Theorem \ref{GESDS02} we may conclude the desired statements.
\end{proof}
\begin{rem}\label{remm} If we would consider the semi-linear model \eqref{fsdew001} with $\theta\in\left[1/2,1\right]$ and  $m\in\left[3/2,2\right)$, we should guarantee $\max\left\{p_1;p_2;p_3\right\}\leq3$ from parameter restrictions appearing by the application of the classical Gagliardo-Nirenberg inequality. However, we should give at least for one exponent $k=1,2,3$ another restriction
	\begin{equation*}
	 p_k>p_{\text{bal}}(m,0,\theta)=2+\frac{6(m-2+2\theta)}{2m\theta-3m+6}\geq3\,\,\,\,\text{if}\,\,\,\,m\in\left[3/2,2\right)\,\,\,\,\text{and}\,\,\,\,\theta\in\left[1/2,1\right].
	\end{equation*}
	In conclusion, the set of admissible triplets of exponents $(p_1,p_2,p_3)$ is empty.
\end{rem}
\subsubsection{Data from energy spaces with suitable higher regularity}
\begin{thm}\label{HlL1nonlinearythm01}
	Let us consider the semi-linear model \eqref{fsdew001} with $\theta\in\left[1/2,1\right]$ and choose $m\in\left[1,6/5\right)$. Let us assume
	\begin{equation*}
	\begin{aligned}
	&1+\lceil s\rceil<\min\left\{p_1;p_2;p_3\right\}\leq\max\left\{p_1;p_2;p_3\right\}\leq1+2/(1-2s)\,\,\,\,&\text{if}\,\,\,\,&s\in\left(0,1/2\right),\\
	&1+\lceil s\rceil<\min\left\{p_1;p_2;p_3\right\}\leq\max\left\{p_1;p_2;p_3\right\}<\infty\,\,\,\,&\text{if}\,\,\,\,&s\in\left[1/2,\infty\right).\\
	\end{aligned}
	\end{equation*}
	The exponents $p_1,p_2,p_3$ satisfy one of the conditions \eqref{0exp01} to \eqref{0exp03}. Then, there exists a constant $\varepsilon_0>0$ such that for all $\big(U^{(k)}_0,U^{(k)}_1\big)\in\ml{D}^s_{m,1}$ with $\sum\limits_{k=1}^3\big\|\big(U^{(k)}_0,U^{(k)}_1\big)\big\|_{\ml{D}^s_{m,1}}\leq\varepsilon_0$ there is a uniquely determined energy solution
	\begin{equation*}
	U\in \big(\mathcal{C}\big([0,\infty),H^{s+1}\big(\mb{R}^3\big)\big)\cap \mathcal{C}^1\big([0,\infty),H^s\big(\mb{R}^3\big)\big)\big)^3
	\end{equation*}
	to the Cauchy problem \eqref{fsdew001}. Moreover, the following estimates hold:
	\begin{equation*}
	\begin{split}
	\|U^{(k)}(t,\cdot)\|_{L^2}&\lesssim (1+t)^{-\frac{6-5m}{4m\theta}+g_k} \sum\limits_{k=1}^3\big\|\big(U^{(k)}_0,U^{(k)}_1\big)\big\|_{\ml{D}^s_{m,1}},\\
	\|U_t^{(k)}(t,\cdot)\|_{L^2}&\lesssim (1+t)^{-\frac{6-3m}{4m\theta}+g_k} \sum\limits_{k=1}^3\big\|\big(U^{(k)}_0,U^{(k)}_1\big)\big\|_{\ml{D}^s_{m,1}},\\
	\||D|U^{(k)}(t,\cdot)\|_{\dot{H}^{s}}+\|U^{(k)}_t(t,\cdot)\|_{\dot{H}^{s}}&\lesssim (1+t)^{-\frac{6-3m+2sm}{4m\theta}+g_k}\sum\limits_{k=1}^3\big\|\big(U^{(k)}_0,U^{(k)}_1\big)\big\|_{\ml{D}^s_{m,1}},
	\end{split}
	\end{equation*}
	where the parameters $g_k$ are the same as  in Theorem \ref{GESDS02}.
\end{thm}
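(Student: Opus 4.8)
The plan is to combine the loss-of-decay strategy used in the proof of Theorem \ref{GESDS02} for the low-order contributions to the norm with the treatment of higher-order energies by fractional tools used in the proof of Theorem \ref{HlL1nonlinearythm}. First I would introduce for $T>0$ the complete evolution space
\begin{equation*}
X(T):=\big(\mathcal{C}\big([0,T],H^{s+1}\big(\mb{R}^3\big)\big)\cap\mathcal{C}^1\big([0,T],H^s\big(\mb{R}^3\big)\big)\big)^3
\end{equation*}
endowed with the weighted norm
\begin{equation*}
\begin{split}
\|U\|_{X(T)}:=\sup\limits_{0\leq t\leq T}\Big(&\sum\limits_{k=1}^3(1+t)^{\frac{6-5m}{4m\theta}-g_k}\|U^{(k)}(t,\cdot)\|_{L^2}+\sum\limits_{k=1}^3(1+t)^{\frac{6-3m}{4m\theta}-g_k}\|U_t^{(k)}(t,\cdot)\|_{L^2}\\
&+\sum\limits_{k=1}^3(1+t)^{\frac{6-3m+2sm}{4m\theta}-g_k}\big(\||D|U^{(k)}(t,\cdot)\|_{\dot{H}^{s}}+\|U_t^{(k)}(t,\cdot)\|_{\dot{H}^{s}}\big)\Big),
\end{split}
\end{equation*}
where the parameters $g_k$ describing the loss of decay are chosen exactly as in Theorem \ref{GESDS02} according to which of the conditions \eqref{0exp01}--\eqref{0exp03} is satisfied; the weights are those dictated by the linear estimates from Theorem \ref{enee} recalled above. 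The goal is to verify \eqref{Improtant1} and \eqref{Improtant2} uniformly with respect to $T$, whence Banach's fixed-point theorem applied to the operator $N$ from \eqref{fixedpointformulation} yields a uniquely determined global (in time) solution $U^*\in X(T)$ for all $T>0$ together with the stated decay estimates.

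For the low-order contributions $\|N_kU(t,\cdot)\|_{L^2}$ and $\|\partial_tN_kU(t,\cdot)\|_{L^2}$ I would repeat essentially verbatim the argument from the proof of Theorem \ref{GESDS02}: use the classical Gagliardo-Nirenberg inequality (Proposition \ref{claGNineq}) to bound $\big\||U^{(k-1)}(\tau,\cdot)|^{p_k}\big\|_{L^m}$ and $\big\||U^{(k-1)}(\tau,\cdot)|^{p_k}\big\|_{L^2}$ by $(1+\tau)^{-\frac{(3-m)p_k-3}{2m\theta}+g_{k-1}p_k}\|U\|_{X(\tau)}^{p_k}$ and $(1+\tau)^{-\frac{2(3-m)p_k-3m}{4m\theta}+g_{k-1}p_k}\|U\|_{X(\tau)}^{p_k}$, then apply the $(L^2\cap L^m)$-$L^2$ estimate on $[0,t]$ (respectively on $\left[0,t/2\right]$, together with the $L^2$-$L^2$ estimate on $\left[t/2,t\right]$ for the derivatives), split $[0,t]=\left[0,t/2\right]\cup\left[t/2,t\right]$ and distinguish the three cases \eqref{0exp01}--\eqref{0exp03}. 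The only point that is new compared with Theorem \ref{GESDS02} is that the Gagliardo-Nirenberg exponents must remain admissible for data measured in $\dot{H}^{s+1}$; this is exactly what forces $1+\lceil s\rceil<\min\left\{p_1;p_2;p_3\right\}$ and, when $s\in\left(0,1/2\right)$, the upper bound $\max\left\{p_1;p_2;p_3\right\}\leq1+2/(1-2s)$. Since $1+\lceil s\rceil\geq2\geq2/m$, these restrictions are compatible with $p_k\in\left[2/m,3\right]$, so that all estimates of the previous theorem go through unchanged.

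For the higher-order part I would follow the scheme of the proof of Theorem \ref{HlL1nonlinearythm}: estimate $\|\partial_t^jN_kU(t,\cdot)\|_{\dot{H}^{s+1-j}}$ for $j=0,1$ by the $(\dot{H}^s\cap L^m)$-$\dot{H}^s$ estimate on $\left[0,t/2\right]$ and the $\dot{H}^s$-$\dot{H}^s$ estimate on $\left[t/2,t\right]$, controlling the nonlinearity in the $L^m$ norm by the fractional Gagliardo-Nirenberg inequality and in the $\dot{H}^s$ norm by the fractional chain rule (Proposition \ref{fractionalchainrule}),
\begin{equation*}
\big\||U^{(k-1)}(\tau,\cdot)|^{p_k}\big\|_{\dot{H}^s}\lesssim\|U^{(k-1)}(\tau,\cdot)\|_{L^{q_1}}^{p_k-1}\|U^{(k-1)}(\tau,\cdot)\|_{\dot{H}^{s}_{q_2}}
\end{equation*}
with $\frac{p_k-1}{q_1}+\frac{1}{q_2}=\frac{1}{2}$ and $p_k>\lceil s\rceil$, followed by interpolating both factors between $L^2$ and $\dot{H}^{s+1}$ via the fractional Gagliardo-Nirenberg inequality. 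This produces for the nonlinearity a time weight of the form $(1+\tau)^{-\vartheta_k+C_{s,k}g_{k-1}}$ with an explicit $\vartheta_k$ built from the linear decay rates recalled above, so that, inserting the same parameters $g_k$ already fixed for the low-order part, the integral over $\left[0,t/2\right]$ and the boundary term over $\left[t/2,t\right]$ are again controlled by $\|U\|_{X(t)}^{p_k}$ carrying the right decay weight, under precisely the conditions \eqref{0exp01}--\eqref{0exp03}. The Lipschitz estimate \eqref{Improtant2} would be obtained exactly as in Theorem \ref{HlL1nonlinearythm} by combining H\"older's inequality, Minkowski's inequality, the fractional Leibniz rule (Proposition \ref{fractionleibnizrule}) and the fractional Gagliardo-Nirenberg inequality, using the representation
\begin{equation*}
|U^{(k-1)}(\tau,x)|^{p_k}-|V^{(k-1)}(\tau,x)|^{p_k}=p_k\int\nolimits_0^1\big(U^{(k-1)}(\tau,x)-V^{(k-1)}(\tau,x)\big)G\big(\nu U^{(k-1)}(\tau,x)+(1-\nu)V^{(k-1)}(\tau,x)\big)d\nu
\end{equation*}
with $G(w)=w|w|^{p_k-2}$, which again requires $\min\left\{p_1;p_2;p_3\right\}>1+\lceil s\rceil$.

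The main obstacle I expect is the synchronisation of the two mechanisms. One has to check that the exponent $\vartheta_k$ of the time weight coming from the fractional chain rule in the $\dot{H}^s$ estimate, once the loss-of-decay parameters $g_k$ — which are tuned to the $L^2$-level conditions $\alpha_{k_1}(m,\theta)<3/2$ or $\widetilde{\alpha}_{k_1}(m,\theta)<3/2$ — have been inserted, still makes the relevant $\tau$-integrals converge, or grow no faster than the weight carried by the left-hand side permits. Concretely this amounts to verifying that the higher-regularity conditions on the exponents $p_1,p_2,p_3$ do not spoil the critical-exponent conditions \eqref{0exp01}--\eqref{0exp03}, and conversely; this is a routine but somewhat lengthy compatibility check of several inequalities, which I would carry out in the style already used in the proofs of Theorems \ref{GESDS02} and \ref{HlL1nonlinearythm} rather than reproduce in full detail.
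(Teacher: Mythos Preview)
Your proposal is correct and follows essentially the same approach as the paper: the paper's proof merely defines the very norm you wrote down for the evolution space \eqref{spp} and then says ``we immediately follow the proof of Theorem \ref{HlL1nonlinearythm} to complete this proof.'' Your write-up is in fact more detailed than the paper's, since you spell out explicitly how the loss-of-decay bookkeeping from Theorem \ref{GESDS02} is to be combined with the fractional-tool estimates of Theorem \ref{HlL1nonlinearythm}; the ``synchronisation'' concern you raise is real but routine, and the paper leaves it implicit.
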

\begin{proof} We define the norm for the evolution space \eqref{spp} as follows:
	\begin{equation*}
	\begin{split}
	\|U\|_{X(T)}:&=\sup\limits_{0\leq t\leq T}\Big(\sum\limits_{k=1}^3(1+t)^{\frac{6-5m}{4m\theta}-g_k}\|U^{(k)}(t,\cdot)\|_{L^2}+\sum\limits_{k=1}^3(1+t)^{\frac{6-3m}{4m\theta}-g_k}\|U_t^{(k)}(t,\cdot)\|_{L^2}\\
	 &\quad\quad\quad\quad\quad\quad\quad\quad+\sum\limits_{k=1}^3(1+t)^{\frac{6-3m+2sm}{4m\theta}-g_k}\big(\||D|U^{(k)}(t,\cdot)\|_{\dot{H}^s}+\|U_t^{(k)}(t,\cdot)\|_{\dot{H}^s}\big)\Big).
	\end{split}
	\end{equation*}
	Then, we immediately follow the proof of Theorem \ref{HlL1nonlinearythm} to complete this proof.
\end{proof}

The choice of data from higher-order energy spaces allows us to weaken the upper bounds for the exponents $p_1,p_2,p_3$ for which we can prove the global (in time) existence of small data energy solutions by choosing the parameter of additional regularity in the interval $m\in\left[3/2,2\right)$. To be more precise, the following statements hold:
\begin{equation*}
\big[2/m,1+2/(1-2s)\big]\cap \big(p_{\text{bal}}(m,0,\theta),\infty\big)\neq\emptyset\,\,\,\,\text{for}\,\,\,\,s\in\left(0,1/2\right)
\end{equation*}
and
\begin{equation*}
\big[2/m,\infty\big)\cap \big(p_{\text{bal}}(m,0,\theta),\infty\big)\neq\emptyset\,\,\,\,\text{for}\,\,\,\,s\in\left[1/2,\infty\right)
\end{equation*}
when $\theta\in\left[1/2,1\right]$ and $m\in\left[3/2,2\right)$. Thus, we can get a more flexible (with respect to $s$) admissible range of exponents $p_1,p_2,p_3$.
\begin{thm}\label{HlL1nonlinearythm09}
	Let us consider the semi-linear model \eqref{fsdew001} with $\theta\in\left[1/2,1\right]$ and choose $m\in\left[6/5,2\right)$. Let us assume
	\begin{equation*}
	\begin{aligned}
	&1+\lceil s\rceil<\min\left\{p_1;p_2;p_3\right\}\leq\max\left\{p_1;p_2;p_3\right\}\leq1+2/(1-2s)\,\,\,\,&\text{if}\,\,\,\,&s\in\left(0,1/2\right),\\
	&1+\lceil s\rceil<\min\left\{p_1;p_2;p_3\right\}\leq\max\left\{p_1;p_2;p_3\right\}<\infty\,\,\,\,&\text{if}\,\,\,\,&s\in\left[1/2,\infty\right).\\
	\end{aligned}
	\end{equation*}
	The exponents $p_1,p_2,p_3$ satisfy one of the conditions \eqref{0000exp01} to \eqref{0000exp03}. Then, there exists a constant $\varepsilon_0>0$ such that for all $\big(U^{(k)}_0,U^{(k)}_1\big)\in\ml{D}^s_{m,1}$ with $\sum\limits_{k=1}^3\big\|\big(U^{(k)}_0,U^{(k)}_1\big)\big\|_{\ml{D}^s_{m,1}}\leq\varepsilon_0$ there is a uniquely determined energy solution
	\begin{equation*}
	U\in \big(\mathcal{C}\big([0,\infty),H^{s+1}\big(\mb{R}^3\big)\big)\cap \mathcal{C}^1\big([0,\infty),H^s\big(\mb{R}^3\big)\big)\big)^3
	\end{equation*}
	to the Cauchy problem \eqref{fsdew001}. Moreover, the following estimates hold:
	\begin{equation*}
	\begin{split}
	\|U^{(k)}(t,\cdot)\|_{L^2}&\lesssim (1+t)^{1-\frac{6-3m}{4m\theta}+g_k} \sum\limits_{k=1}^3\big\|\big(U^{(k)}_0,U^{(k)}_1\big)\big\|_{\ml{D}^s_{m,1}},\\
	\|U_t^{(k)}(t,\cdot)\|_{L^2}&\lesssim (1+t)^{-\frac{6-3m}{4m\theta}+g_k} \sum\limits_{k=1}^3\big\|\big(U^{(k)}_0,U^{(k)}_1\big)\big\|_{\ml{D}^s_{m,1}},\\
	\||D|U^{(k)}(t,\cdot)\|_{\dot{H}^{s}}+\|U^{(k)}_t(t,\cdot)\|_{\dot{H}^{s}}&\lesssim (1+t)^{-\frac{6-3m+2sm}{4m\theta}+g_k}\sum\limits_{k=1}^3\big\|\big(U^{(k)}_0,U^{(k)}_1\big)\big\|_{\ml{D}^s_{m,1}},
	\end{split}
	\end{equation*}
	where the parameters $g_k$ are the same as in Theorem \ref{GESDS02}.
\end{thm}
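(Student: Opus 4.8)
The plan is to combine the loss of decay strategy already carried out for Theorem \ref{GESDS09} with the treatment of higher-order energies developed in the proof of Theorem \ref{HlL1nonlinearythm}. For $T>0$ I would work in the evolution space \eqref{spp} equipped with the norm
\begin{equation*}
\begin{split}
\|U\|_{X(T)}:=\sup\limits_{0\leq t\leq T}&\Big(\sum\limits_{k=1}^3(1+t)^{-1+\frac{6-3m}{4m\theta}-g_k}\|U^{(k)}(t,\cdot)\|_{L^2}+\sum\limits_{k=1}^3(1+t)^{\frac{6-3m}{4m\theta}-g_k}\|U_t^{(k)}(t,\cdot)\|_{L^2}\\
&\quad+\sum\limits_{k=1}^3(1+t)^{\frac{6-3m+2sm}{4m\theta}-g_k}\big(\||D|U^{(k)}(t,\cdot)\|_{\dot{H}^s}+\|U_t^{(k)}(t,\cdot)\|_{\dot{H}^s}\big)\Big),
\end{split}
\end{equation*}
where the weights are dictated by the linear estimates for data in $\ml{D}^s_{m,1}$, $m\in\left[6/5,2\right)$, recalled at the beginning of Section \ref{1/2,1GESDS}, and the non-negative numbers $g_k$ are chosen exactly as in Theorem \ref{GESDS02}. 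First I would verify \eqref{Improtant1}: applying the $(L^2\cap L^m)$-$L^2$ and $(\dot{H}^s\cap L^m)$-$\dot{H}^s$ estimates on $\left[0,t/2\right]$ and the $L^2$-$L^2$ and $\dot{H}^s$-$\dot{H}^s$ estimates on $\left[t/2,t\right]$ to the Duhamel term in \eqref{fixedpointformulation}, the whole problem reduces to estimating $\||U^{(k-1)}(\tau,\cdot)|^{p_k}\|_{L^m}$, $\||U^{(k-1)}(\tau,\cdot)|^{p_k}\|_{L^2}$ and $\||U^{(k-1)}(\tau,\cdot)|^{p_k}\|_{\dot{H}^s}$.

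For the $L^m$ and $L^2$ norms I would invoke the fractional Gagliardo--Nirenberg inequality, which forces $p_k\in\left[2/m,1+2/(1-2s)\right]$ if $s\in\left(0,1/2\right)$ and $p_k\in\left[2/m,\infty\right)$ if $s\geq 1/2$; for the $\dot{H}^s$ norm I would use the fractional chain rule from Proposition \ref{fractionalchainrule} followed by the fractional Gagliardo--Nirenberg-type inequalities, as in \eqref{cont}--\eqref{conttt}, which is where $\min\left\{p_1;p_2;p_3\right\}>1+\lceil s\rceil$ enters. This yields
\begin{equation*}
\big\||U^{(k-1)}(\tau,x)|^{p_k}\big\|_{L^m}\lesssim(1+\tau)^{-\frac{(3-m)p_k-3}{2m\theta}+g_{k-1}p_k}\|U\|_{X(\tau)}^{p_k},
\end{equation*}
together with the analogous bounds for the $L^2$ and $\dot{H}^s$ norms with the appropriate powers of $(1+\tau)$, the $\dot{H}^s$ decay being governed by the same exponent as in \eqref{conttt}. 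Inserting these into the Duhamel integral and splitting $[0,t]$ into $\left[0,t/2\right]\cup\left[t/2,t\right]$ I would then distinguish the three cases \eqref{0000exp01}, \eqref{0000exp02}, \eqref{0000exp03}. In Case (i) all $g_k=0$ and the time integral is uniformly bounded because $\min\left\{p_1;p_2;p_3\right\}>p_{\text{bal}}(m,0,\theta)$; in Cases (ii) and (iii) the choice of $g_{k_1}$ and $g_{k_2}$ fixed in Theorem \ref{GESDS09} absorbs the non-integrable tail over $\left[0,t/2\right]$, while the conditions $\alpha_{k_1,\text{bal}}(m,0,\theta)<3/2$, respectively $\widetilde{\alpha}_{k_1,\text{bal}}(m,0,\theta)<3/2$, ensure that the exponent appearing for the component which feeds back the loss of decay is strictly below $-1$, exactly as in \eqref{condicase2} and \eqref{condicase3}. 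If some $p_{k_j}$ equals $p_{\text{bal}}(m,0,\theta)$ one replaces the corresponding $g_{k_j}$ by $\varepsilon_1>0$ to avoid a logarithmic factor.

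Finally I would establish the Lipschitz-type bound \eqref{Improtant2}. For the $L^2$ norms of $\partial_t^j(N_kU-N_kV)$, $j=0,1$, H\"older's inequality together with the fractional Gagliardo--Nirenberg inequality suffices; for the $\dot{H}^s$ norm I would use the identity $|U^{(k-1)}|^{p_k}-|V^{(k-1)}|^{p_k}=p_k\int\nolimits_0^1\big(U^{(k-1)}-V^{(k-1)}\big)G\big(\nu U^{(k-1)}+(1-\nu)V^{(k-1)}\big)d\nu$ with $G(w)=w|w|^{p_k-2}$, then Minkowski's inequality and the fractional Leibniz rule from Proposition \ref{fractionleibnizrule} followed once more by the fractional Gagliardo--Nirenberg inequalities and the fractional chain rule, reproducing the computation at the end of the proof of Theorem \ref{HlL1nonlinearythm}. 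Collecting \eqref{Improtant1} and \eqref{Improtant2}, Banach's fixed-point theorem on $X(T)$, uniformly in $T$, gives the unique global solution and the stated decay estimates. The main obstacle is the compatibility of the three sets of constraints on the exponents: the upper bound $p_k\leq 1+2/(1-2s)$ coming from the fractional Gagliardo--Nirenberg inequality, the lower bound $p_k>1+\lceil s\rceil$ coming from the fractional chain and Leibniz rules in the $\dot{H}^s$ estimates, and the case-dependent conditions \eqref{0000exp01}--\eqref{0000exp03} involving $p_{\text{bal}}(m,0,\theta)$ and the balanced parameters $\alpha_{k,\text{bal}},\widetilde{\alpha}_{k,\text{bal}}$; one must check that the non-emptiness statements displayed just before the theorem make all $g_k$ non-negative while keeping every Gagliardo--Nirenberg exponent admissible.
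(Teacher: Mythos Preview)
Your proposal is correct and follows essentially the same approach as the paper: the paper's own proof is a two-line sketch that says to redefine the norm on the evolution space \eqref{spp} exactly as you do and then follow the proof of Theorem \ref{HlL1nonlinearythm}, and your write-up is a faithful expansion of that sketch. One small remark: the explicit decay exponent you quote for $\||U^{(k-1)}(\tau,\cdot)|^{p_k}\|_{L^m}$ is the one from the $m\in[1,6/5)$ setting of Theorem \ref{GESDS02} rather than the $m\in[6/5,2)$ setting of Theorem \ref{GESDS09}; since here $\|U^{(k-1)}(\tau,\cdot)\|_{L^2}$ carries the weight $(1+\tau)^{1-\frac{6-3m}{4m\theta}+g_{k-1}}$, the Gagliardo--Nirenberg computation produces a slightly different power of $(1+\tau)$, and it is this exponent (and the corresponding $g_k$ from Theorem \ref{GESDS09}) that feeds into the case analysis \eqref{0000exp01}--\eqref{0000exp03}. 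The theorem statement itself refers to the $g_k$ of Theorem \ref{GESDS02}, which appears to be a typo for \ref{GESDS09}; your text wavers between the two, so you should commit to the latter.
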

\begin{proof} Following the proof of Theorem \ref{HlL1nonlinearythm}, this theorem can be proved immediately by redefining the norm for the evolution space \eqref{spp} as follows:
	\begin{equation*}
	\begin{split}
	\|U\|_{X(T)}:&=\sup\limits_{0\leq t\leq T}\Big(\sum\limits_{k=1}^3(1+t)^{-1+\frac{6-3m}{4m\theta}-g_k}\|U^{(k)}(t,\cdot)\|_{L^2}+\sum\limits_{k=1}^3(1+t)^{\frac{6-3m}{4m\theta}-g_k}\|U_t^{(k)}(t,\cdot)\|_{L^2}\\
	 &\quad\quad\quad\quad\quad\quad\qquad\qquad+\sum\limits_{k=1}^3(1+t)^{\frac{6-3m+2sm}{4m\theta}-g_k}\big(\||D|U^{(k)}(t,\cdot)\|_{\dot{H}^s}+\|U_t^{(k)}(t,\cdot)\|_{\dot{H}^s}\big)\Big),
	\end{split}
	\end{equation*}
	where $g_k$ are the same as in Theorem \ref{GESDS02}.
\end{proof}
Finally, we are interested in the case of large regular data belonging to $L^{\infty}\big(\mb{R}^3\big)$, too. For this reason, we choose the regularity parameter $s$ from the interval $\left(3/2,\infty\right)$. Let us restrict ourselves to the case $m\in\left[1,6/5\right)$.
\begin{thm}Let us consider the semi-linear model \eqref{fsdew001} with $\theta\in\left[1/2,1\right]$ and $m\in\left[1,6/5\right)$, $s>3/2$. Let us assume
	\begin{equation*}
	\max\left\{1+s;p_c(m,\theta)\right\}<\min\left\{p_1;p_2;p_3\right\},
	\end{equation*}
	and one of the conditions \eqref{0exp01} to \eqref{0exp03}. Then, there exists a constant $\varepsilon_0>0$ such that for all $\big(U_0^{(k)},U_1^{(k)}\big)\in\ml{D}_{m,1}^s$ with $\sum\limits_{k=1}^3\big\|\big(U_0^{(k)},U_1^{(k)}\big)\big\|_{\ml{D}_{m,1}^s}\leq\varepsilon_0$ there is a uniquely determined energy solution
	\begin{equation*}
	U\in \big(\mathcal{C}\big([0,\infty),H^{s+1}\big(\mb{R}^3\big)\big)\cap \mathcal{C}^1\big([0,\infty),H^s\big(\mb{R}^3\big)\big)\big)^3
	\end{equation*}
	to the Cauchy problem \eqref{fsdew001}. Moreover, the estimates for the solutions are the same as in Theorem \ref{HlL1nonlinearythm01} choosing $g_k=0$ for all $k=1,2,3$.
\end{thm}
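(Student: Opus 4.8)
The plan is to transplant the argument of Theorem \ref{HlL1nonlinearythm01} --- which already settles the case $\theta\in\left[1/2,1\right]$, $m\in\left[1,6/5\right)$ with data in $\ml{D}^s_{m,1}$ via the fractional Gagliardo-Nirenberg inequality together with the fractional chain and Leibniz rules --- and to replace, exactly as in the proof of Theorem \ref{embthm}, those two tools by the fractional powers rules of Proposition \ref{fractionalpowersrule} in the range $s>3/2$, where the embedding $\dot H^{s}\cap \dot H^{s^*}\hookrightarrow L^\infty$ for $0<2s^*<3<2s$ becomes available. It is precisely this substitution that relaxes the upper bound for $\min\left\{p_1;p_2;p_3\right\}$ from $1+\lceil s\rceil$ to $1+s$. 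Since here $\min\left\{p_1;p_2;p_3\right\}>p_c(m,\theta)$, we are in case (i), condition \eqref{0exp01}, so no loss of decay occurs and we may take $g_k=0$ throughout.

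First I would fix $T>0$ and work in the evolution space \eqref{spp} with the norm of Theorem \ref{HlL1nonlinearythm01} and $g_k=0$, i.e.
\begin{equation*}
\begin{split}
\|U\|_{X(T)}:&=\sup\limits_{0\leq t\leq T}\Big(\sum\limits_{k=1}^3(1+t)^{\frac{6-5m}{4m\theta}}\|U^{(k)}(t,\cdot)\|_{L^2}+\sum\limits_{k=1}^3(1+t)^{\frac{6-3m}{4m\theta}}\|U_t^{(k)}(t,\cdot)\|_{L^2}\\
&\qquad\qquad+\sum\limits_{k=1}^3(1+t)^{\frac{6-3m+2sm}{4m\theta}}\big(\||D|U^{(k)}(t,\cdot)\|_{\dot{H}^s}+\|U_t^{(k)}(t,\cdot)\|_{\dot{H}^s}\big)\Big).
\end{split}
\end{equation*}
The estimates for $\|\partial_t^jN_kU(t,\cdot)\|_{L^2}$ with $j=0,1$ are obtained verbatim as in Theorems \ref{GESDS02} and \ref{HlL1nonlinearythm01}: one splits the Duhamel integral in \eqref{fixedpointformulation} over $\left[0,t/2\right]$ and $\left[t/2,t\right]$, applies the $(L^2\cap L^m)$-$L^2$ and $L^2$-$L^2$ estimates from Theorem \ref{enee}, and bounds $\big\||U^{(k-1)}(\tau,x)|^{p_k}\big\|_{L^m\cap L^2}$ by the fractional Gagliardo-Nirenberg inequality; the hypothesis $\min\left\{p_1;p_2;p_3\right\}>p_c(m,\theta)$ makes the resulting time integrals converge. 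Likewise the $L^1$-norm of the source term needed for the $\dot H^s$-estimate on $\left[0,t/2\right]$ is handled by fractional Gagliardo-Nirenberg as in Theorem \ref{HlL1nonlinearythm}.

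The genuinely new ingredient is the $\dot H^s$-norm of the nonlinearities. Invoking Proposition \ref{fractionalpowersrule} for $\min\left\{p_1;p_2;p_3\right\}>s$ gives
\begin{equation*}
\big\||U^{(k-1)}(\tau,x)|^{p_k}\big\|_{\dot{H}^s}\lesssim\|U^{(k-1)}(\tau,\cdot)\|_{\dot{H}^s}\|U^{(k-1)}(\tau,\cdot)\|_{L^{\infty}}^{p_k-1},
\end{equation*}
and I would then control $\|U^{(k-1)}(\tau,\cdot)\|_{L^\infty}$ through Proposition \ref{supplement} with $s^*=3/2-\epsilon$, $\epsilon>0$ small, reducing everything through the fractional Gagliardo-Nirenberg inequality to $\|U^{(k-1)}\|_{L^2}$ and $\|U^{(k-1)}\|_{\dot H^{s+1}}$; inserting the decay weights of $X(\tau)$ and using once more $\min\left\{p_1;p_2;p_3\right\}>p_c(m,\theta)$ produces a factor integrable over $\left[0,t/2\right]$ and bounded over $\left[t/2,t\right]$, which yields \eqref{Improtant1}. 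For \eqref{Improtant2} I would write $|U^{(k-1)}|^{p_k}-|V^{(k-1)}|^{p_k}$ via the mean value representation with $G(w)=w|w|^{p_k-2}$, estimate the difference in $L^m$ and $L^2$ by fractional Gagliardo-Nirenberg, and in $\dot H^s$ by the $L^\infty$-based product estimates of Proposition \ref{supplement} applied to $G$, which forces $\min\left\{p_1;p_2;p_3\right\}>1+s$.

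The main obstacle will be precisely the $\dot H^s$-Lipschitz estimate for $s>3/2$: one must check that the $L^\infty$-interpolation of Proposition \ref{supplement} applied to $G\big(\nu U^{(k-1)}+(1-\nu)V^{(k-1)}\big)$ is compatible, uniformly in $\nu\in[0,1]$, with the decay rates carried by $X(\tau)$, so that the exponent of $(1+\tau)$ is strictly below $-1$ on $\left[0,t/2\right]$ and strictly below $-\tfrac{6-3m+2sm}{4m\theta}$ on $\left[t/2,t\right]$; this is exactly where the sharpened lower bound $\min\left\{p_1;p_2;p_3\right\}>\max\left\{1+s;p_c(m,\theta)\right\}$ is used. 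Once \eqref{Improtant1} and \eqref{Improtant2} hold uniformly in $T$, Banach's fixed-point theorem applied to $N$ on a small ball of $X(T)$ gives the unique global solution $U^*\in X(T)$ for all $T>0$, and the definition of the norm of $X(T)$ yields the claimed decay estimates with $g_k=0$, which completes the proof.
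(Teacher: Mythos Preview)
Your proposal is correct and follows essentially the same route as the paper, whose proof consists of the single sentence ``One can complete the proof by following the same steps of the proof of Theorem \ref{embthm}.'' Two minor slips: the source term on $[0,t/2]$ must be controlled in $L^m$ (not $L^1$) since $m\in[1,6/5)$, and the $\dot H^s$-product estimate in the Lipschitz step comes from Proposition \ref{coroleibunizpower} rather than Proposition \ref{supplement} (the latter only supplies the $L^\infty$ bound); neither affects the argument.
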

\begin{proof} One can complete the proof by following the same steps of the proof of Theorem \ref{embthm}.
\end{proof}
\section{Concluding remarks and open problems}\label{Concludingremark}
\begin{rem}
Sharp energy estimates for the solutions to the linear damped elastic waves \eqref{linearproblem} in three space dimensions with $\theta\in\left[0,1/2\right)\cup\left(1/2,1\right]$ and data $\big(u_0^{(k)},u_1^{(k)}\big)\in(H^{s+1}\cap L^m)\times (H^s\cap L^m)$ for all $k=1,2,3$, are still open.	
\end{rem}

\begin{rem}
By the same motivation for using data from the space $\ml{D}_{3/2,1}^0$ we can obtain another admissible range for the exponents $p_1,p_2,p_3$ for proving the global (in time) existence of energy solutions with small data having an additional regularity $L^{3/2}$. Following the same approach as in the proofs to Theorems \ref{GESDS03} and \ref{HlL1nonlinearythm} we can prove the following result.
\begin{coro}\label{HlL1nonlinearythm000000}
	Let us consider the semi-linear model \eqref{fsdew001} with $\theta\in\left[0,1/2\right)$ and $s\in\left(0,1/2\right)$. Let us choose
	\begin{equation*}
	\begin{aligned}
	&1+\lceil s\rceil<\min\left\{p_1;p_2;p_3\right\}\leq\max\left\{p_1;p_2;p_3\right\}\leq 1+2/(1-2s)
	\end{aligned}
	\end{equation*}
	such that
	\begin{flalign}\label{000exp011111}
	&\text{(i)}\quad\text{there are no other restrictions when}\,\,\,\,\min\left\{p_1;p_2;p_3\right\}>p_{\text{bal}}(3/2,s,\theta);&
	\end{flalign}
	\begin{flalign}\label{000exp011112}
	&\text{(ii)}\quad \alpha_{k_1,\text{bal}}(3/2,s,\theta)<3/2\,\,\,\,\text{when}\,\,\,\,1<p_{k_1}<p_{\text{bal}}(3/2,s,\theta)\,\,\,\,\text{and}\,\,\,\,p_{k_2},p_{k_3}>p_{\text{bal}}(3/2,s,\theta);&
	\end{flalign}
	\begin{flalign}\label{000exp011113}
	 &\text{(iii)}\quad\widetilde{\alpha}_{k_1,\text{bal}}(3/2,s,\theta)<3/2\,\,\,\,\text{when}\,\,\,\,1<p_{k_1},p_{k_2}<p_{\text{bal}}(3/2,s,\theta)\,\,\,\,\text{and}\,\,\,\,p_{k_3}>p_{\text{bal}}(3/2,s,\theta).&
	\end{flalign}
	Then, there exists a constant $\varepsilon_0>0$ such that for all $\big(U^{(k)}_0,U^{(k)}_1\big)\in\ml{D}^s_{3/2,1}$ with $\sum\limits_{k=1}^3\big\|\big(U^{(k)}_0,U^{(k)}_1\big)\big\|_{\ml{D}^s_{3/2,1}}\leq\varepsilon_0$ there exists a uniquely determined energy solution
	\begin{equation*}
	U\in \big(\mathcal{C}\big([0,\infty),H^{s+1}\big(\mb{R}^3\big)\big)\cap \mathcal{C}^1\big([0,\infty),H^s\big(\mb{R}^3\big)\big)\big)^3
	\end{equation*}
	to the Cauchy problem \eqref{fsdew001}. Moreover, the following estimates hold:
	\begin{equation*}
	\begin{split}
	\|U^{(k)}(t,\cdot)\|_{L^2}&\lesssim (1+t)^{1-\rho_{1}(3/2,\theta)+g_k} \sum\limits_{k=1}^3\big\|\big(U^{(k)}_0,U^{(k)}_1\big)\big\|_{\ml{D}^s_{3/2,1}},\\
	\|U_t^{(k)}(t,\cdot)\|_{L^2}&\lesssim (1+t)^{-\rho_{1}(3/2,\theta)+g_k} \sum\limits_{k=1}^3\big\|\big(U^{(k)}_0,U^{(k)}_1\big)\big\|_{\ml{D}^s_{3/2,1}},\\
	\||D|U^{(k)}(t,\cdot)\|_{\dot{H}^{s}}+\|U_t^{(k)}(t,\cdot)\|_{\dot{H}^{s}}&\lesssim (1+t)^{-\rho_{s+1}(3/2,\theta)+g_k}\sum\limits_{k=1}^3\big\|\big(U^{(k)}_0,U^{(k)}_1\big)\big\|_{\ml{D}^s_{3/2,1}},
	\end{split}
	\end{equation*}
	where the parameters $g_k$ are chosen in the following way:
	\begin{enumerate}
		\item  $g_{k}=0$ for $k=1,2,3,$ when $p_1,p_2,p_3$ satisfy the condition \eqref{000exp011111};
		\item \[ g_{k_1}=1+\frac{2-2\theta+s}{(1-\theta)(s+1)}+\frac{6\theta-5-2s}{4(1-\theta)(s+1)}p_{k_1}\,\,\,\,\mbox{and}\,\,\,\, g_{k_2}=g_{k_3}=0,\] when $p_{k_1},p_{k_2},p_{k_3}$ satisfy the condition \eqref{000exp011112};
		\item \begin{eqnarray*}
			&& g_{k_1}=1+\frac{2-2\theta+s}{(1-\theta)(s+1)}+\frac{6\theta-5-2s}{4(1-\theta)(s+1)}p_{k_1}, \\ && g_{k_2}=1+\frac{2-2\theta+s}{(1-\theta)(s+1)}+\Big(1+\frac{3+2s-2\theta}{4(1-\theta)(s+1)}\Big)p_{k_2}+\frac{6\theta-5-2s}{4(1-\theta)(s+1)}p_{k_1}p_{k_2}\,\,\,\,\mbox{and}\,\,\,\, g_{k_3}=0,\end{eqnarray*}
		when $p_{k_1},p_{k_2},p_{k_3}$ satisfy the condition \eqref{000exp011113}.
	\end{enumerate}
\end{coro}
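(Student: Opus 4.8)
The plan is to merge the loss-of-decay bookkeeping from the proof of Theorem~\ref{GESDS03} with the Harmonic Analysis machinery (fractional Gagliardo-Nirenberg inequality, fractional chain rule, fractional Leibniz rule) employed in the proof of Theorem~\ref{HlL1nonlinearythm}. First I would fix $T>0$ and work in the evolution space \eqref{spp} equipped with the weighted norm
\begin{equation*}
\begin{split}
\|U\|_{X(T)}:=\sup\limits_{0\leq t\leq T}&\Big(\sum\limits_{k=1}^3(1+t)^{-1+\rho_1(3/2,\theta)-g_k}\|U^{(k)}(t,\cdot)\|_{L^2}+\sum\limits_{k=1}^3(1+t)^{\rho_1(3/2,\theta)-g_k}\|U_t^{(k)}(t,\cdot)\|_{L^2}\\
&\quad+\sum\limits_{k=1}^3(1+t)^{\rho_{s+1}(3/2,\theta)-g_k}\big(\||D|U^{(k)}(t,\cdot)\|_{\dot{H}^s}+\|U_t^{(k)}(t,\cdot)\|_{\dot{H}^s}\big)\Big),
\end{split}
\end{equation*}
where the weights are dictated by the linear estimates for data in $\ml{D}^s_{3/2,1}$ recalled at the beginning of Section~\ref{0,1/2GESDS}, and the $g_k$ are the non-negative numbers specified in the statement.

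Next I would estimate the power nonlinearity $|U^{(k-1)}(\tau,x)|^{p_k}$ in the $L^{3/2}$ and $L^2$ norms via the fractional Gagliardo-Nirenberg inequality, interpolating between $\|U^{(k-1)}(\tau,\cdot)\|_{L^2}$ and $\|U^{(k-1)}(\tau,\cdot)\|_{\dot{H}^{s+1}}$; the admissible parameter constraints, together with the restrictions needed later for the fractional chain rule, force $1+\lceil s\rceil<p_k\leq 1+2/(1-2s)$ for $s\in(0,1/2)$, exactly as in Theorem~\ref{HlL1nonlinearythm}. For the $\dot{H}^s$ norm of the nonlinearity I would reproduce the chain of estimates \eqref{cont}--\eqref{conttt}: apply the fractional chain rule from Proposition~\ref{fractionalchainrule} and then the fractional Gagliardo-Nirenberg-type inequality to the two resulting factors, also estimating the $L^1$ norm by the same interpolation. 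Each such bound yields a time factor of the form $(1+\tau)^{(1-\rho_1(3/2,\theta))p_k-3(\frac{p_k}{2}-\frac{2}{3})+g_{k-1}p_k}$ (respectively the analogue with $\rho_{s+1}(3/2,\theta)$ and $s/3$-corrections in the $\dot H^s$ estimate), just as in the proof of Theorem~\ref{GESDS03}.

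Then I would plug these bounds into the Duhamel term of \eqref{fixedpointformulation}, split $[0,t]$ into $[0,t/2]$ and $[t/2,t]$, and use the $(L^2\cap L^{3/2})$-$L^2$ estimates on $[0,t/2]$ together with the $L^2$-$L^2$ estimates on $[t/2,t]$ for the $L^2$ norms of $U$ and its first order derivatives, and the $(\dot{H}^s\cap L^{3/2})$-$\dot{H}^s$ and $\dot{H}^s$-$\dot{H}^s$ estimates for the higher-order energies. Following the three-case distinction: in Case~\eqref{000exp011111} one takes $g_1=g_2=g_3=0$ and uses $\min\{p_1;p_2;p_3\}>p_{\text{bal}}(3/2,s,\theta)$ to make the integrals over $[0,t/2]$ convergent and the boundary terms bounded; in Case~\eqref{000exp011112} one allows a loss of decay only in the component $k_1$ with the indicated $g_{k_1}$ and $g_{k_2}=g_{k_3}=0$, and $\alpha_{k_1,\text{bal}}(3/2,s,\theta)<3/2$ is precisely what makes the time exponents in the $k_2$-equation strictly less than $-1$; in Case~\eqref{000exp011113} one iterates this once more, introducing the additional loss $g_{k_2}$, with $\widetilde{\alpha}_{k_1,\text{bal}}(3/2,s,\theta)<3/2$ playing the corresponding role for the $k_3$-equation. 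This gives \eqref{Improtant1}. The Lipschitz estimate \eqref{Improtant2} is obtained as in the proof of Theorem~\ref{HlL1nonlinearythm}: write the difference of the nonlinearities through $G(U^{(k-1)})=U^{(k-1)}|U^{(k-1)}|^{p_k-2}$, apply Minkowski's inequality, the fractional Leibniz rule from Proposition~\ref{fractionleibnizrule} and the fractional Gagliardo-Nirenberg inequality to the factors, which is where $\min\{p_1;p_2;p_3\}>1+\lceil s\rceil$ enters. Banach's fixed-point theorem then produces the unique $U^{*}\in X(T)$ for all $T>0$ and the stated decay estimates.

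The hard part will be the simultaneous bookkeeping of the exponents. One has to verify that the prescribed $g_k$ stay non-negative under $1<p_{k_j}<p_{\text{bal}}(3/2,s,\theta)$ and, at the same time, that in every case the time exponents appearing both in the $[0,t/2]$-integrals and in the $[t/2,t]$-boundary terms — now carrying the $\rho_1(3/2,\theta)$ and $\rho_{s+1}(3/2,\theta)$ weights and the extra contributions $g_{k-1}p_k$ coming from the fractional estimates — are $<-1$ and $\leq0$ respectively. Translating these inequalities back into the conditions $\alpha_{k_1,\text{bal}}(3/2,s,\theta)<3/2$ and $\widetilde{\alpha}_{k_1,\text{bal}}(3/2,s,\theta)<3/2$, and reconciling the resulting admissible set with the parameter window $1+\lceil s\rceil<p_k\leq1+2/(1-2s)$ forced by the fractional chain rule and the fractional Gagliardo-Nirenberg inequality, is the only genuinely delicate point; everything else is a routine repetition of the computations in Theorems~\ref{GESDS03} and~\ref{HlL1nonlinearythm}.
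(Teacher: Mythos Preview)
Your proposal is correct and matches the paper's own approach exactly: the paper does not give a detailed proof of this corollary but simply states that it follows by combining the loss-of-decay argument of Theorem~\ref{GESDS03} with the fractional Harmonic Analysis tools (fractional Gagliardo--Nirenberg, chain rule, Leibniz rule) from Theorem~\ref{HlL1nonlinearythm}, which is precisely the merger you outline. One small slip: where you write ``also estimating the $L^1$ norm by the same interpolation'' this should read $L^{3/2}$, consistently with the $(\dot{H}^s\cap L^{3/2})$--$\dot{H}^s$ estimates you correctly invoke afterwards.
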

As stated in Remark \ref{varepsilon1}, if some of the exponents $p_{k_j}=p_{c}(m,\theta)$ for some $j=1,2,3$, then we can choose the parameters $g_{k_j}$ describing the loss of decay as $g_{k_j}=\varepsilon_1$ with a sufficiently small constant $\varepsilon_1>0$.
\end{rem}

\begin{rem} If $\theta\in\left[0,1/2\right)$ we proved in Section \ref{0,1/2GESDS} the global (in time) existence of energy solutions with small data belonging to $\ml{D}_{1,1}^s$ or $\ml{D}_{3/2,1}^s$. One may also consider the global (in time) existence of solutions with $\big(U_0^{(k)},U_1^{(k)}\big)\in\ml{D}_{m,1}^s$ for $m\in[1,2)$, $s\geq0$ or $\big(U_0^{(k)},U_1^{(k)}\big)\in\ml{D}_{m,2}^s$ for $m\in\left[1,6/5\right)$, $s\geq0$ by using the energy estimates to the linear model \eqref{linearproblem} (cf. with Theorems \ref{enee} and \ref{additionaldecay}, respectively).
\end{rem}
\begin{rem} In Section \ref{ge}, we proved some results for the global (in time) existence of small data solutions to some semi-linear models with exponents $p_1,p_2,p_3$ satisfying some conditions. Up to now, we did not prove any optimality of the exponent for the global (in time) existence of small data solutions. But we except that the following exponents and parameters are critical to the semi-linear model \eqref{fsdew001} with structural damping $(-\Delta)^{1/2}U_t$:
	\begin{equation*}
	p_c(1,1/2)=2,
	\end{equation*}
	\begin{equation*}
	\alpha_{\max}(1,1/2)=\max\left\{\alpha_1(1,1/2);\alpha_2(1,1/2)\right\}=3/2,
	\end{equation*}
	\begin{equation*}
	\widetilde{\alpha}_{\max}(1,1/2)=\max\left\{\widetilde{\alpha}_1(1,1/2);\widetilde{\alpha}_2(1,1/2);\widetilde{\alpha}_3(1,1/2)\right\}=3/2,
	\end{equation*}
	 The main reason is that the exponent $p_c(1,1/2)$ corresponds to the critical exponent to the semi-linear structurally damped wave equation \eqref{semistructuraldampedwave}. The recent paper \cite{D'abbicco2015} proved a global (in time) existence result when $\alpha_{\max}(1,1/2)>3/2$ and a blow up result when $\alpha_{\max}(1,1/2)<3/2$. Thus, we conjecture that the parameter $\alpha_{\max}(1,1/2)=3/2$ is critical if only one exponent is below or equal to the exponent $p_c(1,1/2)$. In addition, as mentioned at the beginning of Section \ref{ge}, the parameter $\widetilde{\alpha}_{\max}(1,1/2)$ is generalized from the parameter $\alpha_{\max}(1,1/2)$. Therefore, we also conjecture that the parameter $\widetilde{\alpha}_{\max}(1,1/2)=3/2$ is critical when two exponents are below or equal to the exponent $p_c(1,1/2)$.
\end{rem}
\begin{rem}\label{remka}
	Let us turn to weakly coupled systems of semi-linear elastic waves with \emph{structural damping of Kelvin-Voigt type} \cite{WuChaiLi2017} $\left(\theta\in(0,1]\right)$ in three-dimensions, that is, to the model
		\begin{equation}\label{fsdew0}
	\left\{
	\begin{aligned}
	&U_{tt}-a^2\Delta U-\big(b^2-a^2\big)\nabla\divv U+\big(-a^2\Delta-\big(b^2-a^2\big)\nabla\divv\big)^{\theta}U_t=F(U),\quad &(t,x)\in(0,\infty)\times\mb{R}^3,\\
	&(U,U_t)(0,x)=(U_0,U_1)(x),\quad &x\in\mb{R}^3,
	\end{aligned}
	\right.
	\end{equation}
	where $b^2>a^2>0$, $\theta\in(0,1]$. In particular, if $\theta=1$, then we have the weakly coupled system of semi-linear elastic waves \eqref{fsdew004} with \emph{viscoelastic damping of Kelvin-Voigt type} \cite{MunozRiveraRacke2017}. Moreover, the non-linear term is defined by $F(U)=\big(|U^{(3)}|^{p_1},|U^{(1)}|^{p_2},|U^{(2)}|^{p_3}\big)^{\mathrm{T}}$. Our starting point is to study the corresponding Cauchy problem for the linear elastic waves with structural damping of Kelvin-Voigt type
	\begin{equation}\label{fsdew004}
	\left\{
	\begin{aligned}
	&u_{tt}-a^2\Delta u-\big(b^2-a^2\big)\nabla\divv u+\big(-a^2\Delta-\big(b^2-a^2\big)\nabla\divv\big)^{\theta}u_t=0,\quad &(t,x)\in(0,\infty)\times\mb{R}^3,\\
	&(u,u_t)(0,x)=(u_0,u_1)(x),\quad &x\in\mb{R}^3.
	\end{aligned}
	\right.
	\end{equation}
	To study some qualitative properties of solutions we may apply the diagonalization procedure from Section \ref{EstimateforthelinearCauchyproblem}. After applying the partial Fourier transformation with respect to the $x$-variable we obtain the following linear system of ordinary differential equations depending on the parameter $\xi$:
	\begin{equation*}
	\left\{\begin{aligned}
	 &\hat{u}_{tt}+|\xi|^{2\theta}\big(a^{2\theta}I+\big(b^{2\theta}-a^{2\theta}\big)|\xi|^{-2}\xi\cdot\xi^{\mathrm{T}}\big)\hat{u}_t+|\xi|^2\big(a^{2}I+\big(b^{2}-a^{2}\big)|\xi|^{-2}\xi\cdot\xi^{\mathrm{T}}\big)\hat{u}=0,\quad &(t,\xi)\in(0,\infty)\times\mb{R}^3,\\
	&(\hat{u},\hat{u}_t)(0,\xi)=(\hat{u}_0,\hat{u}_1)(\xi),\quad&\xi\in\mb{R}^3.
	\end{aligned}\right.
	\end{equation*}
	Defining the same mirco-energy as before, we obtain the first-order system
	\begin{equation*}
	D_tW^{(0)}-\frac{i}{2}|\xi|^{2\theta}B_2W^{(0)}-|\xi|B_1W^{(0)}=0,\,\,\,\, W^{(0)}(0,\xi)=W_0^{(0)}(\xi),
	\end{equation*}
	where the matrix $B_1$ is the same as before and
	\[
	\begin{split}
	B_2=\left(
	{\begin{array}{*{20}c}
		a^{2\theta} & 0 & 0 & a^{2\theta} & 0 & 0\\
		0 & a^{2\theta} & 0 & 0 & a^{2\theta} & 0\\
		0 & 0 & b^{2\theta} & 0 & 0 & b^{2\theta}\\
		a^{2\theta} & 0 & 0 & a^{2\theta} & 0 & 0\\
		0 & a^{2\theta} & 0 & 0 & a^{2\theta} & 0\\
		0 & 0 & b^{2\theta} & 0 & 0 & b^{2\theta}\\
		\end{array}}
	\right).
	\end{split}
	\]
	We also distinguish between \emph{Case 2.0} to \emph{Case 2.3} by the influence of the parameter $|\xi|$. We observe that $B_2$ has the same structure as $B_0$ and the elements of $B_2$ satisfy $a^{2\theta}\neq b^{2\theta}$. This brings some benefits for applying the diagonalization process. We expect that the approach of this paper can be transferred without any difficulties to deal with the model \eqref{fsdew004}.
	
	From the point of view of estimates obtained by energy methods in phase space, the recent paper \cite{WuChaiLi2017} obtained almost sharp estimates for the total energy with data belonging to the space $(H^{1}\cap L^1)\times(L^2\cap L^1)$ by using multipliers in the Fourier space and the Haraux-Komornik inequality. We also expect that following our approaches in Lemma \ref{Hl+1Hllem} and Theorem \ref{Hl+1Hl} we may obtain estimates for the solutions with data being from $(H^{s+1}\cap L^m)\times(H^s\cap L^m)$ all for $m\in[1,2]$ and $s\geq0$.
	
From the above discussions, because the energy estimates for solutions to the linear model \eqref{fsdew004} are the same as the energy estimates for solutions to the linear model \eqref{linearproblem}, we also expect that the results for the global (in time) existence of small data solutions to the system \eqref{fsdew0} are the same as the derived results to the system \eqref{fsdew001} in Section 5.
\end{rem}
\begin{rem}\label{rem4} The main objectives of this paper are to show the asymptotic behavior of solutions, some estimates for linear dissipative elastic waves basing on the $L^2$ norm and the global (in time) existence for semi-linear weakly coupled systems of elastic waves with different damping mechanisms in three dimensions with power source nonlinearity $F(U)=\big(|U^{(3)}|^{p_1},|U^{(1)}|^{p_2},|U^{(2)}|^{p_3}\big)^{\mathrm{T}}$. Here, we restrict ourselves to energy estimates basing on the $L^2$ norm. In a forthcoming paper, we will develop some different $L^m$-$L^{q}$ estimates not necessarily on the conjugate line to the linear model \eqref{linearproblem} . Then, it allows us to study the global (in time) existence of small data Sobolev solutions basing on the $L^q$ norm for the following weakly coupled systems:
	\begin{equation*}
	\left\{\begin{aligned}
	&U_{tt}-a^2\Delta U-\big(b^2-a^2\big)\nabla\divv U+(-\Delta)^{\theta}U_t=F(|D|^{\sigma}U),&(t,x)\in(0,\infty)\times\mb{R}^3,\\
	&(U,U_t)(0,x)=(U_0,U_1)(x),&x\in\mb{R}^3,
	\end{aligned}\right.
	\end{equation*}
	and
	\begin{equation*}
	\left\{\begin{aligned}
	&U_{tt}-a^2\Delta U-\big(b^2-a^2\big)\nabla\divv U+(-\Delta)^{\theta}U_t=F(\partial_tU),&(t,x)\in(0,\infty)\times\mb{R}^3,\\
	&(U,U_t)(0,x)=(U_0,U_1)(x),&x\in\mb{R}^3,
	\end{aligned}\right.
	\end{equation*}
	where the nonlinearities are described by
	\begin{equation*}
	\begin{split}
	F(|D|^{\sigma}U)&=\big(||D|^{\sigma}U^{(3)}|^{p_1},||D|^{\sigma}U^{(1)}|^{p_2},||D|^{\sigma}U^{(2)}|^{p_3}\big)^{\mathrm{T}},\\
	F(\partial_tU)&=\big(|\partial_tU^{(3)}|^{p_1},|\partial_tU^{(1)}|^{p_2},|\partial_tU^{(2)}|^{p_3}\big)^{\mathrm{T}},
	\end{split}
	\end{equation*}
	where the constant parameters $\theta\in[0,1]$ and $\sigma\in[0,1]$. We will study the influence of different damping mechanisms to several source nonlinearities of power type.
\end{rem}

\section*{Acknowledgments}
The Ph.D study of Mr. Wenhui Chen is supported by S\"achsisches Landesgraduiertenstipendium.
\appendix
\section{Some elements in the matrices in the diagonalization procedure}\label{appendix1}
When we discuss the representation of solutions in Section \ref{representationsolution}, we introduce the following parameters:
\begin{equation*}
\begin{split}
z_1(y)=\frac{iy^4|\xi|^{4-6\theta}}{1-y^2|\xi|^{2-4\theta}}=O\big(|\xi|^{4-6\theta}\big),\quad z_6(y)=\frac{y^3|\xi|^{3-4\theta}}{1-y^2|\xi|^{2-4\theta}}=O\big(|\xi|^{3-4\theta}\big),
\end{split}
\end{equation*}
where $y=a,b$. Moreover, we introduce
\begin{equation*}
\begin{split}
&z_2=\frac{1}{|\xi|^{4\theta}-z_6^2(a)}\left(i\big(a^2+b^2\big)z_6^2(a)|\xi|^{2-2\theta}+2z_1(a)z_6^2(a)+i|\xi|^{2\theta}z_6^2(a)\right),\\
&z_3=\frac{1}{|\xi|^{4\theta}-z_6^2(a)}\left(2ia^2|\xi|^{2-2\theta}z_6^2(a)+i|\xi|^{2\theta}z_6^2(a)+2z_1(a)z_6^2(a)\right),\\
&z_4=\frac{1}{|\xi|^{4\theta}-z_6^2(b)}\left(i|\xi|^{2\theta}z_6^2(b)+z_6^3(b)+z_1(b)z_6^2(b)-i\big(a^2+b^2\big)|\xi|^{2-2\theta}z_6^2(b)\right),\\
&z_5=\frac{1}{|\xi|^{4\theta}-z_6^2(b)}\left(2z_1(b)z_6^2(b)+i|\xi|^{2\theta}z_6^2(b)+i\big(a^2+b^2\big)|\xi|^{2-2\theta}z_6^2(b)\right).
\end{split}
\end{equation*}
\section{Tools from Harmonic Analysis}\label{toolfractional}
In this section we present some tools from Harmonic Analysis that are used in Section \ref{ge}.
\begin{prop}\label{claGNineq} (Classical Gagliardo-Nirenberg inequality) Let $j,m\in\mb{N}$ with $j<m$, and let $f\in \ml{C}_0^{m}\big(\mb{R}^n\big)$. Let $\beta=\beta_{j,m}\in\big[\frac{j}{m},1\big]$ with $p,q,r\in\left[1,\infty\right]$ satisfy
	\begin{equation*}
	j-\frac{n}{q}=\Big(m-\frac{n}{r}\Big)\beta-\frac{n}{p}(1-\beta).
	\end{equation*}
	Then, we have the following inequality:
	\begin{equation*}
	\|D^jf\|_{L^q}\lesssim\|f\|_{L^p}^{1-\beta}\|D^mf\|_{L^r}^{\beta},
	\end{equation*}
	provided that $\big(m-\frac{n}{r}\big)-j\notin\mb{N}$.
	If $\big(m-\frac{n}{r}\big)-j\in\mb{N}$, then the classical Gagliardo-Nirenberg inequality holds provided that $\beta\in\big[\frac{j}{m},1\big)$.
\end{prop}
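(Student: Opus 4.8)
The plan is to reduce the multiplicative estimate to a scale-invariant additive one, to establish that additive inequality by a Littlewood--Paley decomposition, and finally to recover the statement for $D^{j}$ and for the full admissible range of exponents by Calder\'on--Zygmund theory and H\"older's inequality. By density it suffices to argue for $f$ in the Schwartz class $\ml{S}\big(\mb{R}^{n}\big)$. Applying the inequality to the dilates $f_{\lambda}(x)=f(\lambda x)$ and using the balance relation of the statement, one sees that both sides carry the homogeneity $\lambda^{j-n/q}$, and that the exponent $j-n/q=\big(m-n/r\big)\beta-\tfrac{n}{p}(1-\beta)$ is a convex combination of $-n/p$ and $m-n/r$, hence lies strictly between them whenever $\beta\in(0,1)$. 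Consequently it is enough to prove the scale-invariant additive estimate $\|D^{j}f\|_{L^{q}}\lesssim\|f\|_{L^{p}}+\|D^{m}f\|_{L^{r}}$ subject to the same relation: the multiplicative form then follows by optimising in $\lambda$, while the converse is the elementary bound $a^{1-\beta}b^{\beta}\le a+b$. For $1<p,q,r<\infty$ the boundedness of the Riesz transforms lets one replace $D^{j}$ and $D^{m}$ by the homogeneous fractional operators $|D|^{j}$ and $|D|^{m}$, reducing matters to $\||D|^{j}f\|_{L^{q}}\lesssim\|f\|_{L^{p}}+\||D|^{m}f\|_{L^{r}}$; the endpoint cases $p\in\{1,\infty\}$ or $r\in\{1,\infty\}$, where multiplier theory is unavailable, are handled separately by the one-dimensional inequality $\|g\|_{L^{\infty}(\mb{R})}^{2}\lesssim\|g\|_{L^{p_{1}}}\|g'\|_{L^{r_{1}}}$ and a tensorisation over coordinate directions, in the spirit of Nirenberg's original argument.

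The heart of the matter is the dyadic estimate. Write $f=\sum_{k\in\mb{Z}}f_{k}$ with $f_{k}$ frequency-localised to $\{|\xi|\sim 2^{k}\}$, and split at a cut-off $N$. On the low frequencies $k\le N$, Bernstein's inequality gives $\||D|^{j}f_{k}\|_{L^{q}}\lesssim 2^{k(j+n/p-n/q)}\|f\|_{L^{p}}$; on the high frequencies $k>N$ one first estimates $\||D|^{j}f_{k}\|_{L^{q}}\lesssim 2^{-k(m-j)}\||D|^{m}f_{k}\|_{L^{q}}$ and then uses Bernstein from $L^{r}$ to $L^{q}$ to obtain $\||D|^{j}f_{k}\|_{L^{q}}\lesssim 2^{k(j-m+n/r-n/q)}\||D|^{m}f\|_{L^{r}}$. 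Since $j+n/p-n/q>0$ and $j-m+n/r-n/q<0$ by the first paragraph, both geometric series converge; summing them and optimising over $N$ produces the additive inequality with the sharp exponents, hence the multiplicative one. This computation also pinpoints the role of the hypothesis $\big(m-n/r\big)-j\notin\mb{N}$: when $\big(m-n/r\big)-j$ is a nonnegative integer, the exponent $j-m+n/r-n/q$ vanishes at $\beta=1$, the high-frequency sum diverges logarithmically, and the estimate can only be closed for $\beta$ strictly below $1$---which is precisely the restriction $\beta\in\big[\tfrac{j}{m},1\big)$ imposed in that case.

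It remains to assemble the general statement. For $j\ge1$ the passage to $|D|^{j}$ is again legitimate by the Riesz-transform bounds, and for the admissible interval $\beta\in\big[\tfrac{j}{m},1\big]$ one interpolates via H\"older's inequality between the two distinguished endpoints: $\beta=\tfrac{j}{m}$, which is the iterated ``interpolation of derivatives'' estimate $\|D^{k}f\|_{L^{q_{k}}}\lesssim\|D^{k-1}f\|_{L^{q_{k-1}}}^{1/2}\|D^{k+1}f\|_{L^{q_{k+1}}}^{1/2}$ coming from integration by parts and Calder\'on--Zygmund bounds; and $\beta=1$, which is the homogeneous Sobolev embedding $\dot{W}^{m,r}\hookrightarrow\dot{W}^{j,q}$, valid exactly because $\big(m-n/r\big)-j\notin\mb{N}$. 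A short induction on $m$, in which the balance relation guarantees that the auxiliary Lebesgue exponents produced at each step are themselves admissible, then covers the remaining configurations. The main obstacle is organisational rather than conceptual: in each configuration one must decide whether the intermediate exponent is reached by interpolating ``upward'' or ``downward'' from the critical Sobolev index, treat separately the endpoints $p,q,r\in\{1,\infty\}$ where multiplier theory fails, and carry the borderline exclusion $\big(m-n/r\big)-j\in\mb{N}$ through every step; none of these is serious alone, but their combination is what makes a complete proof lengthy, and the details may be found in \cite{ReissigEbert2018}.
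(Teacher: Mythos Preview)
The paper does not prove this proposition at all: immediately after the statement it writes only ``The proof of the classical Gagliardo--Nirenberg inequality can be found in \cite{Friedman1976}, Part I, Theorem 9.3.'' So there is nothing to compare against except a bare citation. Your proposal, by contrast, offers an actual sketch via scale invariance, Littlewood--Paley decomposition and Bernstein's inequality; this is the modern Fourier-analytic route, quite different from the classical real-variable argument in Friedman/Nirenberg, which proceeds by one-dimensional integration-by-parts identities and induction on the order of derivatives with no Fourier analysis whatsoever. Your approach makes the origin of the exponents transparent; the classical approach reaches the endpoints $p,r\in\{1,\infty\}$ directly without the separate patch you have to bolt on.

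Two points in your sketch are genuinely off, though. First, the Bernstein step $\|f_{k}\|_{L^{q}}\lesssim 2^{kn(1/p-1/q)}\|f_{k}\|_{L^{p}}$ is valid only for $p\le q$, and likewise the passage from $L^{r}$ to $L^{q}$ in the high-frequency piece needs $r\le q$; the balance relation alone does not force either ordering, so your dyadic core only covers a sub-range of parameters and the remaining configurations really do require the case analysis you defer to the last paragraph. Second, and more seriously, your explanation of the exceptional case $\big(m-\tfrac{n}{r}\big)-j\in\mathbb{N}$ is incorrect: the identity $j-m+n/r-n/q=0$ at $\beta=1$ follows from the balance relation for \emph{every} admissible choice of $p,q,r$, not only in the exceptional case, so it cannot be what singles out the integer values. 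The true obstruction at $\beta=1$ is the failure of the endpoint Sobolev--Morrey embedding $\dot{W}^{m,r}\hookrightarrow\dot{W}^{j,q}$ when the gain in regularity hits an integer (one lands in BMO or a Zygmund class rather than $L^{\infty}$ or $C^{k}$), and this is not visible from the convergence of the dyadic sum.
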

The proof of the classical Gagliardo-Nirenberg inequality can be found in \cite{Friedman1976}, Part I, Theorem 9.3.
\begin{prop}\label{fractionalgagliardonirenbergineq} (Fractional Gagliardo-Nirenberg inequality)
	Let $p,p_0,p_1\in(1,\infty)$ and $\kappa\in(0,s)$ with $s>0$. Then, for all $f\in L^{p_0}\big(\mb{R}^n\big)\cap \dot{H}^{s}_{p_1}\big(\mb{R}^n\big)$ the following inequality holds:
	\begin{equation*}
	\|f\|_{\dot{H}^{\kappa}_{p}}\lesssim\|f\|_{L^{p_0}}^{1-\beta}\|f\|^{\beta}_{\dot{H}^{s}_{p_1}},
	\end{equation*}
	where $\beta=\beta_{\kappa,s}=\big(\frac{1}{p_0}-\frac{1}{p}+\frac{\kappa}{n}\big)\big/\big(\frac{1}{p_0}-\frac{1}{p_1}+\frac{s}{n}\big)$ and $\beta\in\big[\frac{\kappa}{s},1\big]$.
\end{prop}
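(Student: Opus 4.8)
The plan is to deduce this fractional Gagliardo--Nirenberg inequality from two standard facts about homogeneous Riesz potential spaces, realized as homogeneous Triebel--Lizorkin spaces $\dot{F}^{\sigma}_{r,2}$ (cf.\ \cite{RunstSickel1996}): complex interpolation and the fractional Sobolev embedding. The first preparatory step is to record the Littlewood--Paley identifications $L^{r}\big(\mb{R}^n\big)=\dot{F}^{0}_{r,2}\big(\mb{R}^n\big)$ and $\dot{H}^{\sigma}_{r}\big(\mb{R}^n\big)=\dot{F}^{\sigma}_{r,2}\big(\mb{R}^n\big)$, which hold for $1<r<\infty$ by the Mikhlin--H\"ormander multiplier theorem. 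This is precisely where the hypotheses $p,p_0,p_1\in(1,\infty)$ are used, and it explains why the Lebesgue endpoints must be excluded.

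Next I would interpolate the pair $\big(L^{p_0},\dot{H}^{s}_{p_1}\big)$ at level $\beta$. Complex interpolation of homogeneous Triebel--Lizorkin spaces gives $[\dot{F}^{0}_{p_0,2},\dot{F}^{s}_{p_1,2}]_{\beta}=\dot{F}^{\beta s}_{q,2}=\dot{H}^{\beta s}_{q}$ with $\tfrac1q=\tfrac{1-\beta}{p_0}+\tfrac{\beta}{p_1}$, and hence the interpolation inequality
\[
\|f\|_{\dot{H}^{\beta s}_{q}}\lesssim\|f\|_{L^{p_0}}^{1-\beta}\,\|f\|_{\dot{H}^{s}_{p_1}}^{\beta}.
\]
For the endpoint $\beta=\kappa/s$ the intermediate smoothness already equals $\kappa$, and $\beta=1$ is a trivial embedding, so the substance lies in the intermediate range.

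The final step corrects the integrability index by a homogeneous Sobolev embedding. A short computation shows that the formula for $\beta$ in the statement is equivalent to the scaling identity $\beta s-\tfrac nq=\kappa-\tfrac np$; that is, the exponent points $(\beta s,1/q)$ and $(\kappa,1/p)$ lie on one homogeneous Sobolev line. Combined with the constraint $\beta\geq\kappa/s$, which forces $\beta s\geq\kappa$ and therefore $q\leq p$, this yields the fractional Sobolev embedding $\dot{H}^{\beta s}_{q}\hookrightarrow\dot{H}^{\kappa}_{p}$ of Hardy--Littlewood--Sobolev type. Composing it with the interpolation inequality above produces exactly the claimed estimate.

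The part I expect to require the most care is not any single inequality but the consistent handling of the homogeneous scale: the spaces $\dot{H}^{\sigma}_{r}$ and $\dot{F}^{\sigma}_{r,2}$ must be realized modulo polynomials, so that the equivalences, the interpolation identity, and the embedding all refer to the same normalization, and one must verify that every index triple produced along the way stays strictly inside the admissible ranges $1<r<\infty$ so that the multiplier theorem and the embedding apply. An alternative, more hands-on route would replace interpolation by a direct Littlewood--Paley decomposition with Bernstein's inequalities and optimization over a dyadic frequency cutoff; this avoids quoting the interpolation functor but still relies on the square-function characterization of $\dot{H}^{\kappa}_{p}$, so it offers no genuine saving over the interpolation argument.
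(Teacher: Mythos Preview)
Your argument is correct: identify $L^{p_0}=\dot{F}^{0}_{p_0,2}$ and $\dot{H}^{s}_{p_1}=\dot{F}^{s}_{p_1,2}$, complex-interpolate to land in $\dot{H}^{\beta s}_{q}$ with $\tfrac1q=\tfrac{1-\beta}{p_0}+\tfrac{\beta}{p_1}$, verify the scaling relation $\beta s-\tfrac nq=\kappa-\tfrac np$ (which is exactly the defining formula for $\beta$), and then apply the homogeneous Sobolev embedding $\dot{H}^{\beta s}_{q}\hookrightarrow\dot{H}^{\kappa}_{p}$, available because $\beta\geq\kappa/s$ forces $q\leq p$. Your caveats about working consistently modulo polynomials and keeping all integrability indices in $(1,\infty)$ are the right ones; the convex-combination formula for $1/q$ guarantees $q\in(1,\infty)$ automatically.

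There is nothing to compare against in the paper itself: the proposition is stated in the appendix as a tool and its proof is not given but referred to \cite{Hajaiej}. Your interpolation-plus-embedding route is one of the standard self-contained proofs of this inequality, and is in the same spirit as the Besov/Triebel--Lizorkin arguments in that reference.
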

The proof of this result can be found in \cite{Hajaiej}.
\begin{prop}\label{fractionleibnizrule} (Fractional Leibniz rule)
	Let $s>0$ and $1\leq r\leq\infty$, $1<p_1,p_2,q_1,q_2\leq\infty$ satisfy the relation
	\begin{equation*}
	\frac{1}{r}=\frac{1}{p_1}+\frac{1}{p_2}=\frac{1}{q_1}+\frac{1}{q_2}.
	\end{equation*}
	Then, for all $f\in\dot{H}^{s}_{p_1}\big(\mb{R}^n\big)\cap L^{q_1}\big(\mb{R}^n\big)$ and $g\in\dot{H}^{s}_{q_2}\big(\mb{R}^n\big)\cap L^{q_2}\big(\mb{R}^n\big)$
the following inequality holds:
	\begin{equation*}
	\|fg\|_{\dot{H}^{s}_{r}}\lesssim \|f\|_{\dot{H}^{s}_{p_1}}\|g\|_{L^{p_2}}+\|f\|_{L^{q_1}}\|g\|_{\dot{H}^{s}_{q_2}}.
	\end{equation*}
\end{prop}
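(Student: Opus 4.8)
The plan is to establish this Kato--Ponce-type estimate by a Littlewood--Paley/paraproduct argument. First I would fix a homogeneous Littlewood--Paley decomposition: smooth Fourier projections $P_j$ onto the dyadic annuli $\{|\xi|\sim 2^j\}$, $j\in\mb{Z}$, with low-frequency cut-offs $S_j:=\sum_{k<j}P_k$, so that $\mathrm{Id}=\sum_j P_j$ holds on $\ml{Z}'\big(\mb{R}^n\big)$ (the polynomials killed by every $P_j$ are precisely those modded out in the definition of $\dot H^s_p$). Using Bony's paraproduct decomposition I would write
\[
fg=T_fg+T_gf+R(f,g),\qquad T_hw:=\sum_k S_{k-2}h\,P_kw,\qquad R(f,g):=\sum_{|j-k|\le 2}P_jf\,P_kg,
\]
and estimate $|D|^s$ applied to each of the three pieces separately, recovering the target norm from the square-function characterization $\|w\|_{\dot H^s_r}\approx\big\|\big(\sum_j 2^{2js}|P_jw|^2\big)^{1/2}\big\|_{L^r}$, available for $1<r<\infty$. (The genuinely endpoint situations, $r=1$ or one of the exponents $=\infty$, require the usual $L^1$/$BMO$ substitutes; since in Section~\ref{ge} the rule is only ever used with $r=2$ and all remaining exponents finite, I would either defer these to the refined Kato--Ponce literature or simply restrict to $1<p_1,p_2,q_1,q_2<\infty$, $1<r<\infty$.)

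For the low--high paraproduct $T_gf=\sum_j S_{j-2}g\,P_jf$ each summand is frequency-supported in $\{|\xi|\sim 2^j\}$, so $P_\ell(S_{j-2}g\,P_jf)$ vanishes unless $\ell\sim j$. Inserting the factor $2^{\ell s}$, using H\"older's inequality with $\frac1r=\frac1{p_1}+\frac1{p_2}$, the uniform bound $\|S_{j-2}g\|_{L^{p_2}}\lesssim\|g\|_{L^{p_2}}$ (Bernstein together with the Hardy--Littlewood maximal inequality), and finally the Fefferman--Stein vector-valued maximal inequality to carry out the $\ell^2$-summation in $j$, I would obtain $\|T_gf\|_{\dot H^s_r}\lesssim\|f\|_{\dot H^s_{p_1}}\|g\|_{L^{p_2}}$. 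Exchanging the roles of $f,g$ and of the two exponent pairs gives in the same way $\|T_fg\|_{\dot H^s_r}\lesssim\|f\|_{L^{q_1}}\|g\|_{\dot H^s_{q_2}}$. These two contributions are exactly the two terms on the right-hand side of the claimed inequality.

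The remainder $R(f,g)$ is the place where the hypothesis $s>0$ is really needed, and I expect it to be the main obstacle. Each summand $P_jf\,P_kg$ with $|j-k|\le 2$ is frequency-supported only in a \emph{ball} $\{|\xi|\lesssim 2^k\}$, not in an annulus, so $P_\ell R(f,g)$ receives contributions from all $k\gtrsim\ell$:
\[
\|P_\ell R(f,g)\|_{L^r}\lesssim\sum_{k\ge\ell-C}\|P_kf\|_{L^{p_1}}\,\|\widetilde{P}_kg\|_{L^{p_2}},
\]
with $\widetilde{P}_k$ a fattened projector and $\frac1r=\frac1{p_1}+\frac1{p_2}$. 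Multiplying by $2^{\ell s}$ and summing over $\ell\le k$ first, the geometric factor $\sum_{\ell\le k}2^{\ell s}\lesssim 2^{ks}$ --- which diverges and forces the argument to break down when $s\le 0$ --- reduces matters to bounding $\sum_k 2^{ks}\|P_kf\|_{L^{p_1}}\|\widetilde{P}_kg\|_{L^{p_2}}$ in $L^r$; a pointwise Cauchy--Schwarz in $k$ followed by H\"older splits this as
\[
\big\|\big(\textstyle\sum_k 2^{2ks}|P_kf|^2\big)^{1/2}\big\|_{L^{p_1}}\big\|\big(\textstyle\sum_k|\widetilde{P}_kg|^2\big)^{1/2}\big\|_{L^{p_2}}\lesssim\|f\|_{\dot H^s_{p_1}}\|g\|_{L^{p_2}},
\]
which is again absorbed into the first term on the right-hand side. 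Combining the three estimates proves the proposition; the only remaining work is the bookkeeping of the endpoint exponents, which --- as noted above --- is irrelevant for the applications in Section~\ref{ge}.
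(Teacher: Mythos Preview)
Your sketch is a correct outline of the standard Littlewood--Paley/paraproduct proof of the Kato--Ponce inequality: the Bony decomposition, the handling of the two paraproducts via frequency localization plus H\"older and Fefferman--Stein, and the remainder estimate exploiting $s>0$ through the geometric sum $\sum_{\ell\le k}2^{\ell s}\lesssim 2^{ks}$ are all the right ingredients. Your caveat about the endpoint exponents is also appropriate and, as you note, irrelevant for the applications in Section~\ref{ge}.

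The paper itself does not give a proof at all: it simply records the proposition as a known tool and refers to \cite{Grafakos2014} (Grafakos--Oh, \emph{The Kato--Ponce inequality}). So there is nothing to compare at the level of argument; what you have written is essentially the proof one would find by following that citation. If anything, you are supplying more than the paper does.
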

The proof of this inequality can be found in \cite{Grafakos2014}.
\begin{prop}\label{fractionalchainrule} (Fractional chain rule)
	Let $s>0$, $p>\lceil s\rceil$ and  $1<r,r_1,r_2<\infty$ satisfy the relation
	\begin{equation*}
	\frac{1}{r}=\frac{p-1}{r_1}+\frac{1}{r_2}.
	\end{equation*}
	Then, for all $f\in\dot{H}^{s}_{r_2}\big(\mb{R}^n\big)\cap L^{r_1}\big(\mb{R}^n\big)$ the following inequality holds:
	\begin{equation*}
	\|\pm f|f|^{p-1}\|_{\dot{H}^{s}_{r}}+\||f|^p\|_{\dot{H}^{s}_{r}}\lesssim\|f\|_{L^{r_1}}^{p-1}\|f\|_{\dot{H}^{s}_{r_2}}.
	\end{equation*}
\end{prop}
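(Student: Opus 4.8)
The plan is to obtain this \emph{fractional chain rule} from the two estimates already prepared in this appendix, namely the fractional Leibniz rule (Proposition~\ref{fractionleibnizrule}) and the fractional Gagliardo--Nirenberg inequality (Proposition~\ref{fractionalgagliardonirenbergineq}), once the statement has been settled in the basic range $s\in(0,1)$; alternatively one may quote the composition (Nemytskij) theory of \cite{RunstSickel1996}, but I will describe a self-contained route.

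\emph{Step 1: the range $s\in(0,1)$.} Here I would use the equivalence of the Riesz potential norm with the Gagliardo-type square function, $\|g\|_{\dot{H}^s_r}\approx\|D^sg\|_{L^r}$ for $0<s<1$, $1<r<\infty$, with $D^sg(x):=\big(\int_{\mb{R}^n}|g(x)-g(x-y)|^2|y|^{-n-2s}\,dy\big)^{1/2}$. Since $F(u)=|u|^p$ (and likewise $u|u|^{p-1}$) is $\mathcal{C}^1$ with $|F'(u)|\lesssim|u|^{p-1}$ whenever $p>1$---which is guaranteed by $p>\lceil s\rceil\geq1$---the mean value theorem yields $|F(u(x))-F(u(x-y))|\lesssim\big(|u(x)|^{p-1}+|u(x-y)|^{p-1}\big)|u(x)-u(x-y)|$. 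Inserting this into $D^s(F\circ u)$, the contribution of $|u(x)|^{p-1}$ factors out of the $y$-integral and gives, after H\"older's inequality with $\tfrac1r=\tfrac{p-1}{r_1}+\tfrac1{r_2}$, the bound $\||u|^{p-1}\|_{L^{r_1/(p-1)}}\|D^su\|_{L^{r_2}}=\|u\|_{L^{r_1}}^{p-1}\|u\|_{\dot{H}^s_{r_2}}$, which is precisely the assertion. The remaining cross contribution, with $|u(x-y)|^{p-1}$ sitting inside the difference-quotient integral, I would control by a dyadic decomposition in $|y|$ combined with the Hardy--Littlewood (respectively Fefferman--Stein vector-valued) maximal inequality, exactly as in the proof of the Christ--Weinstein fractional chain rule.

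\emph{Step 2: reducing $s\geq1$ to its fractional part.} For general $s>0$ write $s=m+\sigma$ with $m=\lceil s\rceil-1\in\mb{N}_0$ and $\sigma=s-m\in(0,1]$, and use the classical equivalence $\|g\|_{\dot{H}^s_r}\approx\sum_{|\alpha|=m}\|\partial^\alpha g\|_{\dot{H}^\sigma_r}$. The Fa\`a di Bruno / Leibniz formula expands $\partial^\alpha(F\circ u)$ into a finite sum of terms $F^{(j)}(u)\,\partial^{\beta_1}u\cdots\partial^{\beta_j}u$ with $1\leq j\leq m$, $|\beta_l|\geq1$, $|\beta_1|+\dots+|\beta_j|=m$, where $|F^{(j)}(u)|\lesssim|u|^{p-j}$. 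The hypothesis $p>\lceil s\rceil$ is exactly what forces $p-j\geq p-m>1$, so every $F^{(j)}(u)$ is again a power nonlinearity to which Step~1 applies. Distributing the $\dot{H}^\sigma$-derivative across each such product by the fractional Leibniz rule (Proposition~\ref{fractionleibnizrule}) and estimating the factor $\|F^{(j)}(u)\|_{\dot{H}^\sigma_q}$ by Step~1 (the case $\sigma=1$ being the ordinary chain rule $\nabla(F^{(j)}\circ u)=(F^{(j)})'(u)\nabla u$ followed by H\"older), one is left only with mixed Lebesgue and homogeneous Sobolev norms of $u$.

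\emph{Step 3: interpolation and bookkeeping.} Each of the norms $\|\partial^{\beta_l}u\|_{L^{q_l}}$, $\|u\|_{L^{q}}$, $\|u\|_{\dot{H}^\sigma_q}$ that appears I would then interpolate, via the fractional Gagliardo--Nirenberg inequality (Proposition~\ref{fractionalgagliardonirenbergineq}), between $\|u\|_{L^{r_1}}$ and $\|u\|_{\dot{H}^s_{r_2}}$. A homogeneity count shows that in every resulting term the total power of $\|u\|_{L^{r_1}}$ is $p-1$ and that of $\|u\|_{\dot{H}^s_{r_2}}$ is $1$, so the finitely many terms recombine into the claimed inequality; the constraints $1<r,r_1,r_2<\infty$ and $\tfrac1r=\tfrac{p-1}{r_1}+\tfrac1{r_2}$ are exactly what make all intermediate exponents admissible. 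I expect the hard part to be Step~1---handling, in a scale-invariant way, the cross term $|u(x-y)|^{p-1}$ inside the difference-quotient integral---and, to a lesser and more clerical extent, checking in Steps~2--3 that the many H\"older exponents produced by iterating the Leibniz and Gagliardo--Nirenberg inequalities can all be chosen simultaneously admissible.
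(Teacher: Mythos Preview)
The paper does not actually prove this proposition: immediately after the statement it simply writes ``One can find the proof in \cite{PalmieriReissig2018}.'' So there is no argument in the paper to compare against; the proposition is quoted from the literature as a black box.

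Your outline is a reasonable and essentially standard route to such a fractional chain rule: the Christ--Weinstein type argument via the difference-quotient square function for $s\in(0,1)$, followed by a Fa\`a di Bruno / Leibniz reduction for higher $s$ and a Gagliardo--Nirenberg bookkeeping step to recombine everything into $\|f\|_{L^{r_1}}^{p-1}\|f\|_{\dot{H}^s_{r_2}}$. The observation that $p>\lceil s\rceil$ makes each $F^{(j)}$ with $j\le m=\lceil s\rceil-1$ again a $C^1$ power nonlinearity (since $p-j>1$) is exactly the point of that hypothesis, and you identify it correctly. Two small caveats worth flagging if you carry this out in full: first, in Step~2 the iterated Leibniz rule (Proposition~\ref{fractionleibnizrule}) produces many intermediate exponents, and one has to check they can all be chosen in $(1,\infty)$ simultaneously---this is routine but tedious, and is where the scaling condition $\tfrac{1}{r}=\tfrac{p-1}{r_1}+\tfrac{1}{r_2}$ is used repeatedly; second, in Step~1 the ``cross term'' with $|u(x-y)|^{p-1}$ inside the $y$-integral genuinely requires the maximal-function machinery you mention and is not a one-line H\"older argument. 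You acknowledge both points, so the sketch is honest about where the work lies.
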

One can find the proof in \cite{PalmieriReissig2018}.
\begin{prop}\label{fractionalpowersrule} (Fractional powers rules)
	Let $r\in(1,\infty)$, $p>1$ and $s\in(0,p)$. Then, for all $f\in{\dot{H}^{s}_{r}}\big(\mb{R}^n\big)\cap L^{\infty}\big(\mb{R}^n\big)$ the following inequality holds:
	\begin{equation*}
	\|\pm f|f|^{p-1}\|_{\dot{H}^{s}_{r}}+\||f|^p\|_{\dot{H}^{s}_{r}}\lesssim\|f\|_{\dot{H}^{s}_{r}}\|f\|_{L^{\infty}}^{p-1}.
	\end{equation*}
\end{prop}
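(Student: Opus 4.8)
The plan is to prove both estimates at once by regarding $f\mapsto f|f|^{p-1}$ and $f\mapsto|f|^p$ as instances of a single superposition operator $f\mapsto G(f)$, where $G\in\mathcal{C}^1(\mb{R})$ satisfies $G(0)=0$ and $|G'(u)|\lesssim|u|^{p-1}$; every bound below will use $G$ only through these structural facts, so the two left-hand terms are treated identically. The working tool is the identification, valid for $1<r<\infty$, of the Riesz potential space $\dot{H}^s_r\big(\mb{R}^n\big)$ with the homogeneous Triebel--Lizorkin space $\dot{F}^s_{r,2}\big(\mb{R}^n\big)$, together with its difference (Gagliardo-type) characterization for $0<s<1$, namely
\begin{equation*}
\|g\|_{\dot{H}^s_r} \approx \Big\| \Big( \int_{\mb{R}^n} \frac{|g(\cdot+h)-g(\cdot)|^2}{|h|^{n+2s}}\,dh \Big)^{1/2} \Big\|_{L^r}.
\end{equation*}

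First I would settle the base range $0<s<1$, which in fact requires nothing beyond $p>1$. Writing $M=\|f\|_{L^\infty}$, the mean value theorem gives the pointwise bound $\big| G(f(x+h))-G(f(x)) \big| \leq \|G'\|_{L^\infty([-M,M])}\,|f(x+h)-f(x)|$, and the structural hypothesis yields $\|G'\|_{L^\infty([-M,M])} \lesssim \|f\|_{L^\infty}^{p-1}$. Inserting this into the difference characterization lets one factor $\|f\|_{L^\infty}^{p-1}$ out of the square function and conclude $\|G(f)\|_{\dot{H}^s_r} \lesssim \|f\|_{L^\infty}^{p-1}\|f\|_{\dot{H}^s_r}$ for every $0<s<1$; the endpoint $s=1$ is immediate from $\nabla G(f)=G'(f)\nabla f$ and Hölder's inequality.

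The range $s\geq1$ I would handle by induction on $\lceil s\rceil$, peeling off one derivative at a time. Since $\dot{H}^s_r$-regularity of $G(f)$ is equivalent to $\dot{H}^{s-1}_r$-regularity of $\nabla G(f)=G'(f)\nabla f$, it suffices to estimate the product $G'(f)\cdot\nabla f$ in $\dot{H}^{s-1}_r$. Applying the fractional Leibniz rule (Proposition~\ref{fractionleibnizrule}) gives
\begin{equation*}
\|G'(f)\,\partial_j f\|_{\dot{H}^{s-1}_r} \lesssim \|G'(f)\|_{\dot{H}^{s-1}_{p_1}}\|\partial_j f\|_{L^{p_2}} + \|G'(f)\|_{L^{q_1}}\|\partial_j f\|_{\dot{H}^{s-1}_{q_2}},
\end{equation*}
and here the key observation is that $G'(f)$ is again a superposition operator, now associated with the exponent $p-1$. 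Because the hypothesis $s<p$ forces $s-1<p-1$, the inductive hypothesis applies to the factor $\|G'(f)\|_{\dot{H}^{s-1}_{p_1}}$, while $\|G'(f)\|_{L^{q_1}}\lesssim\|f\|_{L^\infty}^{p-1}$ (taking $q_1=\infty$), and the remaining $\nabla f$ factors together with the leftover regularity are distributed onto $\|f\|_{\dot{H}^s_r}$ and $\|f\|_{L^\infty}$ by the fractional Gagliardo--Nirenberg inequality (Proposition~\ref{fractionalgagliardonirenbergineq}). A scaling check ($\||f|^p\|_{\dot{H}^s_r}$ and $\|f\|_{\dot{H}^s_r}\|f\|_{L^\infty}^{p-1}$ carry the same homogeneity $\lambda^{s-n/r}$) confirms that the exponents must recombine into the asserted form, closing the induction.

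The hard part will be precisely this higher-regularity step, and within it the interplay between the non-smoothness of $|u|^p$ at $u=0$ when $p$ is not an even integer and the order of differentiation being demanded: the condition $s<p$ is what guarantees that at each level the regularity $s-k$ distributed by the Leibniz rule stays strictly below the available power $p-k$, so the intermediate operators $u\mapsto|u|^{p-k}$ never drop below $\mathcal{C}^1$ and the chain-rule argument of Proposition~\ref{fractionalchainrule} remains legitimate. Making this watertight requires exhibiting admissible Hölder triples $(p_1,p_2)$ and $(q_1,q_2)$ at every stage and verifying the Gagliardo--Nirenberg constraints $\beta\in[\kappa/s,1]$; this bookkeeping is routine but delicate. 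Since this inductive closure is exactly the content developed in \cite{RunstSickel1996}, I would record the elementary $0<s<1$ argument in full and refer to \cite{RunstSickel1996} for the passage to $s\geq1$.
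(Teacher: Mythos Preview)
The paper does not prove this proposition at all: it simply records that the result, together with its proof, can be found in \cite{RunstSickel1996}. Your proposal therefore goes beyond what the paper does. Your base-case argument for $0<s<1$ via the difference characterization of $\dot F^s_{r,2}$ is correct and standard, and since you ultimately defer to \cite{RunstSickel1996} for the passage to $s\geq 1$, your treatment aligns with---and in fact elaborates on---the paper's approach.

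One small inaccuracy in your inductive sketch: you write that the condition $s<p$ guarantees the intermediate operators $u\mapsto|u|^{p-k}$ ``never drop below $\mathcal{C}^1$''. That is not quite what $s<p$ buys you: $|u|^{p-k}$ is $\mathcal{C}^1$ only when $p-k>1$, whereas $s<p$ merely ensures $s-k<p-k$. In the regime $1<s<p\leq 2$ you must estimate $\|G'(f)\|_{\dot H^{s-1}_{p_1}}$ with $G'$ only $(p-1)$-H\"older, $0<p-1\leq 1$, and the mean-value-theorem argument from your base case no longer applies directly; a genuine H\"older-type superposition estimate is needed there. Since you hand this range off to \cite{RunstSickel1996} anyway, this does not invalidate your proposal, but the ``bookkeeping'' in that step is a bit more than choosing Leibniz exponents.
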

\begin{prop}\label{coroleibunizpower} Let $r\in(1,\infty)$ and $s>0$. Then, for all $f,g\in\dot{H}^{s}_{r}\big(\mb{R}^n\big)\cap L^{\infty}\big(\mb{R}^n\big)$ the following inequality holds:
	\begin{equation*}
	\|fg\|_{\dot{H}^{s}_{r}}\lesssim\|f\|_{\dot{H}^{s}_{r}}\|g\|_{L^{\infty}}+\|f\|_{L^{\infty}}\|g\|_{\dot{H}^{s}_{r}}.
	\end{equation*}
\end{prop}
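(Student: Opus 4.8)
The plan is to read off Proposition \ref{coroleibunizpower} as the $L^\infty$-endpoint special case of the fractional Leibniz rule already available in Proposition \ref{fractionleibnizrule}. First I would specialize the four H\"older exponents there to $p_1=r$, $p_2=\infty$ in the first summand and $q_1=\infty$, $q_2=r$ in the second summand. These are admissible because the statement of Proposition \ref{fractionleibnizrule} explicitly allows the value $\infty$ (its hypothesis reads $1<p_1,p_2,q_1,q_2\leq\infty$), and our $r\in(1,\infty)$ certainly lies in $[1,\infty]$. The two balance identities then reduce to the trivial relations $\tfrac{1}{r}=\tfrac{1}{r}+0=0+\tfrac{1}{r}$. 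Under the standing assumption $f,g\in\dot{H}^s_r\cap L^\infty$, all four quantities $\|f\|_{\dot{H}^s_r}$, $\|g\|_{L^\infty}$, $\|f\|_{L^\infty}$, $\|g\|_{\dot{H}^s_r}$ are finite, so Proposition \ref{fractionleibnizrule} applies directly and its conclusion is verbatim the claimed estimate.

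The only point that deserves care is the legitimacy of the $L^\infty$ endpoint itself; this is where I would locate the actual work, in case one prefers not to invoke the endpoint form of Proposition \ref{fractionleibnizrule} as a black box. I would instead decompose the product by Bony's paraproduct formula, $fg=T_f g+T_g f+R(f,g)$, where $T_f g=\sum_j (S_{j-1}f)(\Delta_j g)$ and $R(f,g)$ collects the high--high interactions, with $\Delta_j$ and $S_j$ the Littlewood--Paley projectors. The crucial structural feature is that the factor one estimates in $L^\infty$ enters only through a low-frequency truncation $S_{j-1}$, and these truncations are uniformly bounded on $L^\infty$ (convolution against an $L^1$-normalised kernel); hence no derivative ever falls on the $L^\infty$ factor. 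Applying the square-function characterisation of $\dot{H}^s_r$ together with the Fefferman--Stein and Mikhlin--H\"ormander machinery---valid precisely for $r\in(1,\infty)$---one controls $T_f g$ and $T_g f$ by $\|f\|_{\dot{H}^s_r}\|g\|_{L^\infty}+\|f\|_{L^\infty}\|g\|_{\dot{H}^s_r}$.

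The main obstacle is the remainder $R(f,g)$: its Littlewood--Paley blocks are not localised to a single dyadic annulus, so the reconstruction of the $\dot{H}^s_r$ norm requires summing $\sum_j 2^{js}\|\Delta_j f\,\widetilde{\Delta}_j g\|$ over shifted indices. This geometric sum converges only because $s>0$, which is exactly why the hypothesis excludes $s=0$; here I would bound one block in $L^\infty$ and the companion block in $L^r$ and then apply Young's inequality for the convolution in the discrete $j$-variable. Once the remainder is handled, summing the three paraproduct contributions gives the asserted inequality and simultaneously re-proves the endpoint instance of Proposition \ref{fractionleibnizrule} used in the short argument of the first paragraph.
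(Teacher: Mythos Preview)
The paper does not actually prove Proposition \ref{coroleibunizpower}; immediately after stating it (together with Proposition \ref{fractionalpowersrule}) the authors simply write ``The above two propositions with their proofs can be found in \cite{RunstSickel1996}.'' So there is no in-paper argument to compare against.

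Your proposal is correct on its own terms. The first paragraph---specialising Proposition \ref{fractionleibnizrule} with $p_1=r$, $p_2=\infty$, $q_1=\infty$, $q_2=r$---is formally valid because the paper's stated hypothesis $1<p_1,p_2,q_1,q_2\leq\infty$ explicitly admits the endpoint, and the cited reference \cite{Grafakos2014} (Grafakos--Oh) does cover that case. Your second paragraph, the paraproduct sketch, is essentially how such estimates are established in the references the paper invokes (both \cite{Grafakos2014} and \cite{RunstSickel1996}); in particular your remarks that the $L^\infty$ factor only appears through uniformly $L^\infty$-bounded low-frequency truncations, and that the remainder requires $s>0$ for the geometric sum to converge, identify exactly the two places where care is needed. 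So your write-up supplies strictly more than the paper does, and what it supplies is the standard argument.
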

The above two propositions with their proofs can be found in \cite{RunstSickel1996}.
\begin{prop}\label{supplement} Let $0<2s^*<n<2s$. Then, for any function $f\in\dot{H}^{s^*}\big(\mb{R}^n\big)\cap\dot{H}^s\big(\mb{R}^n\big)$ one has
	\begin{equation*}
	\|f\|_{L^{\infty}}\leq\|f\|_{\dot{H}^{s^*}}+\|f\|_{\dot{H}^s}.
	\end{equation*}
\end{prop}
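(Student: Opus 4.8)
The plan is to bound the sup-norm by the $L^1$ norm of the Fourier transform and then to split the frequency space into the unit ball and its complement, estimating the low frequencies by $\|f\|_{\dot{H}^{s^*}}$ and the high frequencies by $\|f\|_{\dot{H}^{s}}$. Since $\|g\|_{L^{\infty}}\lesssim\|\hat{g}\|_{L^1}$ whenever $\hat{g}\in L^1(\mb{R}^n)$ (by Fourier inversion, $f$ then agreeing a.e.\ with a bounded continuous function), it suffices to show $\hat{f}\in L^1(\mb{R}^n)$ together with the quantitative bound $\|\hat{f}\|_{L^1}\lesssim\|f\|_{\dot{H}^{s^*}}+\|f\|_{\dot{H}^{s}}$. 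This is the only thing to prove.

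To this end I would write $\|\hat{f}\|_{L^1}=\int_{|\xi|\le 1}|\hat{f}(\xi)|\,d\xi+\int_{|\xi|>1}|\hat{f}(\xi)|\,d\xi$ and handle the two pieces separately by inserting the appropriate power of $|\xi|$ and applying the Cauchy--Schwarz inequality. For the low-frequency part,
\[
\int_{|\xi|\le 1}|\hat{f}(\xi)|\,d\xi\le\Big(\int_{|\xi|\le 1}|\xi|^{-2s^*}\,d\xi\Big)^{1/2}\Big(\int_{\mb{R}^n}|\xi|^{2s^*}|\hat{f}(\xi)|^2\,d\xi\Big)^{1/2}\lesssim\|f\|_{\dot{H}^{s^*}},
\]
where the first factor is finite because $\int_0^1 r^{n-1-2s^*}\,dr<\infty$ precisely when $2s^*<n$. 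For the high-frequency part,
\[
\int_{|\xi|>1}|\hat{f}(\xi)|\,d\xi\le\Big(\int_{|\xi|>1}|\xi|^{-2s}\,d\xi\Big)^{1/2}\Big(\int_{\mb{R}^n}|\xi|^{2s}|\hat{f}(\xi)|^2\,d\xi\Big)^{1/2}\lesssim\|f\|_{\dot{H}^{s}},
\]
where now the first factor is finite because $\int_1^\infty r^{n-1-2s}\,dr<\infty$ precisely when $2s>n$. Adding the two estimates gives $\|f\|_{L^{\infty}}\lesssim\|f\|_{\dot{H}^{s^*}}+\|f\|_{\dot{H}^{s}}$; the explicit constant $1$ appearing in the statement is irrelevant for the applications in Section \ref{ge} and can be recovered after the standard normalization of the Fourier transform (or by optimizing the radius at which the frequency space is split).

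There is essentially no obstacle: this is the usual proof of the embedding $\dot{H}^{s^*}\cap\dot{H}^{s}\hookrightarrow L^{\infty}$ in the range $0<2s^*<n<2s$. The only point deserving attention is that the two hypotheses $2s^*<n$ and $n<2s$ are each used exactly once -- the former to make $|\xi|^{-2s^*}$ integrable near the origin, the latter to make $|\xi|^{-2s}$ integrable at infinity -- so that neither homogeneous norm alone controls $L^{\infty}$ while the pair together does. Accordingly, the whole argument reduces to the two elementary weighted integrals above.
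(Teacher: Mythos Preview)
Your proof is correct and is the standard Fourier-analytic argument for this embedding. The paper does not give its own proof but simply cites \cite{DabbiccoEbertLucente}; the argument there is exactly the one you wrote: control $\|f\|_{L^\infty}$ by $\|\hat f\|_{L^1}$, split at $|\xi|=1$, and use Cauchy--Schwarz with the weights $|\xi|^{\pm s^*}$ and $|\xi|^{\pm s}$, the conditions $2s^*<n$ and $n<2s$ ensuring integrability of $|\xi|^{-2s^*}$ near the origin and of $|\xi|^{-2s}$ at infinity, respectively. Your remark that the constant $1$ in the statement is immaterial for the applications in Section~\ref{ge} is accurate: the proposition is only ever used with $\lesssim$.
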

The proof of this statement was given in \cite{DabbiccoEbertLucente}.

\section{Existence and restriction of parameters}\label{appendix2}
The purpose of this section is to clarify the possibility to choose the parameters $q_1,q_2$, $r_1,\dots,r_6$ in the proof of Theorem \ref{HlL1nonlinearythm}. In other words, we show that the condition \eqref{rest01} below is not only a sufficient condition but also a necessary condition for a suitable choice of these parameters.

First of all, in Theorem \ref{HlL1nonlinearythm} the restrictions for the parameters $q_1$ and $q_2$ are as follows: \begin{eqnarray*} && \frac{3}{s+1}\Big(\frac{1}{2}-\frac{1}{q_1}\Big)\in[0,1],\,\,\,\,  \frac{3}{s+1}\Big(\frac{1}{2}-\frac{1}{q_2}+\frac{s}{3}\Big)\in\Big[\frac{s}{s+1},1\Big]\,\,\,\,\mbox{and}\,\,\,\, \frac{p_k-1}{q_1}+\frac{1}{q_2}=\frac{1}{2}\,\,\,\,\mbox{ with}\,\,\,\, q_1,q_2\neq \infty.\end{eqnarray*}
In other words, we have
\begin{equation*}
\begin{aligned}
\frac{(1-2s)(p_k-1)+1}{6}\leq\frac{p_k-1}{q_1}+\frac{1}{q_2}&\leq\frac{p_k}{2}&\text{if }\,&0<s<1/2,\\
\frac{1}{6}\leq\frac{p_k-1}{q_1}+\frac{1}{q_2}&\leq\frac{p_k}{2}&\text{if }\,&1/2\leq s.
\end{aligned}
\end{equation*}
Hence, the assumption
\begin{equation}\label{rest01}
p_k\leq1+\frac{2}{1-2s}\quad\text{if }\,0<s<1/2
\end{equation}
for $p_k$ for all $k=1,2,3,$ leads to \[ \frac{1}{2}\in\Big[\frac{(1-2s)(p_k-1)+1}{6},\frac{p_k}{2}\Big] \,\,\,\,\mbox{and}\,\,\,\, \frac{1}{2}\in\Big[\frac{1}{6},\frac{p_k}{2}\Big].\] All in all, \eqref{rest01} is a necessary condition to choose suitable parameters $q_1$ and $q_2$.

Next, we want to show that the condition \eqref{rest01} is a sufficient condition for choosing $q_1,q_2$. From the relationship \[ \frac{1}{q_2}=\frac{1}{2}-\frac{p_k-1}{q_1}\,\,\,\,\mbox{and}\,\,\,\, \frac{3}{s+1}\Big(\frac{1}{2}-\frac{1}{q_1}\Big)\in[0,1],\] we may conclude
\begin{equation}\label{rest02}
\begin{aligned}
\frac{1}{q_2}&\in\Big[1-\frac{p_k}{2},\frac{1}{2}-\frac{(1-2s)(p_k-1)}{6}\Big]&\text{if }\,&0<s<1/2,\\
\frac{1}{q_2}&\in\Big[1-\frac{p_k}{2},\frac{1}{2}\Big)&\text{if }\,&1/2\leq s.
\end{aligned}
\end{equation}
We should point out that the interval for $\frac{1}{q_2}$ is not empty because of $p_k>1$. Taking account of $\frac{3}{s+1}\big(\frac{1}{2}-\frac{1}{q_2}+\frac{s}{3}\big)\in\big[\frac{s}{s+1},1\big]$ and \eqref{rest02} together with assumption \eqref{rest01}, we observe
\begin{equation*}
\begin{aligned}
\Big[\frac{1}{6},\frac{1}{2}\Big]&\cap\Big[1-\frac{p_k}{2},\frac{1}{2}-\frac{(1-2s)(p_k-1)}{6}\Big]\neq\emptyset&\text{if }\,&0<s<1/2,\\
\Big(0,\frac{1}{2}\Big]&\cap\Big[1-\frac{p_k}{2},\frac{1}{2}\Big)\neq\emptyset&\text{if }\,&1/2\leq s.
\end{aligned}
\end{equation*}
In conclusion, there exist suitable parameters $q_1,q_2$ in the proof of Theorem \ref{HlL1nonlinearythm}.

Moreover, the restrictions on $r_1,r_2$ are \[ \frac{3}{s+1}\Big(\frac{1}{2}-\frac{1}{r_1}+\frac{s}{3}\Big)\in\Big[\frac{s}{s+1},1\Big],\,\,\,\, \frac{3}{s+1}\Big(\frac{1}{2}-\frac{1}{r_2(p_k-1)}\Big)\in[0,1],\,\,\,\,\mbox{and}\,\,\,\, \frac{1}{r_1}+\frac{1}{r_2}=\frac{1}{2}.\] By the same arguments as above, the condition \eqref{rest01} is a sufficient and necessary condition for the choice of suitable parameters $r_1$ and $r_2$.

Lastly, the restrictions on $r_3,\dots,r_6$ are
\begin{eqnarray*} && \frac{3}{s+1}\Big(\frac{1}{2}-\frac{1}{r_3}\Big)\in[0,1],\,\,\,\, \frac{3}{s+1}\Big(\frac{1}{2}-\frac{1}{r_5}\Big)\in[0,1],\,\,\,\,  \frac{3}{s+1}\Big(\frac{1}{2}-\frac{1}{r_6}+\frac{s}{3}\Big)\in\Big[\frac{s}{s+1},1\Big],\\
&& \qquad \frac{1}{r_3}+\frac{1}{r_4}=\frac{1}{2}\,\,\,\,\mbox{and}\,\,\,\, \frac{1}{r_4}=\frac{p_k-2}{r_5}+\frac{1}{r_6}.\end{eqnarray*} As in the paper \cite{PalmieriReissig2018} we also prove the optimality of the condition \eqref{rest01} for $p_k$.\\
One choice for the parameters $q_1,q_2$ and $r_1,\dots,r_6$ is the following:
\begin{equation*}
\begin{split}
&q_1=3(p_k-1),\quad q_2=6,\quad r_1=6,\quad r_2=3,\quad r_3=3(p_k-1),\quad r_4=\frac{6(p_k-1)}{3(p_k-1)-2},\quad r_5=3(p_k-1),\quad r_6=6.
\end{split}
\end{equation*}
This choice implies the condition for $k=1,2,3,$
\begin{equation*}
1+\frac{2}{3}\leq p_k\leq 1+\frac{2}{1-2s}\,\,\,\,\text{if}\,\,\,\,0<s<1/2.
\end{equation*}

\nocite{*}

\clearpage

\end{document}